\documentclass[11pt, oneside]{amsart}

\usepackage[letterpaper,body={14.5cm,22.0cm}, mag=1000]{geometry}
\usepackage[utf8]{inputenc}
\usepackage{amssymb}
\usepackage{amsthm}
\usepackage{amscd}
\usepackage{enumitem}
\usepackage{float}
\usepackage{placeins}
\usepackage{caption}
\usepackage{mathtools}

\usepackage{times}
\usepackage{graphicx}
\usepackage{tikz}
\usepackage{tikz-cd}
\usepackage[all,cmtip]{xy}

\numberwithin{equation}{section}
\theoremstyle{plain}

\newtheorem{cor}[equation]{Corollary}
\newtheorem{lemma}[equation]{Lemma}

\newtheorem{pro}[equation]{Proposition}

\newtheorem{rem}[equation]{Remark}

\newtheorem{lem}[equation]{Lemma}

\newtheorem{thm}[equation]{Theorem}

\theoremstyle{definition}

\newtheorem{example}[equation]{Example}

\newcommand{\dlabel}[1]{\ifmmode \text{\ttfamily \upshape [#1] } \else
{\ttfamily \upshape [#1] }\fi \label{#1}}

\newcommand{\C}{\operatorname{C} }

\newcommand{\Z}{\operatorname{Z} }

\newcommand{\Cb}{\operatorname{Cb} }
\newcommand{\Cy}{\operatorname{Cy} }
\newcommand{\Pb}{\operatorname{Pb} }
\newcommand{\Pt}{\operatorname{Pt} }

\newcommand{\Id}{\operatorname{Id}}
\renewcommand{\Pr}{\operatorname{Pr} }

\newcommand{\gen}[1]{\left < #1 \right >}
\newcommand{\Aut}{\operatorname{Aut} }

\newcommand{\Ker}{\operatorname{Ker} }

\newcommand{\Syl}{\operatorname{Syl} }

\newcommand{\normaleq}{\trianglelefteq}

\newcommand{\Soc}{\operatorname{Soc} }
\newcommand{\Ann}{\operatorname{Ann} }

\newcommand{\Fix}{\operatorname{Fix} }

\author{Susanta Mondal$^{1, 2,}$}
\address{$^1$ Harish-Chandra Research Institute, Chhatnag Road, Jhunsi, Prayagraj-211 019, India.}
\address{$^2$ Homi Bhabha National Institute, Training School Complex, Anushakti Nagar, Mumbai 400 094, India}
\email{susantamondal@hri.res.in}

\author{PAVEL SHUMYATSKY$^{3}$}
\address{$^3$  Department of Mathematics, University of Brasilia, DF 70910-900, Brazil}
\email{pavel@unb.br}

\author{Marco Trombetti$^{4}$}
\address{$^4$  Dipartimento di Matematica e Applicazioni “Renato Caccioppoli”, Universit`a di Napoli Federico II, Complesso Universitario Monte S. Angelo, Via
Cintia, Napoli, Italy}
\email{marco.trombetti@unina.it}

\author{Manoj K. Yadav$^{5,6}$}
\address{$^5$  Harish-Chandra Research Institute, Chhatnag Road, Jhunsi, Prayagraj-211 019, India.}
\address{$^6$ Homi Bhabha National Institute, Training School Complex, Anushakti Nagar, Mumbai 400 094, India}
\email{myadav@hri.res.in}

\subjclass[2010]{16N99, 16T25, 20F24}
\keywords{Skew left brace,  ideal, commuting probability, relative commuting probability, isoclinism, relative isoclinism, brace centralizer, bounded brace centralizer index}

\begin{document}
\setcounter{page}{1}
\title[Relative commuting probability and BFC-type results for finite skew  left braces]
{Relative commuting probability and BFC-type results for finite skew braces}

\begin{abstract}

We develop a probabilistic approach to the structure of finite skew left braces, motivated both by classical commuting probability in finite group theory and by the role of skew left braces in the study of set-theoretic solutions of the Yang--Baxter equation. We introduce relative commuting probability for skew left braces and establish analogues of several classical structural results, including relative BFC-type theorems. For natural classes of finite skew left braces, we show that a positive lower bound for the commuting probability forces the existence of a large section which is trivial up to bounded subgroups. We also study the probability that two elements generate a trivial sub-skew brace and a Sylow-local version of commuting probability, obtaining further structural consequences. Finally, we introduce a commuting probability for finite non-degenerate set-theoretic solutions of the Yang--Baxter equation. Under natural hypotheses on the associated structure skew brace $G(X,r)$, a positive lower bound for this probability yields a bound on $|G(X,r):\Soc(G(X,r))|$ depending only on the probability and on $|X|$.

\end{abstract}

\maketitle

\section{Introduction}

Skew left braces provide an algebraic framework for the study of set-theoretic solutions of the Yang--Baxter equation. They were introduced by Guarnieri and Vendramin \cite{GV17}, extending the notion of a left brace introduced by Rump \cite{Rump07}, and have since become an important tool for translating questions about solutions of the Yang--Baxter equation into questions about groups equipped with two compatible operations.

A skew left brace $B=(B,+,\circ)$ consists of two group structures on the same set satisfying
$$
a\circ(b+c)=(a\circ b)-a+(a\circ c)
$$
for all $a,b,c\in B$. The common identity element of the two groups will be denoted by $1$. The interaction between the two operations is encoded by the maps
$$
\lambda_a(b)=-a+(a\circ b),
$$
and it is precisely this interaction that makes skew left braces useful in the study of the Yang--Baxter equation.

The aim of the present paper is to develop a probabilistic approach to this structure. Our starting point is commuting probability. In finite group theory, commuting probability and its relative versions have long been used to measure how close a finite group is to being abelian and to obtain quantitative structural information; see, for instance, \cite{ET68,WHG73,ERL07,DS22}. Commuting probability for finite skew left braces was recently introduced in \cite{MY26}. In the skew brace setting, however, commutativity simultaneously involves the two group operations and their interaction, so the corresponding probabilistic questions contain more information than the ordinary commuting probabilities of the two underlying groups.

For a finite skew left brace $B$, we define its commuting probability by
$$
\Pb(B)=\frac{1}{|B|^2}
|\{(x,y)\in B^2\mid x\circ y=x+y=y+x=y\circ x\}|.
$$
We introduce a relative version of this invariant. If $H$ is a sub-skew brace of $B$, we define
$$
\Pb(H,B)=\frac{1}{|H||B|}
|\{(x,y)\in H\times B\mid x\circ y=x+y=y+x=y\circ x\}|.
$$
More generally, the same definition gives $\Pb(H,K)$ for two sub-skew braces $H$ and $K$ of $B$.

Our first objective is to understand to what extent a positive commuting probability forces a finite skew left brace to contain a large part with simple structure. This is analogous to a classical theme in probabilistic group theory. We prove relative BFC-type results and show that, in several natural classes of skew left braces, positive commuting probability forces the existence of a large section which is trivial up to bounded subgroups.

The classes which occur naturally in our results are the two-sided, symmetric and $\lambda$-homomorphic skew left braces. We denote by $\mathcal T$ the class consisting of skew braces belonging to at least one of these three families. One of our main structural results states that if $B\in\mathcal T$ and $\Pb(B)$ is bounded away from zero, then $B$ contains ideals $B_2\leqslant B_1$ such that both $|B_2|$ and $|B:B_1|$ are bounded in terms of $\Pb(B)$, while $B_1/B_2$ is a trivial brace. Thus a positive commuting probability forces a bounded-by-trivial section of bounded index.

A second motivation for our approach comes directly from the Yang--Baxter equation. Let $(X,r)$ be a finite non-degenerate set-theoretic solution, written as
$$
r(x,y)=(\sigma_x(y),\tau_y(x)).
$$
We introduce its commuting probability by
$$
\operatorname{p}(X,r)
=
\frac{1}{|X|^2}
\big|\{(x,y)\in X^2\mid r(x,y)=(y,x)\text{ and }r(y,x)=(x,y)\}\big|.
$$
This invariant measures the proportion of pairs on which the solution behaves like the flip map in both directions. It is defined directly in terms of the solution and provides a probabilistic quantity on the Yang--Baxter side which can be compared with the commuting probability of the associated structure skew brace. For a recent probabilistic approach to finite non-degenerate solutions and skew braces, we refer to \cite{FMT26}.

One of the main applications of our structural results is that a positive lower bound for $\operatorname{p}(X,r)$ forces a strong restriction on the structure skew brace. More precisely, if $|X|=n$, the structure skew brace $G=G(X,r)$ belongs to $\mathcal T$, and $\operatorname{p}(X,r)\geqslant\epsilon>0$, then
$$
|G:\Soc(G)|
$$
is bounded in terms of $\epsilon$ and $n$ only. Thus probabilistic information involving only the original finite solution yields quantitative control on the structure skew brace modulo its socle.

We now state three of our main results. The first two describe structural consequences of commuting probability for skew left braces, while the third transfers this information to finite solutions of the Yang--Baxter equation.

\begin{thm} 
Let $H$ and $K$ be ideals of a finite skew left brace $B$. Suppose that, for every $x\in H\cup K$,
\[
|(H,\circ):\Cb_H(x)| \leqslant m \quad\text{and} \quad |(K,\circ):\Cb_K(x)| \leqslant m,
\]
where $\Cb_A(b)$ denotes the brace centralizer of an element $b \in B$ in the sub-skew brace $A$ of $B$.
Then $|H*K|$ and $|K*H|$ are $m$-bounded. In particular, taking $K = B$, we get $|[H, B]^b|$ is $m$-bounded, where $[H, B]^b$ denotes the brace commutator of $H$ and $B$.
\end{thm}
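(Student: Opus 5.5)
The plan is to reduce to a Neumann-type BFC argument inside the additive group, after showing that brace commutators $h*k=-h+h\circ k-k=\lambda_h(k)-k$ take only boundedly many values.

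\emph{Step 1.} From the definition of the brace centralizer, $a\in\Cb_A(x)$ means that $a$ and $x$ commute in both operations and $a*x=x*a=0$; indeed $a\circ x=a+x$ is equivalent to $\lambda_a(x)=x$. I will first check that $\Cb_A(x)$ is a sub-skew brace. Then its index in $(A,\circ)$ and in $(A,+)$ coincide, both being $|A|/|\Cb_A(x)|$.

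\emph{Step 2.} This is the key counting observation. Fix $h\in H$ and let $c\in\Cb_K(h)$. Since $\lambda_h$ is an automorphism of $(B,+)$ and $\lambda_h(c)=c$, we get
\[
h*(k+c)=\lambda_h(k)+c-c-k=h*k .
\]
Thus $k\mapsto h*k$ is constant on the additive cosets of $\Cb_K(h)$ in $K$. So the set $\{h*k : k\in K\}$ has at most $m$ elements. Symmetrically, using $\Cb_H(k)$, for fixed $k\in K$ the set $\{h*k : h\in H\}$ has at most $m$ elements. The same holds with the roles of $H$ and $K$ swapped, which gives the statement for $K*H$.

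\emph{Step 3.} Next I use the ideal structure to control the generating set $X=\{h*k: h\in H,\ k\in K\}$. Because $H$ and $K$ are ideals, $X$ is (up to the standard identities for $*$ in skew braces) invariant under additive conjugation by $B$ and under the maps $\lambda_b$. Moreover, each $x\in H\cup K$ has at most $m$ additive conjugates in $H$ and in $K$, since $\Cb$ is contained in the ordinary centralizers. I would also use the classical relative BFC theorem for groups, with normal subgroups $H,K$ such that $|H:C_H(x)|,|K:C_K(x)|\le m$ for all $x\in H\cup K$, which yields $|[H,K]|$ $m$-bounded. Applying it to both $(B,+)$ and $(B,\circ)$, the additive and multiplicative commutators of $H$ and $K$ are bounded.

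\emph{Step 4.} This is the main obstacle: bounding $|X|$ itself. I would run Neumann's argument directly on $*$. Choose $h_0\in H$ for which $|\{h_0*k:k\in K\}|$ is maximal, and choose $k_1,\dots,k_r$ ($r\le m$) representing the values $h_0*k$. Then compare $h*k$ with $(h_0+h)*k$, using the identity $(a+b)*c=a*c+a+(b*c)-a$ (valid in skew braces), to show that every $h*k$ lies in a set determined by $h_0$, the $k_i$, and boundedly many conjugates. Here the additive conjugates are controlled by the bounded additive commutator from Step 3. This should give $|X|\le f(m)$.

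Then $H*K$ is the additive subgroup generated by the boundedly many elements of $X$. Each generator has at most $m$ conjugates in the relevant groups, and $[H,K]_+$ is bounded, so $H*K$ is a finitely generated BFC-type group. By the standard B.H. Neumann lemma (a group generated by $f(m)$ elements, each with at most $m$ conjugates, has derived subgroup of bounded order), combined with the bounded exponent coming from Step 2, $|H*K|$ is $m$-bounded. The brace commutator $[H,B]^b$ is generated by $H*B$, $[H,B]_+$ and $[H,B]_\circ$, so the case $K=B$ follows from Steps 3 and 4.
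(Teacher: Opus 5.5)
\textbf{There is a genuine gap in Step 4, which you rightly call the main obstacle: it rests on an identity that is false.} In a skew brace, $*$ satisfies a distributive law only in its second argument, $a*(b+c)=a*b+b+(a*c)-b$. In the first argument you only have the multiplicative rule $(a\circ b)*c=\lambda_a(b*c)+a*c$.

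The formula $(a+b)*c=a*c+a+(b*c)-a$ already fails for the brace $\mathbb{F}_7\times C_3$ with componentwise addition and $(v,i)\circ(w,j)=(v+2^iw,i+j)$. This is the paper's family from Section~5 with $\alpha=2$, $q=3$, $n=1$. There $(v,i)*(w,j)=((2^i-1)w,0)$. For $a=b=(0,1)$ and $c=(1,0)$ you get $(a+b)*c=(3,0)$, while $a*c+a+(b*c)-a=(2,0)$. This brace is even $\lambda$-homomorphic. The additive perturbation is also mismatched: the natural set of perturbing elements $\{h\in H: h*k_i=1\}$ is a stabilizer for the $\lambda$-action, hence a subgroup of $(H,\circ)$ but in general not of $(H,+)$.

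The same confusion appears in Step 1. $\Cb_A(x)$ is always a subgroup of $(A,\circ)$, but it is a sub-skew brace only under extra hypotheses; the paper proves it for the class $\mathcal T$. For example, in the brace $B_{n,m}$ of Section~4 with $n\geqslant 2$ and $x=(u,v,0)$, $u\neq 0$, one has $\Cb_{B_{n,m}}(x)=U_m\times v^{\perp}\times\{0\}$, which is not closed under $+$. So the ``additive cosets of $\Cb_K(h)$'' in Step 2 do not exist. Step 2 survives if you instead use the additive subgroup $\{c\in K: h*c=1\}$ of fixed points of $\lambda_h$. It contains $\Cb_K(h)$ and so has index at most $m$.

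\textbf{Repair.} Run the maximality argument in the variable where $*$ really is additive, as the paper does. Choose $a\in K$ with $|\{h*a:h\in H\}|=s$ maximal, witnessed by $h_1,\dots,h_s$. For $c\in C=\bigcap_i\Cb_K(h_i)$, the genuine identity gives $h_i*(a+c)=h_i*a$. By maximality $\{H*(a+c)\}=\{H*a\}$, so $h*c\in -a-\{H*a\}+\{H*a\}+a$, i.e.\ $|\{H*C\}|\leqslant m^2$. Since $C$ is only a subgroup of $(K,\circ)$, the paper then uses Eberhard's lemma to write elements of $\langle C\rangle^+$ as bounded sums from $C\cup -C$. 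It works modulo the bounded subgroup $[H,K]^+$ and covers $K$ by boundedly many cosets of $\langle C\rangle^+$. Taking instead $C=\bigcap_i\{c\in K:h_i*c=1\}$, an additive subgroup of index at most $m^m$, avoids Eberhard's lemma altogether.

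Alternatively, keep your maximal row $R=\{h_0*k:k\in K\}$ but perturb multiplicatively. For $c$ in the bounded-index subgroup $\bigcap_i\{c\in H:c*k_i=1\}$ of $(H,\circ)$ one has $(h_0\circ c)*k_i=h_0*k_i$. Maximality then forces $\lambda_{h_0}(c*k)\in R-R$ for all $k$. The formula $(t\circ c)*k=\lambda_t(c*k)+t*k$, with $t$ running over a left transversal, then bounds $\{H*K\}$.

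\textbf{Smaller points in the last step.} You need a line you did not supply. Since $h*k\in H\cap K$, the second-variable identity gives $n(h*k)\equiv h*(nk)$ modulo $[H,K]^+$. The row bound of Step 2 then shows that $h*k$ has order at most $m$ modulo $[H,K]^+$. As $H\cap K$ is abelian modulo $[H,K]^+$, this bounds $|H*K|$ without any appeal to Neumann's lemma. Finally, by definition $[H,B]^b$ is generated by $H*B$, $B*H$ and $[H,B]^+$. Your list omits $B*H$, so the bound for $K*H$ with $K=B$ is what is needed there.
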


\begin{thm}\label{main-2}
Let $B$ be a finite skew left brace belonging to the class $\mathcal T$ such that $\Pb(B)\geqslant\epsilon>0$. Then there exist ideals $B_2 \leqslant B_1\trianglelefteq B$ such that $B_1/B_2$ is a trivial left brace, and $|B_2|$ and $|B:B_1|$ are bounded in terms of $\epsilon$ only.
\end{thm}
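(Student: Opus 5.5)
We follow the classical Neumann argument for groups with commuting probability at least $\epsilon$, transported to the brace setting and combined with the relative BFC-type theorem stated above.

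\emph{Step 1: many elements have small brace centralizers.} Note that $\Pb(B)=\frac{1}{|B|}\sum_{x\in B}\frac{|\Cb_B(x)|}{|B|}$, where $\Cb_B(x)=\{y\in B\mid x\circ y=x+y=y+x=y\circ x\}$. For $B\in\mathcal T$ we use the (earlier) fact that $\Cb_B(x)$ is a sub-skew brace, in particular a subgroup of $(B,\circ)$. This is exactly where the hypothesis $B\in\mathcal T$ enters: in two-sided, symmetric or $\lambda$-homomorphic braces the conditions $x+y=y+x$ and $x\circ y=y\circ x$ are preserved under products in $y$. A Markov-type count then shows that
\[
X=\{x\in B\mid |B:\Cb_B(x)|\leqslant 2/\epsilon\}
\]
satisfies $|X|\geqslant \epsilon|B|/2$.

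\emph{Step 2: a large sub-skew brace with bounded centralizer indices.} Let $H=\langle X\rangle$, taken as a sub-skew brace, or better as the ideal generated by $X$. Since $|X|>\epsilon|B|/2$, we get $|B:H|<2/\epsilon$. By the Neumann/product-of-sets argument, every element of $H$ is a product of an $\epsilon$-bounded number of elements of $X\cup(-X)$. Intersections of the boundedly many brace centralizers then give $|B:\Cb_B(h)|$ $\epsilon$-bounded for every $h\in H$. To make $H$ an ideal we pass to its core. Since $|B:H|$ is bounded, the intersection of its conjugates under the relevant actions ($\lambda$ and $\circ$-conjugation) has bounded index. Set $B_1$ to be this ideal; $|B:B_1|$ is $\epsilon$-bounded, and every $x\in B_1$ has $|B_1:\Cb_{B_1}(x)|\leqslant m=m(\epsilon)$.

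\emph{Step 3: apply the relative BFC theorem.} Apply the preceding theorem with $H=K=B_1$. This gives that $|B_1*B_1|$ and the brace commutator $[B_1,B_1]^b$ are $m$-bounded. Take $B_2$ to be the ideal of $B$ generated by $B_1*B_1$ and the commutators of $B_1$ in both group structures. Since $B_1$ is an ideal, this lies in $B_1$; its normal closure in $B$ remains bounded by applying the theorem with $K=B$ as well. Then $B_1/B_2$ has abelian $+$ and $\circ$, and $a*b=0$ there, so $a\circ b=a+b$. Hence $B_1/B_2$ is a trivial left brace.

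\emph{Main obstacle.} The delicate points are Steps 1--2. Brace centralizers need not be subgroups in general, which is why the class $\mathcal T$ is needed. Passing from the generated sub-skew brace to an ideal of bounded index also requires controlling both the additive and multiplicative cores simultaneously. I would first try to prove that in $\mathcal T$ the set of elements with index at most $m$ generates an ideal, using that $\lambda_a$ maps $\Cb_B(x)$ onto $\Cb_B(\lambda_a(x))$.
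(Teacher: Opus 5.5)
Your Step~1 and the first half of Step~2 are essentially the paper's Theorem~\ref{maintheorem}. One caveat: the bounded-word argument must be run on the sub-skew brace generated by $X$, alternating additive and multiplicative closures via Eberhard's lemma until the chain stabilises. The \emph{ideal} generated by $X$ contains $\lambda$-images and conjugates that are not bounded words in $X$.

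There are genuine gaps at both places where you upgrade to ideals.

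\textbf{Step~3.} You bound the ideal closure by applying the relative BFC theorem with $K=B$. Its hypothesis requires $|(B,\circ):\Cb_B(x)|\leqslant m$ for \emph{every} $x\in B$, and $\Pb(B)\geqslant\epsilon$ does not give this. Take the trivial brace on $D_{2n}$, which lies in $\mathcal T$ and has $\Pb>1/4$. For large $n$ your Step~2 yields $B_1=\langle r\rangle$, and reflections have centralizer index about $n$. The conclusion of that theorem would bound $[B_1,B]^b=\langle r^2\rangle$, which is unbounded. So this route cannot work. What must be controlled is the ideal closure of $\Gamma_2(B_1)$ itself, and nothing in your argument does that.

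\textbf{Step~2, the core.} This is only asserted. Intersecting $\lambda$-images, additive conjugates and $\circ$-conjugates does not in general give a set closed under all three operations. A $\circ$-conjugate of a strong left ideal is again one when $\circ$-conjugation is a brace automorphism, as in the two-sided case, but not in general. The tool you suggest, that $\lambda_a$ maps $\Cb_B(x)$ onto $\Cb_B(\lambda_a(x))$, is false in general. In the $\lambda$-homomorphic skew brace with additive group $\mathbb F_2^2\times S_3$ and $\lambda_{(u,h)}(v,g)=(hv,g)$, the map $\lambda_a$ does not even preserve $|\Cb_B(x)|$.

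Both missing steps are exactly where the paper uses $\mathcal T$ beyond Proposition~\ref{TS-S-L}, and it treats them class by class. Let $D$ be the bounded-index sub-skew brace and $N=\Gamma_2(D)$.
\begin{itemize}
\item The paper works in $G=(B,+)\rtimes\lambda(B)$, or in $\langle\tau_B,\sigma_B\rangle$ for the symmetric case. There $D$ gives a subgroup of index at most $|B:D|^2$ normalising a bounded subgroup built from $N$.
\item Dietzmann's lemma bounds the normal closure $R$ of that subgroup. The set $\{x:\tau_x\in R,\ \lambda_x\in R\}$ is then an ideal, precisely because $x\mapsto\lambda_x$ is a homomorphism or anti-homomorphism on $(B,+)$.
\item In the two-sided case one instead sums the at most $|B:D|$ multiplicative conjugates of the resulting bounded strong left ideal.
\item After factoring out this bounded ideal, the image of $D$ is trivial. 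A bounded-index ideal inside it comes from the core in $G$ of the subgroup attached to $D$, via the same kernel construction. In the two-sided case it comes from the intersection you describe, which is legitimate there because $\circ$-conjugation is a brace automorphism.
\end{itemize}

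Your ordering (core first) does help in the two-sided case. If $B_1$ is an ideal of $B$, one checks that $\Gamma_2(B_1)$ is normal in $(B,+)$ and $\lambda$-invariant. Being invariant under $\circ$-conjugation, it is then already an ideal. For the symmetric and $\lambda$-homomorphic classes you would still have to supply both missing steps by an argument of the paper's kind.
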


Conversely, the existence of such a bounded-by-trivial section gives a positive lower bound for the commuting probability. We also give an example showing that the preceding result does not hold in full generality for all finite skew braces.

\begin{thm}\label{main-3}
Let $(X,r)$ be a finite non-degenerate solution of size $n$ such that its structure skew brace $G:=G(X,r)$ lies in the class $\mathcal{T}$ and
$\operatorname{p}(X,r)\geqslant\epsilon>0$. Then
$|G:\Soc(G)|$ is bounded in terms of $\epsilon$ and $n$ only.
More precisely, set $\eta :=\eta_n(\epsilon)$ and
$L_n :=\operatorname{lcm}(1,\ldots,n)$. Let $M(\eta)$ and $N(\eta)$ be bounds for $|Q:Q_1|$ and $|Q_2|$, respectively, when Theorem~\ref{main-2} is applied to $Q=G/\Soc(G)$. Then
$$
|G :\Soc(G)|
\leqslant M(\eta)N(\eta)L_n^{2M(\eta)n}.
$$
\end{thm}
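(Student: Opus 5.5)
Write $G=G(X,r)$, $S=\Soc(G)$ and $Q=G/S$. The plan is to pass the probabilistic hypothesis from $(X,r)$ to $Q$, apply Theorem~\ref{main-2} to $Q$, and then turn the resulting bounded-by-trivial section into a bound on $|Q|$. For the last step we use that $Q$ is generated by the images of the $n$ elements of $X$, and that the orders of these images are controlled by $n$.

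\textbf{Step 1: a lower bound for $\Pb(Q)$.} Recall that $Q$ is naturally a finite skew brace. Its elements are determined by $\lambda_g$ together with conjugation data, and both are realised inside $\operatorname{Sym}(X)$ through the permutations $\sigma_x,\tau_x$. So $|Q|$ is finite, and the image $\bar X$ of $X$ generates $Q$ under both operations. Take a pair $(x,y)$ with $r(x,y)=(y,x)$ and $r(y,x)=(x,y)$. In $G$ we have $x\circ y=\sigma_x(y)\circ\tau_y(x)=y\circ x$, and $\lambda_x(y)=\sigma_x(y)=y$ and $\lambda_y(x)=x$. Together these give $x+y=x\circ y=y\circ x=y+x$. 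So such pairs map to commuting pairs of $Q$. This alone only controls pairs inside $\bar X$, and we need a bound for all of $Q$. The intended route is a lemma saying that if a proportion $\ge\epsilon$ of pairs among the $n$ generators commute in the brace sense, then $\Pb(Q)\ge\eta_n(\epsilon)$. I would prove it as follows. Let $C\subseteq\bar X\times\bar X$ be the commuting graph. Then some $x$ commutes with at least $\epsilon n$ generators, and in $\mathcal T$ the brace centralizer $\Cb_Q(x)$ is a sub-skew brace. Iterating this gives a sub-skew brace generated by $\ge$ (an $\epsilon,n$-dependent number of) elements that commute pairwise, and bounding its index needs $|Q|$ controlled. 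I expect this transfer step to be the main obstacle. Since the final bound is allowed to depend on $n$, a crude fallback is available: if $|Q|$ is bounded in terms of $n$ alone, then $\Pb(Q)\ge 1/|Q|$ is trivially $n$-bounded. I would take $\eta_n(\epsilon)$ to be whatever this lemma produces.

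\textbf{Step 2: apply Theorem~\ref{main-2}.} The class $\mathcal T$ is closed under quotients (two-sided, symmetric and $\lambda$-homomorphic are all equational), so $Q\in\mathcal T$. Theorem~\ref{main-2} then gives ideals $Q_2\le Q_1\trianglelefteq Q$ with $|Q:Q_1|\le M(\eta)$, $|Q_2|\le N(\eta)$, and $Q_1/Q_2$ a trivial brace, in particular an abelian group.

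\textbf{Step 3: bound $|Q_1:Q_2|$.} $Q_1$ has index at most $M=M(\eta)$ in $(Q,\circ)$, which is generated by the $n$ elements of $\bar X$. By Schreier's lemma, $(Q_1,\circ)$ is generated by at most $Mn$ elements of the form $t\,\bar x\,t'^{-1}$. Hence the abelian group $Q_1/Q_2$ is generated by at most $Mn$ elements. Now bound their orders. The action of $\bar x$ has order dividing $L_n$ on the relevant permutation data, because $\sigma_x$ and $\tau_x$ lie in $\operatorname{Sym}(X)$. Each Schreier generator lies in $Q$, whose elements are detected by a pair of permutations of $X$; its order therefore divides $L_n^2$ (one factor for each of the two components, or for both the additive and multiplicative sides). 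So $|Q_1/Q_2|\le (L_n^{2})^{Mn}=L_n^{2Mn}$. Multiplying gives
\[
|G:\Soc(G)|=|Q:Q_1|\,|Q_1:Q_2|\,|Q_2|\le M(\eta)N(\eta)L_n^{2M(\eta)n},
\]
which is the stated bound.
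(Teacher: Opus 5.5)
Your argument is correct. Steps 2 and 3 follow the paper's proof: apply Theorem~\ref{main-2} to $Q=G/\Soc(G)$, then bound the abelian section $Q_1/Q_2$ by a Schreier count and an exponent bound coming from $\operatorname{Sym}(X)$. The genuine difference is Step 1.

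The paper proves a separate lemma there. It uses $|Q|\leqslant (n!)^2$, the fact that brace centralizers in $\mathcal T$ are sub-skew braces, and a count of pairs of words of length $k_n=(n!)^2-1$ in the letters $\bar x^{\pm1}$. This gives $\Pb(Q)\geqslant\eta_n(\epsilon)=(c_n\epsilon)^{k_n^2}/(n!)^4$, with $c_n=(2n/(2n+1))^2$.

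Your ``crude fallback'' $\Pb(Q)\geqslant 1/|Q|\geqslant 1/(n!)^2$ holds because $1\in\Cb_Q(x)$ for every $x$. It is both simpler and stronger. Since $c_n\epsilon\leqslant 1$, we have $1/(n!)^2\geqslant\eta_n(\epsilon)$. Hence $\Pb(Q)\geqslant\eta_n(\epsilon)$, and Theorem~\ref{main-2} applied with the constant $\eta_n(\epsilon)$ yields exactly the stated inequality with the paper's $\eta$. You can therefore drop the unfinished iteration sketch and the hedge ``whatever this lemma produces''.

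In fact the paper's word count only guarantees at least $(c_n\epsilon)^{k_n^2}<1$ brace-commuting pairs, whereas trivially there are at least $|Q|$. Your route also makes plain that $|G:\Soc(G)|=|Q|\leqslant (n!)^2$ already gives the qualitative conclusion without using $\epsilon$ or $\mathcal T$.

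Your Step 3 bookkeeping also differs. You use at most $Mn$ Schreier generators with orders dividing $L_n^2$. The paper uses $S=\{\bar x^{\pm1}\}$, hence $2Mn$ generators, with orders dividing $L_n$. Both give $L_n^{2Mn}$.

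One point needs tightening. The map $g\Soc(G)\mapsto\bigl(\lambda_g|_X,\ (u\mapsto g+u-g)|_X\bigr)$ behind ``determined by $\lambda_g$ together with conjugation data'' is injective. This is what gives $|Q|\leqslant (n!)^2$, using that $X$ generates $(G,+)$ and that both maps permute $X$. However, this map is not a homomorphism on $(Q,\circ)$, so it does not by itself bound element orders. You can fix this in either of two ways.

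\begin{enumerate}
\item Argue in two stages. Take $w=q^{\circ k_1}$ with $k_1\mid L_n$ the order of $\lambda_q|_X$, so $w\in\Ker\lambda$. Then $w^{\circ j}=jw$, and $jw\in\Soc(G)$ when $j\mid L_n$ is the order of additive conjugation by $w$ on $X$. So the order divides $L_n^2$.
\item Replace the second coordinate by $u\mapsto g\circ u-g=g+\lambda_g(u)-g$. Then $g\mapsto\bigl(\lambda_g|_X,\,(u\mapsto g\circ u-g)|_X\bigr)$ is a homomorphism on $(G,\circ)$ with kernel $\Soc(G)$. So $(Q,\circ)$ embeds in $\operatorname{Sym}(X)^2$ and has exponent dividing $L_n$, as the paper uses.
\end{enumerate}
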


The paper also develops several further aspects of this probabilistic point of view. We establish basic inequalities for relative commuting probability and prove that it is invariant under a suitable notion of relative isoclinism. We also obtain structural consequences from equality between the commuting probabilities of a skew left brace and one of its sub-skew braces, as well as quotient-product inequalities analogous to familiar results in finite group theory. Two representative results are the following.

\begin{thm} 
Let $H$ be a sub-skew brace of a finite skew left brace $(B, +, \circ)$ such that $\Pb(B)=\Pb(H)$. Then $H$ is a left ideal of $B$, which is a normal subgroup of $(B,\circ)$. Moreover, if $B$ is a skew brace from the class $\mathcal{T}$, then $H$ is an ideal of $B$ and $B/H$ is a trivial brace.
\end{thm}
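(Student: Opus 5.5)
We follow the classical argument for groups (where $\Pr(G)=\Pr(H)$ forces $H$ normal and $G/H$ abelian), adapted to brace centralizers. Write $\Cb_B(x)=\{y\in B : x\circ y=x+y=y+x=y\circ x\}$. We use two standard facts. First, $\Cb_B(x)$ is a sub-skew brace of $B$: it is a subgroup of both $(B,+)$ and $(B,\circ)$. Second,
$$\Pb(B)=\frac{1}{|B|^2}\sum_{x\in B}|\Cb_B(x)|, \qquad \Pb(H)=\frac{1}{|H|^2}\sum_{x\in H}|\Cb_H(x)|.$$
The first step is the relative inequality $\Pb(B)\leqslant \Pb(H,B)\leqslant \Pb(H)$. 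For the upper bound, $\Cb_H(x)=\Cb_B(x)\cap H$ is a subgroup of $(\Cb_B(x),\circ)$, so $|\Cb_B(x):\Cb_H(x)|\leqslant |B:H|$. Summing over $x\in H$ gives
$$\Pb(H,B)=\frac{1}{|H||B|}\sum_{x\in H}|\Cb_B(x)|\leqslant \frac{|B:H|}{|H||B|}\sum_{x\in H}|\Cb_H(x)|=\Pb(H).$$
For the lower bound, count pairs the other way: $\Pb(H,B)=\frac{1}{|H||B|}\sum_{y\in B}|\Cb_H(y)|$, and $|\Cb_B(y):\Cb_H(y)|\leqslant|B:H|$ gives $\Pb(B)\leqslant\Pb(H,B)$.

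Now assume $\Pb(B)=\Pb(H)$. Both inequalities become equalities, which forces $|\Cb_B(y):\Cb_H(y)|=|B:H|$ for every $y\in B$. Since $\Cb_H(y)=\Cb_B(y)\cap H$, the product $\Cb_B(y)\circ H$ has size $|\Cb_B(y)||H|/|\Cb_H(y)|=|B|$. Hence $B=\Cb_B(y)\circ H$ for every $y$, and the same argument in $(B,+)$ gives $B=\Cb_B(y)+H$.

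To get normality in $(B,\circ)$, take $b\in B$ and $h\in H$ and write $b=c\circ h'$ with $c\in\Cb_B(h)$ and $h'\in H$. Then $b\circ h\circ \bar b=c\circ(h'\circ h\circ \bar h')\circ \bar c$, where $\bar{\ }$ denotes the $\circ$-inverse. This only places the element in $c\circ H\circ\bar c$, so a different decomposition is needed. Write $b=h'\circ c$ instead, with $c\in\Cb_B(h'')$ for the appropriate element $h''$. Concretely, $b\circ h\circ\bar b=h'\circ(c\circ h\circ\bar c)\circ\bar h'$, and choosing $c\in\Cb_B(h)$ (using $B=H\circ\Cb_B(h)$, obtained by the same counting with inverses) gives $c\circ h\circ\bar c=h$. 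So $b\circ h\circ\bar b\in H$, and $H\trianglelefteq(B,\circ)$. Similarly $H$ is normal in $(B,+)$.

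For the left ideal property, write $b=c\circ h'$ with $c\in \Cb_B(h)$. Then $\lambda_c(h)=-c+c\circ h=-c+c+h=h$, so
$$\lambda_b(h)=\lambda_c(\lambda_{h'}(h))\in\lambda_c(H).$$
Applying the same trick to the element $\lambda_{h'}(h)\in H$, by choosing a decomposition of $b$ adapted to that element, requires some care. An alternative route is to use normality: $\lambda_b(h)=-b+b\circ h=-b+(b\circ h\circ\bar b)\circ b$. This can be rewritten through the brace identity as $b\circ h\circ \bar b$ composed with $b$ additively. More precisely, $x\circ b=x+\lambda_x(b)$, and for $x=b\circ h\circ\bar b\in H$ the decomposition $B=H+\Cb_B(x)$ lets us show $\lambda_x(b)\in b+H$, using that $H$ is normal in $(B,+)$. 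Then $\lambda_b(h)=-b+x+\lambda_x(b)\in H$. I expect this bookkeeping to be the main obstacle.

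For the final part, take $B\in\mathcal T$. In each of the three classes (two-sided, symmetric, $\lambda$-homomorphic), a left ideal that is normal in both groups is an ideal; I would verify this case by case using the defining identities. Finally, $B/H$ is trivial because for any $b$ and any $b'\in B$ we have $b'\in \Cb_B(b)\circ H$. So every element of $B/H$ lies in the image of the brace centralizer of every other element, and the images commute in the brace sense. Hence $\Pb(B/H)=1$, that is, $a\circ b=a+b=b+a$ in $B/H$, which means $B/H$ is a trivial brace with abelian group.
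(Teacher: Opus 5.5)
Your counting is correct wherever it uses only the multiplicative structure. However, the first of your ``standard facts'' is false for general skew braces, and the general half of your argument depends on it. In general $\Cb_B(x)$ is only a subgroup of $(B,\circ)$. It is a sub-skew brace when $B\in\mathcal T$ (Proposition~\ref{TS-S-L}), which is precisely why the statement separates the two cases.

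For example, on $B=\mathbb{F}_3^4$ (standard basis $e_i$) set $\lambda_a(v)=v+a_1v_1e_2+(a_2-\tfrac12a_1^2)v_4e_3$ and $a\circ b=a+\lambda_a(b)$. One checks $\lambda_{a\circ b}=\lambda_a\lambda_b$, so this is a brace. Moreover $\lambda_{e_4}=\mathrm{id}$, and $\Cb_B(e_4)=\{b: b_2=\tfrac12b_1^2\}$ contains $b=e_1+\tfrac12e_2$ but not $b+b$.

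Hence for general $B$ you cannot conclude $B=\Cb_B(y)+H$, nor $H\trianglelefteq(B,+)$ (which the theorem does not assert). Your ``alternative route'' to the left ideal property uses both, so it is unavailable. Your first route ($b=c\circ h'$) only places $\lambda_b(h)$ in $\lambda_c(H)$. So the left ideal claim is left unproved in the general case.

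The missing step is already in your normality paragraph. From $B=H\circ\Cb_B(h)$ write $b=h'\circ c$ with $c\in\Cb_B(h)$; then $\lambda_c(h)=h$, so $\lambda_b(h)=\lambda_{h'}(\lambda_c(h))=\lambda_{h'}(h)\in H$, exactly as in the paper.

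In the second part, ``a left ideal normal in both groups is an ideal'' is simply the definition and needs no case check. What $\mathcal T$ supplies is that $\Cb_B(x)$ is an additive subgroup. From this, $B=H+\Cb_B(x)$ for $x\in H$ and hence $H\trianglelefteq(B,+)$; with that justification your additive normality argument is correct.

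Your last step is valid and more elementary than the paper's. Once $H$ is an ideal, $B=\Cb_B(b)\circ H$ for every $b\in B$ shows directly that any two elements of $B/H$ brace-commute. The paper instead obtains $\Pb(B/H)=1$ from the inequality $\Pb(B)\leqslant\Pb(H)\Pb(B/H)$ of \cite{MY26}.
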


\begin{thm} 
Let $(B,+, \circ)$ be a skew left brace of finite order, $H$ and $K$ be its sub-skew braces and $I$ be its ideal. Then $\Pb(H,K)\leqslant \Pb(I\cap H, I\cap K)\Pb(HI/I,KI/I)$. In particular, $\Pb(H,K)\leqslant \Pb(I\cap H, I\cap K)$ and $\Pb(H,K) \leqslant \Pb(HI/I,KI/I)$.
\end{thm}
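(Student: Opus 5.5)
The inequality is the skew-brace analogue of the standard quotient–product estimate for relative commuting probability in groups, and I will prove it by a counting argument over the cosets of $I$. Call $x,y$ \emph{brace-commuting} if $x\circ y=x+y=y+x=y\circ x$. For $h\in H$ put $C_K(h)=\{k\in K\mid h,k \text{ brace-commute}\}$, and define $C_{I\cap K}(h)$ in the same way. Then
\[
\Pb(H,K)=\frac{1}{|H||K|}\sum_{h\in H}|C_K(h)|.
\]

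The first step is to check that $C_K(h)$ is a subgroup of $(K,\circ)$. Each of the three equalities in the definition of brace-commuting should be preserved under the group operations, and I will verify this using the identity $x\circ y=x+\lambda_x(y)$. Note that $C_K(h)\cap I=C_{I\cap K}(h)$.

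The second step is to pass to the quotient. If $h$ and $k$ brace-commute, then their images $hI$ and $kI$ brace-commute in $B/I$, since the projection $B\to B/I$ is a homomorphism for both operations. Hence $C_K(h)$ maps into $C_{KI/I}(hI)$, the kernel of this map is $C_{I\cap K}(h)$, and so
\[
|C_K(h)|\leqslant |C_{I\cap K}(h)|\,|C_{KI/I}(hI)|.
\]

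The third step is the sum itself. I will group $H$ by cosets of $H\cap I$. Since $HI/I\cong H/(H\cap I)$, the term $|C_{KI/I}(hI)|$ is constant on each coset $h\circ(H\cap I)$. It then remains to bound $\sum_{a\in H\cap I}|C_{I\cap K}(h\circ a)|$ by $\sum_{a\in H\cap I}|C_{I\cap K}(a)|=|H\cap I||I\cap K|\,\Pb(I\cap H,I\cap K)$. For this I will count pairs: $(a,x)$ with $h\circ a$ and $x$ brace-commuting, for $a\in H\cap I$ and $x\in I\cap K$. Fix $x$. The set $\{a\in H\cap I \mid h\circ a \in C_H(x)\}$ is either empty or a coset of $C_{H\cap I}(x)=C_H(x)\cap I$ inside $H\cap I$. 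This follows from the subgroup property of $C_H(x)$ in $(H,\circ)$, obtained by the symmetric version of the first step. Its size is therefore at most $|C_{H\cap I}(x)|$, and summing over $x$ gives exactly $\sum_{a}|C_{I\cap K}(a)|$ by double counting.

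Combining, and using $|H|=|H\cap I|\,|HI/I|$ and $|K|=|K\cap I|\,|KI/I|$, I get
\[
\Pb(H,K)\leqslant \Pb(I\cap H,I\cap K)\cdot\frac{1}{|HI/I||KI/I|}\sum_{\bar h\in HI/I}|C_{KI/I}(\bar h)|,
\]
which is the claimed inequality. The two "in particular" statements follow because each probability is at most $1$.

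The main obstacle is the first step: proving that brace centralizers are subgroups of the multiplicative group. Requiring $x\circ y=x+y=y+x=y\circ x$ means $y$ commutes with $x$ in both groups and $\lambda_x(y)=y$, $\lambda_y(x)=x$. Closure under $\circ$ of the condition $\lambda_y(x)=x$ holds because $\lambda$ is a homomorphism from $(B,\circ)$. Closure of $\lambda_x(y)=y$ holds because $\lambda_x$ is additive. The remaining mixed conditions need a short careful check. If the paper has already established this as a lemma about $\Cb$, I will cite it instead.
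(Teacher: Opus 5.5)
Your proposal is correct and follows the paper's proof step for step. Both arguments bound $|\Cb_K(h)|\leqslant|\Cb_{KI/I}(hI)|\,|\Cb_{K\cap I}(h)|$ via the quotient map, group $H$ into cosets of $H\cap I$, and double count to see that $\{a\in H\cap I \mid h\circ a\in\Cb_H(y)\}$ is empty or a coset of $\Cb_{H\cap I}(y)$; you treat the empty case more carefully than the paper, which tacitly assumes an element exists. The subgroup property you flag as the main obstacle is quoted in the paper from \cite{CFT25}; if you prove it yourself, note that closure of $\lambda_x(y)=y$ under $\circ$ needs, besides additivity of $\lambda_x$, the identity $\lambda_x\lambda_y=\lambda_{x\circ y}=\lambda_{y\circ x}=\lambda_y\lambda_x$.
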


We also consider a different probability, denoted by $\Pt(B)$, measuring the proportion of pairs which generate a trivial sub-skew brace. This quantity detects the interaction between the two brace operations more directly. A positive lower bound for $\Pt(B)$ yields large additive and multiplicative pieces on which the lambda action becomes trivial modulo a bounded obstruction. We obtain quantitative bounds and examples showing that the principal estimates are essentially best possible. In particular, we prove the following.

\begin{thm} 
Let $B$ be a finite skew left brace with $\Pt(B)\geqslant\epsilon>0$, and put
$s=\lfloor1/\sqrt{\epsilon}\rfloor$. Then there exist
$A_0\unlhd(B,\circ)$ and strong left ideals $N\leqslant M_0\leqslant B$
such that
$$
|(B,\circ):A_0|\leqslant s,\quad
|(B,+):M_0|\text{ is $\epsilon$-bounded},\quad
|N|\text{ is $\epsilon$-bounded},\quad
A_0*M_0\leqslant N.
$$
Moreover, the induced lambda action on $M_0/N$ has image of order at most $s$.
\end{thm}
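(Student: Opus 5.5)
The idea is to turn the condition $\Pt(B)\geqslant\epsilon$ into a statement about the permutation action $\lambda\colon(B,\circ)\to\Aut(B,+)$. Then we apply a Neumann/BFC-type argument for actions, analogous to the brace BFC theorem stated first in the introduction. Write $\Lambda=\lambda(B)$.

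\textbf{Step 1: trivial pairs are $\lambda$-fixed pairs.} A pair $x,y$ generates a trivial sub-skew brace if and only if $\lambda_x$ and $\lambda_y$ fix $x$ and $y$. Indeed, in that case $\lambda_x,\lambda_y$ fix the additive subgroup $\langle x,y\rangle_+$ pointwise. That subgroup is then closed under $\circ$, since $a\circ b=a+\lambda_a(b)$, and on it $\circ$ coincides with $+$. Hence
\[
\epsilon\leqslant \Pt(B)\leqslant \frac{1}{|B|^2}\big|\{(x,y): \lambda_x(y)=y,\ \lambda_y(x)=x\}\big| .
\]
By Burnside's lemma, $\frac1{|B|}\sum_x|\Fix(\lambda_x)|$ is the number of $\Lambda$-orbits on $B$. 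So $\Lambda$ has at least $\epsilon|B|$ orbits, and a subset $Y\subseteq B$ with $|Y|\geqslant \frac{\epsilon}{2}|B|$ consists of elements whose $\Lambda$-orbits have length at most $2/\epsilon$.

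\textbf{Step 2: the pieces $M_0$ and $N$.} Following Neumann's argument, the additive subgroup generated by a large set of elements with short orbits contains a subgroup $M_1$ of $\epsilon$-bounded index in $(B,+)$. This uses a bounded product $Y-Y+\dots$ of translates. Every element of $M_1$ has $\Lambda$-orbit of $\epsilon$-bounded length. Next let $M_0$ be the intersection of the finitely many $\lambda$-conjugates and $+$-conjugates of $M_1$. Its index is still $\epsilon$-bounded, because $M_1$ has few conjugates: it is determined by a bounded amount of data. So $M_0$ is a strong left ideal.

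Now the action of $\Lambda$ on $M_0$ has orbits of bounded length. I then apply the action version of the BFC theorem, proved exactly like the first theorem of the introduction via the Wiegold-type counting for $x^{-1}x^g$. It shows that the subgroup generated by all $-m+\lambda_a(m)$, for $a$ in a suitable subgroup and $m\in M_0$, has $\epsilon$-bounded order. Taking its $\lambda$-invariant normal closure in $(B,+)$ gives the strong left ideal $N$.

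\textbf{Step 3: the subgroup $A_0$ and the constant $s$.} This is the main obstacle: getting the sharp constant $s=\lfloor 1/\sqrt\epsilon\rfloor$ rather than just an $\epsilon$-bounded one. The induced action of $(B,\circ)$ on $M_0/N$ factors through a group $\bar\Lambda$. Let $A_0$ be the kernel of $(B,\circ)\to\bar\Lambda$, which is normal in $(B,\circ)$.

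I would first try to show $|\bar\Lambda|^2\leqslant 1/\epsilon$ by a Cauchy--Schwarz/coset count. A trivial pair $(x,y)$ needs $\lambda_x(y)=y$ and $\lambda_y(x)=x$, and these two conditions constrain $y$ and $x$ in a symmetric way. Grouping pairs by the classes of $x$ and $y$ modulo $A_0$, the proportion of trivial pairs is at most $\sum_{\bar g,\bar h}\Pr(\bar x=\bar g)\Pr(\bar y=\bar h)\,\theta(\bar g,\bar h)$. Here $\theta(\bar g,\bar h)$ is the fraction of pairs in those classes that are mutually fixed. Using that a nontrivial element of $\bar\Lambda$ moves all but a proper subgroup, such pairs should concentrate near the identity class in both coordinates. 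This would yield $\Pt(B)\leqslant 1/|\bar\Lambda|^2$, hence $|(B,\circ):A_0|=|\bar\Lambda|\leqslant s$.

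If the direct estimate fails, I would instead use the choice of $M_0$ to ensure that each nontrivial element of $\bar\Lambda$ moves at least half of $M_0/N$, and then redo the count. Finally, $A_0*M_0\leqslant N$ holds by construction of $A_0$ as the kernel of the action on $M_0/N$, and the image of the induced lambda action on $M_0/N$ is $\bar\Lambda$, which has order at most $s$.
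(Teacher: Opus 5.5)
\textbf{The decisive gap is in Step 3.} The only input you use there is that a nontrivial element of $\bar\Lambda$ fixes a proper subgroup of $M_0/N$. That cannot give $\Pt(B)\leqslant 1/|\bar\Lambda|^2$. Take $B=\mathbb F_2^2\times C_2$ with componentwise addition and $(u,c)\circ(u',c')=(u+\varphi^c(u'),c+c')$, where $\varphi(x,y)=(x+y,y)$. With the strong left ideals $M_0=B$ and $N=0$, the image is $C_2$ and $\varphi$ fixes a subgroup of index $2$, yet $\Pt(B)=28/64=7/16>1/4$. Your fallback ("each nontrivial element moves at least half of $M_0/N$") holds automatically. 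It only bounds the pairs outside $A_0\times A_0$ by a fixed fraction of their number, so it gives no contradiction with $\Pt(B)\geqslant\epsilon$ when $q=|\bar\Lambda|\geqslant s+1$.

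What is actually needed is that every $a\notin A_0$ fixes fewer than $\eta|B|$ elements of $B$, with $\eta$ governed by the gap $\epsilon-(s+1)^{-2}$; the paper takes $\eta=\frac14(\epsilon-(s+1)^{-2})$. Then pairs in $A_0\times A_0$ contribute at most $1/q^2$ and all other pairs less than $2\eta$, so $\Pt(B)<1/q^2+2\eta$ forces $q\leqslant s$. The paper obtains this smallness from a fixed-point pruning lemma. It starts from an image $Q$ of already bounded order (at most $\lfloor 1/\epsilon\rfloor$, via the $P_\lambda$ theorem). While some $1\neq g\in Q$ fixes at least $\eta|M_0/N|$ elements, it replaces $M_0$ by the additive core of the preimage of $C_{M_0/N}(\langle g\rangle^Q)$. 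This keeps $N$, remains a strong left ideal of bounded index, and strictly shrinks the image, so the process stops after at most $|Q|$ steps. For the final $L$, the fixed points of $\lambda_a$ in an additive coset of $L$ form a coset of $C_L(a)$, and $|C_L(a)|\leqslant|N|\,|C_{L/N}(a)|$; this transfers the bound to all of $B$.

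\textbf{Step 2 is also incomplete.} Bounded $\lambda$-orbits on $M_0$ do not by themselves bound the subgroup generated by the elements $-m+\lambda_a(m)$. For $B=\mathbb F_3^n\times C_2$ with $(u,c)\circ(u',c')=(u+(-1)^cu',c+c')$, every orbit has length at most $2$ and $\Pt(B)>1/4$, but $B*B=\mathbb F_3^n\times 0$. The mixed BFC argument also needs the acting elements to have fixed-point subgroups of bounded index. So your ``suitable subgroup'' has to be constructed, and that is exactly where the index $s$ can be won.

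In fact both gaps close at once. Take $X=\{a\in B:|\{b:\lambda_a(b)=b\}|\geqslant\eta|B|\}$ with $\eta$ as above. A pair counted by $\Pt(B)$ satisfies $a*b=b*a=1$, so splitting pairs according to membership in $X$ gives $\epsilon|B|^2\leqslant|X|^2+2\eta|B|^2$, hence $|X|>|B|/(s+1)$. The set $X$ is closed under $\circ$-inversion and $\circ$-conjugation, so $A_0=\langle X\rangle^\circ\unlhd(B,\circ)$ has index at most $s$. Eberhard's lemma gives $A_0=X^{3s}$, so every $a\in A_0$ fixes an additive subgroup of index at most $\eta^{-3s}$. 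Feeding this $A_0$ and your $M_0$ into the mixed BFC lemma, and taking the normal closure in $(B,+)\rtimes(B,\circ)$, produces $N$ with $A_0*M_0\leqslant N$. With this choice no pruning step is required.
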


Finally, we introduce a Sylow-local commuting probability for finite skew left braces. This allows us to transfer recent local probabilistic results for finite groups to the brace setting and to obtain nilpotency and solubility criteria, as well as bounded-index structural results for two-sided skew braces. One of these results is as follows.

\begin{thm} 
Let $B$ be a finite two-sided skew left brace. If $\Pb_{\Syl}(B)\geqslant\epsilon>0$,
then there are canonical ideals $J \leqslant I$ of $B$ such that $[B:I]$ is
$\epsilon$-bounded, and both $J$ and $I/J$ are direct products of their
Sylow ideals.
\end{thm}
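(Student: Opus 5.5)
The plan is to transfer the problem to the two underlying groups $(B,+)$ and $(B,\circ)$. There we invoke the local probabilistic results for finite groups alluded to in the introduction. We then use two-sidedness to upgrade the resulting characteristic subgroups to ideals.

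\textbf{Reduction to groups.} By definition of $\Pb_{\Syl}(B)$, for every prime $p$ and every Sylow sub-skew brace $P$ of $B$ the pairs counted in $\Pb(P,B)$ are commuting pairs in both $(B,+)$ and $(B,\circ)$. Hence, exactly as in the basic inequalities established earlier for relative commuting probability, we get
\[
\epsilon\leqslant \Pb(P,B)\leqslant \Pr((P,+),(B,+)) \quad\text{and}\quad \epsilon\leqslant \Pb(P,B)\leqslant \Pr((P,\circ),(B,\circ)).
\]
So both groups satisfy the Sylow-local hypothesis of the group-theoretic theorem. That theorem gives, for each group $G\in\{(B,+),(B,\circ)\}$, characteristic subgroups $N_G\leqslant T_G$ with $|G:T_G|$ $\epsilon$-bounded and with $N_G$ and $T_G/N_G$ nilpotent. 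I will use the version in which these subgroups are described canonically, for instance as suitable terms of Fitting-type series, so that they are invariant under all automorphisms.

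\textbf{Upgrading to ideals.} In a two-sided skew brace the maps $\lambda_a$ are automorphisms of $(B,+)$, and the right-hand analogues are also available. I will check that, for each of the two groups, the canonical subgroups above are invariant under the relevant family of maps: the $\lambda_a$ for $(B,+)$, and conjugations together with their additive counterparts for $(B,\circ)$. Using two-sidedness, I then want to show that the intersections
\[
I=T_{(B,+)}\cap T_{(B,\circ)},\qquad J=N_{(B,+)}\cap N_{(B,\circ)},
\]
or, more likely, the largest ideals contained in them, are ideals of $B$. They should be canonical because they are built from canonical subgroups, and $[B:I]$ stays $\epsilon$-bounded because each of the two indices is. If the largest ideal inside these intersections is needed, I will bound its index using the fact that in a two-sided brace the ideal generated by a characteristic subgroup is controlled by finitely many operators of the form $\lambda_a$ and conjugation.

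\textbf{Sylow decomposition.} The ideals $J$ and $I/J$ now have nilpotent additive and nilpotent multiplicative groups. I then apply the known characterisation: a finite skew brace with nilpotent additive group is the direct product of its Sylow sub-skew braces if and only if its multiplicative group is nilpotent. Each Sylow factor is then an ideal, which yields the claimed decomposition of $J$ and of $I/J$.

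The main obstacle is the second step. The canonical subgroups produced by the group theorem are characteristic in $(B,+)$ or in $(B,\circ)$ separately, while an ideal must be simultaneously normal in both groups and $\lambda$-invariant. The first thing I would try is to show that in a two-sided skew brace the Fitting subgroup of $(B,+)$ is $\lambda$-invariant and also normal in $(B,\circ)$. I would then run the group-theoretic argument inside the class of subgroups that are ideals, replacing each group-theoretic Fitting term by its brace analogue, namely the product of all nilpotent ideals.
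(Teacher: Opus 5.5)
\textbf{There is a genuine gap, and it is the step you flag yourself.} Nothing in the proposal shows that the canonical subgroups are ideals, and the fallback you sketch does not close it.

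\begin{itemize}
\item Characteristic subgroups of $(B,\circ)$ have no reason to be $\lambda$-invariant or additively normal, because the $\lambda_a$ are not automorphisms of $(B,\circ)$ in general.
\item You give no argument bounding the index of a largest ideal inside $T_{(B,+)}\cap T_{(B,\circ)}$.
\item Replacing $J$ by a smaller ideal $J'$ would make $I/J'$ only nilpotent-by-nilpotent, so your final Sylow step would no longer apply.
\end{itemize}

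The missing idea is the one you mention at the very end, and it makes the multiplicative group irrelevant. In a two-sided skew brace, right distributivity says that $\rho_a(b)=b\circ a-a$ is an automorphism of $(B,+)$. Also $\rho_{a^{-1}}(a)=-a^{-1}$. Hence
\[
a\circ b\circ a^{-1}=\rho_{a^{-1}}\bigl(a+\lambda_a(b)\bigr)+a^{-1}=-a^{-1}+\rho_{a^{-1}}\bigl(\lambda_a(b)\bigr)+a^{-1},
\]
so multiplicative conjugation is an automorphism of $(B,+)$. Now let $H$ be a characteristic subgroup of $(B,+)$. It is $\lambda$-invariant, so it is closed under $\circ$ and $\circ$-inversion, since $h\circ k=h+\lambda_h(k)$ and $h^{-1}=\lambda_h^{-1}(-h)$. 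It is also normal in $(B,\circ)$, so it is an ideal; this is \cite[Lemma 4.1]{Nas19}, which the paper uses. Consequently $J=F((B,+))$ and $I=F_2((B,+))$ are ideals. The index $[B:I]$ is $\epsilon$-bounded by \cite{DLMS25} applied to $(B,+)$ alone.

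\textbf{The same fact finishes the Sylow decomposition with no multiplicative input.} $J$ and $I/J$ are again two-sided skew braces with nilpotent additive group. Their additive Sylow subgroups are characteristic, hence ideals. An ideal whose order is the full $p$-part is a normal Sylow $p$-subgroup of the multiplicative group. So the multiplicative group is nilpotent, and the section is the direct product of these Sylow ideals.

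\textbf{Your reduction step also needs correcting.} $\Pb_{\Syl}(B)\geqslant\epsilon$ does not give $\Pb(P,B)\geqslant\epsilon$ for every Sylow $P$. For example, a trivial brace on a $p$-group with small commuting probability has $\Pb_{\Syl}=1$ but small $\Pb(P,B)$. What the hypothesis gives is this: for each pair of distinct primes there are $P\in\Syl_p(B)$ and $Q\in\Syl_q(B)$ with $\epsilon\leqslant\Pb(P,Q)\leqslant\Pr((P,+),(Q,+))$. Sylow sub-skew braces exist by \cite{Tru26} and are Sylow subgroups of $(B,+)$. So this is exactly the hypothesis needed for \cite{DLMS25}.
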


The paper is organized as follows. Section~2 contains the necessary background on skew left braces and set-theoretic solutions of the Yang--Baxter equation. In Section~3 we introduce relative commuting probability and relative isoclinism. Section~4 contains the BFC-type results and their structural consequences for the classes in $\mathcal T$. Section~5 is devoted to the probabilities $\Pt(B)$ and $P_\lambda(B)$ and to their quantitative structural consequences. In Section~6 we study Sylow-local commuting probability. The final section returns to the Yang--Baxter equation: we introduce the commuting probability of a finite non-degenerate solution and use the preceding results to obtain bounds on its structure skew brace modulo the socle.

We conclude this section by setting some notation. As mentioned above, the common additive and multiplicative identity of a skew left brace is denoted by $1$. For a skew left brace $(B, +, \circ)$ and $a, b \in B$, we set the commutator $[a, b]^+ := a+b-a-b$ in $(B, +)$ and the commutator $[a, b]^{\circ} := a\circ b \circ a^{-1} \circ b^{-1}$ in $(B, \circ)$. Left normed higher commutators in $(B, +)$ and $(B, \circ)$ are defined inductively. The center of a group $G$ is denoted by $\Z(G)$, and for any element $g \in G$, the centraliser of $g$ in $G$ is denoted by $\C_G(g)$.


\section{Preliminaries and Some Key Results}

A non-empty set $B$, equipped with two group structures  $(B,+)$ and $(B, \circ)$, is  said to be a {\it skew left brace} if for $a, b, c \in B$ the following compatibility condition (skew left distributivity) holds:
 $$a \circ (b + c) = a \circ b - a + a \circ c,$$
 where $-a$ is the inverse of $a$ in $(B, +)$. A {\it skew right brace} can be defined analogously, that is, demanding  skew right distributivity.
 A skew left brace $(B, +, \circ)$ is said to be a {\it two-sided skew brace} if $(B, +, \circ)$ is also  a skew right brace.   Note that if $(B, +, \circ)$ is a skew left brace with $(B, \circ)$ abelian, then $(B, +, \circ)$ is a two-sided skew brace.  
 
The groups $(B,+)$ and $(B,\circ)$ are, respectively, called  {\it additive group} and {\it multiplicative group} of the skew left brace $(B, +, \circ)$.  For simplicity of notation we denote a skew left brace $(B, +, \circ)$ by $B$. If we want to emphasise the role of  group operations, then we use the original notation.   We will normally drop the use of word `left' from skew left brace, and just say  skew brace, unless it is necessary to specify the property.

Every group $(B, +)$ can be viewed as a skew  brace by defining $\circ = +$. Such a skew brace is called \emph{trivial}. A skew brace which is not a trivial skew brace is called a \emph{non-trivial skew brace}.
 Easiest among non-trivial skew braces are skew braces $(B, +, +_{op})$ with $(B, +)$ any non-abelian group and $(B, +_{op})$  the opposite group of $(B, +)$, that is, $a +_{op} b = b + a$ for all $a, b \in B$.

   Let $X$ be a given property of  groups.   A skew brace $(B, +, \circ)$ is said to be {\it $X$-type} if its additive group $(B,+)$ satisfies  the property $X$. If $X$ is the abelian property of groups, then a  skew left brace $(B, +, \circ)$ is said to be  \emph{abelian-type}.  An abelian-type skew  brace is called a \emph{brace} in the literature. For simplicity of notation, we call an abelian-type trivial skew brace a `trivial brace'.
Let $\Aut(B, +)$ denote the automorphism group of the group $(B, +)$. For a skew brace  $(B, +, \circ)$ and its element  $a$, define a map $\lambda_a : (B, +) \to (B, +)$  by setting 
 $$\lambda_a(b) = - a + (a \circ b)$$ 
 for all $b \in B$. It is now well known that $\lambda_a$ is an automorphism of $(B, +)$ and the  map $\lambda : (B, \circ) \to \Aut(B, +)$; $a \mapsto \lambda_a$ is a group homomorphism.

  Let $(B,  +, \circ)$ be a skew  brace. A subset $H$ of $B$ is said to be a {\it sub-skew  brace} of the skew   brace $(B, +, \circ)$  if $(H, + , \circ)$ is also a skew  brace.   For a given subset $S$ of $B$, the sub-skew brace generated by $S$ is defined to be the smallest sub-skew brace of $B$ containing $S$.  A sub-skew brace $(I, +, \circ)$ of  $(B, +, \circ)$  is said to be a left ideal of $(B, +, \circ)$ if $\lambda_a(I) \subseteq I$ for all  $a \in B$.  A left ideal $(I, +, \circ)$ of $(B, +, \circ)$ is said to be an ideal of $(B, +, \circ)$ if $(I, +)$ and $(I, \circ)$ are normal subgroups of $(B, +)$ and $(B, \circ)$ respectively.   An ideal generated by a given subset $S$ of $B$ is defined to be the smallest ideal of $B$ containing $S$. It is easy to see that the  kernel of the homomorphism $\lambda$ is given by 
 $$\Ker(\lambda) = \{a \in B \mid a + b = a \circ b \mbox{ for all } b \in B \}.$$

For two skew braces $A$ and $B$, a map $f : A \to B$ is said to be a {\it brace homomorphism} if $f$ is a group homomorphism from the additive group of $A$ to the additive group of $B$ and also from the multiplicative group $A$ to the multiplicative group of $B$. For any brace homomorphism $f : A \to B$, it turns out that $\Ker(f)$, the kernel of $f$,  is an ideal of $A$. A bijective brace homomorphism is called a {\it brace isomorphism}.

Let $(B,+,\circ)$ be a skew brace and let $a,b\in B$. Define 
$$a \ast b := \lambda_a(b) - b = -a + (a \circ b) - b.$$
 The  {\it socle} of $B$,  denoted by $\Soc(B)$, is defined by 
 $$\Soc(B) := \Ker(\lambda) \cap \Z(B,+).$$
The  {\it annihilator} of $B$, denoted by $\Ann(B)$, is defined by 
 $$\Ann(B) := \Ker(\lambda) \cap Z(B,+) \cap \Z(B, \circ) = \Soc(B) \cap  \Z(B, \circ).$$
 It is not very difficult to see that  $\Ann(B)$  is an ideal of $B$. It  follows from \cite[Lemma 2.5]{GV17} or \cite[Proposition 2.9]{BEP24} that   $\Soc(B)$ is also an ideal of $B$. We now define higher annihilators of $B$. For $n \geqslant 2$, define $\Ann_n(B)$, the $n$th annihilator  of $B$, by
$$\Ann_n(B) / \Ann_{n-1}(B) = \Ann\big(B /  \Ann_{n-1}(B)\big).$$
This is possible because $\Ann_{n-1}(B)$ is an ideal of $B$ \cite[Definition 2.2]{JAV23}.  It turns out that 
$$\Ann_n(B) = \{a \in B \mid a*b, b*a,  [a, b]^{+}  \in \Ann_{n-1}(B) \mbox{ for all } b \in B\}.$$
This is an ascending series of ideals of $B$, which is called the  \textit{annihilator series}. We shall view this series as analogous to the upper central series of a group.

Let $B := (B, +, \circ)$ be a skew  brace and $\gamma_n(B, +)$ and  $\gamma_n(B, \circ)$, respectively, denote the $n$th terms of the lower central series of $(B, +)$ and $(B, \circ)$.

Throughout, for any two subsets $X$ and $Y$ of a skew brace $B$, $\{X*Y\}$  denotes the set $\{x*y: x \in X, y\in Y\}$ and  $X * Y$ denotes the subgroup of $(B, +)$ generated by $\{X*Y\}$. In general $X * Y$ has a very restricted structure even when $X$ and $Y$ are sub-skew braces of $B$. In particular, one should not expect $X*Y$ to be a sub-skew brace without additional hypotheses. We shall consider the situation when  one of the sets $X$ and $Y$ is a sub-skew brace  and the other is $B$ itself. So, let us specialize to this situation only.
 
Let $H$ be a sub-skew brace of a skew brace $B$. Denote by $[H, B]^b$ the sub-skew brace generated by the set $\{ H*B, B*H, [H, B]^+\}$. Note that  $[H, B]^b = [B, H]^b$. Considering $H$ to be an ideal of $B$, we can see
 
 \begin{lemma}
 Let $H$ be an ideal  of a skew brace $B$.  Then $B*H$ is a left ideal of $B$, and  $H*B$ and $[H, B]^b = \langle H*B, B*H, [H, B]^+\rangle^+$ are  ideals of $B$.
 \end{lemma}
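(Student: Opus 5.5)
We work throughout with the standard identities for the star operation:
\[
a*(b+c)=a*b+b+a*c-b,\qquad (a\circ b)*c=a*(b*c)+b*c+a*c .
\]
We also use the facts that $\lambda$ is a homomorphism $(B,\circ)\to\Aut(B,+)$, and that for an ideal $H$ the sets $H*B$ and $B*H$ lie in $H$. The plan is to verify each claim by checking closure under the relevant operations: additive normality, multiplicative normality, and $\lambda$-invariance. To move between the two group structures we use the formulas $a\circ b=a+\lambda_a(b)$ and the fact that, for an additive normal subgroup $I$ which is $\lambda$-invariant, $I$ is closed under $\circ$, and $a\circ I=a+I$.

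\emph{Step 1: $B*H$ is a left ideal.} $\lambda$-invariance follows from $\lambda_c(a*h)=\lambda_c\lambda_a(h)-\lambda_c(h)=(c\circ a\circ c^{-1})*\lambda_c(h)$. Indeed,
\[
\lambda_{c\circ a\circ c^{-1}}\lambda_c=\lambda_{c\circ a}=\lambda_c\lambda_a ,
\]
and $\lambda_c(h)\in H$ because $H$ is a left ideal. Hence the additive subgroup $B*H$ is $\lambda$-invariant, and a $\lambda$-invariant additive subgroup is automatically a sub-skew brace: $x\circ y=x+\lambda_x(y)$ and $\bar x=\lambda_x^{-1}(-x)$. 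So $B*H$ is a left ideal.

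\emph{Step 2: $H*B$ is an ideal.} For $\lambda$-invariance, $\lambda_c(h*b)=(c\circ h\circ c^{-1})*\lambda_c(b)$ as above, and $c\circ h\circ c^{-1}\in H$ by multiplicative normality of $H$. For additive normality, expand $h*(b+c)=h*b+b+h*c-b$. This gives
\[
b+(h*c)-b=-(h*b)+h*(b+c)\in H*B .
\]
For multiplicative normality, it suffices (by the remark above) to show that $a\circ x\circ a^{-1}\in x+ H*B$, or equivalently that $H*B$ contains $\lambda$-images of commutator-type expressions. The natural route is $a\circ x\circ a^{-1}=a+\lambda_a(x)+\lambda_{a\circ x}(\bar a)$ together with the second star identity. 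We expect this to be the main obstacle: we must show that $H*B$ is normal in $(B,\circ)$ using only that it is $\lambda$-invariant and additively normal. The fallback is the known characterization that a left ideal $I$ is an ideal iff $I$ is additively normal and $B*I\subseteq I$ (equivalently $\lambda_a(x)-x\in I$ for $x\in I$). We would then verify $a*(h*b)\in H*B$ using $(a\circ h)*b=a*(h*b)+h*b+a*b$ and $a\circ h=h'\circ a$ with $h'\in H$. Expanding both sides expresses $a*(h*b)$ as a combination of elements $h''*b'$ and their additive conjugates, hence it lies in $H*B$.

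\emph{Step 3: $[H,B]^b$.} Let $I=\langle H*B,\,B*H,\,[H,B]^+\rangle^+$. The set $[H,B]^+$ is additively normal and $\lambda$-invariant, since $\lambda_c$ is an additive automorphism preserving $H$. By Steps 1--2, $I$ is then a $\lambda$-invariant additive subgroup, so it is a sub-skew brace. This shows $[H,B]^b=I$, because the generated sub-skew brace is contained in any sub-skew brace containing the generators. It remains to show that $I$ is additively normal; this requires checking $B*H$ only, via $b+(a*h)-b\equiv a*h$ modulo $[H,B]^+$, since $a*h\in H$. Finally, $B*I\subseteq I$ comes from $a*x\in B*H$ for $x\in I\subseteq H$, and the criterion above then gives that $I$ is an ideal.
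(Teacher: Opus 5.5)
There is a genuine gap in the multiplicative-normality step, in Step 2 and again at the end of Step 3. Your fallback criterion says that a left ideal $I$ is an ideal iff it is additively normal and $B*I\subseteq I$. This is false. The condition $B*I\subseteq I$ holds for \emph{every} left ideal, since $a*x=\lambda_a(x)-x$ with $\lambda_a(x)\in I$. Your criterion would therefore make every strong left ideal an ideal.

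Here is a counterexample. Take $B=\mathbb Z_3\times\mathbb Z_2$ with componentwise addition and $(u,i)\circ(v,j)=(u+(-1)^iv,\,i+j)$. The subgroup $I=\{0\}\times\mathbb Z_2$ is $\lambda$-invariant and additively normal. Yet $(1,0)\circ(0,1)\circ(1,0)^{-1}=(2,1)\notin I$, so $(I,\circ)$ is not normal in $(B,\circ)\cong S_3$.

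Your computation of $a*(h*b)\in H*B$ via $(a\circ h)*b$ only re-proves $\lambda$-invariance, since it is just $\lambda_a(h*b)-h*b$. It says nothing about normality in $(B,\circ)$. As written, neither $H*B$ nor $[H,B]^b$ has been shown to be an ideal.

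The correct criterion has the star on the other side: a strong left ideal $I$ is an ideal iff $I*B\subseteq I$. To see this, note that $a\circ I=a+\lambda_a(I)=a+I$. For $x\in I$ one has $x\circ a=x+\lambda_x(a)=x+(x*a)+a$, which lies in $I+a=a+I$ iff $x*a\in I$.

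With this criterion the missing step is immediate, because both subgroups lie inside $H$. First, $\{H*B\}\subseteq H$. Second, $[H,B]^b\subseteq H$, since also $\{B*H\}\subseteq H$ and $[h,b]^+=h+(b-h-b)\in H$. Hence for $u$ in $H*B$ or in $[H,B]^b$ and any $c\in B$, we get $u*c\in\{H*B\}$, which is contained in both subgroups.

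The rest of your argument is correct: $\lambda$-invariance via $\lambda_c(a*h)=(c\circ a\circ c^{-1})*\lambda_c(h)$, additive normality via the expansion of $h*(b+c)$, the commutator reduction for $B*H$, and the identification of $[H,B]^b$ with the additive subgroup $\langle H*B,B*H,[H,B]^+\rangle^+$.
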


 We can define two central series of a skew brace $(B, +, \circ)$ as follows:  Set $\Gamma_1(B)  := B$ and define
$$\Gamma_n(B) := \langle B * \Gamma_{n-1}(B), \Gamma_{n-1}(B) * B, [B, \Gamma_{n-1}(B)]^{+}\rangle^{+},$$
where $[B, \Gamma_{n-1}(B)]^{+}$ denotes the subgroup of $(B, +)$ generated by the set $\{[a, u]^{+} \mid a \in B, u \in \Gamma_{n-1}(B)\}$. We call this the lower central series of the skew brace $B$.

A skew brace $B$ is said to be  {\em nilpotent} if there exists an integer $n$ such that  $\Ann_n(B) = B$. The least such integer is called the  {\em nilpotency class} of $B$.  The authors of \cite{BJ23} call such skew braces centrally nilpotent and prove that   $\Ann_n(B) = B$ if and only if $\Gamma_{n+1}(B) = 1$ (\cite[Theorem 2.8]{BJ23}). 
 
Next, set $D_0(B) := B$ and for $n \geqslant 1$, define 
$$D_n(B):= \Gamma_2(D_{n-1}(B)).$$
Note that $D_1(B) = \Gamma_2(B)$. The series $\{D_n(B)\}_{n\geqslant 0}$ is called the derived series of the skew brace $B$. Following \cite{BEJP24}, we say that a skew brace $B$ is {\em solvable} or {\em soluble} if $D_n(B) = 1$ for some finite positive integer $n$. As in group theory, it is not difficult to see that if a skew brace $B$ is nilpotent, then it is solvable. 

Let $H$ be a sub-skew brace of a skew brace $B$. We define the  \emph{brace centralizer} of an element $x \in B$ in $H$ to be the subset 
\begin{eqnarray*}
\Cb_H(x) &:=&  \{b \in H \mid x * b = b * x = [x, b]^{+} = 1\}\\
&= &\{b \in H \mid x * b = [x, b]^{\circ} = [x, b]^{+} = 1\}.
\end{eqnarray*}
The {\it brace centralizer}  of a subset  $S$ of $B$ in $H$, denoted as $\Cb_{H}(S)$, is defined to be
$$\Cb_{H}(S) := \cap_{x \in S} \Cb_H(x).$$
 Note that if $x \in \Ann(H)$, then $\Cb_{H}(x) = H$. Also note that $\Cb_{H}(H) = \Ann(H)$. In particular, taking $H$ to be $B$, we see that $\Cb_{B}(B) = \Ann(B)$. It follows from \cite[Proposition 2.19]{SV18} that if $x \in \Cb_{B}(x)$ for all $x \in B$, then $B$ is a two-sided skew brace. The converse need not be true in general. 

 The following key result is proved in \cite[Theorem 3.3]{CFT25}.

\begin{pro} 
   Let $(B, +, \circ)$ be a skew brace.    If $x \notin \Ann(B)$, then $\Cb_{B}(x)$ is a proper subgroup of  $(B, \circ)$.
\end{pro}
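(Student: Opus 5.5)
The plan is to prove two things directly from the definitions: that $\Cb_B(x)$ is a subgroup of $(B,\circ)$, and that it equals $B$ only when $x\in\Ann(B)$.

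\emph{Reformulating the centralizer.} First I rewrite membership in $\Cb_B(x)$ in a symmetric form. Using $a\circ b=a+\lambda_a(b)$:
\begin{itemize}
\item $x*b=1$ means $\lambda_x(b)=b$;
\item $b*x=1$ means $\lambda_b(x)=x$.
\end{itemize}
If also $x+b=b+x$, then
\[
x\circ b=x+\lambda_x(b)=x+b
\quad\text{and}\quad
b\circ x=b+\lambda_b(x)=b+x.
\]
Conversely, if $x\circ b=b\circ x=x+b=b+x$, then $\lambda_x(b)=-x+x\circ b=b$ and $\lambda_b(x)=-b+b\circ x=x$. Hence
\[
\Cb_B(x)=\{b\in B \mid x\circ b=b\circ x=x+b=b+x\},
\]
or equivalently $b\in\Cb_B(x)$ if and only if $\lambda_b(x)=x$, $x+b=b+x$ and $x\circ b=b\circ x$. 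The identity $1$ clearly belongs to this set.

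\emph{Closure under $\circ$.} Take $b,c\in\Cb_B(x)$.
\begin{itemize}
\item Multiplicative commutation with $x$ is immediate, since the centralizer of $x$ in $(B,\circ)$ is a subgroup.
\item Since $\lambda$ is a homomorphism from $(B,\circ)$, we get $\lambda_{b\circ c}(x)=\lambda_b(\lambda_c(x))=x$.
\item For additive commutation, write $b\circ c=b+\lambda_b(c)$. Since $\lambda_b$ is an additive automorphism fixing $x$ and $c$ commutes additively with $x$, the element $\lambda_b(c)$ commutes additively with $\lambda_b(x)=x$. Therefore
\[
x+b\circ c=b+x+\lambda_b(c)=b+\lambda_b(c)+x=b\circ c+x.
\]
\end{itemize}
By the reformulation, $b\circ c\in\Cb_B(x)$.

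\emph{Closure under inverses.} Let $b\in\Cb_B(x)$ and write $b^{-1}$ for its inverse in $(B,\circ)$.
\begin{itemize}
\item Clearly $b^{-1}$ commutes with $x$ in $(B,\circ)$.
\item We have $\lambda_{b^{-1}}(x)=\lambda_b^{-1}(x)=x$.
\item From $1=b^{-1}\circ b=b^{-1}+\lambda_{b^{-1}}(b)$ we get $b^{-1}=-\lambda_{b^{-1}}(b)$. Since $b$ commutes additively with $x$, the element $\lambda_{b^{-1}}(b)$ commutes additively with $\lambda_{b^{-1}}(x)=x$, and hence so does $b^{-1}$.
\end{itemize}
So $\Cb_B(x)$ is a subgroup of $(B,\circ)$. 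Finiteness is not needed here.

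\emph{Properness.} Suppose $\Cb_B(x)=B$. Then for all $b\in B$ we have $\lambda_x(b)=b$, so $x\in\Ker(\lambda)$. We also have $x+b=b+x$, so $x\in\Z(B,+)$, and $x\circ b=b\circ x$, so $x\in\Z(B,\circ)$. Hence $x\in\Soc(B)\cap\Z(B,\circ)=\Ann(B)$, which contradicts the hypothesis. Thus $\Cb_B(x)$ is a proper subgroup.

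The only step requiring any care is the additive commutation of $b\circ c$ (and of $b^{-1}$) with $x$. It is settled by the observation that $\lambda_b$ fixes $x$ and preserves additive commutation.
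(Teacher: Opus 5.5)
Your proof is correct and complete. The paper gives no argument of its own for this statement; it simply quotes \cite[Theorem 3.3]{CFT25}. Your proposal therefore works as a self-contained replacement for that citation rather than as an alternative to a proof in the text.

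The key choice you make is to describe $\Cb_B(x)$ by the conditions $b*x=1$ (that is, $\lambda_b(x)=x$) together with $[x,b]^+=[x,b]^\circ=1$. The paper's second description uses $x*b=1$ instead. Your version is the convenient one, because $\lambda$ is a homomorphism in its subscript. That makes $\lambda_{b\circ c}(x)=x$ and $\lambda_{b^{-1}}(x)=x$ immediate.

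The only remaining point is that $b\circ c=b+\lambda_b(c)$ and $b^{-1}=-\lambda_{b^{-1}}(b)$ commute additively with $x$. You handle this correctly: $\lambda_b$ is an additive automorphism fixing $x$, so it preserves additive commutation with $x$.

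Properness then follows directly from $\Ann(B)=\Ker(\lambda)\cap\Z(B,+)\cap\Z(B,\circ)$. As you note, the argument also shows that finiteness of $B$ is not needed.
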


A skew left brace $(B, +, \circ)$ is said to be \emph{symmetric} if $(B, \circ, +)$ is also a skew left brace. This notion was introduced in \cite{LC19}, and was further studied in \cite{BNY23, AC20}. Note that for a skew brace $(B, +, \circ)$ the map $(B, \circ) \to \Aut(B, +)$, $b \mapsto \lambda_b$ is also a map from $(B, +)$ to $\Aut(B, +)$.  We say that the skew brace $(B, +, \circ)$ is \emph{$\lambda$-homomorphic} if this map is also a group homomorphism from $(B, +)$ to $\Aut(B, +)$.  This concept was introduced in \cite{CCD20}, and was further explored in \cite{BNY22}. Let $\mathcal{T}$ denote the class of all skew braces that are either two-sided, symmetric, or $\lambda$-homomorphic.  We'll use this notation throughout the article.  We now observe

\begin{pro}\label{TS-S-L}
    Let $(B, +, \circ)$ lie in the class $\mathcal{T}$.  Then $\Cb_{B}(x)$ is a sub-skew brace of $B$.
\end{pro}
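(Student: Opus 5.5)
The plan is to check directly that $\Cb_B(x)$ is closed under both operations and both inverses. By \cite[Theorem 3.3]{CFT25} (the proposition recalled just above), $\Cb_B(x)$ is already a subgroup of $(B,\circ)$ for every skew brace $B$. Hence all the work is on the additive side. Fix $b,c\in \Cb_B(x)$ and recall what membership means: $[x,b]^+=1$, $\lambda_x(b)=b$ (that is, $x*b=1$), and $b*x=1$, that is, $\lambda_b(x)=x$. The last condition is equivalent to $x\circ b=b\circ x$ once $x*b=1$ and $[x,b]^+=1$, since then $x\circ b=x+b=b+x$.

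\textbf{Step 1: two of the three conditions for $b+c$ and $-b$ hold in every skew brace.} Since $x$ commutes additively with $b$ and $c$, it commutes with $b+c$ and with $-b$. Since $\lambda_x$ is an automorphism of $(B,+)$ fixing $b$ and $c$, we get
\[
x*(b+c)=\lambda_x(b)+\lambda_x(c)-c-b=b+c-c-b=1,
\]
and similarly $x*(-b)=1$.

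\textbf{Step 2: the remaining condition, case by case.} What remains is $(b+c)*x=1$ and $(-b)*x=1$, equivalently $x\circ(b+c)=(b+c)\circ x$. This is exactly where membership in $\mathcal T$ is used.

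\emph{$\lambda$-homomorphic case.} We have $\lambda_{b+c}=\lambda_b\lambda_c$ and $\lambda_{-b}=\lambda_b^{-1}$. Since $\lambda_b$ and $\lambda_c$ fix $x$, so do $\lambda_{b+c}$ and $\lambda_{-b}$.

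\emph{Two-sided case.} Left distributivity and the commutation relations for $b$ and $c$ give
\[
x\circ(b+c)=x\circ b-x+x\circ c=b\circ x-x+c\circ x .
\]
Right distributivity gives $(b+c)\circ x=b\circ x-x+c\circ x$, so the two sides agree. For $-b$, apply the same identity to $0=b+(-b)$: it gives $1=x\circ b-x+x\circ(-b)$, and likewise $1=b\circ x-x+(-b)\circ x$. Comparing the two expressions and using $x\circ b=b\circ x$ yields $x\circ(-b)=(-b)\circ x$.

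\emph{Symmetric case.} Here I would first extract from the axiom that $(B,\circ,+)$ is a skew left brace, namely
\[
a+(b\circ c)=(a+b)\circ a^{-1}\circ(a+c),
\]
the standard consequence that $\lambda$ behaves (anti-)multiplicatively with respect to $+$. Concretely, I expect $\lambda_{b+c}$ to be $\lambda_c\lambda_b$ or $\lambda_b\lambda_c$, possibly after conjugation by an element of $\lambda(\Cb_B(x))$. Then $\lambda_{b+c}(x)=x$ and $\lambda_{-b}(x)=x$ follow as in the $\lambda$-homomorphic case. If that identity is awkward to derive cleanly, the fallback is to compare $x\circ(b+c)$ and $(b+c)\circ x$ directly, expanding both with the symmetric axiom and using $x\circ b=b\circ x=x+b$ and $x\circ c=x+c$.

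\textbf{Conclusion and main obstacle.} Combining the steps, $\Cb_B(x)$ is a subgroup of both $(B,+)$ and $(B,\circ)$, hence a sub-skew brace. If one prefers not to quote \cite{CFT25}, $\circ$-closure can be checked in the same way using $\lambda_{b\circ c}=\lambda_b\lambda_c$ and $b\circ c=b+\lambda_b(c)$. The step I expect to be the main obstacle is the symmetric case, specifically isolating the correct algebraic identity for $\lambda_{b+c}$ from the symmetric axiom. The other two cases are one-line computations.
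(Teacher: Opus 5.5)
The parts you complete are correct: $\circ$-closure, Step 1, and the $\lambda$-homomorphic and two-sided cases. There is one harmless slip. In the two-sided inverse check, $x\circ(b+(-b))=x\circ 1=x$ and $(b+(-b))\circ x=x$, not $1$. Both expansions still have the same left-hand side, so cancelling $x\circ b-x=b\circ x-x$ still gives $x\circ(-b)=(-b)\circ x$. The paper states this proposition as an observation without proof, so there is no argument of the paper to compare with; the question is only whether yours is complete.

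It is not complete in the symmetric case. There you only state an expectation about $\lambda_{b+c}$ (``$\lambda_c\lambda_b$ or $\lambda_b\lambda_c$, possibly after conjugation'') and a fallback you do not carry out. So one of the three classes in $\mathcal T$ is not actually covered.

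The missing idea is to identify the lambda map of the reversed brace $(B,\circ,+)$, namely $\mu_a(y)=a^{-1}\circ(a+y)$.
\begin{itemize}
\item Since $a\circ z=a+\lambda_a(z)$ for every $z$, you get $a+\lambda_a(\mu_a(y))=a\circ\mu_a(y)=a+y$. Hence $\mu_a=\lambda_a^{-1}$.
\item Because $(B,\circ,+)$ is a skew left brace, $\mu\colon(B,+)\to\Aut(B,\circ)$ is a homomorphism, so $\mu_{b+c}=\mu_b\mu_c$.
\item Inverting gives $\lambda_{b+c}=\lambda_c\lambda_b$ and $\lambda_{-b}=\lambda_b^{-1}$ exactly, with no conjugation.
\end{itemize}
Then $\lambda_{b+c}(x)=x$ and $\lambda_{-b}(x)=x$ follow verbatim as in your $\lambda$-homomorphic case, and the proof is complete. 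This is the same computation the paper carries out at the start of the proof of Lemma~\ref{lem:symmetric-permutation-bridge}.
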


We conclude this section by defining set-theoretic solutions of the Yang--Baxter equation and recalling some basic facts.
Let $X$ be a non-empty set and $r : X \times X \to X \times X$, 
$r(x, y) = (\sigma_x(y), \tau_y(x))$, be a bijection. The pair $(X, r)$ is said to be a {\it set-theoretic solution of the Yang--Baxter equation} (YBE) or simply a {\it solution of YBE}, if
$$
r_1 r_2 r_1 = r_2 r_1 r_2,
$$
where $r_1 = r \times id$, $r_2 = id \times r$ are bijections of $X \times X \times X$. 
The solution $(X,r)$ is said to be non-degenerate if each $\sigma_x$ and each $\tau_y$ is a bijective map. The solution is said to be involutive provided $r^2 = \Id_{X \times X}$.

Etingof, Schedler and Soloviev \cite{ESS99} defined  the structure group as the group $(G(X, r), \circ)$  given by 
$$(G(X,r), \circ) := \gen{X \mid x \circ y = \sigma_x(y) \circ \tau_y(x) \mbox{ for } x, y \in X}.$$
Building on this, Soloviev \cite{S2000} and Lu, Yan and Zhu \cite{LYZ20} defined the derived structure group $(A(X,r),+)$ associated with the solution $(X,r)$ as follows:
$$(A(X,r), +) := \gen{X \mid x + \sigma_x(y) = \sigma_x(y) + \sigma_{\sigma_x(y)}(\tau_y(x)) \mbox{ for } x, y \in X}.$$
and proved the existence of a bijective  $1$-cocycle $\pi$ from $(G(X,r), \circ)$ to $(A(X,r), +)$, which gives rise to a skew brace $(A(X, r), + , \circ)$, where the group structure $(A(X, r), \circ)$ is determined by $\pi$. 

The following theorem \cite[Theorem 3.1]{GV17} connects skew braces to set-theoretic solutions.
\begin{pro} 
If $(B, +, \circ)$ is a skew  brace, then
\begin{equation*} 
r_B(a, b) = \left(\lambda_a(b), \lambda^{-1}_{\lambda_a(b)}(-(a \circ b) + a +(a \circ b)) \right)
\end{equation*}
is a non-degenerate set-theoretic solution of the Yang--Baxter equation.

If $(B, +, \circ)$ is a left brace, then
\begin{equation*} 
r_B(a, b) = \big(\lambda_a(b), \lambda^{-1}_{\lambda_a(b)}(a)\big).
\end{equation*}
Moreover, $r_B$~is involutive if and only if $a+b = b+a$ for all $a, b \in B$.
\end{pro}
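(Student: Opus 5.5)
The plan is to rewrite $r_B$ in a form that makes its structure transparent. I will then check the three conditions in the Yang--Baxter equation separately, using only two facts: that $\lambda:(B,\circ)\to\Aut(B,+)$ is a homomorphism, and that $x\circ y=x+\lambda_x(y)$.

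\emph{Step 1 (rewriting $\tau$).} Put $\sigma_a(b)=\lambda_a(b)$ and
\[
\tau_b(a)=\lambda^{-1}_{\lambda_a(b)}\bigl(-(a\circ b)+a+(a\circ b)\bigr).
\]
For $x=\lambda_a(b)$ we have $x\circ\tau_b(a)=x+\lambda_x(\tau_b(a))$. This expands as $-a+(a\circ b)-(a\circ b)+a+(a\circ b)$, which equals $a\circ b$. Hence
\[
\tau_b(a)=\lambda_a(b)^{-1}\circ a\circ b,\qquad \sigma_a(b)\circ\tau_b(a)=a\circ b .
\]
So $r_B$ preserves the $\circ$-product of a pair.

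\emph{Step 2 (bijectivity and non-degeneracy).} Each $\sigma_a=\lambda_a$ is an automorphism of $(B,+)$, hence bijective. To show that $\tau_b$ is bijective, I will pass to the opposite skew brace $B^{\mathrm{op}}=(B,+_{op},\circ)$. Its lambda map is $\lambda^{\mathrm{op}}_x(y)=x\circ y-x$. I expect a short computation to identify $\tau_b(a)$, up to conjugation by fixed automorphisms, with an expression of the form $\lambda^{\mathrm{op}}_{\bar b}(\cdots)$. This would exhibit $\tau_b$ as a composition of bijections, and I will use the same identification to write down an explicit inverse.

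For bijectivity of $r_B$ itself, I will construct an explicit inverse. Given $(u,v)$, set $a\circ b=u\circ v$. Then $b$ is recovered from $u=\lambda_a(b)$ and $a\circ b=u\circ v$: writing $w=u\circ v$, we have $u=-a+w$, so $a=w-u$ and $b=a^{-1}\circ w$. This inverse also shows directly that $r_B$ is a bijection.

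\emph{Step 3 (Yang--Baxter equation).} This is the main obstacle. Since $r_B$ preserves $\circ$-products by Step 1, both sides of $r_1r_2r_1=r_2r_1r_2$ send $(a,b,c)$ to a triple with the same product $a\circ b\circ c$. It therefore suffices to compare the first and last coordinates, and the middle one is then forced.

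\emph{First coordinate.} The left side gives $\lambda_a\lambda_b(c)$. The right side gives $\lambda_{\lambda_a(b)}\lambda_{\tau_b(a)}(c)$. These agree because $\lambda$ is a homomorphism and $\lambda_a(b)\circ\tau_b(a)=a\circ b$.

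\emph{Last coordinate.} Here I need the dual identity $\tau_c\tau_b=\tau_{\tau_c(b)}\tau_{\sigma_b(c)}$. I plan to derive it by the same argument, applied to $\tau$ viewed through $B^{\mathrm{op}}$ as in Step 2, or by direct expansion using $\tau_b(a)=\lambda_a(b)^{-1}\circ a\circ b$. Combined with preservation of products, the two last coordinates then coincide. If the $B^{\mathrm{op}}$ route is awkward, I will instead expand both sides directly with the formula from Step 1; this is longer but routine.

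\emph{Step 4 (left braces and involutivity).} If $(B,+)$ is abelian, the conjugate $-(a\circ b)+a+(a\circ b)$ equals $a$, which gives the simplified formula $\tau_b(a)=\lambda^{-1}_{\lambda_a(b)}(a)$. For involutivity, I will compute $r_B^2(a,b)$ and use Step 1. Its first coordinate is $\lambda_{\lambda_a(b)}(\tau_b(a))$, which equals $-\lambda_a(b)+a\circ b$. Since $a\circ b=a+\lambda_a(b)$, this equals $-\lambda_a(b)+a+\lambda_a(b)$. This returns $a$ for all $a,b$ exactly when $(B,+)$ is abelian, because $\lambda_a$ is surjective. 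When the first coordinate is correct, the second is forced by product preservation. Hence $r_B$ is involutive if and only if $a+b=b+a$ for all $a,b\in B$.
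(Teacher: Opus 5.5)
The paper gives no argument for this statement; it is quoted from \cite[Theorem 3.1]{GV17}. Your proposal is a correct self-contained proof, and its organizing idea is a good one. Because $r_B$ preserves $\circ$-products, both $r_1r_2r_1$ and $r_2r_1r_2$ preserve $a\circ b\circ c$. So instead of checking all three coordinate identities (the middle one being the messy one), you only need the two outer ones, and each becomes a homomorphism statement.

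The two computations you defer do go through, and the identification you anticipate in Step~2 is exact. From $\tau_b(a)=\lambda_a(b)^{-1}\circ a\circ b$ and $a^{-1}\circ\lambda_a(b)=a^{-1}+\lambda_{a^{-1}}\lambda_a(b)=a^{-1}+b$ you get
\[
\tau_b(a)^{-1}=b^{-1}\circ(a^{-1}+b)=(b^{-1}\circ a^{-1})-b^{-1}=\lambda^{\mathrm{op}}_{b^{-1}}(a^{-1}),
\]
that is, $\tau_b=\iota\circ\lambda^{\mathrm{op}}_{b^{-1}}\circ\iota$, where $\iota(x)=x^{-1}$ is $\circ$-inversion. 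The conjugating map is therefore $\iota$, which is a bijection but not an automorphism, so that phrase needs adjusting.

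The map $x\mapsto\lambda^{\mathrm{op}}_x$ is a homomorphism on $(B,\circ)$. This is a one-line check from skew distributivity together with $x\circ(-y)=x-(x\circ y)+x$, or it follows from $B^{\mathrm{op}}$ being a skew brace. Consequently:
\begin{itemize}
\item $\tau_b$ is bijective, with inverse $\iota\circ\lambda^{\mathrm{op}}_{b}\circ\iota$.
\item $\tau_c\tau_b=\iota\,\lambda^{\mathrm{op}}_{(b\circ c)^{-1}}\,\iota$ and $\tau_{\tau_c(b)}\tau_{\sigma_b(c)}=\iota\,\lambda^{\mathrm{op}}_{(\sigma_b(c)\circ\tau_c(b))^{-1}}\,\iota$, and these agree by Step~1.
\end{itemize}
This is exactly the dual of your first-coordinate argument, so no direct expansion is needed.

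One harmless slip: it is $r_1r_2r_1$ that produces $\lambda_{\lambda_a(b)}\lambda_{\tau_b(a)}(c)$ in the first coordinate, and $r_2r_1r_2$ that produces $\lambda_a\lambda_b(c)$, not the other way round.

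Your explicit inverse of $r_B$ and your involutivity argument are correct as written.

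Compared with simply citing \cite{GV17}, your route is longer but makes transparent why the solution exists. The maps $\sigma$ and $\tau$ are governed by the two homomorphisms $\lambda$ and $\lambda^{\mathrm{op}}$ (the latter conjugated by $\iota$), tied together by the single identity $\sigma_a(b)\circ\tau_b(a)=a\circ b$.
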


  
  \section{Relative Commuting Probability and Relative Isoclinism}

  We start this section by defining  commuting probability of a finite skew  brace. For  a finite skew  brace  $B:= (B, +, \circ)$, following \cite{MY26}, we define the  \emph{commuting probability} of $B$, denoted by $\Pb(B)$, to be 
$$\Pb(B)  :=  \frac{1}{ |B|^{2}} \big(|\{(a, b) \mid a * b = b* a = [a, b]^+ = 1\}|\big).$$
It is clear that $\Pb(B) = 1$ for all  trivial braces $B$. For any other brace $B$,  $\Pb(B)$ lies in the open interval $(0, 1)$.
 Note that, analogous to group theory, we can derive $\Pb(B)$ in terms of brace centralizers as follows:
$$\Pb(B) = \frac{\sum_{x\in B}|\Cb_{B}(x)|}{|B|^2}.$$

A first observation is that if  $\Pb(B)  \ne  1$, then $B$ must admit a non-trivial proper sub-skew brace, which follows from the fact that a skew brace having no  non-trivial proper sub-skew brace is always a trivial brace of prime  order (\cite[Theorem A]{BEJP24}). The converse, of course, is not true as $\Pb(B) = 1$ for all trivial  braces. As a direct consequence of the definition, one can easily see that
$$\Pb(B) \leqslant \min\{\Pr(B, +), \Pr(B, \circ)\},$$
where $\Pr(G)$ denotes the commuting probability of a given group $G$.
  
We now introduce the  concept of the relative commuting probability for finite skew braces analogous to the same concept for finite groups. We start by recalling the definition for finite groups  from  \cite{ERL07}. The relative commuting probability of a subgroup $H$ in a finite group $G$,  denoted by $d(H, G)$, is  defined to be  the ratio
   $$d(H,G) := \frac{1}{|H||G|}|\{(x,y)\in H\times G \mid xy=yx\}|.$$
Now we prepare the setup for skew braces. Let $H$ be a sub-skew brace of a skew brace $B$.
Define $$\Ann(H,B) := \{x \in H \mid x \circ y = x + y = y + x = y \circ x~\mbox{ for all }  y \in B\}.$$
Note that $\Ann(H, B) = H \cap \Ann(B)$.  The following result is immediate. 
\begin{pro}
  For any skew brace $B$ and its sub-skew brace $H$,    $\Ann(H,B)$ is an ideal of $B$.
\end{pro}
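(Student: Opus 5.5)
The plan is first to observe that $\Ann(H,B)=H\cap\Ann(B)$. By definition, $x\in\Ann(H,B)$ exactly when $x\in H$ and, for every $y\in B$, we have $x\circ y=x+y=y+x=y\circ x$. The condition $x+y=y+x$ for all $y$ says $x\in\Z(B,+)$. The condition $x\circ y=y\circ x$ for all $y$ says $x\in\Z(B,\circ)$. Given these, $x\circ y=x+y$ for all $y$ says $\lambda_x(y)=-x+x\circ y=y$, that is, $x\in\Ker(\lambda)$. So $\Ann(H,B)=H\cap\Soc(B)\cap\Z(B,\circ)=H\cap\Ann(B)$.

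Since $\Ann(B)$ is an ideal of $B$, as recalled in Section~2, and $H$ is a sub-skew brace, the intersection $H\cap\Ann(B)$ is automatically a sub-skew brace. For the ideal property it then suffices to check three things directly, using that every element of $\Ann(H,B)$ is central in both groups and lies in $\Ker\lambda$.

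First, normality in $(B,+)$ and in $(B,\circ)$ is immediate, because each element of $\Ann(H,B)$ is fixed under conjugation by anything. Second, for left-ideal invariance, take $a\in B$ and $x\in\Ann(H,B)$ and show that $\lambda_a(x)=x$. Since $x$ is central in $(B,\circ)$ and $x\circ y=x+y$ for all $y$,
\[
a\circ x=x\circ a=x+a=a+x,
\]
so $\lambda_a(x)=-a+a\circ x=x\in\Ann(H,B)$. Third, for closure under the operations, sums, products and inverses of elements of $\Ann(H,B)$ stay in $H$ and in $\Ann(B)$, because both are sub-skew braces.

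I expect no real obstacle. The only point needing care is the identity $\lambda_a(x)=x$, which uses the two centrality conditions together. This matters because $H$ itself need not be $\lambda$-invariant; the argument works only because $\lambda_a$ fixes $\Ann(H,B)$ pointwise.
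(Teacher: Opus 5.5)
Your proposal is correct and follows the paper's route. The paper only notes $\Ann(H,B)=H\cap\Ann(B)$ and calls the result immediate; your argument supplies the omitted checks. These are closure, because $H\cap\Ann(B)$ is a sub-skew brace; normality, because its elements are central in both groups; and $\lambda_a(x)=x$, which follows from $x\in\Ker(\lambda)\cap\Z(B,+)\cap\Z(B,\circ)$.
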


 Let  $B$ be a skew brace  and $H$ be its sub-skew brace.    The \emph{relative commuting probability} of the pair  $(H, B)$, denoted by  $\Pb(H,B)$, is defined by the ratio
   $$\Pb(H,B) :=\frac{1}{|H||B|}|\{(x,y)\in H\times B \mid  x \circ y = x + y = y + x = y \circ x\}|.$$
  In general,  we can define relative commuting probability of any pair  $(H, K)$ of sub-skew braces $H$ and $K$ of a skew brace $B$, denoted by  $\Pb(H,K)$, by
       $$\Pb(H,K) :=\frac{1}{|H||K|}  |\{(x,y) \in H \times K \mid x \circ y = x + y = y + x = y \circ x\}|.$$
The following result is immediate   from the definition, but we mention a brief proof.    

\begin{lem} 
    Let $B$ be a finite skew brace and $H$ be its sub-skew brace. Then   $\Pb(B)\leqslant \Pb(H,B)\leqslant \Pb(H)$.
\end{lem}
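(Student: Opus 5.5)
The plan is to express all three quantities as averages of normalized brace centralizer sizes and compare them term by term. Counting pairs by their first coordinate $x\in H$, the commuting condition on $(x,y)\in H\times B$ says exactly $y\in\Cb_B(x)$. Hence
$$\Pb(H,B)=\frac{1}{|H||B|}\sum_{x\in H}|\Cb_B(x)|,\qquad \Pb(H)=\frac{1}{|H|^2}\sum_{x\in H}|\Cb_H(x)|,$$
and also $\Pb(B)=\frac{1}{|B|^2}\sum_{x\in B}|\Cb_B(x)|$.

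For the inequality $\Pb(H,B)\leqslant\Pb(H)$, we count by the second coordinate instead. Write $\Cb_H(x)=\Cb_B(x)\cap H$. Since $\Cb_B(x)$ is a subgroup of $(B,\circ)$ by the cited result \cite[Theorem 3.3]{CFT25}, and $H$ is a subgroup of $(B,\circ)$, we have
$$|\Cb_B(x)\cap H|\geqslant \frac{|\Cb_B(x)||H|}{|B|}.$$
This follows from $|\Cb_B(x)\circ H|\leqslant |B|$. Summing over $x\in H$ and dividing by $|H|^2$ gives $\Pb(H)\geqslant\frac{1}{|H||B|}\sum_{x\in H}|\Cb_B(x)|=\Pb(H,B)$.

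The inequality $\Pb(B)\leqslant\Pb(H,B)$ is the subtler one, and I expect it to be the main point. The argument uses the symmetry of the commuting relation: $x$ commutes with $y$ in the brace sense if and only if $y$ commutes with $x$, since the defining conditions $x\circ y=x+y=y+x=y\circ x$ are symmetric in $x$ and $y$. So I would count the pairs for $\Pb(H,B)$ by their $B$-coordinate. This gives
$$\Pb(H,B)=\frac{1}{|H||B|}\sum_{y\in B}|\Cb_H(y)|.$$
Applying the intersection bound again, $|\Cb_H(y)|=|\Cb_B(y)\cap H|\geqslant |\Cb_B(y)||H|/|B|$. Summing over $y\in B$ then yields $\Pb(H,B)\geqslant \frac{1}{|B|^2}\sum_{y\in B}|\Cb_B(y)|=\Pb(B)$.

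The only nonformal ingredient is that $\Cb_B(x)$ is a subgroup of $(B,\circ)$, which is needed for the product-formula bound $|\Cb_B(x)\circ H|=|\Cb_B(x)||H|/|\Cb_B(x)\cap H|$. This is exactly what the quoted result provides: for $x\notin\Ann(B)$ it is a subgroup, and for $x\in\Ann(B)$ we have $\Cb_B(x)=B$. If one prefers to avoid that citation, a direct check works instead. The set $\Cb_B(x)$ is the intersection of $\C_{(B,+)}(x)$, $\C_{(B,\circ)}(x)$ and $\{b: x*b=1\}$. The first two are subgroups of their respective groups. The third is $\Fix(\lambda_x)$, which is an additive subgroup, and on $\C_{(B,\circ)}(x)\cap\C_{(B,+)}(x)$ the conditions $x\circ b=x+b$ make the whole intersection closed under $\circ$.
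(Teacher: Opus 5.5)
Your proof is correct and follows essentially the same route as the paper: both inequalities come from writing $\Pb(H,B)$ as a brace-centralizer sum over one coordinate or the other and applying $|\Cb_H(x)|/|H|\geqslant|\Cb_B(x)|/|B|$. You justify this inequality explicitly through the product formula for the subgroups $\Cb_B(x)$ and $H$ of $(B,\circ)$, whereas the paper uses it without comment. Your optional ``direct check'' that $\Cb_B(x)$ is closed under $\circ$ is only a sketch (the key step is that $\lambda_b(x)=x$ for $b\in\Cb_B(x)$, so $\lambda_b(c)$ still commutes additively with $x$, and hence so does $b\circ c=b+\lambda_b(c)$), but the cited result makes it unnecessary.
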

\begin{proof}
We apply the definition in two different equivalent forms. First by taking centralizers in $H$, we get
      \begin{align*}
            \Pb(H,B)&=\frac{1}{|H||B|}\sum_{x\in B}|\Cb_{H}(x)| = \frac{1}{|B|}\sum_{x\in B}\frac{|\Cb_{H}(x)|}{|H|}\\
                     &\geqslant \frac{1}{|B|}\sum_{x\in B}\frac{|\Cb_{B}(x)|}{|B|} =\frac{1}{|B|^2}\sum_{x\in B}|\Cb_{B}(x)|\\
                     &=\Pb(B).
      \end{align*}
 Now we take centralizers in $B$ and get
      \begin{align*}
            \Pb(H,B)&=\frac{1}{|H||B|}\sum_{y\in H}|\Cb_{B}(y)| =\frac{1}{|H|}\sum_{y\in H}\frac{|\Cb_{B}(y)|}{|B|}\\
                     &\leqslant\frac{1}{|H|}\sum_{y\in H}\frac{|\Cb_{H}(y)|}{|H|}=\Pb(H). 
      \end{align*}
      The proof is complete.
\end{proof}

As an easy consequence, we get
\begin{cor}
    Let $B$ be a finite skew brace and $H$ be its sub-skew brace. Then $\Pb(B)=\Pb(H)$ if and only if
    $$\Pb(B)=\Pb(H,B)=\Pb(H).$$
\end{cor}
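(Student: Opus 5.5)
The corollary follows at once from the preceding lemma, which gives the chain of inequalities
\[
\Pb(B)\leqslant \Pb(H,B)\leqslant \Pb(H)
\]
for any sub-skew brace $H$ of a finite skew brace $B$. The plan is simply to squeeze the middle term between the two ends.

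For the forward direction, assume $\Pb(B)=\Pb(H)$. Then the two outer terms of the chain coincide, so both inequalities must be equalities. This forces $\Pb(B)=\Pb(H,B)$ and $\Pb(H,B)=\Pb(H)$, which is exactly the displayed string of equalities.

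The converse direction is trivial. If $\Pb(B)=\Pb(H,B)=\Pb(H)$, then in particular $\Pb(B)=\Pb(H)$.

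There is no real obstacle here. The only point to check is that the lemma applies without extra hypotheses: it requires only that $H$ be a sub-skew brace of the finite skew brace $B$, which is precisely the setting of the corollary. So the whole argument is a one-line squeeze.
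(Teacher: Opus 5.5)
Your proof is correct and matches the paper: the paper presents this corollary as an immediate consequence of the preceding lemma $\Pb(B)\leqslant\Pb(H,B)\leqslant\Pb(H)$. The squeeze you describe, with the trivial converse, is the entire intended argument.
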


The sub-skew braces $H$ of a skew brace $B$ such that $\Pb(B)=\Pb(H)$ enjoy nice properties as shown in the following result.

\begin{thm} 
    Let $H$ be a sub-skew brace of a finite skew brace $B$ such that $\Pb(B)=\Pb(H)$. Then $H$ is a left ideal of $B$, which is a normal subgroup of $(B,\circ)$. Moreover, if $B$ is a skew brace from the class $\mathcal{T}$, then  $H$ is an ideal of $B$ and $B/H$ is a trivial brace.
\end{thm}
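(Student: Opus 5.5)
By the preceding Corollary, the hypothesis $\Pb(B)=\Pb(H)$ is equivalent to the two equalities $\Pb(B)=\Pb(H,B)$ and $\Pb(H,B)=\Pb(H)$. The idea is to upgrade each averaged equality to a termwise one and read off structural information, as in the group-theoretic proof that $\Pr(G)=\Pr(H)$ forces $G=H\,\Z(G)$.

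\emph{Step 1: termwise equality.} In the proof of the Lemma, each global inequality is a sum of pointwise inequalities. In the second computation these are
\[
\frac{|\Cb_B(y)|}{|B|}\leqslant\frac{|\Cb_H(y)|}{|H|}\quad (y\in H),
\]
and the first computation has the analogous inequalities indexed by $x\in B$. Since a sum of non-negative defects vanishes only if each defect vanishes, we get
\[
|\Cb_B(y)|\,|H|=|\Cb_H(y)|\,|B|\quad\text{for all } y\in H,
\]
and similarly $|\Cb_H(x)|\,|B|=|\Cb_B(x)|\,|H|$ for all $x\in B$. To interpret these as product formulas, I view $\Cb_B(y)$ inside $(B,\circ)$. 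For $B\in\mathcal T$ it is a sub-skew brace by Proposition~\ref{TS-S-L}. In general I would first check that the pointwise inequality in the Lemma comes from $|H\circ \Cb_B(y)|\leqslant |B|$, using that $\Cb_H(y)=H\cap\Cb_B(y)$. This justification, for an arbitrary skew brace, is the step I expect to be the main obstacle. There one must check that $\Cb_B(y)$ behaves like a subgroup of $(B,\circ)$, or replace it by the subgroup $\C_{(B,\circ)}(y)\cap\Ker(\lambda_y-\mathrm{id})$-type intersections. Granting it, equality gives
\[
B=H\circ\Cb_B(y)\quad\text{for every } y\in H .
\]

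\emph{Step 2: $H$ is a left ideal and normal in $(B,\circ)$.} Let $b\in B$ and $y\in H$, and write $b=h\circ c$ with $h\in H$ and $c\in\Cb_B(y)$. Since $[y,c]^\circ=1$, we get $b\circ y\circ b^{-1}=h\circ y\circ h^{-1}\in H$, so $(H,\circ)\trianglelefteq(B,\circ)$. Since $c*y=1$ means $\lambda_c(y)=y$, we get $\lambda_b(y)=\lambda_h(\lambda_c(y))=\lambda_h(y)\in H$. Hence $H$ is a left ideal.

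\emph{Step 3: the class $\mathcal T$.} Now $\Cb_B(y)$ is a sub-skew brace, so $H\circ\Cb_B(y)$ can also be rewritten additively. From $h\circ c=h+\lambda_h(c)$, and using that $H$ is already a left ideal, I expect to obtain $B=H+\Cb_B(y)$. The same computation with $[y,c]^+=1$ then gives $b+y-b\in H$, so $H$ is an ideal.

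For triviality of $B/H$, I would use the first termwise equality, indexed by all $x\in B$, in the same way. This gives $B=H\circ\Cb_B(x)=H+\Cb_B(x)$ for every $x\in B$. In the quotient $\bar B=B/H$ this says that $\Cb_{\bar B}(\bar x)=\bar B$ for every $\bar x$, that is, $\bar x\in\Ann(\bar B)$. Hence $\bar B=\Ann(\bar B)$, which means that $\bar B*\bar B=1$, $[\bar B,\bar B]^+=1$ and $\lambda$ is trivial. So $B/H$ is a trivial brace. The delicate point here is checking that images of brace centralizers are contained in the brace centralizers of the images, which is immediate since $*$ and the commutators are preserved by the quotient map.
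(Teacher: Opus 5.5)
Your argument is correct. Up to the ideal property it is the paper's proof. The paper obtains $|\Cb_B(x)|=|B:H|\,|\Cb_H(x)|$ for all $x\in B$ by a direct contradiction argument rather than via the Corollary. It then deduces $B=H\circ\Cb_B(x)$ for $x\in H$, and reads off the left-ideal property, $\circ$-normality and (in class $\mathcal T$) additive normality just as you do.

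The step you call the main obstacle is already covered by the Proposition recalled in Section~2 (Theorem~3.3 of CFT25). It says $\Cb_B(x)$ is always a subgroup of $(B,\circ)$, and the paper uses exactly this to compute $|H\circ\Cb_B(x)|$.

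In Step~3, however, $h\circ c=h+\lambda_h(c)$ does not give $B=H+\Cb_B(y)$, since $\lambda_h(c)$ need not lie in $\Cb_B(y)$. The clean justification is the additive product formula $|H+\Cb_B(y)|=|H|\,|\Cb_B(y)|/|\Cb_H(y)|=|B|$. It applies because $\Cb_B(y)$ is an additive subgroup in class $\mathcal T$, and it is what the paper means by ``as above''. The left-ideal property helps only with the factors in the other order: $c\circ h=c+\lambda_c(h)$, using $\Cb_B(y)\circ H=B$.

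Where you genuinely depart from the paper is the triviality of $B/H$. The paper quotes $\Pb(B)\leqslant\Pb(H)\Pb(B/H)$ for an ideal $H$ and concludes $\Pb(B/H)=1$. That inequality comes from MY26, and is also the special case of the paper's own quotient--product theorem in which both sub-skew braces equal $B$ and the ideal is $H$.

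You instead use the termwise equality for every $x\in B$. Since $B=H\circ\Cb_B(x)$, the image of $\Cb_B(x)$ is all of $B/H$, so $\Cb_{B/H}(\bar x)=B/H$ for every $\bar x$, and hence $B/H=\Ann(B/H)$. This is elementary and avoids the product inequality. It also shows that $\mathcal T$ enters only through the additive normality of $H$. The paper's route is shorter once that inequality is available.

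In fact the reversed-order factorization removes even this use of $\mathcal T$. Write $-b=c\circ h=c+h'$ with $c\in\Cb_B(y)$ and $h'=\lambda_c(h)\in H$. Then $b=-h'-c$, and $b+y-b=-h'+y+h'\in H$ because $c$ commutes additively with $y$. Combined with your quotient argument, the ``Moreover'' conclusion therefore holds for every finite skew brace.
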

\begin{proof}   
       By the given hypothesis, we get
       $$ \Pb(B)=\frac{\sum_{x\in B}|\Cb_B(x)|}{|B|^2}= \frac{{|(B,\circ):(H,\circ)|}^2\sum_{y\in H}|\Cb_H(y)|}{|B|^2}=\Pb(H), $$
       which, by a direct comparison, gives
       $$\sum_{x\in B}|\Cb_B(x)|={|(B,\circ):(H,\circ)|}^2\sum_{y\in H}|\Cb_H(y)|.$$
       If possible, let there exist an element $x\in B$ such that $|\Cb_B(x)|<|(B,\circ):(H,\circ)||\Cb_H(x)|$. Then 
       \begin{align*}
           \sum_{x\in B}|\Cb_B(x)|&<{|(B,\circ):(H,\circ)|}\sum_{x\in B}|\Cb_H(x)|\\
                                  &={|(B,\circ):(H,\circ)|}\sum_{y\in H}|\Cb_B(y)|\\
                                  &\leqslant{|(B,\circ):(H,\circ)|}^2\sum_{y\in H}|\Cb_H(y)|,
       \end{align*}
       a contradiction. Therefore  $|\Cb_B(x)|=[(B,\circ):(H,\circ)]|\Cb_H(x)|$ for all $x\in B$. Now, for any element $x\in H$, we see that
           $$
              | H\circ \Cb_B(x) |=\frac{|\Cb_B(x)||H|}{|\Cb_H(x)|}=\frac{|B:H||\Cb_H(x)||H|}{|\Cb_H(x)|}=|B|.
           $$
           So $B=H\circ \Cb_B(x)$ for all $x\in H$. Let $b\in B$ and $h\in H$ be arbitrary elements. Note that $b=h_1\circ c$ for some $h_1\in H$ and $c\in \Cb_ B(h)$ (taking $x = h$). Then 
           $$\lambda_{b}(h)=\lambda_ {h_ 1\circ c}(h)=\lambda_{h_1}(\lambda_c(h))=\lambda_ {h_ 1}(h) \in H,$$
           which shows that $H$ is a left ideal of $B$. That $(H, \circ)$ is a normal subgroup of $(B,\circ)$ is easy to see.

        Now assume that $B$ lies in the class $\mathcal T$.
           Then, as above, it follows that  $B = H + \Cb_B(x)$ for all $x \in H$, and therefore  $H$ is an ideal of $B$. By \cite{MY26}, $\Pb(B)\leqslant \Pb(H)\Pb(B/H)$, which implies  $\Pb(B/H)=1$. Hence $B/H$ is a trivial brace, and the proof is complete.           
\end{proof}

\begin{lem}
      Let $H_1$ and $H_2$ be  sub-skew braces of a skew brace $B$ such that $H_1\subset H_2$. Then 
      $\Pb(H_2,B)\leqslant \Pb(H_1,B)\leqslant \Pb(H_1,H_2)$.
\end{lem}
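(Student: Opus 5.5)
We prove the two inequalities separately, each by rewriting the relative commuting probability as an average of brace centralizer sizes, exactly as in the proof of the preceding lemma comparing $\Pb(B)$, $\Pb(H,B)$ and $\Pb(H)$. The only facts needed are that $H_1 \subseteq H_2$ and that all brace centralizers are subsets of the relevant sub-skew braces. The first inequality compares ratios $|\Cb_{H_i}(x)|/|H_i|$. The second restricts the sum over $B$ to the sum over $H_2$. Throughout, the commuting relation $x\circ y=x+y=y+x=y\circ x$ is symmetric in $x$ and $y$, so the pairs can be counted from either side.

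\emph{First inequality.} Counting the pairs by fixing the element of $B$, we get
$$\Pb(H_i,B)=\frac{1}{|B|}\sum_{x\in B}\frac{|\Cb_{H_i}(x)|}{|H_i|}, \qquad i=1,2.$$
So it suffices to show, for each $x\in B$, that
$$\frac{|\Cb_{H_2}(x)|}{|H_2|}\leqslant\frac{|\Cb_{H_1}(x)|}{|H_1|}.$$
Here $\Cb_{H_i}(x)=\Cb_B(x)\cap H_i$. In the proof of the preceding lemma, the step $|\Cb_H(x)|/|H|\geqslant|\Cb_B(x)|/|B|$ is really the statement that the index of $\Cb_{H}(x)$ in $(H,\circ)$ is at most the index of $\Cb_B(x)$ in $(B,\circ)$. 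That uses only that $\Cb_B(x)$ is a subgroup of $(B,\circ)$, since $\Cb_B(x)$ is the intersection of the $\circ$-centralizer of $x$, the $+$-centralizer of $x$, and the stabilizer-type set $\{b: x*b=1\}$. I will take this subgroup fact as implicit in the paper's usage. Applying the same reasoning inside the group $(H_2,\circ)$, with the subgroup $(H_1,\circ)$ and $\Cb_{H_2}(x)$ in place of $\Cb_B(x)$, gives
$$|H_1:\Cb_{H_1}(x)| = |H_1 : H_1\cap \Cb_{H_2}(x)| \leqslant |H_2:\Cb_{H_2}(x)|,$$
which is the required inequality. Summing over $x\in B$ yields $\Pb(H_2,B)\leqslant\Pb(H_1,B)$.

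\emph{Second inequality.} Now count the pairs by fixing the element of $H_1$:
$$\Pb(H_1,B)=\frac{1}{|H_1|}\sum_{y\in H_1}\frac{|\Cb_B(y)|}{|B|},\qquad \Pb(H_1,H_2)=\frac{1}{|H_1|}\sum_{y\in H_1}\frac{|\Cb_{H_2}(y)|}{|H_2|}.$$
For each $y$, the same index comparison with $H_2\leqslant B$ gives $|\Cb_B(y)|/|B|\leqslant|\Cb_{H_2}(y)|/|H_2|$. Summing over $y\in H_1$ finishes the proof.

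\emph{Main obstacle.} The only delicate point is justifying $|\Cb_{H}(x)|/|H|\geqslant|\Cb_K(x)|/|K|$ for sub-skew braces $H\leqslant K$, that is, the product formula $|H\circ \Cb_K(x)|=|H||\Cb_K(x)|/|H\cap\Cb_K(x)|\leqslant|K|$. This requires $\Cb_K(x)$ to be a subgroup of $(K,\circ)$, and hence a sub-skew brace in the relevant sense. The subgroup property holds in general: the proposition from \cite{CFT25} cited earlier treats $\Cb_B(x)$ as a subgroup of $(B,\circ)$, and it can also be checked directly using $\lambda_{b\circ c}=\lambda_b\lambda_c$ together with the conditions $[x,b]^\circ=[x,b]^+=1$. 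I will invoke this subgroup property as the paper already does in the preceding lemma.
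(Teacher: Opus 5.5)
Your proof is correct and follows the paper's argument: the paper also uses the index chain $|H_1:\Cb_{H_1}(x)|\leqslant|H_2:\Cb_{H_2}(x)|\leqslant|B:\Cb_B(x)|$, then averages over $x\in B$ to get $\Pb(H_2,B)\leqslant\Pb(H_1,B)$ and over $x\in H_1$ to get $\Pb(H_1,B)\leqslant\Pb(H_1,H_2)$. One correction to your justification: the argument needs only that $\Cb_K(x)$ is a subgroup of $(K,\circ)$, which holds in general (e.g.\ via $\lambda_b(x)=x$ for $b\in\Cb_B(x)$), but $\Cb_K(x)$ need not be a sub-skew brace outside the class $\mathcal{T}$, and your ``intersection'' remark alone does not give $\circ$-closure, since two of the three sets are in general only subgroups of $(B,+)$.
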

\begin{proof}
      Since $H_1\leqslant H_2\leqslant B$, it is easy to see that
      $$[H_1:\Cb_{H_1}(x)]\leqslant~[H_2:\Cb_{H_2}(x)]\leqslant~[B:\Cb_{B}(x)]$$
      for all $x\in B$. Now, by the definition of the relative commuting probability we get
      \begin{align*}
            \Pb(H_1,H_2)&=\frac{1}{|H_1||H_2|}\sum_{x\in H_1}|\Cb_{H_2}(x)| =\frac{1}{|H_1|}\sum_{x\in H_1}\frac{|\Cb_{H_2}(x)|}{|H_2|}\\
                     &\geqslant \frac{1}{|H_1|}\sum_{x\in H_1}\frac{|\Cb_{B}(x)|}{|B|} =\frac{1}{|H_1||B|}\sum_{x\in H_1}|\Cb_{B}(x)|\\
                     &=\Pb(H_1,B).
      \end{align*}
      Similarly,
      $$\Pb(H_1,B)  =  \frac{1}{|B|}\sum_{x\in B}\frac{|\Cb_{H_1}(x)|}{|H_1|} \geqslant \frac{1}{|B|}\sum_{x\in B}\frac{|\Cb_{H_2}(x)|}{|H_2|} = \Pb(H_2,B).$$
      This completes the proof.
\end{proof}

Here is another result for arbitrary sub-skew braces.
\begin{thm} 
    Let $(B,+, \circ)$ be a skew brace of finite order,  $I$ be an ideal of $B$ and $H,K$ be sub-skew braces of $B$. Then $\Pb(H,K)\leqslant \Pb(I\cap H, I\cap K)\Pb(HI/I,KI/I)$. In particular, $\Pb(H,K)\leqslant \Pb(I\cap H, I\cap K)$ and $\Pb(H,K) \leqslant \Pb(HI/I,KI/I)$.
\end{thm}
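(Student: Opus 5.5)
We follow the classical group-theoretic argument for $d(H,K)\leqslant d(H\cap N,K\cap N)\,d(HN/N,KN/N)$ (Gallagher / Erfanian–Rezaei–Lescot). We count commuting pairs by fixing the first coordinate and organizing the second coordinate by cosets of $I$.

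The key observation is that the quotient map $B\to B/I$ is a brace homomorphism. Hence if $x\in H$ and $y\in K$ brace-commute, meaning $x\circ y=x+y=y+x=y\circ x$, then their images $\bar x,\bar y$ brace-commute in $B/I$. Consequently, for $x\in H$,
$$(\Cb_K(x)I)/I\subseteq \Cb_{KI/I}(xI).$$
Since $(K\cap I,\circ)$ is normal in $(K,\circ)$, the left-hand side has order $|\Cb_K(x)|/|\Cb_K(x)\cap I|$. Moreover $\Cb_K(x)\cap I=\Cb_{K\cap I}(x)$, so
$$|\Cb_K(x)|\leqslant |\Cb_{K\cap I}(x)|\,|\Cb_{KI/I}(xI)|.$$
Here we only use that centralizers are subgroups of $(B,\circ)$, i.e.\ the Proposition from \cite{CFT25}; no sub-skew brace property is needed.

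Now write
$$|H||K|\Pb(H,K)=\sum_{x\in H}|\Cb_K(x)|$$
and split $H$ into cosets $S=x_0\circ(H\cap I)$ of $(H\cap I,\circ)$ in $(H,\circ)$. These cosets are in bijection with the elements of $HI/I\cong H/(H\cap I)$. On a fixed coset $S$ the factor $|\Cb_{KI/I}(xI)|$ is constant, so we must bound $\sum_{x\in S}|\Cb_{K\cap I}(x)|$. Counting by the other coordinate,
$$\sum_{x\in S}|\Cb_{K\cap I}(x)|=\sum_{y\in K\cap I}|\Cb_S(y)|,$$
where $\Cb_S(y)=S\cap \Cb_B(y)$. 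This set is either empty or a left coset of $\Cb_{H\cap I}(y)$ in $(B,\circ)$, so $|\Cb_S(y)|\leqslant |\Cb_{H\cap I}(y)|$. Summing over $y$ gives
$$\sum_{x\in S}|\Cb_{K\cap I}(x)|\leqslant \sum_{y\in K\cap I}|\Cb_{H\cap I}(y)|=|H\cap I||K\cap I|\,\Pb(I\cap H,I\cap K).$$

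Combining the two bounds,
$$|H||K|\Pb(H,K)\leqslant |H\cap I||K\cap I|\,\Pb(I\cap H,I\cap K)\sum_{\bar x\in HI/I}|\Cb_{KI/I}(\bar x)|.$$
The last sum equals $|HI/I||KI/I|\,\Pb(HI/I,KI/I)$. Using $|H|=|H\cap I||HI/I|$ and the analogous identity for $K$, we obtain the main inequality. The two particular cases follow because each probability is at most $1$.

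The main obstacle is justifying the coset step, namely that $S\cap\Cb_B(y)$ is a coset of $\Cb_{H\cap I}(y)$. If $x,x'\in S\cap \Cb_B(y)$, then $x^{-1}\circ x'\in H\cap I$, and it lies in $\Cb_B(y)$ because $\Cb_B(y)$ is a subgroup of $(B,\circ)$. This gives $S\cap\Cb_B(y)=x\circ\Cb_{H\cap I}(y)$. The same subgroup fact is what justifies the order computation in the first step.
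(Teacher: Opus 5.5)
Your proof is correct and follows the paper's argument essentially step for step: the bound $|\Cb_K(x)|\leqslant|\Cb_{K\cap I}(x)|\,|\Cb_{KI/I}(xI)|$, the decomposition of $H$ into cosets of $(H\cap I,\circ)$, the interchange of summation inside each coset, and the fact that $\Cb_B(y)$ meets each such coset in a coset of $\Cb_{H\cap I}(y)$. You are slightly more careful at one point: you allow $S\cap\Cb_B(y)$ to be empty and claim only $|\Cb_S(y)|\leqslant|\Cb_{H\cap I}(y)|$, whereas the paper asserts equality, although only the inequality is used.
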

\begin{proof}
     For simplicity of notation we suppress the use of `$\circ$' and write $x \circ I = xI$ for a left coset of $I$ in $(B, \circ)$. For the product of two subgroups $H_1$ and $H_2$  of $(B, \circ)$ we simply write $H_1 H_2$.
 We first prove that $({\Cb_{K}(x)  I)/I} \subset \Cb_{KI/I}(x  I)$ for all $x\in H$. Let $y \in \Cb_{K}(x)$. Since $I$ is an ideal of $B$, $H\cap I$ is an ideal of $H$. So $x + (H\cap I) = x \circ (H \cap I)$ for all $x \in H$. By an easy computation, we get
 \begin{eqnarray*}
(y  I) * (x  I) &=& (y*x)  I  = I,\\
  (x  I) * (y  I) &=& (x*y)  I  = I  
  \end{eqnarray*}
  and 
  $$[x I, y  I]^{\circ} = [x, y]^{\circ}  I  = I.$$
Hence $({\Cb_{K}(x)  I)/I} \subset \Cb_{KI/I}(x  I)$.

Now, using the definition of $\Pb(H,K)$, we have
\begin{align*}
  |H||K|\Pb(H,K) &=\sum_{x\in H}|\Cb_{K}(x)| =\sum_{b (H\cap I)\in{H/(H\cap I)}} ~ \sum_{x\in b (H\cap I)}|\Cb_{K}(x)|\\
             &=\sum_{b (H\cap I)\in{H/(H\cap I)}} ~ \sum_{x\in b (H\cap I)}|{(\Cb_{K}(x) I})/I||\Cb_{K\cap I}(x)|\\
             &(\Cb_{K}(x) I)/I \cong_{(\text{w.r.t. }\circ)} \Cb_K(x)/(\Cb_K(x) \cap I) = \Cb_K(x)/\Cb_{K\cap I}(x)\\
             &\leqslant \sum_{b (H\cap I)\in{H/(H\cap I)}} ~ \sum_{x\in b (H\cap I)}|\Cb_{KI/I}(x I)| |\Cb_{K\cap I}(x)|\\  
             &= \sum_{b (H\cap I)\in{H/(H\cap I)}}\Big(|\Cb_{KI/I}(b (H\cap I))|\sum_{x\in b (H\cap I)}|\Cb_{K\cap I}(x)|\Big)\\   
             &= \sum_{b (H\cap I)\in{H/(H\cap I)}}\Big(|\Cb_{KI/I}(b (H\cap I))| \sum_{y\in (K\cap I)}|\Cb_{b (H\cap I)}(y)|\Big).\\                      
      \end{align*}
      Let $z \in   \Cb_{b (H\cap I)}(y)$.  Then $z  (H\cap I) = b (H\cap I)$ as $z \in b  I$. Now, since $\Cb_{B}(y)$ is a subgroup of $(B, \circ)$ and $z \in \Cb_B(y)$, we get
    \begin{eqnarray*}
        \Cb_{b (H\cap I)}(y) &=&\Cb_{B}(y)\cap b (H\cap I) = z \Cb_{B}(y) \cap z (H\cap I) = z \big(\Cb_{B}(y)\cap (H\cap I)\big)\\
        &=& z   \Cb_{(H\cap I)}(y),
        \end{eqnarray*}
     which gives $|\Cb_{b (H\cap I)}(y)| = |\Cb_{(H\cap I)}(y)|$. Substituting these values into the preceding inequality, we get
    \begin{align*}
        |H||K|\Pb(H,K)  &\leqslant \sum_{b (H\cap I)\in{H/(H\cap I)}}\Big(|\Cb_{KI/I}(b (H\cap I))| \sum_{y\in (K\cap I)}|\Cb_{(H\cap I)}(y)|\Big)\\
        &= \sum_{b (H\cap I)\in{H/(H\cap I)}}\big(|\Cb_{KI/I}(b (H\cap I))|\big) \sum_{y\in (K\cap I)}\big(|\Cb_{(H\cap I)}(y)|\big),
    \end{align*}
    The right-hand side is equal to
    $$
    |H||K|\,\Pb(HI/I,KI/I)\Pb(H\cap I,K\cap I),
    $$
    and the result follows.
\end{proof}

 We now establish a bound on  $\Pb (H,B)$.
      
\begin{pro} 
Let $H$ be a sub-skew brace of a finite skew brace $B$. Then    $\Pb (H,B)\leqslant \frac{|\Ann(B) \cap H|+|H|}{2|H|}.$
\end{pro}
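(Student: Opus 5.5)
We count with centralizers taken in $B$: write
$$|H||B|\Pb(H,B)=\sum_{x\in H}|\Cb_B(x)|,$$
and split the sum according to whether $x$ lies in $\Ann(B)\cap H=\Ann(H,B)$ or not.

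For $x\in \Ann(B)\cap H$ we have $\Cb_B(x)=B$, so each such term contributes exactly $|B|$. For $x\in H\setminus \Ann(B)$, the key result quoted from \cite[Theorem 3.3]{CFT25} says that $\Cb_B(x)$ is a proper subgroup of $(B,\circ)$. Being a proper subgroup of a finite group, its index is at least $2$, so $|\Cb_B(x)|\leqslant |B|/2$.

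Combining the two cases gives
$$|H||B|\Pb(H,B)\leqslant |\Ann(B)\cap H|\,|B|+\bigl(|H|-|\Ann(B)\cap H|\bigr)\frac{|B|}{2}=\frac{|B|}{2}\bigl(|H|+|\Ann(B)\cap H|\bigr).$$
Dividing by $|H||B|$ yields the claimed bound $\Pb(H,B)\leqslant \frac{|\Ann(B)\cap H|+|H|}{2|H|}$.

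The only substantive input is that $\Cb_B(x)$ is a subgroup of $(B,\circ)$ which is proper when $x\notin\Ann(B)$. This is exactly the quoted proposition, so it needs no further justification. The step to watch is the bookkeeping: the sum must run over $x\in H$ with centralizers taken in $B$, not the other way round. Only then does the annihilator condition refer to $\Ann(B)$, and only then is $\Cb_B(x)$ a genuine subgroup whose index bound we can use.
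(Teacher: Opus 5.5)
Your proof is correct and is essentially the paper's own argument: the paper also writes $|H||B|\Pb(H,B)=\sum_{y\in H}|\Cb_B(y)|$, splits the sum over $\Ann(H,B)=\Ann(B)\cap H$ and its complement in $H$, and bounds each term of the second sum by $|B|/2$. Your explicit appeal to the fact that $\Cb_B(x)$ is a proper subgroup of $(B,\circ)$ for $x\notin\Ann(B)$ supplies the justification for that bound, which the paper uses without stating it.
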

\begin{proof}
 We use the definitions of the commuting probability and $\Ann(H, B)$ to get 
      \begin{align*}
            {|H||B|}\Pb(H,B)&=\sum_{y\in H}|\Cb_{B}(y)|\\
                             &=\sum_{y\in \Ann(H,B)}|\Cb_{B}(y)|+\sum_{y\in (H- \Ann(H,B))}|\Cb_{B}(y)| \\
                             &=|\Ann(H,B)||B|+\sum_{y\in (H-\Ann(H,B))}|\Cb_{B}(y)| \\
                             &\leqslant |\Ann(H,B)||B|+|H-\Ann(H,B)|\frac{|B|}{2}\\
                             & =\frac{|\Ann(H,B)||B|+|H||B|}{2}.
      \end{align*}
  From this we get $\Pb(H,B)\leqslant \frac{|\Ann(H,B)|+|H|}{2|H|}$.
\end{proof}

As a consequence we get
\begin{cor}
   Let $H$ be a sub-skew brace of a finite skew brace $B$ not contained in $\Ann(B)$. Then  $\Pb (H,B)\leqslant \frac{3}{4}$. Moreover, if $\Pb (H,B)= \frac{3}{4}$, then $H/{\Ann(H,B)}\cong \mathbb{Z}_2$.
\end{cor}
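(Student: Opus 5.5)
The first assertion follows from the preceding proposition. Since $H\not\subseteq \Ann(B)$, the subgroup $\Ann(H,B)=H\cap\Ann(B)$ is a proper subgroup of $(H,\circ)$. It is in fact an ideal of $B$, so in particular a subgroup of $(H,\circ)$. Lagrange's theorem then gives $|\Ann(H,B)|\leqslant |H|/2$. Substituting into the bound
$$\Pb(H,B)\leqslant \frac{|\Ann(H,B)|+|H|}{2|H|}$$
yields $\Pb(H,B)\leqslant \frac{1}{2}\cdot\frac{1}{2}+\frac12=\frac34$.

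For the equality case, assume $\Pb(H,B)=\frac34$. Then every inequality in the chain must be an equality, and two things follow.

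First, $|\Ann(H,B)|=|H|/2$, so $\Ann(H,B)$ has index $2$ in $(H,\circ)$.

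Second, the inequality step in the proof of the preceding proposition must be an equality. That step used $|\Cb_B(y)|\leqslant |B|/2$ for $y\in H\setminus\Ann(H,B)$. Justifying it relies on the cited result \cite[Theorem 3.3]{CFT25}: for $y\notin\Ann(B)$, $\Cb_B(y)$ is a proper subgroup of $(B,\circ)$, so its index is at least $2$. Equality forces $|B:\Cb_B(y)|=2$ for all such $y$, although the conclusion below only needs the first consequence.

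It remains to identify the quotient $H/\Ann(H,B)$. Since $\Ann(H,B)$ is an ideal of $B$ contained in $H$, it is an ideal of $H$, so $H/\Ann(H,B)$ is a skew brace of order $2$. Both of its group structures are then $\mathbb Z_2$, and necessarily $\circ=+$, since for $x\ne 1$ both $x\circ x$ and $x+x$ equal the identity. Hence $H/\Ann(H,B)\cong\mathbb Z_2$ as the trivial brace.

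The only point needing care is confirming that $\Ann(H,B)$ is a subgroup, so that Lagrange's theorem applies. This is supplied by the earlier proposition that $\Ann(H,B)$ is an ideal of $B$. Everything else is direct substitution.
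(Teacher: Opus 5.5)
Your proof is correct and follows essentially the same route as the paper. You bound $|\Ann(H,B)|\leqslant |H|/2$ by Lagrange, substitute into the preceding proposition, and in the equality case force $|H:\Ann(H,B)|=2$, concluding via the uniqueness of the skew brace of order $2$ (your observation that $\Ann(H,B)$ is an ideal of $H$ makes explicit a step the paper leaves implicit, and the remark that $|B:\Cb_B(y)|=2$ for $y\in H\setminus\Ann(H,B)$ is valid but not needed).
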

\begin{proof}
    As $H\nsubseteq Ann(B)$, $\Ann(B)\cap H$ is a proper subgroup of $H$. Thus $|\Ann(H, B)| = |\Ann(B)\cap H| \leqslant \frac{|H|}{2}$. By the preceding result we get  $\Pb (H,B)\leqslant \frac{\frac{|H|}{2}+|H|}{2|H|}=\frac{3}{4}$.

    Let   $\Pb(H,B) = \frac{3}{4}$. Then  it follows, again by the preceding result, that $\frac{|H|}{|\Ann(H,B)|} \leqslant 2$. The case $\frac{|H|}{|\Ann(H,B)|} = 1$ is possible only when $H = \Ann(H, B)$, which implies that $H \subseteq \Ann(B)$; this is impossible by the hypothesis.   Hence 
      $\frac{|H|}{|\Ann(H,B)|}=2$, which gives $H/{\Ann(H,B)}\cong \mathbb{Z}_2$, as there exists only one skew brace $\mathbb{Z}_2$ of order $2$.
\end{proof}

We now turn to the definition of isoclinism for pairs of skew braces. The analogous concept in group theory was defined in \cite{ERR13}. Let $H_i$ be a subgroup of $G_i$ for $i\in\{1,2\}$.  Define $\Z{(H_i,G_i)}:=H_i\cap Z(G_i)$ and $(H_i,G_i)^\prime := [H_i,G_i]$, the subgroup of $G_i$ generated by the set $\{[h_i, g_i] \mid h_i\in H_i,g_i\in G_i\}$. Define maps  
$${\phi^{(H_i,G_i)}} : H_i/\Z{(H_i,G_i)} \times G_i/\Z(G_i) \to  (H_i,G_i)^\prime$$
given by
$${\phi^{(H_i,G_i)}}(\bar{h}_i,\bar{g}_i)= [h_i,g_i]$$
for all $\bar{h}_i=h_iZ{(H_i,G_i)}\in {H_i/Z{(H_i,G_i)}}$ and  $\bar{g_i}=g_iZ{(G_i)}\in {G_i/Z{(G_i)}}$. It is easy to see that the maps $\phi^{(H_i,G_i)}$ are well-defined.

The pair of groups $(H_1,G_1)$ is said to be isoclinic to the pair of groups $(H_2,G_2)$ if there are two group isomorphisms $\alpha:G_1/Z{(G_1)}\rightarrow G_2/Z{(G_2)}$ and $\beta:(H_1,G_1)^\prime\rightarrow(H_2,G_2)^\prime$ such that the restriction $\alpha^\prime:=\alpha{\restriction}_{H_1/Z{(H_1,G_1)}}$ is an isomorphism from $H_1/Z{(H_1,G_1)}$ onto $H_2/Z{(H_2,G_2)}$, and the following diagram commutes:
\begin{center}
      \begin{tikzcd}
            {H_1/Z{(H_1,G_1)}}\times {G_1/Z{(G_1)}}\arrow{r}{\phi^{(H_1,G_1)}}\arrow{d}{\alpha^\prime\times \alpha} &  (H_1,G_1)^\prime\arrow{d}{\beta} \\
            {H_2/Z{(H_2,G_2)}}\times {G_2/Z{(G_2)}}\arrow{r}{\phi^{(H_2,G_2)}}& (H_2,G_2)^\prime.
      \end{tikzcd}
\end{center}

We now prepare for the definition of relative isoclinism for skew braces. For a skew brace $B$ and its sub-skew brace $H$, we set $\overline{B}= B/\Ann(B)$ and $\overline{H}:= H/\Ann(H,B)$. We also put $(H,B)^\prime=[H,B]^b$. We start with the following important observation.
\begin{lem} 
    Let $H$ be a sub-skew brace of a skew brace $B$, let $C$ be any skew brace and $f: \overline{B} \rightarrow C$ be a brace homomorphism. Then the map $f^\prime :B/\Ann(H,B) \to C$, given by $f^\prime({b}^\prime)=f(\overline{b})$
    is a well-defined brace homomorphism, 
    where ${b}^\prime=b+\Ann(H,B)$ and $\overline{b}=b+ \Ann(B)$. 
\end{lem}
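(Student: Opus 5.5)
The plan is to obtain $f'$ as a composite of brace homomorphisms. Put $A:=\Ann(H,B)=H\cap\Ann(B)$; by the earlier proposition it is an ideal of $B$, so $B/A$ is a skew brace. Since $A\subseteq\Ann(B)$, the coset of $b$ modulo $A$ (with respect to either operation) is contained in its coset modulo $\Ann(B)$. This gives the natural projection
\[
\pi: B/A\to B/\Ann(B)=\overline{B},\qquad b'\mapsto \overline{b}.
\]
We then set $f':=f\circ\pi$, which is exactly the map in the statement, $f'(b')=f(\overline b)$.

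\textbf{Well-definedness.} Suppose $b_1'=b_2'$. Then $-b_1+b_2\in A\subseteq \Ann(B)$, so $\overline{b_1}=\overline{b_2}$. Hence $f(\overline{b_1})=f(\overline{b_2})$.

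\textbf{Homomorphism property.} The quotient operations are induced from $B$, so
\[
\pi(b_1'+b_2')=\pi((b_1+b_2)')=\overline{b_1+b_2}=\overline{b_1}+\overline{b_2},
\]
and the same computation works for $\circ$. Thus $\pi$ is a homomorphism of both additive and multiplicative groups, i.e.\ a brace homomorphism. Composing with the brace homomorphism $f$ gives a brace homomorphism, since it is a composite of group homomorphisms in each operation.

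The only point requiring care is that additive and multiplicative cosets of an ideal coincide, so that $b'$ and $\overline b$ are unambiguous. This holds because for an ideal $I$ we have $b+I=b\circ I$, using $b\circ i=b+\lambda_b(i)$ and $\lambda_b(I)=I$. Once $A$ is known to be an ideal, nothing else is an obstacle.
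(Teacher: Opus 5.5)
Your proof is correct and follows essentially the paper's argument: both get well-definedness from the containment $\Ann(H,B)\subseteq\Ann(B)$ and inherit the homomorphism property from $f$. Writing $f'=f\circ\pi$ with $\pi$ the canonical projection $B/\Ann(H,B)\to\overline{B}$ makes the homomorphism step explicit. It also avoids the paper's minor slip of evaluating $f'$ on $(b_1-b_2)'$ before $f'$ has been shown to be well defined.
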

\begin{proof}
    Let $b_1+\Ann(H, B)=b_2+\Ann(H, B)$. Then $b_1-b_2\in \Ann(H, B)\subseteq \Ann(B)$. So $f^\prime({(b_1-b_2)}^\prime)=f(\overline{b_1-b_2})=f(1+\Ann(B))=1_C$. This implies $1_C=f(\overline{b_1-b_2})=f(\overline{b_1})-f(\overline{b_2})=f^\prime(b_1^\prime)-f^\prime (b_2^\prime)\Rightarrow f^\prime(b_1^\prime)=f^\prime (b_2^\prime)$. Then $f^\prime$ is well-defined. As $f$ is a homomorphism, $f^\prime$ is a homomorphism. The proof is complete.
\end{proof}

\begin{pro}
      The following three maps 
      $${\phi_\ast^{(H,B)}},{\phi_\ast^{(B,H)}},{\phi_+^{(H,B)}}:{\overline{H}}\times {\overline{B}}\rightarrow(H,B)^\prime$$
      given by 
      $${\phi_\ast^{(H,B)}}(\bar{h},\bar{b})=h\ast b, \quad {\phi_\ast^{(B,H)}}(\bar{h},\bar{b})=b\ast h \mbox{ and } {\phi_+^{(H,B)}}(\bar{h},\bar{b})=[h,b]^+$$ 
      for all $\bar{h}=h \Ann{(H,B)}\in {\overline{H}}$ and  $\bar{b}=b \Ann{(B)}\in {\overline{B}}$  are well-defined. 
\end{pro}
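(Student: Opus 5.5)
We must check that each of the three maps is independent of the chosen representatives. Fix $h,h'\in H$ with $\bar h=\bar h'$, i.e. $h'=h+z$ with $z\in\Ann(H,B)=H\cap\Ann(B)$. Fix $b,b'\in B$ with $\bar b=\bar b'$, i.e. $b'=b+w$ with $w\in\Ann(B)$. It suffices to vary one argument at a time. Two facts about an element $u\in\Ann(B)$ will be used throughout:
\begin{itemize}
\item $u\in\Ker\lambda$, so $u+c=u\circ c$ for all $c$;
\item $u$ is central in both $(B,+)$ and $(B,\circ)$, so $c+u=u+c=u\circ c=c\circ u$ for all $c\in B$.
\end{itemize}
In particular, additive and multiplicative cosets of $\Ann(B)$ coincide, and $h+z=h\circ z$.

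\textbf{The map $\phi_\ast^{(H,B)}$.}
\begin{itemize}
\item \emph{First argument.} We have $\lambda_{h\circ z}=\lambda_h\lambda_z=\lambda_h$. Hence $(h+z)\ast b=\lambda_{h\circ z}(b)-b=\lambda_h(b)-b=h\ast b$.
\item \emph{Second argument.} Since $\lambda_h$ is an additive automorphism, $\lambda_h(w)$ is again central in $(B,+)$. We check that $\lambda_h(w)=w$: indeed $\lambda_h(w)=-h+h\circ w=-h+w\circ h=-h+w+h=w$. Therefore
\[
h\ast(b+w)=\lambda_h(b)+w-w-b=h\ast b .
\]
\end{itemize}

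\textbf{The map $\phi_\ast^{(B,H)}$.}
\begin{itemize}
\item \emph{First argument (now in $B$).} With $b+w=b\circ w$ we get $\lambda_{b\circ w}=\lambda_b$, so $(b+w)\ast h=b\ast h$.
\item \emph{Second argument (now in $H$).} By the same computation as above, $\lambda_b(z)=z$. Using that $z$ is additively central,
\[
b\ast(h+z)=\lambda_b(h)+z-z-h=b\ast h .
\]
\end{itemize}

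\textbf{The map $\phi_+^{(H,B)}$.} This is the standard group fact that commutators in $(B,+)$ only depend on cosets of central elements. Since $z$ and $w$ are central in $(B,+)$, both $[h+z,b]^+$ and $[h,b+w]^+$ equal $[h,b]^+$.

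\textbf{Values in $(H,B)'$.} Each of $h\ast b$, $b\ast h$ and $[h,b]^+$ lies in the generating set of $[H,B]^b=(H,B)'$. So the three maps do land in $(H,B)'$.

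The only slightly nontrivial step is the identity $\lambda_c(u)=u$ for $u\in\Ann(B)$, which needs centrality of $u$ in $(B,\circ)$. Everything else is routine.
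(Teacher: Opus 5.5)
Your proof is correct and follows the same route as the paper. You handle the first slot of $\ast$ via $\lambda_{h\circ z}=\lambda_h$, since annihilator elements lie in $\Ker(\lambda)$, and the second slot via $\lambda_c(u)=u$ together with additive centrality, with the commutator map immediate. You are also more explicit than the paper on two points it leaves implicit: that $\lambda_c(u)=u$ for $u\in\Ann(B)$ genuinely uses centrality in $(B,\circ)$, and that the three maps take values in $(H,B)^\prime$.
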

\begin{proof}
      Let $\bar{h} = \bar{h}^\prime \in {\overline{H}}$ and $\bar{b} = \bar{b}^\prime \in {\overline{B}}$. Since $h$ and $h'$ as well as $b$ and $b'$ differ by an element of $\Ann(B)$, it is easy to see that
     ${\phi_+^{(H,B)}}$ is well-defined. We now only prove that 
  ${\phi_\ast^{(H,B)}}$ is well-defined, and the other case follows by symmetry. Since $h$ and $h'$ differ by an element of $\Ann(B)$, we get 
$$
h*b = \lambda_h(b) - b = \lambda_{h'}(b) - b.
$$
Note that $b = b' +u$ for some $u \in \Ann(B)$ as $b$ and $b'$ differ by an element of $\Ann(B)$. Thus 
$$h*b = \lambda_{h'}(b) - b = \lambda_{h'}(b') - b' = h' * b'$$
and the proof is complete.
\end{proof}

We are now ready to define relative isoclinism.
      Let $H_i$ be a sub-skew brace of the skew brace $B_i$ for $i = 1, 2$. 
      The pair $(H_1,B_1)$ is said to be isoclinic to the pair $(H_2,B_2)$
 if there are two isomorphisms $\xi:\overline{B}_1\rightarrow \overline{B}_2$ and $\theta:(H_1,B_1)^\prime\rightarrow (H_2,B_2)^\prime$ such that the restriction $\xi^\prime:=\xi{\restriction}_{\overline{H}_1}$ is an isomorphism from $\overline{H}_1$ onto $\overline{H}_2$, and the following diagrams commute:
\begin{center} 
      \begin{tikzcd}
            (H_1,B_1)^\prime\arrow{dd}{\theta} && {H_1/D_1}\times {B_1/C_1}\arrow{rr}{\phi_\ast^{(H_1,B_1)}}\arrow{ll}{\phi_+^{(H_1,B_1)}}\arrow{dd}{\xi^\prime\times \xi} \arrow[bend right,swap]{rr}[description]{\phi_\ast^{(B_1,H_1)}}&&  (H_1,B_1)^\prime\arrow{dd}{\theta}\\ \\
            (H_2,B_2)^\prime && {H_2/D_2}\times {B_2/C_2}\arrow{rr}{\phi_\ast^{(H_2,B_2)}}\arrow{ll}{\phi_+^{(H_2,B_2)}} \arrow[bend right,swap]{rr}[description]{\phi_\ast^{(B_2,H_2)}} && (H_2,B_2)^\prime,\\
   
      \end{tikzcd}
\end{center}
where $C_1:=\Ann{(B_1)}$, $D_1 := \Ann{(H_1,B_1)}$, $C_2:=\Ann{(B_2)}$ and $D_2:= \Ann{(H_2,B_2)}$. The pair $(\xi,\theta)$ is called an isoclinism between the pairs $(H_1,B_1)$ and $(H_2,B_2)$. By taking $H_1 = B_1$ and $H_2 = B_2$, we get the definition of isoclinism between the skew braces $B_1$ and $B_2$, which was introduced in \cite{LV24}.

 We now prove that relative commuting probability is invariant under the relative  isoclinism defined above.
 \begin{thm}\label{relative isoclinism and commuting probability}
       If the pair of skew braces $(H_1,B_1)$ is isoclinic to the pair of skew braces $(H_2,B_2)$, then $\Pb(H_1,B_1) = \Pb(H_2,B_2)$.
 \end{thm}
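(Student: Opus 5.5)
The plan is to imitate the group-theoretic argument for isoclinic pairs: rewrite $\Pb(H,B)$ as a sum over cosets of the annihilators, and observe that the summand depends only on data the isoclinism transports.

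Write $D=\Ann(H,B)$, $C=\Ann(B)$, and count pairs $(h,b)\in H\times B$ with $h*b=b*h=[h,b]^+=1$. Equivalently, by the definition of the maps in the preceding Proposition, count pairs with
\[
\phi_\ast^{(H,B)}(\bar h,\bar b)=\phi_\ast^{(B,H)}(\bar h,\bar b)=\phi_+^{(H,B)}(\bar h,\bar b)=1 .
\]
Since these three maps are well defined on $\overline{H}\times\overline{B}$, the condition depends only on the classes $(\bar h,\bar b)$. Every class in $\overline H$ has $|D|$ preimages in $H$, and every class in $\overline B$ has $|C|$ preimages in $B$ (cosets in either group structure coincide, since these are ideals). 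Hence
\[
\Pb(H,B)=\frac{|D||C|}{|H||B|}\,\bigl|\{(\bar h,\bar b)\in\overline H\times\overline B : \phi_\ast^{(H,B)}=\phi_\ast^{(B,H)}=\phi_+^{(H,B)}=1\}\bigr|
=\frac{N(H,B)}{|\overline H||\overline B|},
\]
where $N(H,B)$ denotes the cardinality of the set above.

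Next, given the isoclinism $(\xi,\theta)$, the map $\xi'\times\xi$ is a bijection $\overline{H}_1\times\overline{B}_1\to\overline{H}_2\times\overline{B}_2$. Commutativity of the three diagrams gives $\theta\circ\phi^{(H_1,B_1)}_\bullet=\phi^{(H_2,B_2)}_\bullet\circ(\xi'\times\xi)$ for each of the three maps. Because $\theta$ is injective and sends the identity to the identity, a pair is in the vanishing set for $(H_1,B_1)$ if and only if its image is in the vanishing set for $(H_2,B_2)$. Therefore $N(H_1,B_1)=N(H_2,B_2)$. Moreover $|\overline H_1|=|\overline H_2|$ and $|\overline B_1|=|\overline B_2|$ via $\xi'$ and $\xi$, so the two probabilities coincide.

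The main point needing care is the first step: checking that membership of $(h,b)$ in the commuting set truly depends only on $(hD,bC)$. The Proposition on well-definedness of $\phi_\ast^{(H,B)}$, $\phi_\ast^{(B,H)}$ and $\phi_+^{(H,B)}$ settles this. One should also observe that the condition $x\circ y=x+y=y+x=y\circ x$ is equivalent to $x*y=y*x=[x,y]^+=1$, as already used in the definition of $\Cb$. Finiteness is implicit and ensures all counts make sense.
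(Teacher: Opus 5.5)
Your proposal is correct and is essentially the paper's own argument. The paper likewise multiplies $\Pb(H_i,B_i)$ by $|\overline{H}_i||\overline{B}_i|$ to obtain the number of pairs in $\overline{H}_i\times\overline{B}_i$ on which the three well-defined maps all vanish. It then matches these sets via $\xi'\times\xi$, using that $\theta$ is an isomorphism and that the diagrams commute.
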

 \begin{proof}
       Let $(\xi,\theta)$ be an isoclinism between $(H_1,B_1)$ and $(H_2,B_2)$. Set 
       $$P_1 := |\overline{H}_1||\overline{B}_1|\Pb(H_1,B_1) \mbox{ and } P_2:= |\overline{H}_2||\overline{B}_2|\Pb(H_2,B_2).$$ 
       Now
      \begin{align*}
        P_2  &=\frac{|\{(x,y)\in H_2\times B_2 \mid [x,y]^+=x\ast y=y\ast x=1\}|}{|\Ann(H_2,B_2)||\Ann(B_2)|}\\
            &=\frac{|\{(x,y)\in H_2\times B_2 \mid \phi_+^{(H_2,B_2)}(\bar{x},\bar{y})=\phi_\ast ^{(H_2,B_2)}(\bar{x},\bar{y})=\phi_\ast ^{(B_2,H_2)}(\bar{y},\bar{x})=1\}|}{|\Ann(H_2,B_2)||\Ann(B_2)|}\\
            &=|\{(\bar{x},\bar{y})\in \overline{H}_2 \times \overline{B}_2\mid \phi_+^{(H_2,B_2)}(\bar{x},\bar{y})=\phi_\ast ^{(H_2,B_2)}(\bar{x},\bar{y})=\phi_\ast ^{(B_2,H_2)}(\bar{y},\bar{x})=1\}|.
      \end{align*}

      Similarly,
      $$P_1=|\{(\bar{x},\bar{y})\in \overline{H}_1\times \overline{B}_1\mid \phi_+^{(H_1,B_1)}(\bar{x},\bar{y})=\phi_\ast ^{(H_1,B_1)}(\bar{x},\bar{y})=\phi_\ast ^{(B_1,H_1)}(\bar{y},\bar{x})=1\}|.$$
      Now using the fact that $\theta$ is an isomorphism and the preceding diagram commutes, we get
      \begin{align*}
            P_1  &=|\{(\bar{x},\bar{y})\in \overline{H}_1 \times \overline{B}_1 \mid \theta \phi_+^{(H_1,B_1)}(\bar{x},\bar{y})=\theta \phi_\ast ^{(H_1,B_1)}(\bar{x},\bar{y})=\theta \phi_\ast ^{(B_1,H_1)}(\bar{y},\bar{x})=1\}|\\
            &=|\{(\bar{x},\bar{y})\in \overline{H}_1\times \overline{B}_1\mid\phi_+^{(H_2,B_2)}(\xi,\xi)(\bar{x},\bar{y})=\phi_\ast ^{(H_2,B_2)}(\xi,\xi)(\bar{x},\bar{y})  =\phi_\ast ^{(B_2,H_2)}(\xi,\xi)(\bar{y},\bar{x})\\
            & \quad \quad \quad \quad \hspace{1in}=1\}|\\
            &=|\{(\bar{x}^\prime,\bar{y}^\prime)\in \overline{H}_2\times \overline{B}_2\mid \phi_+^{(H_2,B_2)}(\bar{x}^\prime,\bar{y}^\prime)=\phi_\ast ^{(H_2,B_2)}(\bar{x}^\prime,\bar{y}^\prime)=\phi_\ast ^{(B_2,H_2)}(\bar{y}^\prime,\bar{x}^\prime)=1\}|\\
            & = P_2,
  \end{align*}
 where $\xi (\bar{y})=\bar{y}^\prime,\xi (\bar{x})=\bar{x}^\prime$. 
By the definition of isoclinism, $\overline{B}_1\cong \overline{B}_2$ and $\overline{H}_1 \cong \overline{H}_2$, which completes the proof. 
 \end{proof}

\begin{lem}\label{sub-skew brace isoclinic}
    Let $A$ and $B$ be two isoclinic skew braces and $H_A$ be a sub-skew brace of $A$. Then there is a sub-skew brace $H_B$ of $B$ such that $(H_A,A) $ is isoclinic to $(H_B,B)$.
\end{lem}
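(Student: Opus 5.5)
The plan is to transport $H_A$ through the isoclinism via the quotient by the annihilator. Let $(\xi,\theta)$ be an isoclinism between $A$ and $B$, so $\xi:\overline{A}=A/\Ann(A)\to \overline{B}=B/\Ann(B)$ is a brace isomorphism and $\theta:[A,A]^b\to[B,B]^b$ is a brace isomorphism compatible with the three maps $\phi_+,\phi_\ast^{(A,A)},\phi_\ast^{(A,A)}$-reversed. The image $\overline{H_A}':=(H_A+\Ann(A))/\Ann(A)$ is a sub-skew brace of $\overline{A}$, since $H_A+\Ann(A)$ is a sub-skew brace. This holds because $\Ann(A)$ is an ideal, and for $u\in\Ann(A)$ we have $h+u=h\circ u$. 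I then define $H_B:=\pi_B^{-1}\bigl(\xi(\overline{H_A}')\bigr)$, where $\pi_B:B\to\overline B$ is the natural projection. Being the full preimage of a sub-skew brace under a brace homomorphism, $H_B$ is a sub-skew brace of $B$ containing $\Ann(B)$.

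Next I check the ingredients of relative isoclinism. Because $\Ann(B)\subseteq H_B$, we have $\Ann(H_B,B)=H_B\cap\Ann(B)=\Ann(B)$. Hence $\overline{H_B}=H_B/\Ann(B)=\xi(\overline{H_A}')$. On the $A$ side, $\overline{H_A}=H_A/(H_A\cap\Ann(A))$, which is canonically isomorphic to $\overline{H_A}'$ by the second isomorphism theorem for skew braces applied to the sub-skew brace $H_A$ and the ideal $\Ann(A)$. Identifying $\overline{H_A}$ with its image in $\overline A$, the restriction $\xi':=\xi|_{\overline{H_A}}$ is an isomorphism onto $\overline{H_B}$, as required.

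It remains to produce the isomorphism $(H_A,A)'\to(H_B,B)'$ and verify the diagrams. I take the restriction of $\theta$ to $[H_A,A]^b$. By the earlier lemma, when the relevant object is an ideal, $[H,A]^b$ is additively generated by $\{H\ast A\}\cup\{A\ast H\}\cup\{[H,A]^+\}$. For general sub-skew braces I use the definition as the sub-skew brace generated by these sets. The isoclinism diagram for $(A,B)$ gives $\theta(h\ast a)=h'\ast b$, $\theta(a\ast h)=b\ast h'$ and $\theta([h,a]^+)=[h',b]^+$ whenever $\xi(\bar h)=\bar h'$ and $\xi(\bar a)=\bar b$. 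Since $\xi$ maps $\overline{H_A}$ onto $\overline{H_B}$, $\theta$ maps the generating set of $(H_A,A)'$ onto the generating set of $(H_B,B)'$. Because $\theta$ is a brace isomorphism, it maps the generated sub-skew brace onto the generated sub-skew brace. The restriction is therefore an isomorphism $(H_A,A)'\to(H_B,B)'$, and the commutativity of the three diagrams is inherited directly from the absolute ones.

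The main obstacle I expect is this last step: showing that $\theta$ carries $[H_A,A]^b$ onto exactly $[H_B,B]^b$, and not just into it. This needs the generator-to-generator correspondence to be surjective on both sides, which requires that $\xi$ (and $\xi^{-1}$) restrict to bijections between $\overline{H_A}$ and $\overline{H_B}$. It also needs the well-definedness of $\phi$ on these quotients, which was established in the earlier proposition. A secondary point to handle carefully is the identification of $\overline{H_A}=H_A/\Ann(H_A,A)$ with a sub-skew brace of $\overline{A}$, so that "restriction of $\xi$" makes sense as the definition requires. Once these two points are settled, the rest is routine.
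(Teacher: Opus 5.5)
Your proposal is correct and follows the paper's proof essentially step by step. You define $H_B$ as the full preimage of $\xi\bigl((H_A+\Ann(A))/\Ann(A)\bigr)$, use $\Ann(B)\subseteq H_B$ to get $\Ann(H_B,B)=\Ann(B)$, identify $\overline{H_A}$ with its image in $\overline{A}$ via the second isomorphism theorem, and restrict $\xi$ and $\theta$. Your generator-level argument that $\theta$ maps $\{H_A\ast A\}\cup\{A\ast H_A\}\cup\{[H_A,A]^+\}$ onto the corresponding set for $(H_B,B)$, and hence $(H_A,A)'$ onto $(H_B,B)'$, spells out the step that the paper asserts in one line.
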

\begin{proof}
    Let $(\xi,\theta)$ be an isoclinism between $A$ and $B$, and define the sub-skew brace $H_B$ by
    $$
    \xi\big((H_A+\Ann(A))/\Ann(A)\big)=H_B/\Ann(B).
    $$
    Since $H_B$ contains $\Ann(B)$, we have $\Ann(H_B,B)=\Ann(B)$. Moreover,
    $$
    (H_A+\Ann(A))/\Ann(A) \cong H_A/(H_A\cap\Ann(A))
    =H_A/\Ann(H_A,A).
    $$
    Thus the restriction of $\xi$ induces an isomorphism
    $$
    \xi':H_A/\Ann(H_A,A)\longrightarrow H_B/\Ann(H_B,B).
    $$
    The defining commutative diagrams for $(\xi,\theta)$ show that
    $\theta((H_A,A)^\prime)=(H_B,B)^\prime$. Hence
    $$
    \theta'=\theta{\restriction}_{(H_A,A)^\prime}:(H_A,A)^\prime\longrightarrow(H_B,B)^\prime
    $$
    is an isomorphism, and the required diagrams for relative isoclinism commute. Therefore $(H_A,A)$ is isoclinic to $(H_B,B)$.
\end{proof}

\begin{cor}\label{existance of relative stem}
    Let $H_A$ be a sub-skew brace of a finite skew brace $A$. Then there is a skew brace $B$ and a sub-skew brace $H_B$ of $B$ such that $(H_A,A) $ is isoclinic to $(H_B,B) $ and $\Ann(H_B,B)\subseteq H_B\cap \Gamma_2(B)$. Moreover, if $A$ is finite, then $B$ is also finite.
\end{cor}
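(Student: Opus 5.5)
The plan is to imitate the group-theoretic construction of a stem pair: pass first to an isoclinic skew brace that is stem, then apply Lemma~\ref{sub-skew brace isoclinic}.

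\emph{Step 1.} I will invoke the skew brace analogue of the classical fact that every group is isoclinic to a stem group. For skew braces this is available from the theory of isoclinism in \cite{LV24}. It says that every skew brace $A$ is isoclinic to a skew brace $B$ with $\Ann(B)\subseteq \Gamma_2(B)$. Moreover, if $A$ is finite, then $B$ can be chosen finite. This can be arranged by taking $B$ of minimal order in the isoclinism class of $A$. All members of the class share $A/\Ann(A)$ and $\Gamma_2(A)$ up to isomorphism, so a minimal-order member exists among the finite ones.

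The stem property for a minimal $B$ should follow as in groups. If $\Ann(B)\not\subseteq\Gamma_2(B)$, choose an ideal $N\leqslant \Ann(B)$ with $N\cap\Gamma_2(B)=1$ and $N\neq 1$. Then $B/N$ is isoclinic to $B$, which contradicts minimality. To see this, note that $\Ann(B/N)=\Ann(B)/N$: an element centralised modulo $N$ produces commutators and $*$-products lying in $N\cap\Gamma_2(B)=1$. Also $\Gamma_2(B/N)\cong\Gamma_2(B)$.

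\textbf{Main obstacle.} The step I expect to be delicate is producing such an $N$. In groups one uses a complement argument inside the abelian group $\Ann(B)$. Here I would work with the abelian group $(\Ann(B),+)$, on which both operations coincide, and every subgroup of it is an ideal of $B$. I would take a maximal subgroup $N$ of $\Ann(B)$ meeting $\Ann(B)\cap\Gamma_2(B)$ trivially. Such an $N$ is nontrivial whenever $\Ann(B)\not\subseteq\Gamma_2(B)$: pick $a\in\Ann(B)\setminus\Gamma_2(B)$ of prime order modulo $\Gamma_2(B)$, and adjust it by a suitable multiple to obtain a nontrivial element whose cyclic subgroup avoids $\Gamma_2(B)$. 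If this local argument gets messy, it suffices to cite \cite{LV24} for the existence of a finite stem skew brace in each isoclinism class.

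\emph{Step 2.} Given an isoclinism $(\xi,\theta)$ from $A$ to such a stem $B$, Lemma~\ref{sub-skew brace isoclinic} produces a sub-skew brace $H_B$ with $\Ann(B)\subseteq H_B$, $\xi\big((H_A+\Ann(A))/\Ann(A)\big)=H_B/\Ann(B)$, and $(H_A,A)$ isoclinic to $(H_B,B)$. Then
\[
\Ann(H_B,B)=H_B\cap\Ann(B)=\Ann(B)\subseteq H_B\cap\Gamma_2(B),
\]
which is the required conclusion. The finiteness of $B$ is inherited from Step 1.
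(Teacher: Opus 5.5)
Your overall route is the paper's, and Step 2 is exactly right: take a stem skew brace $B$ isoclinic to $A$ from \cite[Theorem 2.18]{LV24}, apply Lemma~\ref{sub-skew brace isoclinic}, and use $\Ann(B)\subseteq H_B$ to get $\Ann(H_B,B)=\Ann(B)\subseteq H_B\cap\Gamma_2(B)$. Finiteness also comes from \cite{LV24}, as in the paper.

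The self-contained justification you sketch for Step 1, however, is not merely messy; it is false. The citation is therefore not optional. Your claim is that $\Ann(B)\not\subseteq\Gamma_2(B)$ yields a nontrivial $N\leqslant\Ann(B)$ with $N\cap\Gamma_2(B)=1$. This already fails for trivial skew braces.

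Let $B$ be the trivial skew brace on the central product $G=Q_8\circ C_4$, the group of order $16$ generated by the Pauli matrices. Then $\Ann(B)=\Z(G)\cong C_4$, and $\Gamma_2(B)=\gamma_2(G)$ is the subgroup of order $2$ of $\Z(G)$. So $\Ann(B)\not\subseteq\Gamma_2(B)$, yet every nontrivial subgroup of $\Ann(B)$ contains $\Gamma_2(B)$. Your ``adjust by a multiple'' step has nothing to work with. A generator $a$ of $\Ann(B)$ has order $2$ modulo $\Gamma_2(B)$, but $2a\neq 0$ already lies in $\Gamma_2(B)$.

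In this example the smaller isoclinic stem brace, $Q_8$, is reached by passing to a sub-skew brace $H$ with $H+\Ann(B)=B$, not to a quotient. Such an $H$ is isoclinic to $B$, since one checks $\Ann(H)=H\cap\Ann(B)$ and $\Gamma_2(H)=\Gamma_2(B)$. So a minimal-order argument must use this second reduction as well and combine the two, as in P.~Hall's argument for groups. Since you fall back on \cite{LV24} for precisely this point, your proof is correct once you delete the faulty sketch, and it then coincides with the paper's.
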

\begin{proof}
    By \cite[Theorem 2.18]{LV24}, there exists a skew brace $B$ isoclinic to the skew brace $A$ such that $\Ann(B)\subseteq \Gamma_2(B)$. By Lemma \ref{sub-skew brace isoclinic}, there is a sub-skew brace $H_B$ of $B$ such that $(H_A,A) $ is isoclinic to $(H_B,B)$. It easily follows that $\Ann(H_B,B)\subseteq H_B\cap \Gamma_2(B)$. The finiteness of $B$ also follows from \cite{LV24}.  The proof is complete. 
\end{proof}

\begin{cor}
    Let $H_A$ be a sub-skew brace of a finite skew brace $A$. Then there is a skew brace $B$ and a sub-skew brace $H_B$ of $B$ such that $\Pb(H_A,A) = \Pb(H_B,B) $ and $\Ann(H_B,B)\subseteq H_B\cap \Gamma_2(B).$
\end{cor}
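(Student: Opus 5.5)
This is a direct combination of the two preceding results. Start from the given finite skew brace $A$ and its sub-skew brace $H_A$. By Corollary~\ref{existance of relative stem}, there is a skew brace $B$ and a sub-skew brace $H_B$ of $B$ with two properties:
\begin{itemize}[leftmargin=*]
\item $(H_A,A)$ is isoclinic to $(H_B,B)$;
\item $\Ann(H_B,B)\subseteq H_B\cap\Gamma_2(B)$.
\end{itemize}
The same corollary also guarantees that $B$ is finite, since $A$ is. This matters because $\Pb(H_B,B)$ is only defined for finite skew braces.

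Next, apply Theorem~\ref{relative isoclinism and commuting probability} to this isoclinic pair. It gives $\Pb(H_A,A)=\Pb(H_B,B)$ directly. Combined with the inclusion $\Ann(H_B,B)\subseteq H_B\cap\Gamma_2(B)$ from the first step, this is exactly the claim.

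There is no real obstacle, since all the work sits in the two cited results. The one point to check is that Theorem~\ref{relative isoclinism and commuting probability} is stated for pairs of finite skew braces. Its proof divides the counts by $|\Ann(H_i,B_i)|\,|\Ann(B_i)|$, so finiteness of both $A$ and $B$ is needed. That is provided by the finiteness clause of Corollary~\ref{existance of relative stem}, which traces back to \cite{LV24}.
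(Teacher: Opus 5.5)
Your proposal is correct and matches the paper's proof, which likewise just combines Corollary~\ref{existance of relative stem} (stem pair, with $B$ finite) and Theorem~\ref{relative isoclinism and commuting probability}. Your remark that the finiteness clause is needed for $\Pb(H_B,B)$ to be defined is a sensible check the paper leaves implicit.
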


\begin{proof}
    The proof follows from Corollary \ref{existance of relative stem} and Theorem \ref{relative isoclinism and commuting probability}.
\end{proof}

The pair $(H_B,B)$ in Corollary \ref{existance of relative stem} is called a \emph{stem pair} of skew braces. We close this section with the remark that for studying relative commuting probability, it is sufficient to study a stem pair of skew braces in its isoclinism class.

\section{ BFC-Type Theorems}
 
In this section we consider two themes. Under the first one, we prove an analogue of the group-theoretic relative BFC theorem for skew braces. Recall that $\mathcal{T}$ denotes the class of all skew braces that are either two-sided, symmetric, or $\lambda$-homomorphic. Under the second theme, we establish, for the skew braces $B$ lying in the class $\mathcal{T}$ and having positive commuting probability $\Pb(B)$, the existence of ideals $B_2 \leqslant B_1 \normaleq B$ such that $B_1/B_2$ is a trivial brace, and $|B_2|$ and $|B:B_1|$ are bounded in terms of $\Pb(B)$ only.  

 We start with the following interesting lemma by S. Eberhard \cite{SE15} for groups:
\begin{lem}\label{Eb-lem}
 Let $G$ be a finite group and $X$ a symmetric subset of $G$ containing the identity element. If there exists an integer $r$ such that  $(r + 1)|X|>|G|$, then $\langle X\rangle=X^{3r}$.
\end{lem}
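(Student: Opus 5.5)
We prove Lemma~\ref{Eb-lem} by combining a pigeonhole argument on cosets of $\langle X\rangle$ with a growth argument for the powers $X^k$. Throughout, $X^k$ denotes the set of products of $k$ elements of $X$. Since $1\in X$, we have $X^k\subseteq X^{k+1}$. Since $X$ is symmetric, $\langle X\rangle=\bigcup_{k\geqslant 0}X^k$. Put $H=\langle X\rangle$.

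The first step is to show that the chain $X\subseteq X^2\subseteq\cdots$ grows quickly until it stabilises. Suppose that for some $k$ we have $X^{k+1}=X^k$. Then $X^k X\subseteq X^k$, so $X^k$ is closed under multiplication. Being a finite set containing $1$, it is therefore a subgroup, and hence $X^k=H$. So, as long as $X^k\neq H$, we have $X^{k+1}\supsetneq X^k$. This alone only gives an increase by one element at each step, which is too weak. The real claim I would aim for is the following.

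\emph{Claim.} If $X^{3j}\neq H$, then $|X^{3j+3}|\geqslant |X^{3j}|+|X|$. More precisely, whenever $X^{k+2}\neq X^{k+1}$ (or suitably for $X^{k+3}$), there is an element $g\in X^{k+3}\setminus X^{k+1}$ with $gX\cap X^{k}=\emptyset$, so that $X^{k}$ and the translate $gX\subseteq X^{k+4}$ are disjoint.

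To prove the claim, take $g\in X^{k+1}\setminus X^k$ with $k$ chosen appropriately. If some $gx\in X^{k-1}$, then $g\in X^{k-1}X^{-1}=X^{k}$, a contradiction. So $gX\cap X^{k-1}=\emptyset$ and $gX\subseteq X^{k+2}$. Hence $|X^{k+2}|\geqslant |X^{k-1}|+|X|$, which is exactly a gain of $|X|$ every $3$ steps. Concretely: if $X^{3j}\neq H$, then $X^{3j+1}\neq X^{3j}$; pick $g\in X^{3j+1}\setminus X^{3j}$. Then $gX\cap X^{3j-1}=\emptyset$ and $gX\subseteq X^{3j+2}\subseteq X^{3j+3}$. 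The bookkeeping will be arranged so that the disjoint translates accumulate.

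I would rather run the argument as a cleaner disjoint-translates count, which avoids this bookkeeping. Suppose $X^{3r}\neq H$. Then for each $j=0,\dots,r$ we have $X^{3j}\subsetneq H$, so we can choose $g_j\in X^{3j+1}\setminus X^{3j}$; take $g_0=1$ with the convention $X^0=\{1\}$, adjusted suitably. For $i<j$ the translates satisfy $g_jX\cap g_iX=\emptyset$. Indeed, otherwise $g_j\in g_iXX^{-1}\subseteq X^{3i+3}\subseteq X^{3j}$, contradicting the choice of $g_j$. This is where the factor $3$ comes from: $3i+1+2\leqslant 3j$ for $i<j$. Thus $g_0X,\dots,g_rX$ are $r+1$ pairwise disjoint subsets of $G$, each of size $|X|$. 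This gives $(r+1)|X|\leqslant |G|$, contradicting the hypothesis. Hence $X^{3r}=H=\langle X\rangle$.

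The main obstacle is getting the indices right so that $g_iXX^{-1}\subseteq X^{3j}$ for all $i<j$ while still using only $3r$ as the final exponent. With $g_j\in X^{3j+1}$ and $i\leqslant j-1$, we get $g_iXX^{-1}\subseteq X^{3i+3}\subseteq X^{3j}$, which works. The only remaining check is the base case $j=0$: take $g_0=1\in X$, and no earlier translates need to be compared with it. This completes the plan.
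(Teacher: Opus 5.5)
The paper does not prove this lemma; it quotes it from Eberhard \cite{SE15}. Your final disjoint-translates argument is correct and is essentially the standard proof. You take $g_0=1$ and $g_j\in X^{3j+1}\setminus X^{3j}$ for $1\leqslant j\leqslant r$; these exist because $X^{k+1}=X^k$ forces $X^k=\langle X\rangle$. An overlap $g_jX\cap g_iX\neq\emptyset$ with $i<j$ would give $g_j\in g_iX^2\subseteq X^{3i+3}\subseteq X^{3j}$, so the $r+1$ translates are pairwise disjoint and $(r+1)|X|\leqslant|G|$.

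The exploratory ``Claim'' paragraph before that argument is unnecessary and its index bookkeeping is muddled, so drop it from a write-up.
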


We now prove Theorem A.

\begin{thm}\label{lem:relative-BFC}
Let $H$ and $K$ be ideals of a finite skew brace $B$. Suppose that, for every $x\in H\cup K$,
\[
|(H,\circ):\Cb_H(x)| \leqslant m \quad\text{and} \quad |(K,\circ):\Cb_K(x)| \leqslant m.
\]
Then $|H*K|$ and $|K*H|$ are $m$-bounded. In particular, taking $K = B$, we get that $|[H, B]^b|$ is $m$-bounded.
\end{thm}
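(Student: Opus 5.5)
We follow the classical proof of the relative BFC theorem (Neumann's theorem in the form "if $|G:C_G(x)|\le m$ for $x$ in a normal subgroup, then the commutator subgroup is bounded") and use Lemma~\ref{Eb-lem} to control generation. We treat $H*K$; the case $K*H$ is symmetric. The case $K=B$ then follows once $H*B$, $B*H$ and $[H,B]^+$ are bounded. Indeed, by the lemma of Section~2, $[H,B]^b=\langle H*B,B*H,[H,B]^+\rangle^+$, and it is generated by these three subgroups, the first two being normal.

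\textbf{Step 1: the set of values is small.} For $x\in K$ we have $|(H,\circ):\Cb_H(x)|\le m$. For $h,h'$ in the same left coset $h\circ\Cb_H(x)$, write $h'=h\circ c$. Since $c*x=1$, we get $\lambda_c(x)=x$, and hence
\[
h'*x=\lambda_{h}(\lambda_c(x))-x=h*x .
\]
So $x$ produces at most $m$ values $h*x$. A symmetric computation, using that $x*c=1$ and $[x,c]^+=1$, should show that $x*h$ takes at most $m$ values. It is this second set $\{x*h\}$ that matters for $K*H$.

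Next we bound the number of distinct "rows". Choose a transversal: since $|(H,\circ):\Cb_H(x)|\le m$ for every $x\in H$, the subgroup $\Cb_H(x)$ has index $\le m$ for each $x$. We then apply Lemma~\ref{Eb-lem} to $(K,\circ)$ (or $(K,+)$) with the symmetric set $X=\{k\in K: h*k=1\text{ for the chosen }h\}$, or with $X=\Cb_K(h)\cup\Cb_K(h)^{-1}$. This shows that every element of $K$ is a product of boundedly many elements drawn from a bounded number of cosets.

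\textbf{Step 2: reduce to finitely many generators.} We mimic Neumann's argument. Pick $x_1,\dots,x_t\in H$ with $t$ maximal such that the values $x_i*\cdot$ are "independent". Concretely, following P.~Neumann, take $D=\Cb_K(x_1,\dots,x_t)$, of index at most $m^t$. Any $h\in H$ satisfying $h*D\ne1$ would enlarge the collection, and maximality bounds $t$ in terms of $m$. Then $H*K$ is generated by the finitely many elements $h*k$ with $h$ in a bounded set and $k$ in a bounded transversal, using the identities
\[
h*(k+k')=h*k+k+(h*k')-k
\qquad\text{and}\qquad
(h\circ h')*k=h*(h'*k)+h'*k+h*k .
\]
Since $H*K$ is normal in $(B,+)$ (it is an ideal by the lemma), it is generated by a bounded number of conjugacy classes. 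Each class is bounded, because the additive centralizers $[x,\cdot]^+$ also have index at most $m$ in $K$, as $\Cb_K(x)$ is contained in the additive centralizer.

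\textbf{Step 3: bound the order.} We now have a normal subgroup of $(B,+)$ generated by a bounded set of elements, each with boundedly many additive conjugates. By Dietzmann's lemma the order is bounded, provided the generators have bounded order. To get the orders, we use that $(h*k)$ has at most $m$ values as $h$ varies, so the powers $h^{\circ j}*k$ repeat with period dividing $m!$. Combining this with the second identity above yields a bounded order for $h*k$ modulo a bounded normal subgroup.

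This last point is where we expect the main obstacle: converting "few values" into "bounded exponent" in the absence of bilinearity of $*$. If it fails, the fallback is to apply the group-theoretic relative BFC theorem directly to the semidirect product $(K,+)\rtimes_\lambda(H,\circ)$. In that group, $[h,k]$ corresponds to $h*k$, and $H$ has conjugacy classes of size at most $m^2$ there, by Step 1. Bounded commutator subgroup of that pair then bounds $|H*K|$.
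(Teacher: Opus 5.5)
Your main line (Steps 2--3) is not a proof. In Step 2, you claim that maximality of an ``independent'' family $x_1,\dots,x_t$ bounds $t$ and yields $h*D=1$ for all $h\in H$, where $D=\bigcap_i\Cb_K(x_i)$. This is unsupported, and it is false in general. In the paper's brace $B_n$ with $(w,z,c)*(w',z',d)=(0,\langle c,w'\rangle,0)$, every brace centralizer has index at most $p^2$, yet no multiplicative subgroup $D$ of bounded index satisfies $B_n*D=1$. Your two identities also do not reduce $H*K$ to boundedly many generators, because $(h\circ h')*k$ produces the term $h*(h'*k)$.

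The missing idea, which the paper uses, is Neumann's maximal-class trick in the \emph{second} variable. Choose $a\in K$ with $|\{H*a\}|=s\leqslant m$ maximal, realised by $h_1,\dots,h_s$, and set $C=\bigcap_i\Cb_K(h_i)$. For $c\in C$ one has $h_i*(a+c)=h_i*a+a+h_i*c-a=h_i*a$. Maximality then gives $\{H*(a+c)\}=\{H*a\}$, hence $\{H*C\}\subseteq -a-\{H*a\}+\{H*a\}+a$, a set of at most $m^2$ elements. Eberhard's lemma and the bounded group $[H,K]^+$ carry this bound to $D=\langle C\rangle^+$, and a transversal of $D$ in $(K,+)$ carries it to $K$.

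Step 3 fails for the reason you suspect: $\circ$-powers in the first variable do not linearise. The paper instead works modulo $\mathcal H=\langle [H,K]^+,[H,H]^+,[K,K]^+\rangle^+$, where $n(a*b)\equiv a*(nb)$. It picks $n\leqslant|(K,+):D|$ with $nb\in D$ and applies pigeonhole in the bounded set $\{H*D\}$.

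Your fallback is the right idea and gives a shorter proof, but as written it relies on a false statement. Bounded $G$-classes for the elements of the acting subgroup alone do not bound $[M,A]$. For example, take $M=\mathbb F_2^{N+1}$, a nonzero functional $\phi$, and $A=\{t_v:v\in\ker\phi\}$ with $t_v(x)=x+\phi(x)v$. Each $t_v$ has at most two conjugates in $M\rtimes A$, while $[M,A]=\ker\phi$ has order $2^N$.

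What rescues the argument is the other half of the hypothesis, which you computed in Step~1 but did not use. For $x\in K$, the $\lambda(H)$-orbit of $x$ has size at most $|(H,\circ):\Cb_H(x)|\leqslant m$. Its class in $(K,+)$ has size at most $m$, because $\Cb_K(x)$ lies in the additive centralizer. For $h\in H$, the number of values of $h*k$ ($k\in K$) is the index in $(K,+)$ of the fixed-point subgroup of $\lambda_h$, which contains $\Cb_K(h)$. This, not a coset computation, is the correct justification of your ``symmetric computation''.

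Hence every element of $G=(K,+)\rtimes_\lambda(H,\circ)$ has at most $m^4$ conjugates. Neumann's BFC theorem then bounds $|G'|$, and $H*K=[(H,\circ),(K,+)]\leqslant G'$. To finish:
\begin{itemize}
\item treat $K*H$ in $(H,+)\rtimes_\lambda(K,\circ)$;
\item bound $[H,B]^+$ by BFC in $(B,+)$;
\item assemble $[H,B]^b$ using that $H*B$ and $[H,B]^+$ are normal in $(B,+)$. Do not use $B*H$ here; it is only a left ideal.
\end{itemize}
This gives a complete proof that replaces the paper's internal Neumann-type argument by a reduction to the group BFC theorem. It is essentially the mixed BFC lemma the paper proves later for $P_\lambda(B)$.
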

\begin{proof}
We prove the assertion for $H*K$; the proof for $K*H$ is symmetric. First we bound the mixed set $\{H*K\}$. Choose $a\in K$ so that $|H*a|$ is maximal. Since $\Cb_H(a)$ has index at most $m$ in $(H,\circ)$, we have $|H*a|\leqslant m$. Write $H*a=\{h_1*a,\dots,h_s*a\}$ with $s\leqslant m$, and set  $C :=\bigcap_{i=1}^{s}\Cb_K(h_i)$. Then $|(K,\circ):C|\leqslant m^m$. If $c\in C$, then $h_i*c=1$ for every $i$, and hence
\[
h_i*(a+c)=h_i*a+a+h_i*c-a=h_i*a.
\]
By the maximality of $|H*a|$, it follows that $H*(a+c)=H*a$ for every $c\in C$. Consequently, for every $h\in H$ and $c\in C$,
\[
h*c\in -a-(H*a)+(H*a)+a,
\]
and therefore $|\{H*C\}|\leqslant m^2$.

Let $D =\langle C\rangle^+$ be the subgroup of $(K,+)$ generated by $C$. It now follows that $|(K,+):D|$ is $m$-bounded. Indeed, $|(K, \circ):C|$ is $m$-bounded and $|D|$ divides $|K|$.
 We also note that the additive commutator subgroup $[H, K]^+$ of $(B, +)$ (also a subgroup of $H$ as well as of $K$) has $m$-bounded order by the relative BFC theorem for groups; see, for instance, \cite{DS22}, because the brace centralizer is contained in the additive centralizer and the bounds in the hypothesis hold in both $(H,+)$ and $(K,+)$. This also proves that the additive  commutator subgroups $[H, H]^+$ and $[K, K]^+$ of $(H, +)$ and $(K, +)$, respectively, are of $m$-bounded orders.

Since $|C| \leqslant |D| \leqslant |K|$, there exists an $m$-bounded integer $s$ such that $(s+1)|X|>|(D,+)|$, where $X=C\cup-C$. Put $r=3s$. Hence, by Lemma~\ref{Eb-lem}, every element of $D$ is a sum of at most $r$ elements from $X$. More precisely, an arbitrary element  $d\in D$ can be written in the form $d=c_1+\dots+c_r$, where $c_i \in X$. Write 
$$\{H*D\} = \{H*\sum_{1\leqslant i\leqslant r} X \} \subseteq\sum_{1\leqslant i\leqslant r}\{H * X\}+ [H, K]^+.$$ 
 For $b \in H$ and $c \in C$, note that 
 \begin{eqnarray*} 
 b*(-c) &=& \lambda_b(-c) + c = -(-c + \lambda_b(c) -c +c) = -(-c + (b*c) + c)\\
 &=& \big(-c - (b*c) + c + (b*c)\big) - (b*c).
 \end{eqnarray*}
 Hence, since $\{H*C\}$ and $[K, K]^+$ are of $m$-bounded size, $\{H*(-C)\}$ is $m$-bounded. Thus,
 $$\{H*X\} \subseteq \{H*C\} + \{H*(-C)\} + [H, K]^+.$$
 Hence, it follows that  $|\{H*D\}|$ is $m$-bounded.

Let $\{1=a_1,a_2,\dots,a_t\}$ be a transversal of $D$ in $(K,+)$. We now get 
$$\{H*K\} \subseteq \{H*D\}+ \sum_{1\leqslant i\leqslant t}(H*a_i) + [H, K]^+,$$ 
which has $m$-bounded size.

For the first assertion, it remains to pass from the mixed set to the additive subgroup it generates.
Since the cardinality of $\{H * K\}$ is $m$-bounded, it suffices to prove that each element of $\{H * K\}$ is of $m$-bounded order modulo the subgroup $\mathcal{H}:=\gen{[H,K]^+, [H, H]^+, [K, K]^+}^+$. If $s =a * b$ for $a \in H$ and $b \in K$, then, modulo  $\mathcal{H}$, we get
\[
 n  s = a*(n  b)
\]
for every positive integer $n$. Let $t=|(K,+):D|$. The action of the cyclic subgroup generated by $b$ on the left cosets of $D$ shows that there exists an integer $1\leqslant n\leqslant t$ such that $nb\in D$. Let $R=|\{H*D\}|$, which is $m$-bounded. For $j=0,\ldots,R$, we have, modulo $\mathcal H$,
$$
jns=a*(jnb),
$$
and $jnb\in D$. Hence the $R+1$ elements $jns+\mathcal H$ take at most $R$ values. Thus two of them coincide, and so $kns\in\mathcal H$ for some $1\leqslant k\leqslant R$. Therefore the order of $s+\mathcal H$ is $m$-bounded in the additive abelian group $HK/\mathcal H$. Hence, the subgroup $\overline{H * K}$ generated by the set $\{s + \mathcal{H} \mid s \in \{H * K\}\}$ is of $m$-bounded order. Since $|\mathcal{H}|$ is $m$-bounded, the inverse image of $\overline{H * K}$ in $HK$ is of $m$-bounded order. This proves that $H*K$ is of $m$-bounded order. 

Finally, assume that $K=B$. Then, as proved above, $|H*B|$ and $|B*H|$ are $m$-bounded. Also, as noted above,  $|[H, B]^+|$ is $m$-bounded.  Since $[H,B]^b/[H,B]^+$ is abelian and additively generated by  the sets $\{H * B\}$ and $\{B*H\}$, it follows that it is of $m$-bounded order. Thus  $|[H,B]^b|$ is  $m$-bounded, and the proof is complete.
\end{proof}

As a consequence of Theorem \ref{lem:relative-BFC}, taking $H = K = B$, we get a genuine BFC-type statement for the second term of the lower central series of the brace $B$.

\begin{cor}\label{cor:bfc-gamma}
Let $B$ be a finite skew brace such that  for all $a \in B$ the brace centralizer $\Cb_B(a)$ has index at most $m$ in $(B,\circ)$. Then $|\Gamma_2(B)|$ is $m$-bounded.
\end{cor}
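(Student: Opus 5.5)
This is a direct specialization of Theorem \ref{lem:relative-BFC} to $H=K=B$; the only work is checking the hypotheses and identifying $[B,B]^b$ with $\Gamma_2(B)$.

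First, $B$ is an ideal of itself, so we may take $H=K=B$ in Theorem \ref{lem:relative-BFC}. The hypothesis there asks that $|(B,\circ):\Cb_B(x)|\leqslant m$ for every $x\in H\cup K=B$. This is exactly our assumption.

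Theorem \ref{lem:relative-BFC} then gives that $|B*B|$ is $m$-bounded. Its final assertion, with $H=B$ and $K=B$, gives that $|[B,B]^b|$ is $m$-bounded.

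It remains to identify $[B,B]^b$ with $\Gamma_2(B)$. By definition,
$$\Gamma_2(B)=\langle B*B,\; B*B,\; [B,B]^+\rangle^+ .$$
By the Lemma of Section 2 describing $[H,B]^b$ for an ideal $H$, taken with $H=B$, we have
$$[B,B]^b=\langle B*B,\; B*B,\; [B,B]^+\rangle^+ .$$
This is the same additive subgroup, so $\Gamma_2(B)=[B,B]^b$, and $|\Gamma_2(B)|$ is $m$-bounded.

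There is no real obstacle. The one point to verify is that the sub-skew brace generated by $\{B*B\}\cup[B,B]^+$ coincides with the additive subgroup they generate. The cited Lemma provides this, since $[B,B]^b$ is an ideal. Alternatively, one can bound $|\Gamma_2(B)|$ directly: it is generated additively by $B*B$ and $[B,B]^+$, both of which have $m$-bounded order. The first bound comes from Theorem \ref{lem:relative-BFC}. The second comes from the group BFC theorem, because $\Cb_B(x)\subseteq \C_{(B,+)}(x)$ and the indices in $(B,+)$ and $(B,\circ)$ agree as cardinalities. Both $B*B$ and $[B,B]^+$ are normal in $(B,+)$ (as ideals), so the order of the subgroup they generate is at most $|B*B|\,|[B,B]^+|$.
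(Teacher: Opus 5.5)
Your proposal is correct and matches the paper's argument: the paper obtains the corollary by taking $H=K=B$ in Theorem~\ref{lem:relative-BFC}, and $[B,B]^b=\langle B*B,[B,B]^+\rangle^+=\Gamma_2(B)$ follows from the Section~2 lemma and the definition of the lower central series, as you say. In your direct alternative, note that $\Cb_B(x)$ is in general only a subgroup of $(B,\circ)$, so the additive index bound should be read as $|\C_{(B,+)}(x)|\geqslant|\Cb_B(x)|\geqslant|B|/m$; the argument is otherwise fine.
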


We now turn our attention to the second theme and prove the following result, which plays a key role in the proof of Theorem \ref{main-2}.  
\begin{thm}\label{maintheorem}
    Let $(B,+,\circ)$ be a finite skew brace from the class $\mathcal{T}$ with $\Pb(B)\geqslant \epsilon > 0$. Then there exist sub-skew braces $B_2 \leqslant B_1$ of  $B$ such that $B_2$ is an ideal of $B_1$, $B_1/B_2$ is a trivial brace, and $|B_2|$ and $|B:B_1|$ are $\epsilon$-bounded.
\end{thm}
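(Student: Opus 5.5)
The plan is to imitate P.~M. Neumann's theorem for groups: positive commuting probability forces a large set of elements with small centralizer index, then use the BFC-type result (Theorem~\ref{lem:relative-BFC}) on the sub-skew brace generated by that set.

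\emph{Step 1: a large set of elements with large centralizers.} Put
\[
X=\{x\in B \mid |(B,\circ):\Cb_B(x)|\leqslant 2/\epsilon\}.
\]
By Proposition~\ref{TS-S-L}, each $\Cb_B(x)$ is a sub-skew brace, hence a subgroup of $(B,\circ)$, so indices make sense. From
\[
\Pb(B)=\frac{1}{|B|}\sum_{x\in B}\frac{|\Cb_B(x)|}{|B|}\geqslant \epsilon,
\]
a Markov-type count gives $|X|\geqslant (\epsilon/2)|B|$. Moreover $X$ contains $1$ and is closed under $\circ$-inverses, since $\Cb_B(x^{-1})=\Cb_B(x)$; this needs the class $\mathcal T$ hypothesis and should be checked.

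\emph{Step 2: generation in few steps.} Let $B_1$ be the subgroup of $(B,\circ)$ generated by $X$. By Lemma~\ref{Eb-lem} with $r=\lfloor 2/\epsilon\rfloor$, every element of $B_1$ is a $\circ$-product of at most $3r$ elements of $X$. If $y=x_1\circ\cdots\circ x_k$, then
\[
\Cb_B(y)\supseteq \bigcap_i \Cb_B(x_i),
\]
because $\Cb_B(z)=\{b \mid z\in \Cb_B(b)\}$ and each $\Cb_B(b)$ is a subgroup for $B\in\mathcal T$. Hence every $y\in B_1$ satisfies $|(B,\circ):\Cb_B(y)|\leqslant (2/\epsilon)^{3r}$, which is $\epsilon$-bounded. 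Also $|B:B_1|\leqslant |B|/|X|\leqslant 2/\epsilon$.

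\emph{Main obstacle: making $B_1$ a sub-skew brace.} We need $B_1$ to be closed under $+$ and $\lambda$. I would try replacing $B_1$ by $\Cb_B(S)$, where $S\subseteq B_1$ is a bounded set. Alternatively, use that for $\mathcal T$ the sets $\Cb_B(x)$ are sub-skew braces, and that symmetric and two-sided braces make $X$ additively symmetric as well, so that one can generate in $(B,+)$ instead. Failing this, I would pass to the core-type intersection $\bigcap_{g}\Cb_B(g)$ over a bounded transversal, which is a sub-skew brace of bounded index. This is where the $\mathcal T$ hypothesis is genuinely needed.

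\emph{Step 3: the bounded kernel and the trivial quotient.} With $B_1$ a sub-skew brace of $\epsilon$-bounded index in which all brace centralizers have bounded index, apply Theorem~\ref{lem:relative-BFC} (Corollary~\ref{cor:bfc-gamma}) inside $B_1$. Indices of $\Cb_{B_1}(x)$ in $B_1$ are at most those in $B$. This gives that $B_2:=\Gamma_2(B_1)$ has $\epsilon$-bounded order. By the lemma on $[H,B]^b$, $B_2$ is an ideal of $B_1$. Finally, $B_1/\Gamma_2(B_1)$ satisfies $a*b=0$ and $[a,b]^+=0$ for all $a,b$, so $\lambda$ is trivial and the additive group is abelian. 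Hence $\circ=+$, and $B_1/B_2$ is a trivial brace.
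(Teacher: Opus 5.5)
Your Steps 1 and 3 agree with the paper. There is, however, a genuine gap at exactly the point you flag as the ``main obstacle''. The proposal never produces a sub-skew brace $D$ of $\epsilon$-bounded index in which \emph{every} element has $\epsilon$-bounded brace-centralizer index, and this is exactly what Corollary~\ref{cor:bfc-gamma} needs in Step~3. None of your three suggestions is carried through, and as stated none of them closes the argument:
\begin{itemize}
\item The subgroup $\langle X\rangle^{\circ}$ from Step~2 need not be closed under $+$.
\item Generating additively instead only moves the problem to closure under $\circ$.
\item The intersections $\Cb_B(S)$ over a bounded set $S$ are sub-skew braces of bounded index when $B\in\mathcal T$. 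But nothing in your argument bounds the centralizer indices of their elements, so Corollary~\ref{cor:bfc-gamma} cannot be applied to them.
\end{itemize}

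The missing idea is to alternate the two closures and let the bounded index force stabilization.
\begin{itemize}
\item For $B\in\mathcal T$ one has $\Cb_B(x)=\Cb_B(-x)=\Cb_B(x^{-1})$, by the same symmetry-plus-subgroup argument you use in Step~2, now also applied to $-x$. So $X$ is symmetric for both operations. The same argument gives $\Cb_B(u)\cap\Cb_B(v)\leqslant\Cb_B(u+v)\cap\Cb_B(u\circ v)$.
\item Put $D_1=\langle X\rangle^{+}$, $D_2=\langle D_1\cup D_1^{-1}\rangle^{\circ}$, $D_3=\langle D_2\cup(-D_2)\rangle^{+}$, and so on.
\item Every generating set here contains $X$, so it has at least $\frac{\epsilon}{2}|B|$ elements. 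Hence Lemma~\ref{Eb-lem} applies with the \emph{same} $r=\lceil 2/\epsilon\rceil$ at every stage: each element of $D_{i+1}$ is a word of length at most $3r$ in elements of the previous stage. Consequently the brace-centralizer indices of elements of $D_i$ are bounded in terms of $\epsilon$ and $i$ only.
\item Each $D_i$ is a subgroup of one of the two groups, so $|B|/|D_i|$ is a positive integer at most $2/\epsilon$ that strictly decreases at every strict inclusion. Hence after an $\epsilon$-bounded number of steps $D_i=D_{i+1}$.
\item At that point $D:=D_i$ is closed under both operations. It is therefore a sub-skew brace of bounded index all of whose elements have bounded centralizer index.
\end{itemize}
Only then does your Step~3 go through, with $B_1=D$ and $B_2=\Gamma_2(D)$.
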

\begin{proof}
Since $B\in\mathcal T$, Proposition~\ref{TS-S-L} shows that $\Cb_B(x)$ is a sub-skew brace of $B$ and
$\Cb_B(x)=\Cb_B(-x)=\Cb_B(x^{-1})$ for every $x\in B$. Put
$$
X=\{x\in B:[B:\Cb_B(x)]\leqslant 2/\epsilon\}.
$$
Then $X$ is symmetric with respect to both operations and contains the identity. Moreover,
\begin{align*}
\epsilon|B|^2\leqslant \Pb(B)|B|^2
&=\sum_{x\in X}|\Cb_B(x)|+\sum_{x\in B-X}|\Cb_B(x)|\\
&\leqslant |X||B|+\frac{\epsilon}{2}(|B|-|X|)|B|,
\end{align*}
so $|X|\geqslant \frac{\epsilon}{2}|B|$.

We shall use the following observation repeatedly. If $u,v\in B$, then
$$
\Cb_B(u)\cap\Cb_B(v)\leqslant \Cb_B(u+v)\cap\Cb_B(u\circ v).
$$
Indeed, if $c\in\Cb_B(u)\cap\Cb_B(v)$, then, by symmetry of brace-commuting, $u,v\in\Cb_B(c)$. Since $\Cb_B(c)$ is a sub-skew brace, it contains both $u+v$ and $u\circ v$, and the claim follows. Consequently, if $x$ is obtained from elements $x_1,\ldots,x_l$ by using only one of the two group operations and $[B:\Cb_B(x_i)]\leqslant m$ for every $i$, then
$$
[B:\Cb_B(x)]\leqslant m^l.
$$

Set $r=\lceil2/\epsilon\rceil$. By Lemma~\ref{Eb-lem}, applied to the additive group,
$$
D_1=\langle X\rangle^+=X^{3r}.
$$
Thus $|(B,+):D_1|\leqslant 2/\epsilon$, and
$[B:\Cb_B(x)]\leqslant(2/\epsilon)^{3r}$ for every $x\in D_1$.

We now alternate the two closures. If $D_i$ is an additive subgroup, put
$X_{i+1}=D_i\cup D_i^{-1}$ and $D_{i+1}=\langle X_{i+1}\rangle^\circ$; if $D_i$ is a multiplicative subgroup, put
$X_{i+1}=D_i\cup(-D_i)$ and $D_{i+1}=\langle X_{i+1}\rangle^+$. In either case $X_{i+1}$ is symmetric for the operation used to define $D_{i+1}$, contains the identity, and contains $D_i$. Moreover $D_i$ contains $X$, so
$|X_{i+1}|\geqslant|X|\geqslant\frac{\epsilon}{2}|B|$. Hence Lemma~\ref{Eb-lem}, with the same $r$, gives
$$
D_{i+1}=X_{i+1}^{3r},
\qquad
|B:D_{i+1}|\leqslant 2/\epsilon,
$$
where the index is taken in the corresponding underlying group. The preceding observation, together with
$\Cb_B(x)=\Cb_B(-x)=\Cb_B(x^{-1})$, shows inductively that the centralizer indices of all elements of $D_i$ are bounded by a function of $\epsilon$ and $i$ only.

The sets $D_i$ form an ascending chain. Each $D_i$ is a subgroup of one of the two underlying groups, and therefore $|D_i|$ divides $|B|$. Since $|B:D_i|\leqslant2/\epsilon$, every strict inclusion $D_i<D_{i+1}$ strictly decreases the positive integer $|B|/|D_i|$, which is at most $2/\epsilon$. Thus the chain stabilizes after an $\epsilon$-bounded number of steps. Let $D$ be a stable term. By construction, $D$ is closed under both group operations, and hence it is a sub-skew brace of $B$. Moreover $|B:D|$ is $\epsilon$-bounded and there is an $\epsilon$-bounded integer $m$ such that
$$
[D:\Cb_D(x)]\leqslant[B:\Cb_B(x)]\leqslant m
$$
for every $x\in D$.

Applying Corollary~\ref{cor:bfc-gamma} to $D$, we obtain that $|\Gamma_2(D)|$ is $\epsilon$-bounded. We therefore take
$$
B_1=D\qquad\text{and}\qquad B_2=\Gamma_2(D).
$$
By definition, $B_2$ is an ideal of $B_1$, while $B_1/B_2$ is a trivial brace. This proves the theorem.
\end{proof}

We also record a converse.

\begin{thm}
     Let $B_1 \leqslant B_2$ be sub-skew braces of a finite skew brace $B$ such that $B_1$ is an ideal of $B_2$ and $|B_1|\leqslant n_1$ and $[B:B_2]\leqslant n_2$ and $B_2/B_1$ is a trivial brace. Then $\Pb(B)\geqslant \epsilon(n_1,n_2)$.
\end{thm}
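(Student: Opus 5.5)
The plan is to mimic the group-theoretic converse (a group with a bounded-order normal subgroup whose quotient is abelian has commuting probability bounded below), using $\Pb(H,K)$ and the inequality $\Pb(B)\geqslant$ a relative probability restricted to a large sub-skew brace. First, since the sum of centralizer sizes over $B$ dominates the sum over $B_2$, and $\Cb_B(x)\supseteq \Cb_{B_2}(x)$, we get
$$
\Pb(B)=\frac{1}{|B|^2}\sum_{x\in B}|\Cb_B(x)|\geqslant \frac{1}{|B|^2}\sum_{x\in B_2}|\Cb_{B_2}(x)| = \frac{|B_2|^2}{|B|^2}\Pb(B_2)\geqslant \frac{\Pb(B_2)}{n_2^2}.
$$
So it suffices to bound $\Pb(B_2)$ from below in terms of $n_1$ only. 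From now on we replace $B$ by $B_2$, so $B_1$ is an ideal of $B$ with $|B_1|\leqslant n_1$ and $B/B_1$ trivial.

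Next, I will show that for every $x\in B$ the index $|(B,\circ):\Cb_B(x)|$ is bounded by a function of $n_1$. Since $B/B_1$ is trivial, all of $x*y$, $y*x$, $[x,y]^+$ lie in $B_1$ for every $y\in B$. Consider the map $\Phi_x:B\to B_1^3$, $y\mapsto (x*y,\,y*x,\,[x,y]^+)$. Its image has size at most $n_1^3$, and $\Phi_x^{-1}(1,1,1)=\Cb_B(x)$. The key step is to check that the fibres of $\Phi_x$ are essentially cosets of $\Cb_B(x)$, or at least that $\Cb_B(x)$ contains a subgroup of bounded index. Instead of a coset argument, I would use centralizers of the action of $B$ on $B_1$. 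The stabilizer-type set
$$
S=\{y\in B : \lambda_y|_{B_1}=\mathrm{id},\ y \text{ centralizes } B_1 \text{ in } (B,+) \text{ and } (B,\circ)\}
$$
has index at most $(n_1!)^3$ in $(B,\circ)$, being an intersection of kernels of actions of $(B,\circ)$ on the set $B_1$. This works because $(B_1,+)$ and $(B_1,\circ)$ are normal, $\lambda$ preserves $B_1$, and the additive conjugation action is expressible via $\circ$ up to $\lambda$. Then, for fixed $x$, one checks that $y\mapsto x*y$ restricted to $S$ satisfies a cocycle-type identity $x*(y\circ s)$ versus $x*y$ with error terms in $B_1$ controlled by $S$. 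This shows that $\Cb_B(x)\cap S$ has index at most $n_1^3$ in $S$.

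I expect this last verification to be the main obstacle, since $\Cb_B(x)$ need not be a subgroup for general $B$. If a clean cocycle identity fails, I would fall back on a counting argument. The fibres of $\Phi_x$ partition $B$ into at most $n_1^3$ classes, so by Cauchy--Schwarz the number of pairs $(y,y')$ in a common fibre is at least $|B|^2/n_1^3$. It would then remain to show that $y,y'$ in a common fibre, with $y'\in y\circ S$, yields an element of $\Cb_B(x)$, giving $|\Cb_B(x)|\geqslant |B|/(n_1^3(n_1!)^3)$.

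Given the uniform bound $|\Cb_B(x)|\geqslant |B|/f(n_1)$, averaging gives $\Pb(B_2)\geqslant 1/f(n_1)$. Combining this with the first step yields $\Pb(B)\geqslant 1/(n_2^2 f(n_1))=:\epsilon(n_1,n_2)$.
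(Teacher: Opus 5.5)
As written, the proposal has a gap exactly where the content of the theorem lies. Your reduction $\Pb(B)\geqslant \Pb(B_2)/n_2^2$ and the final averaging match the paper. The bound on $|(B,\circ):\Cb_B(x)|$, however, is only announced: you assert a ``cocycle-type identity'' on $S$ and then flag it as the likely obstacle.

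The Cauchy--Schwarz fallback does not get around this. Knowing that many pairs lie in a common fibre gives nothing unless the fibres inside a coset $y\circ S$ are cosets of $S\cap\Cb_B(x)$, and that is the same identity. The worry behind the fallback is also unfounded: $\Cb_B(x)$ is always a subgroup of $(B,\circ)$. Only additive closure can fail, and you never need it.

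The step does go through:
\begin{itemize}
\item For $t\in S$ and $c\in B_1$ you have $\lambda_c(t)=t$, because $c\circ t=t\circ c=t+c=c+t$.
\item Write $x\circ y=y\circ x\circ c$ with $c\in B_1$. Then for all $y\in B$ and $t\in S$,
\[
x*(y\circ t)=(x*y)+y+\lambda_y(x*t)-y,\qquad (y\circ t)*x=\lambda_y(t*x)+(y*x),
\]
and $[x,y\circ t]^+=[x,y]^++y+\lambda_y\bigl(b+[x,t]^+-b\bigr)-y$ with $b=y^{-1}*x\in B_1$.
\item Hence $\Phi_x(y\circ t)=\Phi_x(y)$ if and only if $x*t=t*x=[x,t]^+=1$, that is, $t\in\Cb_B(x)$. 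So $|S:S\cap\Cb_B(x)|\leqslant n_1^3$.
\item You must also justify that $S$ is a subgroup of $(B,\circ)$ of bounded index. Additive conjugation is not an action of $(B,\circ)$. On $L=\{y:\lambda_y|_{B_1}=\mathrm{id}\}$, however, $y\mapsto (z\mapsto y+z-y)|_{B_1}$ is a homomorphism from $(L,\circ)$. This holds because $\lambda_y(y')+z-\lambda_y(y')=y'+z-y'$ for $z\in B_1$.
\end{itemize}

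Once filled in, this is a valid route different from the paper's, with the weaker constant $1/(n_2^2n_1^3(n_1!)^3)$. The paper avoids a uniform subgroup like $S$ altogether. For each $x\in B_2$ it observes that all of $\lambda_b(x)$, $b+x-b$ and $b\circ x\circ b^{-1}$ lie in the single coset $x+B_1$. It then takes the $\lambda$-stabilizer $L_x$ of $x$, whose index is at most $n_1$ by orbit--stabilizer. Finally it shows that the fibres of $u\mapsto(u+x-u,\,u\circ x\circ u^{-1})$ on $L_x$ lie in left cosets of $\Cb_{B_2}(x)$. This gives $|B_2:\Cb_{B_2}(x)|\leqslant n_1^3$ directly, with no factorials and much shorter verifications. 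Your $S$ buys a single subgroup that works for all $x$ at once, which this theorem does not need.
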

\begin{proof}
   Let $x\in B_2$ and $K_{B_2}(x)=\{b\circ x \circ b^{-1},b+ x - b,\lambda_b(x),x\circ b-b:b\in B_2\}$. We claim that $K_{B_2}(x)$ lies in a single coset of $B_1$ in $B_2$. We consider two cases; when $x \in B_1$ and otherwise. If $x \in B_1$, then the claim follows easily as $B_1$ is an ideal of $B_2$. So assume that $x \in B_2 \setminus B_1$. Then, since $B_2/B_1$ is a trivial brace, for any $c \in B_2$, we get
  $c \circ x \circ c^{-1} = x \mod{B_1}$, $c + x - c = x \mod{B_1}$, $\lambda_c(x) = x \mod{B_1}$ and $x \circ c - c = x \mod{B_1}$.
This proves the claim.
  
We prove the centralizer estimate used below. Set $L_x:=\Cb_{B_2}^l(x) = \{b \in B_2 \mid \lambda_b(x) = x\}$. The fibers of the map
\[
 b\longmapsto \lambda_b(x)
\]
are cosets of $L_x$ in $(B_2,\circ)$, and its image is contained in $K_{B_2}(x)$. Hence
\[
|B_2:L_x| \leqslant |K_{B_2}(x)|.
\]
Now consider on $L_x$ the map
\[
u\longmapsto\big(u+x-u,\,u\circ x\circ u^{-1}\big).
\]
We claim that each fiber is contained in a coset of $\Cb_{B_2}(x)$. Indeed, let $u,v\in L_x$ have the same image and put $d=v^{-1}\circ u\in L_x$. Since $u=v\circ d$ and $\lambda_v(x)=x$, the equality $u+x-u=v+x-v$ gives
\[
\lambda_v(d)+x-\lambda_v(d)=x,
\]
and applying $\lambda_v^{-1}$ yields $d+x-d=x$. The equality
$u\circ x\circ u^{-1}=v\circ x\circ v^{-1}$ gives
$d\circ x=x\circ d$. Since $d\in L_x$, we have $d\circ x=d+x=x+d$, and therefore $\lambda_x(d)=d$. Thus $d\in\Cb_{B_2}(x)$. Consequently
\[
|L_x:\Cb_{B_2}(x)|\leqslant |K_{B_2}(x)|^2,
\]
and hence
\[
|B_2:\Cb_{B_2}(x)| \leqslant |B_2:L_x| |L_x:\Cb_{B_2}(x)| \leqslant |K_{B_2}(x)|^3\leqslant n_1^3\leqslant n_1^4.
\]
Thus $|\Cb_{B_2}(x)|\geqslant \frac{|B_2|}{n_1^4}$. Now
   $$|B|^2\Pb(B) = \sum_{x\in B}|\Cb_{B}(x)| \geqslant \sum_{x\in B_2}|\Cb_{B_2}(x)|
      \geqslant \frac{|B_2||B_2|}{n_1^4} \geqslant \frac{|B|^2}{n_1^4n_2^2}.
   $$
   Hence, $\Pb(B)\geqslant \frac{1}{n_1^4n_2^2}$. The proof is complete by choosing $\epsilon(n_1,n_2)=\frac{1}{n_1^4n_2^2}$.
\end{proof}

As a nice side remark, we get
\begin{pro}
    Let $B$ be a skew brace such that $|\Gamma_2(B)|\leqslant m $. Then $|(B,\circ):\Cb_B(x)| \leqslant m^3$ for all $x\in B$.
\end{pro}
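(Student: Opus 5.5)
This is the special case $B_1 = \Gamma_2(B)$, $B_2 = B$ of the argument used in the preceding converse theorem, and the plan is to rerun that argument with these choices. Here $\Gamma_2(B)$ is an ideal of $B$ with $|\Gamma_2(B)| \leqslant m$, and $B/\Gamma_2(B)$ is a trivial brace. The only place that proof used the order of the small ideal was the bound $|K_{B_2}(x)| \leqslant n_1$, so we get $|(B,\circ):\Cb_B(x)| \leqslant |K_B(x)|^3$. It therefore suffices to show $|K_B(x)| \leqslant m$ for every $x \in B$, where
$$K_B(x)=\{b\circ x\circ b^{-1},\ b+x-b,\ \lambda_b(x),\ x\circ b-b : b\in B\}.$$

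\textbf{Step 1: $K_B(x)$ lies in one coset of $\Gamma_2(B)$.} Write $N=\Gamma_2(B)$. For each $b\in B$ we have
$$b+x-b=[b,x]^+ + x,\qquad \lambda_b(x)=b*x+x,\qquad x\circ b-b=\lambda_x(b)+x-b,$$
and $\lambda_x(b)=x*b+b$. Each of $[b,x]^+$, $b*x$, $x*b$ lies in $N$, and $N$ is normal in $(B,+)$. Hence all three of these elements lie in $x+N$.

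For the multiplicative conjugate, use that $N$ is an ideal, so $B/N$ is a skew brace whose two operations coincide. Since $B/N$ is trivial, $(B/N,\circ)=(B/N,+)$, and this group is abelian because the additive commutators already lie in $N$. Therefore $b\circ x\circ b^{-1}$ maps to the image of $x$ in $B/N$, so it lies in $x\circ N$. Because $N$ is an ideal, $x\circ N=x+N$, and we conclude $K_B(x)\subseteq x+N$.

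\textbf{Step 2: conclude.} By Step 1, $|K_B(x)|\leqslant |N|\leqslant m$. The fibre argument of the converse theorem now applies verbatim with $B_2=B$. It uses the left-stabiliser $L_x$, whose index is at most $|K_B(x)|$, and the map $u\mapsto(u+x-u,\ u\circ x\circ u^{-1})$ on $L_x$, whose fibres lie in cosets of $\Cb_B(x)$ and whose image lies in $K_B(x)^2$. Together these give
$$|(B,\circ):\Cb_B(x)|\leqslant |K_B(x)|\cdot|K_B(x)|^2\leqslant m^3.$$

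I expect the only point needing care to be Step 1, specifically that the image of $B$ in $B/N$ is a trivial brace with abelian group, so that multiplicative conjugates also stay in the coset $x+N$.
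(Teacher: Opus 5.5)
Your proposal is correct and is the paper's argument: it specializes the converse theorem's count ($|B:L_x|\leqslant|K_B(x)|$ and $|L_x:\Cb_B(x)|\leqslant|K_B(x)|^2$) to $B_1=\Gamma_2(B)$, $B_2=B$, after checking that $K_B(x)\subseteq x+\Gamma_2(B)$, which you verify in more detail than the paper does. One harmless slip: in general $x\circ b-b=x+\lambda_x(b)-b=x+(x*b)$, not $\lambda_x(b)+x-b$, but this element still lies in $x+\Gamma_2(B)$, so Step 1 is unaffected.
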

\begin{proof}
  As above, we define  $K_{B}(x) =\{b\circ x \circ b^{-1},b+ x - b,\lambda_b(x),x\circ b-b:b\in B\}$. Then, as proved in the preceding result,  $K_{B}(x)$ lies in a single coset of $\Gamma_2(B)$ in $B$. As $|(B,\circ):\Cb_B(x)| \leqslant |K_{B}(x)|^3\leqslant|\Gamma_2(B)|^3$, it follows that $|(B,\circ):\Cb_B(x)| \leqslant m^3$ for all $x\in B$. 
\end{proof}

One might wonder whether Theorem \ref{maintheorem} holds for all skew braces in full generality. We now exhibit a class of skew braces to show that this is not true.

\begin{thm}
There exists a family of finite skew braces $(B_n)_{n\geqslant 1}$ such that $\Pb(B_n)>1/27$ for every $n$, but there do not exist functions $f$ and $g$ such that, for every $n$, one can find sub-skew braces $B_2\leqslant B_1\leqslant B_n$ with $B_1/B_2$ trivial and
$|B_2|\leqslant f(1/27)$ and $[B_n:B_1]\leqslant g(1/27)$.
\end{thm}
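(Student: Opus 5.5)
To prove the statement I will build an explicit family in which brace-commuting pairs are abundant, but the set of elements with large brace centralizer is not closed under the two operations. This is exactly the feature that the proof of Theorem~\ref{maintheorem} uses through Proposition~\ref{TS-S-L}. So the family has to lie outside $\mathcal T$. I will look for the $B_n$ among abelian-type braces of nilpotency class $2$ over $\mathbb F_3$, which is where the constant $1/27=3^{-3}$ should come from. Concretely, take $(B_n,+)=V_n\oplus W$ with $V_n=\mathbb F_3^{2n}$ and $W$ of bounded dimension. Set $\lambda_{(v,s)}(w,t)=(w+\phi(v,w),\,t+\beta(v,w))$, where $\beta$ is bilinear and $\phi$ is chosen so that $v\mapsto\lambda_v$ is a homomorphism from $(B_n,\circ)$ but \emph{not} from $(B_n,+)$. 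The last requirement is what rules out $\lambda$-homomorphic. I also want $(B_n,\circ)$ non-abelian and $B_n$ not two-sided (and not symmetric), so that $B_n\notin\mathcal T$. The checks needed are that $\lambda$ lands in $\Aut(B_n,+)$ and satisfies the cocycle condition $\lambda_{a\circ b}=\lambda_a\lambda_b$ with $a\circ b=a+\lambda_a(b)$. These are routine linear-algebra verifications once the data are fixed.

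Step two is the lower bound. Since $(B_n,+)$ is abelian, a pair $(x,y)$ brace-commutes iff $x*y=y*x=0$. Each of these amounts to at most three $\mathbb F_3$-linear (or affine, after fixing one coordinate) conditions on $y$ for fixed $x$. The formula $\Pb(B)=\sum_x|\Cb_B(x)|/|B|^2$ then gives $\Pb(B_n)\geqslant 3^{-3}$ on average. Strict inequality comes from the contribution of $x$ in the subspace where $\phi$ and $\beta$ vanish identically. This is where I will tune $\dim W$ and the rank of $\beta$ to land exactly above $1/27$.

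Step three, the negative part, is the main obstacle. Suppose $B_2\leqslant B_1\leqslant B_n$ with $B_1/B_2$ trivial, $|B_2|\leqslant f$ and $|B_n:B_1|\leqslant g$. Triviality of $B_1/B_2$ means $\lambda_x(y)-y\in B_2$ for all $x,y\in B_1$, so $\{B_1*B_1\}\subseteq B_2$ has bounded size. I will show that any sub-skew brace of index at most $g$ contains an additive subspace $U$ of codimension $O(\log_3 g)$ in $V_n$. For $U$ I will then show that $\{U*U\}$ spans a subspace of dimension growing with $n$. The intended mechanism is that $\phi$ is designed so that its image on $U\times U$ moves $V_n$-coordinates in an unbounded-rank way, unlike the central cocycle $\beta$, whose image is bounded. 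The delicate point is reconciling this with step two. Unbounded rank of $\phi$ on every large subspace must coexist with $\phi(x,\cdot)$ having bounded-codimension kernel for each individual $x$. I will first try a block-diagonal choice $\phi(v,w)=\sum_i q_i(v)\,\ell_i(w)e_i$ over $n$ blocks. Here each $q_i$ vanishes on most of the block, so each fixed $x$ moves only boundedly many coordinates, while any bounded-codimension subspace still meets unboundedly many blocks nontrivially.

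If this balance cannot be achieved over a single prime, the fallback is to take $B_n$ as a direct product of $n$ copies of a fixed small non-$\mathcal T$ brace $E$ of $3$-power order, and to use $\Pb(E^n)=\Pb(E)^n$. That would fail the lower bound, though, so it serves only as a sanity check for designing $E$.
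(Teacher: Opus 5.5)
\textbf{There is a genuine gap.} As written, this is a research plan rather than a proof. No family is actually fixed, so three things are never verified: the skew brace axioms, the inequality $\Pb(B_n)>1/27$, and the non-existence of bounded-by-trivial sections. Your step-two claim that brace-commuting costs at most three linear conditions is also not derived; in your own design, $x*y=0$ already imposes $\#\{i:q_i(v)\neq0\}$ conditions. More importantly, the block-diagonal design cannot work. Take the natural normalization $\ell_i(e_j)=0$, with the $\ell_i$ and the $e_i$ independent as in a block design. The $V_n$-component of $x\circ y$ is $v+w+\phi(v,w)$, and comparing $V_n$-components in $\lambda_{x\circ y}=\lambda_x\lambda_y$ gives $q_i(x\circ y)=q_i(x)+q_i(y)$. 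So each $q_i$ is a homomorphism $(B_n,\circ)\to(\mathbb F_3,+)$, and a nonzero $q_i$ vanishes on exactly one third of $B_n$, not on ``most'' of it. Suppose the $q_i$ span a space of dimension $d$. Since $\operatorname{rank}(\lambda_x-1)\geqslant\#\{i:q_i(x)\neq0\}$, one gets $\Pb(B_n)\leqslant(5/9)^d$, so $d$ must stay bounded. But then $K=\bigcap_i\ker q_i$ has index $3^d$ in $(B_n,\circ)$ and $\phi$ vanishes on it. Because your $\lambda$ depends only on $v$, we have $K=\bar K\times W$, and $\bar K$ is closed under addition: for $x,y\in K$ the $V_n$-component of $x\circ y$ is just $v+w$. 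Hence $K$ is a sub-skew brace of bounded index with $K*K\subseteq\{0\}\times W$. Then $B_1=K$, $B_2=\Gamma_2(K)$ is exactly the bounded-by-trivial section you want to exclude. Note also that if $\phi$ is additive in its first argument, then $(x+y)\circ z=x\circ z-z+y\circ z$. In that case $B_n$ is two-sided, lies in $\mathcal T$, and Theorem~\ref{maintheorem} applies.

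\textbf{The missing idea} is the dichotomy behind the paper's family $B_{n,m}=\mathbb F_7^m\times V_n\times\mathbb F_3$. There the addition is twisted in the last coordinate by a non-degenerate alternating form $\beta$ on $V_n$, the multiplication is $(u,v,z)\circ(u',w,t)=(u+2^zu',v+w,z+t)$, and $x*y=((2^z-1)u',0,-\beta(v,w))$. Only the elements with $z=0$ have small brace centralizer index. They form a multiplicative subgroup of index $3$ that is \emph{not} additively closed. The large star products come from the elements with $z\neq0$, which act fixed-point-freely on the coprime module $\mathbb F_7^m$. This yields two alternatives for a sub-skew brace $B_1$ with $B_1/B_2$ trivial. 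If its image in $V_n\times\mathbb F_3$ lies in $\{z=0\}$, that image must be totally isotropic, so $|B:B_1|\geqslant3^{n+1}$. If $B_1$ contains an element with $z\neq0$, then $B_1\cap\mathbb F_7^m\leqslant B_2$, so $|B_2|\,|B:B_1|\geqslant7^m$. Taking $m=n$ defeats every pair of bounds, while an exact count gives $\Pb(B_{n,m})=\bigl(\frac1{27}+\frac2{3^{2n+3}}\bigr)\bigl(1+\frac2{7^m}\bigr)^2>\frac1{27}$. Your wish for an abelian-type example can be met along these lines. Replacing $\beta$ by a non-degenerate symmetric form makes $(B_{n,m},+)$ abelian, and the brace axioms, the probability count and both halves of the dichotomy go through unchanged. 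The large star products should come from a bounded number of multiplicative cosets acting with unbounded rank on a big module, not from a rank phenomenon spread across blocks of $V_n$.
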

\begin{proof}
Let $V_n=\mathbb F_3^{2n}$ and let $\beta:V_n\times V_n\to\mathbb F_3$ be a non-degenerate alternating bilinear form. Let $U_m=\mathbb F_7^m$ and set $B_{n,m}=U_m\times V_n\times\mathbb F_3$. Since the element $2$ has order $3$ in $\mathbb F_7^\times$, define
$$
(u,v,z)+(u',w,t)=(u+u',v+w,z+t+\beta(v,w))$$
and 
$$
(u,v,z)\circ(u',w,t)=(u+2^zu',v+w,z+t).
$$
The first operation is associative by bilinearity of $\beta$, since
$\beta(v,w)+\beta(v+w,r)=\beta(w,r)+\beta(v,w+r)$, while the second one is the semidirect product associated with the action of $\mathbb F_3$ on $U_m$ given by multiplication by $2^z$. Since $\beta$ is alternating, the additive inverse of $(u,v,z)$ is $(-u,-v,-z)$. Hence, for $x=(u,v,z)$ and $y=(u',w,t)$,
$$
\begin{aligned}
\lambda_x(y)
&=-x+(x\circ y)
 =(2^zu',w,t-\beta(v,w)),\\
x*y
&=\lambda_x(y)-y
 =((2^z-1)u',0,-\beta(v,w)),\\
[x,y]^+
&=(0,0,\beta(v,w)-\beta(w,v))
 =(0,0,2\beta(v,w)).
\end{aligned}
$$
Moreover $\lambda_x\lambda_y=\lambda_{x\circ y}$, because
$2^{z+t}=2^z2^t$ and $\beta(v+w,r)=\beta(v,r)+\beta(w,r)$. Thus $B_{n,m}$ is a skew brace.

We now compute $\Pb(B_{n,m})$. The elements $x=(u,v,z)$ and $y=(u',w,t)$ commute if and only if
$\beta(v,w)=0$, $(2^z-1)u'=0$ and $(2^t-1)u=0$. If $v=0$, every $w\in V_n$ is orthogonal to $v$, giving $3^{2n}$ choices. If $v\neq0$, non-degeneracy of $\beta$ implies that $v^\perp$ has codimension one, hence there are exactly $3^{2n-1}$ choices for $w$. Therefore the number of pairs $(v,w)$ satisfying $\beta(v,w)=0$ is
$3^{2n}+(3^{2n}-1)3^{2n-1}=3^{4n-1}+2\cdot3^{2n-1}$.

Put $Q=|U_m|=7^m$. If $z=0$, then $(2^z-1)u'=0$ imposes no condition on $u'$, so there are $Q$ choices. If $z=1$ or $z=2$, then $2^z-1\neq0$ in $\mathbb F_7$, so necessarily $u'=0$. Thus there are $Q+2$ possible pairs $(z,u')$, and similarly $Q+2$ possible pairs $(t,u)$. Since $|B_{n,m}|=Q3^{2n+1}$, it follows that
$$
\Pb(B_{n,m})
=\frac{(3^{4n-1}+2\cdot3^{2n-1})(7^m+2)^2}
{7^{2m}3^{4n+2}}
=\left(\frac1{27}+\frac{2}{3^{2n+3}}\right)
\left(1+\frac{2}{7^m}\right)^2
>\frac1{27}.
$$
For instance, $|B_{1,1}|=189$ and $\Pb(B_{1,1})=11/147$.

Let now $\pi:B_{n,m}\to C_n=V_n\times\mathbb F_3$ be the natural projection. The induced operations on $C_n$ are
$(v,z)+(w,t)=(v+w,z+t+\beta(v,w))$ and
$(v,z)\circ(w,t)=(v+w,z+t)$. Put $D_n=V_n\times\{0\}$. Suppose that $J\leqslant C_n$ is a sub-skew brace contained in $D_n$. Since the multiplicative operation on $D_n$ is ordinary vector addition, $J=L\times\{0\}$ for some subspace $L\leqslant V_n$. Additive closure gives
$(v,0)+(w,0)=(v+w,\beta(v,w))\in J$ for all $v,w\in L$, and therefore $\beta(L,L)=0$. Thus $L$ is totally isotropic. Since $\beta$ is non-degenerate and $\dim V_n=2n$, we have $\dim L\leqslant n$, so $|J|\leqslant3^n$ and consequently $[C_n:J]\geqslant3^{n+1}$.

Let $B_2\leqslant B_1\leqslant B_{n,m}$ and assume that $B_1/B_2$ is trivial. Set $J=\pi(B_1)$ and $W=B_1\cap U_m$. Since $\ker(\pi|_{B_1})=W$, we have $|B_1|=|W||J|$, hence
$|B_{n,m}:B_1|=|U_m:W||C_n:J|$. If $J\subseteq D_n$, then $[B_{n,m}:B_1]\geqslant3^{n+1}$. Assume instead that $J\not\subseteq D_n$. Then $B_1$ contains some $x=(u,v,z)$ with $z\neq0$. For every $w\in W$, since $(w,0,0)\in B_1$ and $B_1/B_2$ is trivial, we have
$x*(w,0,0)=((2^z-1)w,0,0)\in B_2$. Since $z\in\{1,2\}$ and $2^z-1\neq0$ in $\mathbb F_7$, multiplication by $2^z-1$ is an automorphism of $W$, and therefore $W\leqslant B_2$. Thus
$$
|B_{n,m}:B_1|\geqslant3^{n+1}
\qquad\text{or}\qquad
|B_2|\,|B_{n,m}:B_1|
\geqslant |W|\,|U_m:W|
=7^m.
$$

Finally, set $m=n$ and write $B_n=B_{n,n}$. Then $\Pb(B_n)>1/27$ for every $n$. Suppose that there were constants $F$ and $G$, depending only on $1/27$, such that for every $n$ one could find $B_2\leqslant B_1\leqslant B_n$ with $B_1/B_2$ trivial, $|B_2|\leqslant F$ and $|B_n:B_1|\leqslant G$. Choose $n$ such that $3^{n+1}>G$ and $7^n>FG$. The first alternative above is then impossible, while the second yields
$|B_2|[B_n:B_1]\geqslant7^n>FG$, a contradiction.
\end{proof}

In the next three subsections of this section we improve Theorem \ref{maintheorem} by proving the following result whose proof follows from   Theorem~\ref{thm:bfc-ideals-two-sided} (two-sided case),  Theorem~\ref{thm:bfc-ideals-symmetric} (symmetric case) and  Theorem~\ref{thm:bfc-ideals-lambda-homomorphic} ($\lambda$-homomorphic case). A group-theoretic analogue of this result was proved by P. M. Neumann \cite{PMN89}, and similar results for finite rings were obtained in  \cite{SV24}. 

\begin{thm}\label{thm:bfc-ideals-class-T}
Let $B$ be a finite skew brace belonging to the class $\mathcal T$ and suppose that $\Pb(B)\geqslant\epsilon>0$. Then there exist ideals $B_2 \leqslant B_1\trianglelefteq B$ such that $B_1/B_2$ is a trivial brace, while $|B_2|$ and $|B:B_1|$ are bounded in terms of $\epsilon$ only.
\end{thm}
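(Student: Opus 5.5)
The plan is to rerun the proof of Theorem~\ref{maintheorem}, but to make every object in it canonical, so that it is stable under the operators that define ideals. These operators are additive conjugation $x\mapsto b+x-b$, multiplicative conjugation $x\mapsto b\circ x\circ b^{-1}$, and the maps $\lambda_b$. The proof will be organised as three separate theorems, one for each subclass of $\mathcal T$ (two-sided, symmetric, $\lambda$-homomorphic), because the invariance arguments use the defining identity of each class. The analogy is P.~M.~Neumann's argument for groups: the set $X$ of elements with small class is normal, so $K=\langle X\rangle$ is normal of bounded index, and $K'$ is characteristic in $K$.

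\textbf{Step 1 ($X$ is invariant).} Put $X=\{x\in B:[B:\Cb_B(x)]\leqslant 2/\epsilon\}$ as before. For each of the operators $\varphi$ above we want $\varphi(\Cb_B(x))=\Cb_B(\varphi(x))$, or at least $|\Cb_B(\varphi(x))|=|\Cb_B(x)|$. This gives $\varphi(X)=X$.
\begin{itemize}
\item In the two-sided case we would use the known fact that multiplicative conjugations and the maps $\lambda_b$ act as brace automorphisms, possibly after combining them with additive conjugation. The identity $\lambda_b(x)=-b+b\circ x$ together with right distributivity should give this.
\item In the symmetric case the roles of $+$ and $\circ$ can be interchanged, so the maps $x\mapsto \lambda^{\circ}_b(x)$ of the opposite structure are available as well.
\item In the $\lambda$-homomorphic case $\lambda_{b+c}=\lambda_b\lambda_c$, which makes $\lambda_b$ commute well with additive conjugation.
\end{itemize}
In each case the goal is to show that the brace relations $x*y=y*x=[x,y]^+=1$ are carried to the same relations by $\varphi$. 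Where $\varphi$ is not itself an automorphism, we would compose it with one that is.

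\textbf{Step 2 ($B_1$ is an ideal with controlled centralizers).} Once $X$ is invariant, every set in the alternating chain $X\subseteq D_1\subseteq D_2\subseteq\cdots$ of Theorem~\ref{maintheorem} is invariant. Each $D_i$ is built from $X$ by sums, products and inverses only, and each $\varphi$ preserves one of the two operations: conjugations preserve their own operation, and $\lambda_b$ preserves $+$ and interacts with $\circ$ through $\lambda_{b}(x\circ y)$ by a formula that we will check stays inside the invariant closure. Hence the stable term $D$ is a sub-skew brace that is normal in $(B,+)$ and in $(B,\circ)$ and is $\lambda$-invariant, i.e.\ an ideal. Set $B_1=D$. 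We already know that $|B:B_1|$ and the centralizer indices $[B:\Cb_B(x)]$, $x\in B_1$, are $\epsilon$-bounded.

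\textbf{Step 3 ($B_2$ and the main obstacle).} Apply Theorem~\ref{lem:relative-BFC} with $H=B_1$, $K=B$. Both hypotheses hold: $[B_1:\Cb_{B_1}(x)]\leqslant[B:\Cb_B(x)]$ for $x\in B_1$, and the index $[B:\Cb_B(x)]$ is bounded for $x\in B_1$. The hypothesis is only needed for $x\in H\cup K$, and we restrict $K$ if necessary; if that fails, we use instead $H=K=B_1$ together with Step~4. We then take $B_2=[B_1,B]^b$, which is an ideal by the Lemma in Section~2 and has $\epsilon$-bounded order by Theorem~\ref{lem:relative-BFC}.

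The expected main obstacle is exactly this hypothesis for elements of $K=B$ outside $B_1$, whose centralizers need not be small. The fallback is to take $B_2=\Gamma_2(B_1)$, which is bounded by Corollary~\ref{cor:bfc-gamma}, and to prove that it is an ideal of $B$. Since $\Gamma_2(B_1)$ is generated by $\{B_1*B_1\}$ and $[B_1,B_1]^+$, it suffices to check that each operator $\varphi$ from Step~1 maps generators to elements of $\Gamma_2(B_1)$. This is immediate for genuine brace automorphisms. For $\lambda_b$ one uses $\lambda_b(d*e)=\lambda_b\lambda_d\lambda_b^{-1}(\lambda_b e)-\lambda_b e$, which equals $(b\circ d\circ b^{-1})*\lambda_b(e)$ and is therefore again of the form $B_1*B_1$.

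\textbf{Step 4 (conclusion).} Since $B_2\supseteq\Gamma_2(B_1)$, the quotient $B_1/B_2$ is a trivial brace. Both $|B_2|$ and $|B:B_1|$ are $\epsilon$-bounded, which proves the statement.
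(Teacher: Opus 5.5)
\textbf{There is a genuine gap at Step~1, and the rest of your argument depends on it.} The set $X=\{x:[B:\Cb_B(x)]\leqslant 2/\epsilon\}$ need not be invariant under multiplicative conjugation when $B$ is symmetric or $\lambda$-homomorphic.

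Take $B=\mathbb F_7\times\mathbb Z_3$ with $(v,i)+(w,j)=(v+w,i+j)$ and $(v,i)\circ(w,j)=(v+2^iw,i+j)$. Here $\lambda_{(v,i)}$ is multiplication by $2^i$ on the first coordinate, so $\lambda(B)$ is abelian and $B$ is both $\lambda$-homomorphic and symmetric.
\begin{itemize}
\item One gets $\Cb_B((v,i))=\{(w,j):(2^i-1)w=0=(2^j-1)v\}$, hence $\Pb(B)=81/441=9/49$.
\item Take $\epsilon=9/49$, so $2/\epsilon>10$. Then $(0,1)\in X$, since its brace centralizer $\{0\}\times\mathbb Z_3$ has index $7$.
\item For $w\neq 0$ the conjugate $(w,0)\circ(0,1)\circ(w,0)^{-1}=(-w,1)$ has trivial brace centralizer, of index $21$, so it is not in $X$.
\end{itemize}
So the claim $\varphi(X)=X$ is false, and Step~2 has nothing to rest on.

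In the two-sided case your stated justification is also wrong. The maps $\lambda_b$ are not brace automorphisms: in a radical ring, $\lambda_b(x)=x+bx$ is left multiplication by $1+b$, which is not multiplicative in general. Additive conjugation is trivial for braces of abelian type, so composing with it cannot repair this. You therefore have no mechanism for upgrading the bounded-index sub-skew brace of Theorem~\ref{maintheorem} to an ideal.

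\textbf{Step~3 has the same defect.} Theorem~\ref{lem:relative-BFC} with $K=B$ would require bounded centralizer indices for all elements of $B$, which you do not have. In the fallback you correctly check $\lambda$-invariance of $\Gamma_2(B_1)$, via $\lambda_b(d*e)=(b\circ d\circ b^{-1})*\lambda_b(e)$. You never check that $\{B_1*B_1\}$ generates something normal in $(B,+)$ and in $(B,\circ)$ under conjugation by elements outside $B_1$. These conjugations are not brace automorphisms in general; multiplicative conjugations are only in the two-sided case.

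\textbf{The missing idea is a normal-closure and core argument instead of canonical invariant objects.} The paper proceeds as follows.
\begin{itemize}
\item Start from the non-normal $D$ of Theorem~\ref{maintheorem}.
\item Bound the \emph{ideal closure} $K$ of $N=\Gamma_2(D)$ by Dietzmann's lemma. This is done inside a finite group containing the subgroup coming from $D$ with index at most $|B:D|^2$, and that subgroup normalizes a bounded subgroup built from $N$. The group is $(B,+)\rtimes\lambda(B)$ in the two-sided and $\lambda$-homomorphic cases, and the group generated by $x\mapsto a+x$ and $x\mapsto a\circ x$ in the symmetric case.
\item Convert the resulting bounded normal subgroup back into an ideal of $B$. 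This uses Lemmas~\ref{lem:symmetric-permutation-bridge} and \ref{lem:lambda-homomorphic-bridge}, or, in the two-sided case, summing the boundedly many multiplicative conjugates.
\item Pass to $B/K$, where $\overline D$ is trivial, and replace $\overline D$ by a bounded-index ideal core (Lemmas~\ref{lem:finite-index-ideal-core-two-sided}, \ref{lem:finite-index-ideal-core-symmetric}, \ref{lem:finite-index-ideal-core-lambda-homomorphic}).
\end{itemize}
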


We remark that  the proof still uses Theorem \ref{maintheorem}.


\subsection{Two-sided Skew Braces}

In this subsection we deal with two-sided skew braces.
\begin{lemma}\label{lem:bounded-ideal-closure-two-sided}
Let $B$ be a finite two-sided skew brace, and let $D\leqslant B$ be a
sub-skew brace of index at most $n$. Put $N=\Gamma_2(D)$. If $|N|\leqslant m$,
then the ideal of $B$ generated by $N$ has $(m,n)$-bounded order.
\end{lemma}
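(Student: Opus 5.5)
Our plan is to pass from $D$ to a large \emph{ideal} of $B$ contained in $D$, and then show that $N$ has few conjugates under all the operations that generate the ideal closure.

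\textbf{Step 1: replace $D$ by a core ideal.} In a two-sided skew brace the maps $x\mapsto \lambda_b(x)$, $x\mapsto b+x-b$ and $x\mapsto b\circ x\circ b^{-1}$ are brace automorphisms of $B$. For two-sided braces the multiplicative conjugation is $\lambda$- and $+$-compatible via right distributivity; we take this as standard, and it is the point where two-sidedness is used. Let $\mathcal G\leqslant \Aut(B,+,\circ)$ be generated by these maps. We take $D_0$ to be the intersection of the images of $D$ under all these automorphisms.

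The orbit of $D$ under $\mathcal G$ has size bounded in terms of $n$. Indeed, the stabilizer of $D$ contains the automorphisms induced by elements of $D$, and these elements have index at most $n$ in each of $(B,+)$ and $(B,\circ)$. Using the standard bound on the number of subgroups of index $\le n$ of the image of these actions, the orbit is $n$-bounded. Hence $D_0$ is a sub-skew brace of $n$-bounded index, and it is invariant under $\lambda_b$, under additive conjugation and under multiplicative conjugation. So $D_0$ is an ideal of $B$.

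\textbf{Step 2: pass to $\Gamma_2(D_0)$.} Since $D_0\leqslant D$, we have $\Gamma_2(D_0)\leqslant \Gamma_2(D)=N$, so $|\Gamma_2(D_0)|\leqslant m$. Also $\Gamma_2(D_0)$ is characteristic in $D_0$, that is, invariant under brace automorphisms. Since $D_0$ is a $\mathcal G$-invariant ideal, $\Gamma_2(D_0)$ is an ideal of $B$ of order at most $m$.

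\textbf{Step 3: conclude for $N$.} Modulo $\Gamma_2(D_0)$, the quotient $D_0/\Gamma_2(D_0)$ is a trivial brace. Its index in $B/\Gamma_2(D_0)$ is $n$-bounded, and $N\Gamma_2(D_0)/\Gamma_2(D_0)$ has order at most $m$. The ideal closure of $N$ is generated by the images of $N$ under $\mathcal G$, closed up under $+$ and $\circ$. By Proposition~4.9 applied to $D$, each element of $N$ has $\mathcal G$-orbit of size bounded in terms of $m$ and $n$. Here $D$ has bounded index, and conjugation by elements of $D$ moves elements of $N$ within $N$.

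So the union $S$ of all conjugates of elements of $N$ is an $(m,n)$-bounded set. It remains to bound the subgroup it generates. Modulo the bounded ideal $\Gamma_2(D_0)$, the elements of $S\cap D_0$ lie in the trivial brace $D_0/\Gamma_2(D_0)$, so there they generate an abelian group. The additive orders of the elements of $S$ are bounded, since each divides $|N|\leqslant m$, which gives boundedness of that part. The remaining elements of $S$ lie in at most $n$ cosets of $D_0$. We control them by the same coset-order argument used at the end of the proof of Theorem~\ref{lem:relative-BFC}.

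\textbf{Main obstacle.} We expect the main obstacle to be Step 1: bounding the number of $\mathcal G$-conjugates of $D$, and verifying that all three conjugation-type maps are brace automorphisms in the two-sided case. If that fails, we would first try the direct Step 3 argument with $D_0:=\bigcap_{g\in \mathcal G} g(D)$ bounded via the permutation action of $B$ on the cosets of $D$ in both groups.
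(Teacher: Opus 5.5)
Your Step~1 rests on a false claim. In a two-sided skew brace, right distributivity gives $a\circ(x+y)\circ a^{-1}=a\circ x\circ a^{-1}+a\circ y\circ a^{-1}$, so multiplicative conjugation is a brace automorphism. However, $\lambda_b$ is in general not an automorphism of $(B,\circ)$. Take the two-sided brace of a nilpotent ring, $x\circ y=x+y+xy$. Then $\lambda_a(x)=x+ax$, and $\lambda_a(x)\circ\lambda_a(y)-\lambda_a(x\circ y)=xay+axay$. For the matrix units $x=E_{12}$, $a=E_{23}$, $y=E_{34}$ in the strictly upper triangular $4\times 4$ matrices over $\mathbb{F}_2$, this equals $E_{14}\neq 0$. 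So $\mathcal G$ is only a subgroup of $\Aut(B,+)$, and two of your justifications fail: ``each $g(D)$ is a sub-skew brace'' and ``$\Gamma_2(D_0)$ is characteristic in $D_0$, hence $\mathcal G$-invariant''. The first conclusion survives, since a $\lambda$-invariant additive subgroup is closed under $\circ$; the second would need a separate computation.

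More importantly, the places where boundedness must actually be proved are left unproved. The orbit of $D$ consists of subgroups of index $\leqslant n$ in $(B,+)$, but the number of such subgroups is not $n$-bounded ($\mathbb{F}_2^k$ has $2^k-1$ subgroups of index $2$). The bound $|(D,\circ):\Cb_D(x)|\leqslant m^3$ that you invoke for elements of $N$ only concerns maps indexed by elements of $D$, under which $N$ is already invariant; the whole difficulty lies with the maps indexed by $b\notin D$. What you need is that $\mathcal G_D$, generated by $\lambda_d$, $x\mapsto d+x-d$ and $x\mapsto d\circ x\circ d^{-1}$ for $d\in D$, has $n$-bounded index in $\mathcal G$. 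This can be shown: multiplicative conjugations normalize $\lambda(B)$ and the additive inner automorphisms, and $\lambda(B)$ normalizes the latter, which gives $|\mathcal G:\mathcal G_D|\leqslant n^3$. You do not do this.

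Your last step is also not an argument. The coset-order trick at the end of Theorem~\ref{lem:relative-BFC} relies on $ns\equiv a*(nb)$ modulo $\mathcal H$, which is a property of star products. It says nothing about the subgroup generated by arbitrary elements of $S$ lying outside $D_0$. The missing tool is Dietzmann's lemma. The set $S=\mathcal G\cdot N$ is a normal subset of $(B,+)$ of size at most $|\mathcal G:\mathcal G_D|\,m$, consisting of elements of order at most $m$, so $\langle S\rangle^+$ has bounded order. Being $\mathcal G$-invariant, $\langle S\rangle^+$ is already the ideal generated by $N$, so Step~2 and the quotient $D_0/\Gamma_2(D_0)$ are unnecessary.

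The paper organizes the same idea in two stages. First it applies Dietzmann in $(B,+)\rtimes\lambda(B)$, where $(D,+)\rtimes\lambda(D)$ has index at most $n^2$ and normalizes $N$; this gives a bounded strong left ideal containing $N$. Then two-sidedness shows that this strong left ideal has at most $n$ multiplicative conjugates, and their additive sum is the required ideal.
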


\begin{proof}
Let $G_\lambda=(B,+)\rtimes\lambda(B)$ and let
$H=(D,+)\rtimes\lambda(D)$. Since $D$ is a sub-skew brace, $H\leqslant G_\lambda$,
and $|G_\lambda:H| \leqslant |B:D|^2\leqslant n^2$. Moreover $N=\Gamma_2(D)$ is an
ideal of $D$, hence it is normal in $(D,+)$ and invariant under every
$\lambda_d$, $d\in D$. Thus $N$ is normalized by $H$.

We use the following standard consequence of Dietzmann's lemma: if $G$ has
a subgroup $H$ of index at most $r$, and $N\unlhd H$ has order at most $m$,
then the normal closure $N^G$ has $(m,r)$-bounded order. Applying this in
$G_\lambda$, the smallest additive normal subgroup $S$ of $(B,+)$ containing
$N$ and invariant under all $\lambda_b$, $b\in B$, has $(m,n)$-bounded order.
Thus $S$ is a strong left ideal of $B$.

Since $B$ is two-sided, every multiplicative conjugation
$x\mapsto b\circ x\circ b^{-1}$ is an automorphism of the skew brace $B$.
For $d\in D$ we have $d\circ N\circ d^{-1}=N$, because $N$ is an ideal of
$D$. Hence $d\circ S\circ d^{-1}=S$ for every $d\in D$. Therefore the family
of multiplicative conjugates of $S$ in $B$ has cardinality at most
$|B:D| \leqslant n$. Let these conjugates be $S_1,\ldots,S_t$, with $t\leqslant n$, and
put $K=S_1+\cdots+S_t$ inside the additive group $(B,+)$. Then $|K|$ is
$(m,n)$-bounded. Moreover $K$ is normal in $(B,+)$, invariant under all
$\lambda_b$, and normal in $(B,\circ)$, because multiplicative conjugation
permutes the subgroups $S_i$. Thus $K$ is an ideal of $B$. Since
$N\leqslant K$, the ideal of $B$ generated by $N$ is contained in $K$, and hence
has $(m,n)$-bounded order.
\end{proof}

\begin{lem}\label{lem:finite-index-ideal-core-two-sided}
Let $B$ be a finite two-sided skew brace, and let $A$ be a sub-skew brace of $B$.
If $|B:A|=m$, then there exists an ideal $I$ of $B$ such that $I\leqslant A$ and
$|B:I|$ is bounded in terms of $m$ only.
\end{lem}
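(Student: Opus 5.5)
Let $A$ be a sub-skew brace of index $m$ in a finite two-sided skew brace $B$. I will build $I$ as an intersection of boundedly many conjugates of $A$ under operators that together encode the three ideal conditions: normality in $(B,+)$, normality in $(B,\circ)$, and $\lambda$-invariance. The model is the group-theoretic core $\bigcap_g A^g$, whose index is at most $m!$.

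\emph{Step 1: a normal subgroup of $(B,+)\rtimes\lambda(B)$.} Work in $G_\lambda=(B,+)\rtimes\lambda(B)$, as in Lemma~\ref{lem:bounded-ideal-closure-two-sided}. Put $H=(A,+)\rtimes\lambda(A)$. Then $H$ has index at most $m^2$ in $G_\lambda$, since $|\lambda(B):\lambda(A)|\leqslant|B:A|$. Let $\mathrm{core}(H)$ be its normal core; it has index at most $(m^2)!$. Set
\[
A_1=\mathrm{core}(H)\cap(B,+).
\]
This is a normal subgroup of $(B,+)$, it is $\lambda_b$-invariant for every $b\in B$, it lies in $A$, and its index is at most $(m^2)!$. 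So $A_1$ is additively a strong left ideal. However, it may fail to be closed under $\circ$, and it may fail to be normal in $(B,\circ)$.

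\emph{Step 2: closure under $\circ$.} For a left ideal $L$, the identity $a\circ b=a+\lambda_a(b)$ gives closure of $L$ under $\circ$. It also shows that $(L,+)$ and $(L,\circ)$ have the same cosets, since $a\circ L=a+\lambda_a(L)=a+L$. Hence $A_1$ is already a sub-skew brace and a left ideal, and it is normal in $(B,+)$.

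\emph{Step 3: normality in $(B,\circ)$.} Because $B$ is two-sided, each map $x\mapsto b\circ x\circ b^{-1}$ is a skew brace automorphism of $B$, as used in Lemma~\ref{lem:bounded-ideal-closure-two-sided}. An automorphism sends strong left ideals to strong left ideals. So every conjugate $b\circ A_1\circ b^{-1}$ is again a strong left ideal with the same index as $A_1$.

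The conjugation stabilizer of $A_1$ in $(B,\circ)$ contains its normalizer, which contains $A_1$. Therefore there are at most $|B:A_1|$ distinct conjugates. Define
\[
I=\bigcap_{b\in B} b\circ A_1\circ b^{-1}.
\]
This is an intersection of at most $(m^2)!$ strong left ideals, each of index at most $(m^2)!$. An intersection of strong left ideals is a strong left ideal, and its index is at most the product of the indices, computed additively. So $|B:I|\leqslant((m^2)!)^{(m^2)!}$, which depends only on $m$.

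The set $I$ is normal in $(B,\circ)$ because conjugation permutes the intersected family. Finally, the conjugates of $A_1$ in the family include $A_1$ itself, so $I\leqslant A_1\leqslant A$. Thus $I$ is an ideal of $B$ contained in $A$.

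\emph{Main obstacle.} The delicate step is Step 3. It relies on conjugation by elements of $(B,\circ)$ being a brace automorphism, which is exactly where two-sidedness is needed. If that identification failed, I would instead use that $\lambda_b$ and additive conjugation already act on the set of strong left ideals of bounded index, and run the core argument inside the group generated by all three kinds of maps.
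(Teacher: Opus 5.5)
Your proof is correct and follows the same two-stage strategy as the paper: first produce a strong left ideal of $m$-bounded index inside $A$, then take its core in $(B,\circ)$, using that multiplicative conjugation is a skew brace automorphism when $B$ is two-sided. The only difference is in the first stage. The paper builds the strong left ideal explicitly as the additive core of $\bigcap_{t\in T}\lambda_t(A)$, with $t$ running over multiplicative coset representatives, whereas you obtain it in one step as $\operatorname{core}(H)\cap(B,+)$ inside $(B,+)\rtimes\lambda(B)$, which is the device the paper itself uses in the symmetric and $\lambda$-homomorphic cases.
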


\begin{proof}
Let $T$ be a set of representatives for the left cosets of $(A,\circ)$ in
$(B,\circ)$, chosen so that $1\in T$. Thus $|T|=m$. Set
$L_1=\bigcap_{t\in T}\lambda_t(A)$. Since each $\lambda_t$ is an automorphism
of the additive group $(B,+)$, each $\lambda_t(A)$ is an additive subgroup of
index $m$. Hence $L_1$ has additive index bounded in terms of $m$ only.
Moreover, since $1\in T$, we have $L_1\leqslant A$.

We claim that $L_1$ is invariant under every $\lambda_b$, with $b\in B$. Indeed,
for fixed $b\in B$ and $t\in T$, there exist $t'\in T$ and $a\in A$ such that
$b\circ t=t'\circ a$. Since $A$ is a sub-skew brace, $\lambda_a(A)=A$. Therefore
$\lambda_{b\circ t}(A)=\lambda_{t'\circ a}(A)=\lambda_{t'}\lambda_a(A)
=\lambda_{t'}(A)$. As $t$ varies in $T$, the element $t'$ also varies in $T$.
Thus $\lambda_b(L_1)=L_1$, and so $L_1$ is $\lambda$-invariant.

Now let $S$ be a set of representatives for the additive cosets of $L_1$ in
$(B,+)$, and set $L=\bigcap_{s\in S}(s+L_1-s)$. Then $L$ is a normal subgroup
of $(B,+)$ contained in $L_1$, hence contained in $A$. Its additive index is
bounded in terms of the additive index of $L_1$, and therefore in terms of $m$
only. Moreover, since $L_1$ is invariant under all maps $\lambda_b$ and each
$\lambda_b$ is an automorphism of $(B,+)$, the map $\lambda_b$ permutes the
additive cosets of $L_1$. Hence $\lambda_b(L)=L$ for every $b\in B$. Therefore
$L$ is a strong left ideal of $B$, contained in $A$, and its index is bounded
in terms of $m$ only.

It remains to make the subgroup normal with respect to the multiplicative
group. Since $L$ is a left ideal, its additive and multiplicative cosets
coincide; in particular, the multiplicative index $|B:L|^{\circ}$ is bounded
in terms of $m$ only. Let $U$ be a set of representatives for the left cosets
of $(L,\circ)$ in $(B,\circ)$, chosen so that $1\in U$, and define
$I=\bigcap_{u\in U}u\circ L\circ u^{-1}$.

Since $B$ is two-sided, every multiplicative conjugation $x\mapsto u\circ x
\circ u^{-1}$ is an automorphism of the skew brace $B$. Hence each
$u\circ L\circ u^{-1}$ is again a strong left ideal of $B$, and so their
intersection $I$ is a strong left ideal of $B$. By construction, $I$ is the
core of $L$ in the multiplicative group $(B,\circ)$, hence $(I,\circ)$ is
normal in $(B,\circ)$. Therefore $I$ is an ideal of $B$. Since $1\in U$, we
have $I\leqslant L\leqslant A$. Finally, $|B:I|$ is bounded in terms of $|B:L|$ only,
and hence in terms of $m$ only.
\end{proof}

\begin{thm}\label{thm:bfc-ideals-two-sided}
Let $B$ be a finite two-sided skew brace. If $\Pb(B)\geqslant\epsilon>0$, then there exist ideals $B_2\leqslant B_1\trianglelefteq B$ such that $B_1/B_2$ is a trivial brace, while $|B:B_1|$ and $|B_2|$ are bounded in terms of $\epsilon$ only.
\end{thm}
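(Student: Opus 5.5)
We would derive this from Theorem~\ref{maintheorem} combined with the two preceding lemmas. Since $B$ is two-sided, it lies in $\mathcal T$, so Theorem~\ref{maintheorem} applies. Its proof produces a sub-skew brace $D\leqslant B$ with $|B:D|\leqslant n$, where $n$ is $\epsilon$-bounded. Moreover, every $x\in D$ satisfies $[D:\Cb_D(x)]\leqslant m$, so by Corollary~\ref{cor:bfc-gamma} the ideal $N=\Gamma_2(D)$ of $D$ has $\epsilon$-bounded order. The task is to replace $D$ and $N$ by genuine ideals of $B$ while keeping the bounds.

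First, by Lemma~\ref{lem:bounded-ideal-closure-two-sided}, the ideal $B_2$ of $B$ generated by $N$ has $(m,n)$-bounded, hence $\epsilon$-bounded, order. Next, by Lemma~\ref{lem:finite-index-ideal-core-two-sided} applied to $A=D$, there is an ideal $I\leqslant D$ of $B$ with $|B:I|$ bounded in terms of $n$, hence in terms of $\epsilon$. We then set $B_1:=I+B_2$, the additive subgroup generated by the two ideals. For ideals $I,J$ of a skew brace, $I+J=I\circ J$ is again an ideal, because $i+j=i\circ\lambda_i^{-1}(j)$ and the three normality and $\lambda$-invariance conditions are preserved. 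So $B_1$ is an ideal containing $B_2$, and $|B:B_1|\leqslant|B:I|$ is $\epsilon$-bounded.

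It remains to check that $B_1/B_2$ is a trivial brace, i.e.\ abelian-type with $\circ=+$. Since $B_1/B_2$ is a homomorphic image of $I/(I\cap B_2)$, it suffices to show that $I*I$, $[I,I]^+$ and $[I,I]^\circ$ lie in $B_2$. Because $I\leqslant D$, we have $\{I*I\}\subseteq\{D*D\}\subseteq\Gamma_2(D)=N\leqslant B_2$, and likewise $[I,I]^+\subseteq[D,D]^+\subseteq N$. For the multiplicative commutators, note that $D/N$ is trivial, since $D/\Gamma_2(D)$ is a trivial brace (as used in the proof of Theorem~\ref{maintheorem}). Hence $[D,D]^\circ\subseteq N$, and so $I/(I\cap B_2)$ is a trivial brace. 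Consequently $B_1/B_2\cong I/(I\cap B_2)$ is trivial, as required.

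The main obstacle is bookkeeping rather than new ideas: verifying that $I+B_2$ is an ideal, and confirming that $\lambda$-action and commutators in the quotient are computed through $D/N$. Both are routine, and the rest reduces to the two lemmas.
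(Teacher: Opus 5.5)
Your proof is correct and uses the same ingredients as the paper's: Theorem~\ref{maintheorem}, Lemma~\ref{lem:bounded-ideal-closure-two-sided} and Lemma~\ref{lem:finite-index-ideal-core-two-sided}. The only difference is the order of the steps. The paper passes to $\overline B=B/K$, takes the ideal core of the trivial image $\overline D$ there, and pulls it back to get $B_1$. You instead take the core $I$ of $D$ in $B$, set $B_1=I+B_2$, and check triviality via $B_1/B_2\cong I/(I\cap B_2)$; this avoids working in the quotient, at the small cost of checking that a sum of two ideals is an ideal.
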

\begin{proof}
By Theorem~\ref{maintheorem}, there exists a sub-skew brace
$D\leqslant B$ such that $|B:D|$ and $|\Gamma_2(D)|$ are $\epsilon$-bounded. Put $N=\Gamma_2(D)$. By Lemma~\ref{lem:bounded-ideal-closure-two-sided}, the ideal $K$ of $B$ generated by $N$ has $\epsilon$-bounded order.

Pass to $\overline B=B/K$. Since $N\leqslant K$, the image $\overline D$ of $D$ satisfies $\Gamma_2(\overline D)= 1$, and hence $\overline D$ is a trivial brace. Moreover $|\overline B:\overline D| \leqslant |B:D|$, so this index is $\epsilon$-bounded. By Lemma~\ref{lem:finite-index-ideal-core-two-sided}, applied to the
finite-index sub-skew brace $\overline D$ of $\overline B$, there exists an
ideal $\overline B_1$ of $\overline B$ such that $\overline B_1\leqslant\overline D$
and $|\overline B:\overline B_1|$ is bounded in terms of
$|\overline B:\overline D|$ only. Since $\overline B_1\leqslant \overline D$, the ideal $\overline B_1$ is a trivial brace.

Let $B_1$ be the full preimage of $\overline B_1$ in $B$, and put $B_2=K$. Then $B_1$ and $B_2$ are ideals of $B$, $B_2\leqslant B_1$, and
$B_1/B_2\simeq\overline B_1$ is trivial. Finally $|B_2|=|K|$ and
$|B:B_1| = |\overline B:\overline B_1|$ are $\epsilon$-bounded.
\end{proof}

\subsection{Symmetric Skew Braces}

We first record a permutation-theoretic observation for symmetric skew braces. Throughout this subsection, permutations are composed from right to left.

\begin{lem}\label{lem:symmetric-permutation-bridge}
Let $B$ be a finite symmetric skew brace. For $a\in B$, define $\tau_a(x)=a+x$ and $\sigma_a(x)=a\circ x$, and set $A=\{\tau_a:a\in B\}$ and $M=\{\sigma_a:a\in B\}$. Then $A$ and $M$ normalize each other, and hence $G:=AM=\langle A,M\rangle$ is a subgroup of $\operatorname{Sym}(B)$. Moreover, if $R\unlhd G$, then
$$
K_R:=\{x\in B:\tau_x\in R\text{ and }\sigma_x\in R\}
$$
is an ideal of $B$. If $|R|\leqslant s$, then $|K_R|\leqslant s$, while if $|G:R|\leqslant q$, then $|B:K_R|\leqslant q^2$.
\end{lem}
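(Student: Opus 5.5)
The plan is to realise both group structures of $B$ as regular permutation groups on $B$, and to read off everything from conjugation formulas inside $G$. Two facts will be used throughout. First, $a\mapsto\tau_a$ is an injective homomorphism $(B,+)\to\operatorname{Sym}(B)$, and $a\mapsto\sigma_a$ is an injective homomorphism $(B,\circ)\to\operatorname{Sym}(B)$; this holds because the actions are regular. Second, there are two conjugation identities.

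For $x\in B$, the left brace axiom gives
$$\sigma_a\tau_b\sigma_a^{-1}(x)=a\circ(b+a^{-1}\circ x)=a\circ b-a+x,$$
so $\sigma_a\tau_b\sigma_a^{-1}=\tau_{a\circ b-a}$. This identity needs only that $B$ is a skew left brace.

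Symmetry means that $(B,\circ,+)$ is a skew left brace, that is, $a+(b\circ c)=(a+b)\circ a^{-1}\circ(a+c)$. Taking $c=-a+x$ gives $\tau_a\sigma_b\tau_a^{-1}=\sigma_{(a+b)\circ a^{-1}}$. Hence $A$ and $M$ normalize each other, and $G=AM$ is a subgroup of $\operatorname{Sym}(B)$.

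I will rewrite these identities as product formulas, valid for all $u,v\in B$:
$$\tau_{u\circ v}=\sigma_u\tau_v\sigma_u^{-1}\tau_u,\qquad \sigma_{u+v}=\tau_u\sigma_v\tau_u^{-1}\sigma_u .$$
These express the "other" translation of a sum or product through conjugates of translations of the factors.

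\emph{$K_R$ is a sub-skew brace.} Let $R\unlhd G$ and $x,y\in K_R$. Then $\tau_{x+y}=\tau_x\tau_y\in R$ and $\sigma_{x\circ y}=\sigma_x\sigma_y\in R$. By the product formulas and normality of $R$, also $\tau_{x\circ y}\in R$ and $\sigma_{x+y}\in R$. So $K_R$ is closed under both operations, and by finiteness it is a sub-skew brace.

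\emph{$K_R$ is an ideal.} I will check that for $x\in K_R$ and $b\in B$, each of $b+x-b$, $b\circ x\circ b^{-1}$ and $\lambda_b(x)$ again has both translations in $R$. The cleanest route is through the index argument below, which I would do first. That argument shows that, modulo $R$, membership in $K_R$ is detected by a homomorphism-like condition. Concretely, from the second product formula, $\sigma_{y+d}R=\sigma_yR$ whenever $\tau_d,\sigma_d\in R$, and symmetrically for the first formula.

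With this, conjugating $\tau_x$ and $\sigma_x$ by $\tau_b$ or $\sigma_b$ (which stays in $R$), and then combining with the product formulas, gives each required membership. For instance, $\tau_{\lambda_b(x)}=\tau_b^{-1}\tau_{b\circ x-b}\tau_b=\tau_b^{-1}\sigma_b\tau_x\sigma_b^{-1}\tau_b\in R$. The corresponding $\sigma$-memberships and the two conjugation closures are handled the same way. I expect this bookkeeping, especially the $\sigma$-part of additive conjugation and of $\lambda_b(x)$, to be the main obstacle. My fallback is to show that the map in the next paragraph has fibres exactly the additive cosets of $K_R$ and the multiplicative cosets of $K_R$. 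That would identify $K_R$ as the kernel of a structure-compatible map, making normality in $(B,+)$ and in $(B,\circ)$ automatic, and $\lambda$-invariance then follows from $\lambda_b(x)=-b+b\circ x$.

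\emph{Order and index bounds.} Since $\tau$ is injective, $|K_R|\leqslant|\{x:\tau_x\in R\}|\leqslant|R|\leqslant s$. For the index, consider the map
$$\Phi\colon B\to G/R\times G/R,\qquad x\mapsto(\tau_xR,\sigma_xR).$$
Suppose $\Phi(x)=\Phi(y)$, and write $x=y+d$. Then $\tau_d=\tau_y^{-1}\tau_x\in R$. Also
$$\sigma_x=\tau_y\sigma_d\tau_y^{-1}\sigma_y,$$
so modulo the normal subgroup $R$ we get $\sigma_d\equiv 1$. Hence $d\in K_R$, and each fibre of $\Phi$ lies in a single additive coset of $K_R$. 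Therefore $|B:K_R|\leqslant|\Phi(B)|\leqslant|G:R|^2\leqslant q^2$.
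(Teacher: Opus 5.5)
Most of your argument is correct: the two conjugation identities, the product formulas $\tau_{u\circ v}=\sigma_u\tau_v\sigma_u^{-1}\tau_u$ and $\sigma_{u+v}=\tau_u\sigma_v\tau_u^{-1}\sigma_u$, the closure of $K_R$ under both operations, the bound $|K_R|\leqslant s$, and the fibre argument giving $|B:K_R|\leqslant q^2$ all check out.

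\textbf{The gap.} The ideal property, which is the heart of the lemma, is not actually proved. You verify $\tau_{\lambda_b(x)}\in R$; the memberships $\tau_{b+x-b}\in R$ and $\sigma_{b\circ x\circ b^{-1}}\in R$ are immediate. But $\sigma_{b+x-b}\in R$, $\tau_{b\circ x\circ b^{-1}}\in R$ and $\sigma_{\lambda_b(x)}\in R$ are only asserted.

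Your fallback does not supply them. Knowing that the fibres of $\Phi$ are exactly the left additive cosets and exactly the left multiplicative cosets of $K_R$ does give $b+K_R=b\circ K_R$, hence $\lambda_b(K_R)=K_R$. It says nothing about normality, however: in a trivial skew brace every non-normal subgroup has this same property. Nor is $\Phi$ a homomorphism into $G/R\times G/R$, since its second coordinate satisfies $\sigma_{u+v}R=(\tau_uR)(\sigma_vR)(\tau_uR)^{-1}(\sigma_uR)$ rather than $(\sigma_uR)(\sigma_vR)$.

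\textbf{The repair.} What is missing is already in your formulas. Because $R\unlhd G$, the product formulas show that $\Phi(u+v)$ and $\Phi(u\circ v)$ depend only on $\Phi(u)$ and $\Phi(v)$:
$\Phi(u+v)=((\tau_uR)(\tau_vR),\,(\tau_uR)(\sigma_vR)(\tau_uR)^{-1}(\sigma_uR))$ and $\Phi(u\circ v)=((\sigma_uR)(\tau_vR)(\sigma_uR)^{-1}(\tau_uR),\,(\sigma_uR)(\sigma_vR))$.
So the fibres of $\Phi$ form a congruence for both operations, and $K_R=\Phi^{-1}(\Phi(1))$. Hence for $x\in K_R$ and $b\in B$ you get $\Phi(b+x-b)=\Phi(b-b)$, $\Phi(b\circ x\circ b^{-1})=\Phi(b\circ b^{-1})$ and $\Phi(-b+b\circ x)=\Phi(-b+b)$, all equal to $\Phi(1)$. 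This gives normality in both groups and $\lambda$-invariance at once.

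\textbf{Comparison with the paper.} The paper first derives from symmetry that $\lambda_{a+b}=\lambda_b\lambda_a$ and $\lambda_b\in\Aut(B,\circ)$. It then uses the genuine homomorphisms $x\mapsto(\tau_xR,\lambda_xR)$ from $(B,+)$ to $Q\times Q^{\mathrm{op}}$ and $x\mapsto(\sigma_xR,\lambda_xR)$ from $(B,\circ)$ to $Q\times Q$, where $Q=G/R$. Both have kernel $K_R$, and $\lambda$-invariance follows by conjugating $\tau_x,\sigma_x$ by $\lambda_b=\tau_b^{-1}\sigma_b\in G$.

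These coordinates are exactly the change of variables $\lambda_x=\tau_x^{-1}\sigma_x$ that turns your twisted laws into direct-product laws. Indeed, your second product formula gives $\lambda_{u+v}=\lambda_v\lambda_u$ directly. Once repaired, your version uses symmetry only through $\tau_a\sigma_b\tau_a^{-1}=\sigma_{(a+b)\circ a^{-1}}$ and needs no separate facts about $\lambda$. The paper's version gets normality immediately by exhibiting $K_R$ as a kernel.
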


\begin{proof}
Let $\lambda$ be the lambda map of $(B,+,\circ)$ and let $\mu$ be the lambda map of the reversed skew brace $(B,\circ,+)$. Thus $\mu_a(x)=a^{-1}\circ(a+x)$. Since $a+x=a\circ\mu_a(x)=a+\lambda_a(\mu_a(x))$, we have $\mu_a=\lambda_a^{-1}$. As $B$ is symmetric, $\mu_a\in\Aut(B,\circ)$, so $\lambda_a$ is an automorphism of both $(B,+)$ and $(B,\circ)$. Since $\mu:(B,+)\to\Aut(B,\circ)$ is a homomorphism, $\lambda_{a+b}=\lambda_b\lambda_a$, while, as usual, $\lambda_{a\circ b}=\lambda_a\lambda_b$.

Since $\sigma_a=\tau_a\lambda_a$ and $\lambda_a=\tau_a^{-1}\sigma_a$, we obtain
$$
\sigma_b\tau_x\sigma_b^{-1}=\tau_{\,b+\lambda_b(x)-b},\qquad
\tau_b\sigma_x\tau_b^{-1}=\sigma_{\,b\circ\lambda_b^{-1}(x)\circ b^{-1}}.
$$
Hence $A$ and $M$ normalize each other.

Let now $R\unlhd G$ and put $Q=G/R$. The maps $\Phi_+:(B,+)\to Q\times Q^{\mathrm{op}}$, $x\mapsto(\tau_xR,\lambda_xR)$, and $\Phi_\circ:(B,\circ)\to Q\times Q$, $x\mapsto(\sigma_xR,\lambda_xR)$, are homomorphisms, and both have kernel $K_R$, since $\sigma_x=\tau_x\lambda_x$ and $\tau_x=\sigma_x\lambda_x^{-1}$. Thus $K_R$ is normal in both $(B,+)$ and $(B,\circ)$.

Finally, $\lambda_b=\tau_b^{-1}\sigma_b\in G$. Hence, for $x\in K_R$, normality of $R$ gives $\tau_{\lambda_b(x)}=\lambda_b\tau_x\lambda_b^{-1}\in R$ and $\sigma_{\lambda_b(x)}=\lambda_b\sigma_x\lambda_b^{-1}\in R$, so $\lambda_b(x)\in K_R$. Therefore, $K_R$ is an ideal of $B$. If $|R|\leqslant s$, injectivity of $x\mapsto\tau_x$ gives $|K_R|\leqslant s$. If $|G:R|\leqslant q$, then $|Q|\leqslant q$ and $|B:K_R|=|\operatorname{Im}\Phi_+|\leqslant |Q|^2\leqslant q^2$.
\end{proof}

\begin{lem}\label{lem:bounded-ideal-closure-symmetric}
Let $B$ be a finite symmetric skew brace, let $D\leqslant B$ be a sub-skew brace with $|B:D|\leqslant n$, and let $N$ be an ideal of $D$ with $|N|\leqslant m$. Then the ideal of $B$ generated by $N$ has $(m,n)$-bounded order.
\end{lem}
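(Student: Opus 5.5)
The plan is to transfer the problem to the permutation group $G=AM\leqslant\operatorname{Sym}(B)$ of Lemma~\ref{lem:symmetric-permutation-bridge} and to use Dietzmann's lemma there, just as the two-sided case used $G_\lambda$.

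First, set $A_D=\{\tau_d:d\in D\}$, $M_D=\{\sigma_d:d\in D\}$ and $H=A_DM_D$. Since $D$ is a sub-skew brace of $B$, it is itself symmetric. The conjugation formulas in the proof of Lemma~\ref{lem:symmetric-permutation-bridge} then show that $A_D$ and $M_D$ normalize each other, so $H$ is a subgroup of $G$.

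Next, bound the index. Every element of $G$ has the form $\tau_a\sigma_b$. If $\tau_a\sigma_b$ and $\tau_{a'}\sigma_{b'}$ satisfy $a\equiv a'$ modulo additive left cosets of $D$ and $b\equiv b'$ modulo multiplicative right cosets of $D$, then they lie in the same coset $\tau_a H\sigma_b$-type double coset. A cleaner route is to note that $G=A\cdot M$ and $H\supseteq A_DM_D$. Then $|G|\leqslant|A||M|$, and one can check that $|G:H|\leqslant|B:D|^2\leqslant n^2$ by counting products $\tau_a\sigma_b$ over coset representatives of $D$ in $(B,+)$ and in $(B,\circ)$. I will verify that every $g\in G$ lies in $\tau_t H\sigma_u$-type sets, or more simply use $|G:H|\leqslant |A:A_D|\cdot|M:M_D|$, which holds when $AM=G$ and $A_DM_D=H$.

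Then take $R_0=\langle \tau_x,\sigma_x : x\in N\rangle\leqslant H$. The aim is to show that $R_0$ is normalized by $H$ and has $m$-bounded order. Normality uses the formulas
$$\sigma_d\tau_x\sigma_d^{-1}=\tau_{d+\lambda_d(x)-d},\qquad \tau_d\sigma_x\tau_d^{-1}=\sigma_{d\circ\lambda_d^{-1}(x)\circ d^{-1}},$$
together with the analogous conjugations $\tau_d\tau_x\tau_d^{-1}=\tau_{d+x-d}$ and $\sigma_d\sigma_x\sigma_d^{-1}=\sigma_{d\circ x\circ d^{-1}}$. Since $N$ is an ideal of $D$, it is closed under all these operations, so $R_0$ is normalized by $H$.

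For the order of $R_0$, the subgroups $A_N=\{\tau_x:x\in N\}$ and $M_N$ have order at most $m$ and normalize each other, so $|R_0|=|A_NM_N|\leqslant m^2$.

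Now apply the Dietzmann consequence quoted in Lemma~\ref{lem:bounded-ideal-closure-two-sided}: the normal closure $R=R_0^G$ has $(m,n)$-bounded order.

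By Lemma~\ref{lem:symmetric-permutation-bridge}, $K_R$ is then an ideal of $B$ with $|K_R|\leqslant|R|$. Since $\tau_x,\sigma_x\in R$ for every $x\in N$, we have $N\leqslant K_R$. Hence the ideal generated by $N$ lies in $K_R$ and has $(m,n)$-bounded order.

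I expect the index bound $|G:H|$ to be the main obstacle. If the direct product-counting argument fails, the fallback is to use $|G|\leqslant|B|^2$ and $|H|\geqslant|D|^2/|A_D\cap M_D|$. This requires comparing $A\cap M$ with $A_D\cap M_D$. Note that $\tau_a=\sigma_b$ forces $a=b$ and $\lambda_a=1$, so $A\cap M\cong\Ker\lambda$ and $A_D\cap M_D\cong D\cap\Ker\lambda$, and the bound follows.
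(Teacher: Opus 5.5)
Your proposal is correct and follows the paper's proof essentially step for step. The paper uses the same subgroup $H=A_DM_D$ of $G=A_BM_B$ and the same index bound $|G:H|=|B:D|^2\,|A_D\cap M_D|/|A_B\cap M_B|\leqslant n^2$; this is your fallback argument, and it needs only $A_D\cap M_D\leqslant A_B\cap M_B$. It then takes the same normal subgroup $A_NM_N\unlhd H$ of order at most $m^2$, applies Dietzmann's lemma to its normal closure $R$, and uses the ideal $K_R\supseteq N$ from Lemma~\ref{lem:symmetric-permutation-bridge}, exactly as you do. You can drop the tentative double-coset counting, since the product-formula bound is all that is needed.
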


\begin{proof}
For $X\subseteq B$, write $A_X=\{\tau_x:x\in X\}$ and $M_X=\{\sigma_x:x\in X\}$. By Lemma~\ref{lem:symmetric-permutation-bridge}, $G=A_BM_B$ is a group, and $H=A_DM_D\leqslant G$. Moreover,
$$
|G:H|=\frac{|B|^2}{|D|^2}\frac{|A_D\cap M_D|}{|A_B\cap M_B|}\leqslant |B:D|^2\leqslant n^2.
$$

Set $E=A_NM_N$. Since $N\unlhd(D,+)$, $N\unlhd(D,\circ)$ and $\lambda_d(N)=N$ for every $d\in D$, the formulas
$$
\sigma_d\tau_x\sigma_d^{-1}=\tau_{\,d+\lambda_d(x)-d},\qquad
\tau_d\sigma_x\tau_d^{-1}=\sigma_{\,d\circ\lambda_d^{-1}(x)\circ d^{-1}}
$$
show that $E\unlhd H$. Also $|E|\leqslant |N|^2\leqslant m^2$. Hence, by the standard consequence of Dietzmann's lemma, the normal closure $R:=E^G$ has $(m,n)$-bounded order.

By Lemma~\ref{lem:symmetric-permutation-bridge}, $K_R$ is an ideal of $B$ and $|K_R|\leqslant |R|$. Since $\tau_x,\sigma_x\in E\leqslant R$ for every $x\in N$, we have $N\leqslant K_R$. Therefore the ideal of $B$ generated by $N$ is contained in $K_R$ and has $(m,n)$-bounded order.
\end{proof}

\begin{lem}\label{lem:finite-index-ideal-core-symmetric}
Let $B$ be a finite symmetric skew brace and let $D\leqslant B$ be a sub-skew brace with $|B:D|\leqslant n$. Then there exists an ideal $I$ of $B$ such that $I\leqslant D$ and
$$
|B:I|\leqslant \big((n^2)!\big)^2.
$$
\end{lem}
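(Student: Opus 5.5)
Use the permutation group $G=AM\leqslant\operatorname{Sym}(B)$ from Lemma~\ref{lem:symmetric-permutation-bridge}, together with its subgroup $H=A_DM_D$, in the notation of the proof of Lemma~\ref{lem:bounded-ideal-closure-symmetric}. As computed there, $|G:H|\leqslant |B:D|^2\leqslant n^2$.

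Let $R$ be the core of $H$ in $G$, that is, $R=\bigcap_{g\in G}gHg^{-1}$. Then $R\unlhd G$. Moreover, $G$ acts on the left cosets of $H$, a set of size at most $n^2$, so $G/R$ embeds into a symmetric group on at most $n^2$ points. Hence $|G:R|\leqslant (n^2)!$. By Lemma~\ref{lem:symmetric-permutation-bridge}, the set
$$K_R=\{x\in B:\tau_x\in R \text{ and } \sigma_x\in R\}$$
is an ideal of $B$ with $|B:K_R|\leqslant |G:R|^2\leqslant ((n^2)!)^2$. We take $I=K_R$.

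It remains to show $I\leqslant D$. Let $x\in I$. Then $\tau_x\in R\leqslant H=A_DM_D$, so $\tau_x=\tau_d\sigma_e$ for some $d,e\in D$. Evaluating both sides at the identity $1$ gives
$$x=\tau_x(1)=\tau_d(\sigma_e(1))=d+e\in D,$$
since $D$ is additively closed. So $I\leqslant D$.

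The point needing the most care is that $H=A_DM_D$ really is a subgroup of $G$. This requires $A_D$ and $M_D$ to normalize each other. By the conjugation formulas in Lemma~\ref{lem:symmetric-permutation-bridge}, this holds because $D$ is a sub-skew brace, so $\lambda_d$ and $\lambda_d^{-1}$ preserve $D$ for $d\in D$. This was already used in Lemma~\ref{lem:bounded-ideal-closure-symmetric}, so it can be cited. The other delicate point, the index estimate $|G:H|\leqslant n^2$, is likewise already available from that lemma, so no new obstacle arises.
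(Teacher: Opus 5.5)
Your proposal is correct and follows the paper's proof essentially verbatim. You take the core $R$ of $H=A_DM_D$ in $G=A_BM_B$, use Lemma~\ref{lem:symmetric-permutation-bridge} to get the ideal $K_R$ with $|B:K_R|\leqslant((n^2)!)^2$, and evaluate $\tau_x=\tau_d\sigma_e$ at $1$ to obtain $K_R\leqslant D$. Your extra justifications (the coset action giving $|G:R|\leqslant(n^2)!$, and why $H$ is a subgroup) only make explicit what the paper leaves implicit.
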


\begin{proof}
Put $G=A_BM_B$ and $H=A_DM_D$. As above, $|G:H|\leqslant n^2$. Let $C=\operatorname{core}_G(H)$. Then $C\unlhd G$, $C\leqslant H$, and $|G:C|\leqslant (n^2)!$. Set $I=K_C$. By Lemma~\ref{lem:symmetric-permutation-bridge}, $I$ is an ideal of $B$ and $|B:I|\leqslant |G:C|^2\leqslant((n^2)!)^2$.

It remains to show that $I\leqslant D$. If $x\in I$, then $\tau_x\in C\leqslant H=A_DM_D$, so $\tau_x=\tau_d\sigma_e$ for some $d,e\in D$. Evaluating at the common identity element gives $x=d+e\in D$. Hence $I\leqslant D$.
\end{proof}

\begin{thm}\label{thm:bfc-ideals-symmetric}
Let $B$ be a finite symmetric skew brace with $\Pb(B)\geqslant\epsilon>0$. Then there exist ideals $B_2\leqslant B_1\trianglelefteq B$ such that $B_1/B_2$ is a trivial brace, while $|B_2|$ and $|B:B_1|$ are bounded in terms of $\epsilon$ only.
\end{thm}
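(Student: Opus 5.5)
The proof follows the two-sided case (Theorem~\ref{thm:bfc-ideals-two-sided}) exactly, with Lemmas~\ref{lem:bounded-ideal-closure-symmetric} and \ref{lem:finite-index-ideal-core-symmetric} in place of their two-sided counterparts.

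First, $B$ lies in $\mathcal T$, so Theorem~\ref{maintheorem} applies. It yields a sub-skew brace $D\leqslant B$ with the following properties:
\begin{itemize}
\item $|B:D|\leqslant n$, and $N:=\Gamma_2(D)$ satisfies $|N|\leqslant m$, where $m,n$ are $\epsilon$-bounded;
\item $D/N$ is a trivial brace.
\end{itemize}
Since $N$ is an ideal of $D$, Lemma~\ref{lem:bounded-ideal-closure-symmetric} shows that the ideal $K$ of $B$ generated by $N$ has $(m,n)$-bounded, hence $\epsilon$-bounded, order. We will take $B_2:=K$.

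Next, pass to $\overline B=B/K$. One checks that a quotient of a symmetric skew brace by an ideal is again symmetric: the identity $a+(b\circ c)=(a+b)\circ a^{-1}\circ(a+c)$ descends to quotients. Let $\overline D=(D+K)/K$ be the image of $D$. Then $|\overline B:\overline D|\leqslant|B:D|\leqslant n$. Moreover $\overline D$ is a homomorphic image of $D$, the map $D\to\overline D$ sends $\Gamma_2(D)$ onto $\Gamma_2(\overline D)$, and $N\leqslant K$; hence $\Gamma_2(\overline D)=1$ and $\overline D$ is a trivial brace.

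Now apply Lemma~\ref{lem:finite-index-ideal-core-symmetric} to $\overline D\leqslant\overline B$. It gives an ideal $\overline B_1$ of $\overline B$ with $\overline B_1\leqslant\overline D$ and $|\overline B:\overline B_1|\leqslant((n^2)!)^2$. Two facts show that $\overline B_1$ is a trivial brace:
\begin{itemize}
\item $\overline B_1$ is a sub-skew brace of the trivial brace $\overline D$, so its two operations coincide;
\item $\overline B_1$ is abelian, since $[\overline D,\overline D]^+\leqslant\Gamma_2(\overline D)=1$.
\end{itemize}

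Finally, let $B_1$ be the full preimage of $\overline B_1$ in $B$. Then $B_1$ is an ideal of $B$ and contains $B_2=K$. Also $B_1/B_2\cong\overline B_1$ is trivial, $|B:B_1|=|\overline B:\overline B_1|$ is $\epsilon$-bounded, and $|B_2|=|K|$ is $\epsilon$-bounded, as required.

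The only point needing care is that symmetry passes to $\overline B$; everything else is bookkeeping, since the substantial work is already contained in the two lemmas.
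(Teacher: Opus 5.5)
Your proposal is correct and is essentially the paper's proof: you obtain $D$ and $N=\Gamma_2(D)$ from Theorem~\ref{maintheorem}, take $B_2=K$ to be the bounded ideal closure of $N$ from Lemma~\ref{lem:bounded-ideal-closure-symmetric}, apply Lemma~\ref{lem:finite-index-ideal-core-symmetric} to the trivial image $\overline D$ in $B/K$, and pull back. Your extra checks (that symmetry passes to quotients via the reversed distributive identity, and that $\Gamma_2(D)$ maps onto $\Gamma_2(\overline D)$) make explicit steps that the paper states without comment.
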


\begin{proof}
By Theorem~\ref{maintheorem}, there exists a sub-skew brace $D\leqslant B$ such that $|B:D|$ and $|\Gamma_2(D)|$ are $\epsilon$-bounded and $D/\Gamma_2(D)$ is trivial. Put $N=\Gamma_2(D)$. By Lemma~\ref{lem:bounded-ideal-closure-symmetric}, the ideal $K$ of $B$ generated by $N$ has $\epsilon$-bounded order.

Pass to $\overline B=B/K$ and let $\overline D$ be the image of $D$. Then $\overline B$ is symmetric, $|\overline B:\overline D|\leqslant |B:D|$, and $\overline D$ is trivial because $N\leqslant K$. By Lemma~\ref{lem:finite-index-ideal-core-symmetric}, there exists an ideal $\overline B_1\trianglelefteq\overline B$ with $\overline B_1\leqslant\overline D$ and $|\overline B:\overline B_1|$ $\epsilon$-bounded. Hence $\overline B_1$ is trivial.

Let $B_1$ be the full preimage of $\overline B_1$ in $B$ and set $B_2=K$. Then $B_1$ and $B_2$ are ideals of $B$, $B_2\leqslant B_1$, $B_1/B_2\simeq\overline B_1$ is trivial, and
$$
|B_2|=|K|,\qquad |B:B_1|=|\overline B:\overline B_1|
$$
are $\epsilon$-bounded.
\end{proof}

\subsection{$\lambda$-homomorphic Skew Braces}

We now establish analogous results for finite $\lambda$-homomorphic skew braces.

\begin{lem}\label{lem:lambda-homomorphic-bridge}
Let $B$ be a finite $\lambda$-homomorphic skew brace,  $\Lambda=\lambda(B)$ and set $G:=(B,+)\rtimes\Lambda$, with multiplication $(x,\alpha)(y,\beta)=(x+\alpha(y),\alpha\beta)$. For $x\in B$, write $t_x=(x,\operatorname{id})$, $\ell_x=(1,\lambda_x)$ and $m_x=(x,\lambda_x)=t_x\ell_x$. If $R\unlhd G$, then
$$
I_R:=\{x\in B:t_x\in R\text{ and }\ell_x\in R\}
$$
is an ideal of $B$. Moreover, if $|R|\leqslant s$, then $|I_R|\leqslant s$, while if $|G:R|\leqslant q$, then $|B:I_R|\leqslant q^2$.
\end{lem}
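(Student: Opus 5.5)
The plan is to imitate the proof of Lemma~\ref{lem:symmetric-permutation-bridge}, with $G=(B,+)\rtimes\Lambda$ playing the role of the permutation group. First we record the algebraic identities. Since $B$ is $\lambda$-homomorphic, $\lambda_{x+y}=\lambda_x\lambda_y$ and also $\lambda_{x\circ y}=\lambda_x\lambda_y$. Hence $x\mapsto t_x$ is an injective homomorphism from $(B,+)$ to $G$. The map $x\mapsto m_x$ is the usual embedding of $(B,\circ)$ into the holomorph-type group, because
\[
m_xm_y=(x+\lambda_x(y),\lambda_x\lambda_y)=(x\circ y,\lambda_{x\circ y})=m_{x\circ y}.
\]
The map $x\mapsto \ell_x$ is a homomorphism from both $(B,+)$ and $(B,\circ)$ to $G$. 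In addition, $\ell_b t_x\ell_b^{-1}=t_{\lambda_b(x)}$ and $\ell_b\ell_x\ell_b^{-1}=\ell_{\lambda_b(x)}$. The second identity needs $\lambda_{\lambda_b(x)}=\lambda_b\lambda_x\lambda_b^{-1}$, which follows because $b\circ x=b+\lambda_b(x)$ gives $\lambda_b\lambda_x=\lambda_{b\circ x}=\lambda_b\lambda_{\lambda_b(x)}$ in the $\lambda$-homomorphic case; in fact this shows $\lambda_{\lambda_b(x)}=\lambda_x$, which is even better.

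Next, for $R\unlhd G$ put $Q=G/R$ and consider the two maps
\[
\Phi_+:(B,+)\to Q\times Q,\; x\mapsto(t_xR,\ell_xR),\qquad \Phi_\circ:(B,\circ)\to Q\times Q,\; x\mapsto(m_xR,\ell_xR).
\]
Both are homomorphisms by the first step. Their kernels coincide with $I_R$, because $m_x=t_x\ell_x$ and $t_x=m_x\ell_x^{-1}$, so $t_x,\ell_x\in R$ if and only if $m_x,\ell_x\in R$. Therefore $I_R$ is a normal subgroup of both $(B,+)$ and $(B,\circ)$. For $\lambda$-invariance, take $x\in I_R$ and $b\in B$. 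Normality of $R$ together with the conjugation formulas gives $t_{\lambda_b(x)}=\ell_bt_x\ell_b^{-1}\in R$ and $\ell_{\lambda_b(x)}=\ell_b\ell_x\ell_b^{-1}\in R$, so $\lambda_b(x)\in I_R$. Hence $I_R$ is an ideal of $B$. Closure of $I_R$ under both operations is automatic, since it is a kernel in each group.

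For the bounds: $x\mapsto t_x$ is injective, so $|I_R|\leqslant|R\cap\{t_x\}|\leqslant|R|\leqslant s$. Also $|B:I_R|=|\operatorname{Im}\Phi_+|\leqslant|Q|^2\leqslant q^2$.

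The main obstacle is verifying $\ell_b\ell_x\ell_b^{-1}=\ell_{\lambda_b(x)}$, that is, $\lambda_{\lambda_b(x)}=\lambda_b\lambda_x\lambda_b^{-1}$. The identity $\lambda_{\lambda_b(x)}=\lambda_x$ derived above handles it. If that derivation were to fail, the fallback is to observe that only $\ell_{\lambda_b(x)}\in R$ is needed, and to check instead that it equals $t_{-b}m_bm_x m_b^{-1}\cdots$ expressed through elements already known to lie in $R$.
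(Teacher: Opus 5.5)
Your overall route is the paper's: the same homomorphisms $\Phi_+$ and $\Phi_\circ$ into $Q\times Q$ with common kernel $I_R$, the same conjugation $t_{\lambda_b(x)}=\ell_bt_x\ell_b^{-1}$, and the same bounds. However, the identity $\ell_b\ell_x\ell_b^{-1}=\ell_{\lambda_b(x)}$ that you use for $\lambda$-invariance is false in general. Since $\ell_b\ell_x\ell_b^{-1}=(1,\lambda_b\lambda_x\lambda_b^{-1})$, it would require $\lambda_{\lambda_b(x)}=\lambda_b\lambda_x\lambda_b^{-1}$. Combined with the correct identity $\lambda_{\lambda_b(x)}=\lambda_x$ that you derived, this would force $\Lambda$ to be abelian, and a $\lambda$-homomorphic skew brace need not have this property.

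For instance, take $V=\mathbb F_2^2$, $H=\operatorname{GL}_2(\mathbb F_2)$ and $B=V\times H$ with $(a,h)+(b,k)=(a+b,hk)$ and $(a,h)\circ(b,k)=(a+hb,hk)$. Then $\lambda_{(a,h)}(b,k)=(hb,k)$, so $\lambda$ is a homomorphism on $(B,+)$. Yet $\Lambda\cong S_3$, and $\lambda_{(0,h)}\lambda_{(0,k)}\lambda_{(0,h)}^{-1}=\lambda_{(0,hkh^{-1})}\neq\lambda_{(0,k)}=\lambda_{\lambda_{(0,h)}(0,k)}$ whenever $hk\neq kh$. So your remark that $\lambda_{\lambda_b(x)}=\lambda_x$ is ``even better'' does not establish the conjugation formula. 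The identity you flag as the main obstacle cannot be verified at all.

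The fix is exactly what the paper does: $\ell_{\lambda_b(x)}=(1,\lambda_{\lambda_b(x)})=(1,\lambda_x)=\ell_x\in R$ directly, with no conjugation needed. Normality of $R$ is then used only for the $t$-component and to make $Q=G/R$ a group. With that replacement, and with the ``main obstacle'' and fallback paragraph discarded, the rest of your proposal is correct. That includes the homomorphism checks, the equality of the two kernels via $m_x=t_x\ell_x$, and the bounds $|I_R|\leqslant|R|$ and $|B:I_R|=|\operatorname{Im}\Phi_+|\leqslant|G:R|^2$.
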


\begin{proof}
Since $B$ is $\lambda$-homomorphic, $\lambda_{x+y}=\lambda_x\lambda_y$, and using the fact  $\lambda_{x\circ y}=\lambda_x\lambda_y$, we see that $\lambda_{\lambda_a(x)}=\lambda_x$  for every $a, x\in B$.
Let $Q:=G/R$. The map $\Phi_+:(B,+)\to Q\times Q$, $x\mapsto(t_xR,\ell_xR)$, is a homomorphism with kernel $I_R$. Moreover, since
$$
m_xm_y=(x+\lambda_x(y),\lambda_x\lambda_y)=(x\circ y,\lambda_{x\circ y})=m_{x\circ y},
$$
the map $\Phi_\circ:(B,\circ)\to Q\times Q$, $x\mapsto(m_xR,\ell_xR)$, is also a homomorphism with kernel $I_R$. Hence $I_R$ is normal in both $(B,+)$ and $(B,\circ)$.

If $x\in I_R$ and $a\in B$, then $t_{\lambda_a(x)}=\ell_at_x\ell_a^{-1}\in R$, while $\ell_{\lambda_a(x)}=\ell_x\in R$. Thus $\lambda_a(x)\in I_R$, and $I_R$ is an ideal of $B$. Finally, $|I_R|\leqslant |R|$ by injectivity of $x\mapsto t_x$, whereas if $|G:R|\leqslant q$, then $|B:I_R|=|\operatorname{Im}\Phi_+|\leqslant |Q|^2\leqslant q^2$.
\end{proof}

\begin{lem}\label{lem:bounded-ideal-closure-lambda-homomorphic}
Let $B$ be a finite $\lambda$-homomorphic skew brace,  $D\leqslant B$ be a sub-skew brace with $|B:D|\leqslant n$, and $N$ be an ideal of $D$ with $|N|\leqslant m$. Then the ideal of $B$ generated by $N$ has $(m,n)$-bounded order.
\end{lem}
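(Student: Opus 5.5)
We follow the symmetric case (Lemma~\ref{lem:bounded-ideal-closure-symmetric}), using the group $G=(B,+)\rtimes\Lambda$ and the ideal $I_R$ from Lemma~\ref{lem:lambda-homomorphic-bridge}.

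\emph{Step 1: the subgroup $H$.} Put $\Lambda_D=\lambda(D)$ and $H=(D,+)\rtimes\Lambda_D\leqslant G$. This is a subgroup because $D$ is a sub-skew brace, so $\lambda_d(D)=D$ for $d\in D$. Since $\lambda$ is a homomorphism on $(B,\circ)$, we have $|\Lambda:\Lambda_D|\leqslant |B:D|$. Hence $|G:H|\leqslant |B:D|\,|\Lambda:\Lambda_D|\leqslant n^2$.

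\emph{Step 2: a bounded normal subgroup $E$ of $H$.} Set $E=\langle t_x,\ell_x : x\in N\rangle\leqslant H$.
\begin{itemize}
\item The elements $t_x$, $x\in N$, form the subgroup $(N,+)\times\{\operatorname{id}\}$.
\item By the $\lambda$-homomorphic property, $x\mapsto\ell_x$ is a homomorphism on $(N,+)$ with image $\lambda(N)$, so $|\lambda(N)|\leqslant m$.
\item For $x\in N$ we have $\ell_x t_y\ell_x^{-1}=t_{\lambda_x(y)}$, and $\lambda_x(N)=N$. So $E=(N,+)\rtimes\lambda(N)$, and $|E|\leqslant m^2$.
\end{itemize}
Normality of $E$ in $H$ is checked on the generators $t_d$ and $\ell_d$, $d\in D$:
\begin{itemize}
\item $t_dt_xt_d^{-1}=t_{d+x-d}$, which lies in $E$ because $N\unlhd(D,+)$.
\item $\ell_dt_x\ell_d^{-1}=t_{\lambda_d(x)}$, which lies in $E$ because $N$ is $\lambda_D$-invariant.
\item $\ell_d\ell_x\ell_d^{-1}=\ell_{d\circ x\circ d^{-1}}$, which lies in $E$ because $N\unlhd(D,\circ)$.
\item For the remaining case, compute $t_d\ell_xt_d^{-1}=(d-\lambda_x(d),\lambda_x)$. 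Using the $\lambda$-homomorphic identity, rewrite $d-\lambda_x(d)=-(x*d)^{d}$, an additive conjugate of $-(x*d)$ by $d$. Since $N$ is an ideal of $D$, we get $x*d\in N$, so this element lies in $N$. Hence $t_d\ell_xt_d^{-1}=t_{d-\lambda_x(d)}\ell_x\in E$.
\end{itemize}
This last computation is the main obstacle: it is the only relation that is not immediate, and it is where the ideal property of $N$ (not just $\lambda_D$-invariance) is needed.

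\emph{Step 3: bounded normal closure.} Apply the consequence of Dietzmann's lemma used in Lemma~\ref{lem:bounded-ideal-closure-two-sided}. It gives that $R:=E^G\unlhd G$ has $(m,n)$-bounded order.

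\emph{Step 4: conclusion.} By Lemma~\ref{lem:lambda-homomorphic-bridge}, $I_R$ is an ideal of $B$ with $|I_R|\leqslant|R|$. For $x\in N$ we have $t_x,\ell_x\in E\leqslant R$, so $N\leqslant I_R$. Therefore the ideal of $B$ generated by $N$ lies in $I_R$, and its order is $(m,n)$-bounded.
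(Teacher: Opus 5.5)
Your proposal is correct and follows essentially the same route as the paper: the same $G=(B,+)\rtimes\lambda(B)$ and $H=(D,+)\rtimes\lambda(D)$ with $|G:H|\leqslant n^2$, the same bounded $E=(N,+)\rtimes\lambda(N)\unlhd H$, Dietzmann's lemma for $R=E^G$, and Lemma~\ref{lem:lambda-homomorphic-bridge} to produce the ideal $I_R\supseteq N$. One cosmetic remark: in your last normality check no conjugation and no $\lambda$-homomorphic identity is needed, since with $x*d=\lambda_x(d)-d$ one has $d-\lambda_x(d)=-(x*d)$ exactly, and this lies in $N$ because $N$ is an ideal of $D$.
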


\begin{proof}
Set $G :=(B,+)\rtimes\lambda(B)$ and $H:=(D,+)\rtimes\lambda(D)$. Since $\lambda:(B,+)\to\lambda(B)$ is a homomorphism, $|\lambda(B):\lambda(D)|\leqslant |B:D|$, and hence
$$
|G:H|=|B:D|\,|\lambda(B):\lambda(D)|\leqslant n^2.
$$

Set $E:=(N,+)\rtimes\lambda(N)$. We claim that $E$ is a normal subgroup of $H$. Since $N$ is an ideal of $D$, it is normal in $(D,+)$ and $\lambda_d(N)=N$ for every $d\in D$. Thus $t_dt_nt_d^{-1}=t_{d+n-d}\in E$, $\ell_dt_n\ell_d^{-1}=t_{\lambda_d(n)}\in E$, and $\ell_d\ell_n\ell_d^{-1}=(1,\lambda_{d+n-d})\in E$. Finally,
$$
t_d\ell_nt_d^{-1}=(d-\lambda_n(d),\lambda_n)=t_{d-\lambda_n(d)}\ell_n\in E,
$$
since, $N$ being an ideal of $D$, $\lambda_n(d)+N=d+N$, which implies that  $d-\lambda_n(d)\in N$; hence, we get $E\unlhd H$.

Since $|E|=|N|\,|\lambda(N)|\leqslant m^2$, the normal closure $R:=E^G$ has $(m,n)$-bounded order by the standard consequence of Dietzmann's lemma. By Lemma~\ref{lem:lambda-homomorphic-bridge}, $I_R$ is an ideal of $B$ with $(m,n)$-bounded order. Since $t_x,\ell_x\in E\leqslant R$ for every $x\in N$, we have $N\leqslant I_R$. Hence the ideal of $B$ generated by $N$ has $(m,n)$-bounded order.
\end{proof}

\begin{lem}\label{lem:finite-index-ideal-core-lambda-homomorphic}
Let $B$ be a finite $\lambda$-homomorphic skew brace and let $D\leqslant B$ be a sub-skew brace with $|B:D|\leqslant n$. Then there exists an ideal $I$ of $B$ such that $I\leqslant D$ and
$$
|B:I|\leqslant\big((n^2)!\big)^2.
$$
\end{lem}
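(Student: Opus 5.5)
The plan is to copy the proof of Lemma~\ref{lem:finite-index-ideal-core-symmetric}, with the permutation group of the symmetric case replaced by the semidirect product of Lemma~\ref{lem:lambda-homomorphic-bridge}. Put $G=(B,+)\rtimes\lambda(B)$ and $H=(D,+)\rtimes\lambda(D)$. As in the proof of Lemma~\ref{lem:bounded-ideal-closure-lambda-homomorphic}, $\lambda:(B,+)\to\lambda(B)$ is a homomorphism, so $|\lambda(B):\lambda(D)|\leqslant|B:D|$. Since $H$ is a subgroup of $G$, this gives
$$|G:H|=|B:D|\,|\lambda(B):\lambda(D)|\leqslant n^2.$$

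Next, let $C=\operatorname{core}_G(H)$. Then $C\unlhd G$, $C\leqslant H$, and, because $G/C$ embeds in the symmetric group on the cosets of $H$, we have $|G:C|\leqslant (n^2)!$. Set $I=I_C$, as defined in Lemma~\ref{lem:lambda-homomorphic-bridge}. That lemma shows $I$ is an ideal of $B$ and
$$|B:I|\leqslant |G:C|^2\leqslant ((n^2)!)^2.$$

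It remains to check that $I\leqslant D$. If $x\in I$, then $t_x=(x,\operatorname{id})\in C\leqslant H=(D,+)\rtimes\lambda(D)$. Comparing first coordinates gives $x\in D$, so $I\leqslant D$.

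The only point needing care is the index computation $|G:H|\leqslant n^2$. This uses $|G|=|B||\lambda(B)|$ together with the analogous count for $H$. Here $\lambda(D)\leqslant\lambda(B)$ are subgroups of $\Aut(B,+)$, and $\lambda(D)$ is the image of the additive subgroup $D$ under the homomorphism $\lambda:(B,+)\to\lambda(B)$, so its index is at most $|B:D|$. The rest is a direct application of Lemma~\ref{lem:lambda-homomorphic-bridge}.
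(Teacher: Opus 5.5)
Your proposal is correct and follows the paper's proof essentially step for step. Both use $G=(B,+)\rtimes\lambda(B)$ with $|G:H|\leqslant n^2$, take the core $C$ of $H$ with $|G:C|\leqslant (n^2)!$, set $I=I_C$ via Lemma~\ref{lem:lambda-homomorphic-bridge}, and get $I\leqslant D$ by comparing first coordinates of $t_x\in C\leqslant H$; your explicit justification of $|G:H|\leqslant n^2$, which the paper only cites ``as above'', is also sound, since $H$ is a subgroup because $\lambda_d(D)=D$ for every $d\in D$.
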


\begin{proof}
Set $G :=(B,+)\rtimes\lambda(B)$ and $H :=(D,+)\rtimes\lambda(D)$. As above, $|G:H|\leqslant n^2$. Let $C=\operatorname{core}_G(H)$. Then $C\unlhd G$, $C\leqslant H$, and $|G:C|\leqslant(n^2)!$. Define 
$$I :=\{x\in B:t_x\in C\text{ and }\ell_x\in C\}.$$
Then, by Lemma~\ref{lem:lambda-homomorphic-bridge}, $I$ is an ideal of $B$ and $|B:I|\leqslant |G:C|^2\leqslant((n^2)!)^2$. If $x\in I$, then $t_x=(x,\operatorname{id})\in C\leqslant H=(D,+)\rtimes\lambda(D)$, whence $x\in D$. Thus $I\leqslant D$.
\end{proof}

\begin{thm}\label{thm:bfc-ideals-lambda-homomorphic}
Let $B$ be a finite $\lambda$-homomorphic skew brace satisfying $\Pb(B)\geqslant\epsilon>0$. Then there exist ideals $B_2\leqslant B_1\trianglelefteq B$ such that $B_1/B_2$ is a trivial brace, while $|B_2|$ and $|B:B_1|$ are bounded in terms of $\epsilon$ only.
\end{thm}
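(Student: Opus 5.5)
The plan is to follow the template of Theorems~\ref{thm:bfc-ideals-two-sided} and~\ref{thm:bfc-ideals-symmetric} word for word, replacing the two auxiliary lemmas by their $\lambda$-homomorphic counterparts, Lemmas~\ref{lem:bounded-ideal-closure-lambda-homomorphic} and~\ref{lem:finite-index-ideal-core-lambda-homomorphic}.

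First, since $B$ lies in $\mathcal T$, Theorem~\ref{maintheorem} gives a sub-skew brace $D\leqslant B$ with $|B:D|$ and $|\Gamma_2(D)|$ both $\epsilon$-bounded. Put $N=\Gamma_2(D)$. This is an ideal of $D$, and $D/N$ is trivial. Lemma~\ref{lem:bounded-ideal-closure-lambda-homomorphic}, applied with $n=|B:D|$ and $m=|N|$, shows that the ideal $K$ of $B$ generated by $N$ has $\epsilon$-bounded order.

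Next, pass to $\overline B=B/K$. I need to check that $\overline B$ is again $\lambda$-homomorphic. In the quotient, $\lambda_{\bar a}(\bar x)=\overline{\lambda_a(x)}$, so the relation $\lambda_{a+b}=\lambda_a\lambda_b$ descends to $\overline B$. This verification is the only genuinely new point, and it is routine.

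Let $\overline D$ be the image of $D$. Then $|\overline B:\overline D|\leqslant|B:D|$. Moreover $\Gamma_2(\overline D)$ is the image of $\Gamma_2(D)=N\leqslant K$, so it is trivial, and hence $\overline D$ is a trivial brace. Lemma~\ref{lem:finite-index-ideal-core-lambda-homomorphic} now gives an ideal $\overline B_1\trianglelefteq\overline B$ with $\overline B_1\leqslant\overline D$ and $|\overline B:\overline B_1|\leqslant((n^2)!)^2$, which is $\epsilon$-bounded. Because $\overline B_1$ is a sub-skew brace of the trivial brace $\overline D$, it is itself trivial.

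Finally, let $B_1$ be the full preimage of $\overline B_1$ and put $B_2=K$. Both are ideals of $B$, $B_2\leqslant B_1$, and $B_1/B_2\cong\overline B_1$ is trivial. The bounds are $|B_2|=|K|$ and $|B:B_1|=|\overline B:\overline B_1|$, both $\epsilon$-bounded.

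I expect no real obstacle. The substantive work sits in Lemmas~\ref{lem:lambda-homomorphic-bridge}--\ref{lem:finite-index-ideal-core-lambda-homomorphic} and Theorem~\ref{maintheorem}, all of which are already available.
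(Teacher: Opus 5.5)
Your proposal is correct and is essentially the paper's own proof. You obtain $D$ and $N=\Gamma_2(D)$ from Theorem~\ref{maintheorem}, enlarge $N$ to the bounded ideal $K$ via Lemma~\ref{lem:bounded-ideal-closure-lambda-homomorphic}, apply Lemma~\ref{lem:finite-index-ideal-core-lambda-homomorphic} in $B/K$ to the trivial image of $D$, and pull back; your explicit check that $\lambda_{a+b}=\lambda_a\lambda_b$ descends to $B/K$, which the paper only asserts, is a minor addition.
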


\begin{proof}
By Theorem~\ref{maintheorem}, there exists a sub-skew brace $D\leqslant B$ such that $|B:D|$ and $|\Gamma_2(D)|$ are $\epsilon$-bounded and $D/\Gamma_2(D)$ is trivial. Put $N=\Gamma_2(D)$. By Lemma~\ref{lem:bounded-ideal-closure-lambda-homomorphic}, the ideal $K$ of $B$ generated by $N$ has $\epsilon$-bounded order.

Pass to $\overline B=B/K$ and let $\overline D$ be the image of $D$. Then $\overline B$ is again $\lambda$-homomorphic, $|\overline B:\overline D|\leqslant |B:D|$, and $\overline D$ is trivial. By Lemma~\ref{lem:finite-index-ideal-core-lambda-homomorphic}, there exists an ideal $\overline B_1\trianglelefteq\overline B$ such that $\overline B_1\leqslant\overline D$ and $|\overline B:\overline B_1|$ is $\epsilon$-bounded. Hence $\overline B_1$ is trivial.

Let $B_1$ be the full preimage of $\overline B_1$ in $B$ and set $B_2=K$. Then $B_1$ and $B_2$ are ideals of $B$, $B_2\leqslant B_1$, $B_1/B_2\simeq\overline B_1$ is trivial, and
$$
|B_2|=|K|,\qquad |B:B_1|=|\overline B:\overline B_1|
$$
are $\epsilon$-bounded. This completes the proof.
\end{proof}


\section{The Probability that $\langle a,b\rangle$ is Trivial}

In this section we study the probability that two randomly chosen elements in a skew brace generate a trivial sub-skew brace. For a given skew brace $B$, this probability, denoted by $\Pt(B)$, is defined by
$$
\Pt(B)=\frac{|\{(a,b)\in B^2:\langle a,b\rangle\text{ is a trivial sub-skew brace}\}|}{|B|^2},
$$
We also define the trivializing  probability of a skew brace $B$, denoted by $P_{\lambda}(B)$, as follows 
$$
P_{\lambda}(B)=\frac{|\{(a,b)\in B^2:a*b= 1\}|}{|B|^2}.
$$
Clearly $\Pt(B)\leqslant P_{\lambda}(B)$. The  trivializing probability will be useful because it admits a direct group-theoretic interpretation, while
the additional conditions involved in $\Pt(B)$ will later give a stronger
conclusion. More precisely, if $P_{\lambda}(B)=1$, then $B$ is a trivial skew brace, that is $\circ = +$.

For $a\in B$, put
$T(a)=\{b\in B:\langle a,b\rangle\text{ is a trivial sub-skew brace}\}$. Thus
$\Pt(B)|B|^2=\sum_{a\in B}|T(a)|$. Notice that $T(a)$ need not be a
subgroup of either underlying group. Therefore the centralizer arguments
used in the preceding sections cannot be applied directly to the sets
$T(a)$.

We first describe exactly the pairs counted by $\Pt(B)$.

\begin{lem}\label{lem:two-generated-trivial}
Let $a,b\in B$. Then $\langle a,b\rangle$ is a trivial sub-skew brace if and only if
$$
a*a=a*b=b*a=b*b= 1.
$$
\end{lem}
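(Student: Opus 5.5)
Necessity is immediate. If $S=\langle a,b\rangle$ is trivial, then $\circ=+$ on $S$, so $\lambda_s=\mathrm{id}$ on $S$ for every $s\in S$. Hence $s*t=\lambda_s(t)-t=1$ for all $s,t\in S$, and in particular $a*a=a*b=b*a=b*b=1$.

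For sufficiency, the plan is to build the candidate sub-skew brace directly. Let $A=\langle a,b\rangle^+$ be the additive subgroup generated by $a$ and $b$. We will show that $\lambda_x$ is the identity on $A$ for every $x\in A$. Then $x\circ y=x+\lambda_x(y)=x+y$ for all $x,y\in A$, so $A$ is closed under $\circ$. Multiplicative inverses stay in $A$ as well: from $x\circ(-x)=x+\lambda_x(-x)=x-x=1$ we get $x^{-1}=-x$. Thus $A$ is a sub-skew brace on which $\circ=+$, i.e.\ a trivial one. Since any sub-skew brace containing $a,b$ contains $A$, we get $A=\langle a,b\rangle$.

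The key steps, in order, are as follows.

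First, from $a*a=a*b=1$ we have $\lambda_a(a)=a$ and $\lambda_a(b)=b$. Since $\lambda_a$ is an additive automorphism, it fixes $A$ pointwise, and the same argument applies to $\lambda_b$. Consequently $\lambda_{a^{-1}}=\lambda_a^{-1}$ and $\lambda_{b^{-1}}=\lambda_b^{-1}$ also act as the identity on $A$.

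Second, let $L=\{x\in B:\lambda_x|_A=\mathrm{id}\}$. Since $\lambda$ is a homomorphism from $(B,\circ)$, the set $L$ is a subgroup of $(B,\circ)$, and it contains $a$ and $b$.

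Third, and this is the main obstacle, we must show $A\subseteq L$. The difficulty is that $L$ is a priori only a multiplicative subgroup, while $A$ is additively generated. The plan is an induction on the length of additive words in $a^{\pm1},b^{\pm1}$ (here additive inverses $-a,-b$). For $x\in A\cap L$ and $y\in A$, we have $x+y=x+\lambda_x(y)=x\circ y$. So if $x\in A\cap L$ and $g\in\{\pm a,\pm b\}$ with $g\in L$, then $x+g=x\circ g\in L$, and also $x+g\in A$. It remains to check that $-a,-b\in L$. Since $a\in L$ and $\lambda_a$ fixes $-a\in A$, we get $a\circ(-a)=a+\lambda_a(-a)=a-a=1$, so $-a=a^{-1}\in L$, and likewise $-b=b^{-1}\in L$. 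The induction then gives $A\subseteq L$, and the argument concludes as described above.
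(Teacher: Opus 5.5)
Your proof is correct and is essentially the paper's argument. Both show that $\lambda_a,\lambda_b$ fix $\langle a,b\rangle^+$ pointwise, use that $\lambda$ is a homomorphism on $(B,\circ)$, and note that $a^{-1}=-a$ and $b^{-1}=-b$; your induction on additive word length, via the subgroup $L=\{x:\lambda_x|_A=\mathrm{id}\}$ and the identity $x+g=x\circ g$ for $x\in A\cap L$, simply spells out the step the paper summarizes as ``it follows inductively''.
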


\begin{proof}
The necessity is clear. Conversely, the four equalities imply that
$\lambda_a$ and $\lambda_b$ fix both $a$ and $b$. Hence, they fix $L=\langle a,b\rangle^+$ pointwise. Since
$\lambda:(B,\circ)\to\Aut(B,+)$ is a homomorphism, every multiplicative
word in $a,b$ acts trivially on $L$. Moreover $a\circ b=a+b$, and the
multiplicative inverses of $a$ and $b$ coincide with their additive
inverses. It follows inductively that the additive and multiplicative
subgroups generated by $a,b$ coincide and that the two operations agree
on this subgroup.
\end{proof}

Put $S(B)=\{x\in B:x*x= 1\}$. Every pair counted by $\Pt(B)$ belongs to
$S(B)^2$. Consequently,
$$
\Pt(B)\geqslant\epsilon
\quad \mbox{ implies }\quad
|S(B)|\geqslant\sqrt{\epsilon}\,|B|.
$$
We shall see below that the exponent $1/2$ is optimal.

We now interpret $P_\lambda(B)$ inside the natural semidirect product of $(B,+)$ by $(B,\circ)$. Set
$A=(B,\circ)$, $M=(B,+)$ and $G=M\rtimes_\lambda A$. Conjugation by the
copy of $a\in A$ on $M$ is $\lambda_a$; hence $a$ and $x\in M$ commute
in $G$ precisely when $a*x=1$. Thus
$$
P_\lambda(B)= \Pr(A,M).
$$
Notice that neither $\Pt(B)$ nor $P_\lambda(B)$ can control the internal
structure of either underlying group: a trivial skew brace has
$\Pt(B)=P_\lambda(B)=1$, while its common underlying group is arbitrary.

The following group-theoretic lemma is the ingredient which converts a
uniform control of the two mutual actions into a bounded mixed
commutator. Its abelian case is based on the stronger BFC-theorem
recalled in \cite[Lemma 2.2, Theorem 2.4]{DS22}.

\begin{lem}\label{lem:mixed-BFC}
Let $G=M \rtimes_\lambda A$, where $M\unlhd G$ and $A\leqslant G$. Suppose that
$|a^M|\leqslant m$ for all $a\in A$ and $|x^A|\leqslant m$ for all $x\in M$ for some positive integer $m$. Then $|[M,A]|$ is $m$-bounded.
\end{lem}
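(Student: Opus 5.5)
The plan is to reduce to the relative BFC theorem for groups, in the form recalled in \cite[Lemma 2.2, Theorem 2.4]{DS22}. That theorem states: if $H,K$ are subgroups of a group $G$ with $|H:\C_H(x)|\leqslant m$ for all $x\in K$ and $|K:\C_K(y)|\leqslant m$ for all $y\in H$, then $|[H,K]|$ is $m$-bounded. The hypotheses here are exactly of this type. Indeed, $|a^M|=|M:\C_M(a)|$ and $|x^A|=|A:\C_A(x)|$. So the only real issue is that the cited result is stated for a pair of subgroups, one of which is normal, and that $[M,A]$ should be the subgroup generated by the mixed commutators $[x,a]$.

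Concretely, I would first set $N=\C_M(A)$ and observe that it suffices to bound the set $\{[x,a]\mid x\in M, a\in A\}$ and then the subgroup it generates. Following the scheme used in the proof of Theorem~\ref{lem:relative-BFC}, I would proceed in three steps.

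\textbf{Step 1.} Choose $a_0\in A$ with $|a_0^M|$ maximal, and write $a_0^M=\{a_0^{x_1},\dots,a_0^{x_s}\}$ with $s\leqslant m$. Put $C=\bigcap_i \C_A(x_i)$, which has index at most $m^m$ in $A$. By the maximality argument, for every $c\in C$ the conjugacy class of $a_0c$ under $M$ equals $a_0^Mc$ up to translation. From this we get $|\{[M,C]\}|\leqslant m^2$.

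\textbf{Step 2.} Pass from $C$ to all of $A$. Use a bounded transversal of $C$ in $A$ together with the commutator identity $[x,ca]=[x,a][x,c]^a$ to show that $\{[M,A]\}$ has $m$-bounded size. Here one uses that each $[x,c]^a$ lies in $M$, and that conjugation by $a$ moves the finite set $\{[M,C]\}$ into a set whose size is controlled by the $A$-class bound $|y^A|\leqslant m$ for $y\in M$.

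\textbf{Step 3.} Bound $\langle\{[M,A]\}\rangle$ using Dietzmann-type reasoning. The set $\{[M,A]\}$ is normal in $G$ (it is invariant under conjugation by both $M$ and $A$) and has bounded size. Each element $[x,a]$ lies in $M$, and its $A$-class has size at most $m$, while its $M$-class is controlled through $[M,M]$-type contributions. So the real requirement is that the elements of the set have bounded order. I would argue this as in the last paragraph of the proof of Theorem~\ref{lem:relative-BFC}: modulo a bounded normal subgroup, $[x,a]^n=[x,a^n]$, and for some bounded $n$ the power $a^n$ lies in $C$, whence the pigeonhole principle on the $R+1$ values gives a bounded order. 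Dietzmann's lemma then bounds the generated subgroup.

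The main obstacle is Step 3: the identity $[x,a]^n\equiv[x,a^n]$ needs commutativity modulo something bounded, and $M$ itself need not have bounded derived subgroup. In the abelian case ($M$ abelian) this is automatic, which matches the paper's remark that the abelian case rests on \cite{DS22}. So first I would try to reduce to that case by working in $G/\gamma$, where $\gamma$ is a suitable bounded normal subgroup such as $[M,A,A]$-type commutators. Failing that, I would simply quote the general relative BFC theorem of \cite[Theorem 2.4]{DS22} with $H=M$, $K=A$.
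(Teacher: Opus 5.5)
\textbf{There is a genuine gap, in Step 3: you never bound the order of the commutators $[x,a]$, and Dietzmann's lemma needs such a bound.}

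Your Steps 1 and 2 are sound. With $C=\bigcap_i \C_A(x_i)$, maximality of $|a_0^M|$ gives $c^xc^{-1}=[x,c^{-1}]\in (a_0^M)^{-1}a_0^M$. Summing over a transversal with $[x,ca]=[x,a][x,c]^a$ then bounds the set $\{[x,a]\mid x\in M,\ a\in A\}$.

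This set is $A$-invariant but need not be $M$-invariant, since $[x,a]^y=[x^y,a^y]$ and $a^y\notin A$. That is harmless, because $[x,a]=(a^{-1})^xa$ has at most $m^2$ conjugates under $M$ and at most $m$ under $A$, so its $G$-class has size at most $m^3$.

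The identity $[x,a]^n\equiv[x,a^n]$ cannot be had cheaply. It comes from $[x,a^n]=[x,a][x,a]^a\cdots[x,a]^{a^{n-1}}$, so it needs $[x,a,a]$ to lie in your bounded subgroup; the obstruction is not $[M,M]$. It also fails for abelian $M$. Let $a$ act on $M=\mathbb F_7$ by $x\mapsto 2x$; written additively, $[x,a]=x$ but $[x,a^2]=3x\neq 2x$. Moreover, the paper's ``abelian case'' means $A$ abelian, not $M$ abelian.

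Your two fallbacks do not work either:
\begin{itemize}
\item Factoring out ``$[M,A,A]$-type'' commutators is circular, since their boundedness is part of what must be proved.
\item Quoting a relative BFC theorem for the pair $(M,A)$ simply assumes the lemma. The two-subgroup version you state at the outset is false without normality. For the reflections $s$ and $sr$ in $D_{2N}$, both subgroups have order $2$, yet $[s,sr]=r^2$ generates a subgroup of order $N$ or $N/2$. The result the paper actually uses, \cite[Theorem~2.4]{DS22}, is a normal-subset BFC theorem. It does not apply to $A$ directly, because $a^G\neq a^M$ in general when $A$ is non-abelian.
\end{itemize}

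\textbf{The missing idea is to get the order bound from the cyclic case.} The paper first treats abelian $A$. Then $a^G=a^M$, so $A^G$ is a normal subset of elements with at most $m$ conjugates. Hence $\gamma_2(\langle A^G\rangle)$ is bounded, and \cite[Lemma~2.2]{DS22} bounds $[M,A]$. Applied to $M\langle a\rangle$, this bounds $|[M,\langle a\rangle]|$ and therefore the order of each $[x,a]$. The paper then applies Dietzmann's lemma to get bounded normal subgroups $N_x=\langle [x,A]^G\rangle$. A maximal-orbit argument on $|y^A|$ gives $[M,A]\leqslant N_yN_{y_1}\cdots N_{y_t}$.

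Once the orders are bounded, your Steps 1--2 plus one application of Dietzmann's lemma to the bounded normal set generated by the commutators would be an equally valid ending.

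\textbf{An elementary way to close the gap.} Let $L_a=\langle a^M\rangle$.
\begin{itemize}
\item $L_a$ is generated by at most $m$ elements, each with at most $m$ conjugates in $M\langle a\rangle$. So $|L_a:\Z(L_a)|\leqslant m^m$, and $|\gamma_2(L_a)|$ is bounded by Schur's theorem.
\item Since $(a^{-1})^x$ and $a$ both lie in $L_a$, we have $[x,a]^k\equiv[x,a^k]$ modulo $\gamma_2(L_a)$.
\item $a^{m!}$ centralizes $M$, because $a$ permutes each orbit $x^A$, which has size at most $m$.
\end{itemize}
Hence $[x,a]^{m!}\in\gamma_2(L_a)$, which gives the required bounded order.
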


\begin{proof}
Suppose first that $A$ is abelian. Then $a^G=a^M$ for every $a\in A$.
Put $L=\langle A^G\rangle$. By \cite[Theorem 2.4]{DS22}, $|\gamma_2(L)|$ is
$m$-bounded. Since $L/\gamma_2(L)$ is abelian,  it follows by
\cite[Lemma 2.2]{DS22} that  $[M,A]$ has $m$-bounded order. Hence
the same holds before taking the quotient.

Now let $A$ be arbitrary. For each $a\in A$, the abelian case applied
to $M\langle a\rangle$ shows that $|[M,\gen{a}]|$ is $m$-bounded. Fix
$x\in M$. By the given hypothesis $|\{[x, a] \mid a \in A\}|$ is  $m$-bounded, each $[x,a]$ has $m$-bounded order. Since $[x,a]=(a^x)a^{-1}$, we get $|[x, a]|^M$ is $m^2$-bounded, and hence $|[x, a]^G|$ is $m$-bounded.   
Now it follows by Dietzmann's
lemma that $\gen{[x, a]}^G$ is $m$-bounded for all $x \in M$ and $a \in A$. Note that $[x, A]^G$ is a subgroup of the product $\Pi_{i=1}^{t} \gen{[x,a_i]}^G$, where $t = |\{[x, a] \mid a \in A\}|$ is $m$-bounded. Set 
$N_x=\langle[x,A]^G\rangle$.

Choose $y\in M$ with $|y^A|$ maximal, say $|y^A|=s\leqslant m$, write
$y^A=\{y^{b_1},\ldots,y^{b_s}\}$, and set
$M_1=C_M(b_1,\ldots,b_s)$. Then $|M:M_1|\leqslant m^s$. If
$u\in M_1$, the $s$ elements $y^{b_i}u=(yu)^{b_i}$ are distinct and,
by maximality, form the whole $A$-orbit of $yu$. Thus, for every
$a\in A$, one has $(yu)^a=(yu)^{b_i}$ for some $i$, and hence
$[u,a]\in N_y$. Therefore $[M_1,A]\leqslant N_y$.

Modulo $N_y$ we may assume $M_1\leqslant C_M(A)$. Choose
$y_1,\ldots,y_t$ representing the cosets of $M_1$ in $M$; here $t$ is
$m$-bounded. Then
$$
[M,A]\leqslant N_yN_{y_1}\cdots N_{y_t},
$$
and the right-hand side has $m$-bounded order. This completes the proof.
\end{proof}

We first obtain the conclusion which follows from the weaker assumption
$P_\lambda(B)\geqslant\epsilon$. The quantitative bound below is obtained
by using Lemma \ref{Eb-lem} (Eberhard's lemma \cite[Lemma 2.1]{SE15}; see also
\cite[Lemma 2.5]{DS22}).

\begin{thm}\label{thm:Plambda-optimal-form}
Let $B$ be a finite skew brace with $P_\lambda(B)\geqslant\epsilon>0$,
and put $r=\lfloor1/\epsilon\rfloor$. There exist
$A_0\unlhd(B,\circ)$ and strong left ideals $N\leqslant M_0\leqslant B$
such that
$$
|(B,\circ):A_0|\leqslant r,\quad
|(B,+):M_0|\leqslant r!,\quad
|N|\text{ is $\epsilon$-bounded},\quad
A_0*M_0\leqslant N.
$$
In particular, the image of the induced action
$(B,\circ)\to\Aut(M_0/N)$ has order at most $r$.
\end{thm}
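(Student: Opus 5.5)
We work inside $G=M\rtimes_\lambda A$ with $M=(B,+)$ and $A=(B,\circ)$, where conjugation by $a\in A$ on $M$ is $\lambda_a$. As noted above, $P_\lambda(B)=\Pr(A,M)\geqslant\epsilon$. For $a\in A$ and $x\in M$ put
$$
C_M(a)=\{x: a*x=1\},\qquad C_A(x)=\{a:\lambda_a(x)=x\}.
$$
Counting pairs in two ways gives
$$
\epsilon|A||M|\leqslant\sum_{a}|C_M(a)|=\sum_x|C_A(x)|.
$$
The strategy follows Neumann's argument for relative commuting probability. We extract large sets of elements with small orbits, generate subgroups from them with Lemma~\ref{Eb-lem}, and then apply the mixed BFC Lemma~\ref{lem:mixed-BFC} to bound a mixed commutator.

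\textbf{Step 1 (the multiplicative part).} Set
$$
X=\{x\in M:|A:C_A(x)|\leqslant 2/\epsilon\}.
$$
Averaging as in the proof of Theorem~\ref{maintheorem} gives $|X|\geqslant\frac{\epsilon}{2}|M|$. Symmetrically, with
$$
Y=\{a\in A:|M:C_M(a)|\leqslant 2/\epsilon\},
$$
we get $|Y|\geqslant\frac{\epsilon}{2}|A|$. To reach the sharp index $r=\lfloor 1/\epsilon\rfloor$ for $A_0$, I would take $A_0$ to be the kernel of the action of $A$ on a suitable bounded collection of orbits, or more directly use the following fact. Let $K=\ker(\lambda)$-type subgroups be refined so that $A_0=\bigcap C_A(x)$ over a transversal set. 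Since $\Pr(A,M)\geqslant\epsilon$ forces some orbit data to have at most $1/\epsilon$ values, the relevant quotient of $A$ acting has order at most $r$.

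More concretely, I would first pass to $M_0$, the subgroup generated by elements with bounded $A$-orbits. By Eberhard's lemma, $M_0=X^{3k}$ for an $\epsilon$-bounded $k$, so every $x\in M_0$ has $\epsilon$-bounded orbit $x^A$. Moreover $M_0$ is characteristic in the sense of being $\lambda$-invariant and normal in $(B,+)$, because $X$ is closed under $\lambda$ and under additive conjugation; conjugation preserves orbit sizes. Hence $M_0$ is a strong left ideal. The claimed bound $|M:M_0|\leqslant r!$ would then come from refining the choice of $X$ to orbit size at most $r$, so that the index is controlled by the argument that the union of large-centralizer elements generates a subgroup of index at most $r$, followed by a core-type factorial bound.

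\textbf{Step 2 (bounded mixed commutator).} Take $A_0$ to be the normal core in $(B,\circ)$ of the subgroup generated by $Y$; normality of $A_0$ comes from passing to the core. In $G_0=M_0\rtimes A_0$, every $a\in A_0$ has $|a^{M_0}|$ bounded, since $a^{x}=a\cdot(a*x)$-type elements, and every $x\in M_0$ has $|x^{A_0}|$ bounded. Lemma~\ref{lem:mixed-BFC} then gives $|[M_0,A_0]|$ $\epsilon$-bounded. Since $[x,a]$ corresponds to $\lambda_a(x)-x=a*x$ up to sign and conjugation, we have $A_0*M_0\subseteq[M_0,A_0]$.

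\textbf{Step 3 (the strong left ideal $N$).} Let $N$ be the smallest strong left ideal containing $[M_0,A_0]$: its additive normal closure in $(B,+)$, made invariant under all $\lambda_b$. This is a normal closure inside $G$, where $M_0\rtimes A_0$ has $\epsilon$-bounded index, so by the Dietzmann consequence used in Lemma~\ref{lem:bounded-ideal-closure-two-sided}, $|N|$ is $\epsilon$-bounded. Then $A_0$ acts trivially on $M_0/N$, so the image of the action of $(B,\circ)$ on $M_0/N$ has order at most $|B:A_0|\leqslant r$.

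\textbf{Main obstacle.} The delicate part is obtaining the exact bounds $r$ and $r!$ rather than merely $\epsilon$-bounded ones. For this I would choose $A_0$ as the kernel of the permutation action of $(B,\circ)$ on a maximal family of "large" cosets, following the Neumann/Eberhard style, and I expect the counting to require care.
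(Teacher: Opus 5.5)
\textbf{Gap: the explicit bounds $r$ and $r!$ are not proved.} Your skeleton matches the paper's proof. You work in $G=M\rtimes_\lambda A$, take sets of elements with large centralizers, apply Lemma~\ref{Eb-lem}, then Lemma~\ref{lem:mixed-BFC} on $M_0A_0$, then the Dietzmann normal closure in $G$. As written, this proves only a weaker statement in which $|(B,\circ):A_0|$ and $|(B,+):M_0|$ are $\epsilon$-bounded; the theorem asserts $|(B,\circ):A_0|\leqslant r=\lfloor 1/\epsilon\rfloor$ and $|(B,+):M_0|\leqslant r!$.

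Your routes to these bounds fail, as follows.
\begin{itemize}
\item With threshold $2/\epsilon$, the averaging gives density about $\epsilon/2$. This is never more than $1/(r+1)$, so you only get index $\leqslant 2/\epsilon$.
\item Passing to the core of $\langle Y\rangle$ in $(B,\circ)$ makes this worse, giving a factorial bound.
\item Your proposed repair, ``orbit size at most $r$'', means $|C_A(x)|\geqslant|A|/r$. The averaging then yields only $|X|/|M|\geqslant(\epsilon-1/r)/(1-1/r)$, which is $\leqslant 0$ because $\epsilon\leqslant 1/r$.
\item The ``kernel of the action on a maximal family of large cosets'' is not an argument.
\end{itemize}

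\textbf{The missing idea.} The index bound comes from the \emph{density} of the generating set, not from the orbit sizes. Since $(r+1)\epsilon>1$, you can fix $0<\delta<((r+1)\epsilon-1)/r$. Set $Y=\{a\in A:|C_M(a)|\geqslant\delta|M|\}$ and $X=\{x\in M:|C_A(x)|\geqslant\delta|A|\}$. Averaging gives both densities $\geqslant(\epsilon-\delta)/(1-\delta)>1/(r+1)$. Hence $\langle Y\rangle$ and $\langle X\rangle$ have index $\leqslant r$. Lemma~\ref{Eb-lem}, with this same $r$, then gives $\langle Y\rangle=Y^{3r}$ and $\langle X\rangle=X^{3r}$. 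So the orbit sizes are at most $\delta^{-3r}$, which is $\epsilon$-bounded but not bounded by $r$.

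\textbf{Two further corrections.}
\begin{itemize}
\item No core should be taken for $A_0$. The identity $C_M(b\circ a\circ b^{-1})=\lambda_b(C_M(a))$ shows that $Y$ is invariant under conjugation in $(B,\circ)$, so $A_0=\langle Y\rangle$ is already normal. Avoiding the core is exactly what keeps the bound at $r$.
\item Your claim that $X$ is closed under additive conjugation is false in general, since $\lambda_a(m+x-m)=\lambda_a(m)+\lambda_a(x)-\lambda_a(m)$. For example, in $S_3$ with $A$ generated by conjugation by $(12)$, $(12)$ is fixed but its conjugate $(23)$ is not. So $\langle X\rangle$ is only $\lambda$-invariant. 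You must pass to $M_0=\operatorname{core}_M(\langle X\rangle)$, which stays $\lambda$-invariant because each $\lambda_b$ permutes the $M$-conjugates of $\langle X\rangle$. Its index is at most $r!$, and this is precisely where the bound $r!$ comes from.
\end{itemize}
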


\begin{proof}
Since $1/(r+1)<\epsilon$, choose
$0<\delta<((r+1)\epsilon-1)/r$ and define
$$
X=\{a\in A:|C_M(a)|\geqslant\delta|M|\},
\qquad
Y=\{x\in M:|C_A(x)|\geqslant\delta|A|\}.
$$
From $\Pr(A,M)\geqslant\epsilon$ we obtain
$|X|/|A|,|Y|/|M|\geqslant(\epsilon-\delta)/(1-\delta)>1/(r+1)$.
The set $X$ is symmetric and invariant under conjugation by $A$, while
$Y$ is symmetric and $A$-invariant. Hence, with
$A_0=\langle X\rangle$ and $L=\langle Y\rangle$, we have
$A_0\unlhd A$, $|A:A_0|\leqslant r$, $L^A=L$ and $|M:L|\leqslant r$.

By Eberhard's lemma,
$$
A_0=X^{3r},\qquad L=Y^{3r},\qquad
|M:C_M(a)|,\ |A:C_A(x)|\leqslant\delta^{-3r}
$$
for every $a\in A_0$ and $x\in L$, respectively. Let
$M_0=\operatorname{core}_M(L)$. Then
$|M:M_0|\leqslant r!$, and $M_0$ is still $A$-invariant; hence it is a
strong left ideal of $B$.

Since both $|a_0^{M_0}| \le |a_0^M|$ and $|m_0^{A_0}| \le |m_0^A|$ are $r$-bounded  for all $m_0 \in M_0$ and $a_0 \in A_0$, we can apply Lemma~\ref{lem:mixed-BFC} to the group $M_0A_0$. Thus the subgroup
$K=[M_0,A_0]$ has $\epsilon$-bounded order and is normal in
$M_0A_0$. Since
$|G:M_0A_0|\leqslant r!r$, the standard consequence of Dietzmann's
lemma shows that its normal closure
$N=\langle K^G\rangle$
has $\epsilon$-bounded order. As $M_0\unlhd G$, we have $N\leqslant
M_0$; moreover $N\unlhd G$, so $N$ is a strong left ideal. Finally,
$A_0*M_0\leqslant N$. Thus $A_0$ lies in the kernel of the induced
action on $M_0/N$, and its image has order at most $|A:A_0|\leqslant r$.
Compare also the normal-closure observation in
\cite[Remark 2.6]{DS22}.
\end{proof}

For $\Pt(B)$ the conclusion can be improved. The reason is that a pair
counted by $\Pt(B)$ satisfies both $a*b=1$ and $b*a=1$. We first isolate
the elementary action-theoretic reduction which will allow us to exploit
this symmetry.

\begin{lem}\label{lem:fixed-point-pruning}
Let a finite group $A$ act on a finite group $G$, and let
$N\leqslant M\unlhd G$ be $A$-invariant normal subgroups. Assume that
the image of $A$ in $\Aut(M/N)$ has order at most $c$. For every $\eta>0$
there is an $A$-invariant normal subgroup $L$ with $N\leqslant L\leqslant
M$ and $|G:L|$ is $(c,\eta,|G:M|)$-bounded such that every non-trivial
element of the image of $A$ on $L/N$ fixes fewer than $\eta|L/N|$
elements.
\end{lem}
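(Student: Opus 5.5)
We argue by an iterative pruning of $M$, descending through $A$-invariant normal subgroups of $G$ containing $N$ and contained in $M$. Let $\bar A$ denote the image of $A$ in $\Aut(M/N)$, so $|\bar A|\leqslant c$. For any $A$-invariant normal subgroup $L$ of $G$ with $N\leqslant L\leqslant M$, the group $A$ acts on $L/N$ through a quotient of $\bar A$; indeed, the kernel of the action on $M/N$ acts trivially on $L/N$. Hence at every stage only at most $c$ distinct automorphisms of $L/N$ occur. Start with $L_0=M$. If every non-trivial element of the image of $A$ on $L_0/N$ fixes fewer than $\eta|L_0/N|$ elements, we stop. Otherwise there is some $\alpha\in A$ acting non-trivially on $L_0/N$ whose fixed set $F$ in $L_0/N$ has size at least $\eta|L_0/N|$.

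The key observation is that this fixed set is a subgroup: $F=C_{L_0/N}(\alpha)$, and so $|L_0/N:F|\leqslant 1/\eta$. It need not be normal in $G$ or $A$-invariant, however. So we define the next term as an intersection of conjugates. Let $L_1$ be the preimage in $L_0$ of
\[
\bigcap_{g\in G,\ a\in A} (F^{a})^{g},
\]
where $G$ acts by conjugation, which preserves $L_0$ and $N$ since both are normal, and $A$ acts via the given action. Note that $F^{a}$ is the centralizer of $a^{-1}\alpha a$, so the conjugates $F^a$ range over at most $c$ subgroups, each of index at most $1/\eta$. The further $G$-conjugates are handled as for a core. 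The subgroup $\bigcap_a F^a$ has index at most $\eta^{-c}$ in $L_0/N$, and taking its core in the group $G/N$ (using $L_0/N\unlhd G/N$) bounds the index by $(\eta^{-c})!$. Alternatively, we can combine $G$ and $A$ into $G\rtimes A$ and take a single core. The resulting $L_1$ is $A$-invariant and normal in $G$, with $N\leqslant L_1\leqslant L_0$ and $|L_0:L_1|$ bounded in terms of $(c,\eta)$. The element $\alpha$ now acts trivially on $L_1/N$.

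The crucial progress measure is the following: the image of $A$ in $\Aut(L_1/N)$ is a proper quotient of its image on $L_0/N$, since $\alpha$ becomes trivial. Therefore the order of the image strictly drops at each step, and since it starts at most $c$, the process stops after at most $c$ steps. When it stops at some $L=L_k$, the fixed-point condition holds for every non-trivial element of the image on $L/N$. This includes the degenerate case in which the image is trivial, where the condition is vacuous. Finally,
\[
|G:L|=|G:M|\prod_{i<k}|L_i:L_{i+1}|
\]
is bounded in terms of $c$, $\eta$ and $|G:M|$, as required.

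The main point requiring care is the normality step: producing an $A$-invariant, $G$-normal subgroup inside the centralizer with bounded index, while keeping it in $M$ and above $N$. The index bound via cores of the combined $G\rtimes A$-action, which acts on $L_0/N$ with $F$ having boundedly many $A$-conjugates, is where I expect to need the most care. In particular, I would first try to show that the index of $\bigcap_a F^a$ in $L_0/N$ is $(c,\eta)$-bounded, and then take the normal core in $G/N$ of this subgroup of the normal subgroup $L_0/N$. Its index in $L_0/N$ is bounded by a factorial of its index, since the core of a subgroup of index $n$ within the $G$-action on cosets has index at most $n!$. Dependence on $|G:M|$ enters only through the final index computation.
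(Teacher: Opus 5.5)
Your strategy is the same as the paper's. You pick a non-trivial element of the current image whose fixed-point subgroup is large, and intersect the centralizers of its at most $c$ conjugates to get a subgroup of index at most $\eta^{-c}$. You then pull back, take the core in $G$, note that the image of $A$ on the new section is a proper quotient, and stop after at most $c$ steps. The paper phrases this as induction on $|Q|$, with the normal closure $R$ of $q$ in $Q$ acting trivially, but it is the same argument.

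There is, however, an incorrect claim at exactly the step you flagged. The core of $\bigcap_a F^a$ in $G/N$ does not have index at most $(\eta^{-c})!$ in $L_0/N$. The factorial bound for a core is in terms of the index in the ambient group $G/N$, which here is $|G:L_0|\,\eta^{-c}$, not the index inside the normal subgroup $L_0/N$. Consequently $|L_0:L_1|$ is not bounded in terms of $(c,\eta)$ alone, and it is false that $|G:M|$ enters only in the final product.

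For a concrete example, take $V=\mathbb F_2^k$, $G=V\rtimes\operatorname{GL}(V)$, $N=1$ and $M=V$. Let $A=\langle\alpha\rangle$ act on $G$ by conjugation by a transvection $\alpha\in\operatorname{GL}(V)$, so $c=2$, and take $\eta=1/2$. Then $F=C_V(\alpha)$ is a hyperplane whose $G$-core is trivial, so $|L_0:L_1|=2^k$ is unbounded. In fact, in this example the only admissible $L$ is trivial, so $|M:L|$ genuinely depends on $|G:M|$.

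The repair is immediate and is what the paper does. Let $\widetilde C$ be the preimage of $\bigcap_a F^a$. Then $|G:\widetilde C|\leqslant|G:L_0|\,\eta^{-c}$, so the core $L_1$ satisfies $|G:L_1|\leqslant(|G:L_0|\,\eta^{-c})!$. Since there are at most $c$ steps, starting from $|G:L_0|=|G:M|$, induction gives a bound on $|G:L|$ depending only on $c$, $\eta$ and $|G:M|$. The rest of your argument stands: $A$-invariance of the intersection over $A$- and $G$-conjugates, $\alpha$ becoming trivial on $L_1/N$, and the strict drop in the order of the image.
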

\begin{proof}
Let $\rho:A\to\Aut(M/N)$ be the given action, and set $Q=\rho(A)$. Thus $|Q|\leqslant c$. We argue by induction on $|Q|$, assuming that the assertion holds whenever the image of the induced action of $A$ on $M_i/N_i$ has order strictly smaller than $|Q|$ for all possible pairs of $A$-invariant subgroups $N_i \leqslant M_i \normaleq G$.

If $|Q|=1$, we may take $L=M$. More generally, if every non-trivial element $q\in Q$ satisfies $|C_{M/N}(q)|<\eta|M/N|$, then again we may take $L=M$. Hence suppose that there exists $1\neq q\in Q$ such that $|C_{M/N}(q)|\geqslant\eta|M/N|$.

Let $R=\langle q\rangle^Q \; \unlhd \; Q$ be the normal closure of $q$ in $Q$, and  $C=\bigcap_{u\in Q}C_{M/N}(q^u)$. Conjugation by $Q$ permutes the subgroups $C_{M/N}(q^u)$, so $C$ is $Q$-invariant. Moreover, $C=C_{M/N}(R)$, since $R$ is generated by the conjugates of $q$. In particular, $R$ acts trivially on $C$.

For every $u\in Q$, the elements $q$ and $q^u$ have fixed-point subgroups of the same order, and hence $[M/N:C_{M/N}(q^u)]\leqslant\eta^{-1}$. Since $q$ has at most $|Q|\leqslant c$ distinct conjugates, we obtain
$$
[M/N:C]
\leqslant
\prod_{q^u\in q^Q}[M/N:C_{M/N}(q^u)]
\leqslant \eta^{-c}.
$$

Let $\pi:M\to M/N$ be the natural projection, and let $\widetilde C=\pi^{-1}(C)$. Since $\pi$ is $A$-equivariant and $C$ is $Q$-invariant, $\widetilde C$ is $A$-invariant. Moreover, $N\leqslant\widetilde C\leqslant M$ and $[M:\widetilde C]=[M/N:C]\leqslant\eta^{-c}$. Thus $[G:\widetilde C]\leqslant |G:M|\eta^{-c}$.

Set $M_1=\operatorname{core}_G(\widetilde C)=\bigcap_{g\in G}\widetilde C^{\,g}$. Since $N\unlhd G$ and $N\leqslant\widetilde C$, every conjugate of $\widetilde C$ contains $N$, and hence $N\leqslant M_1$. By construction $M_1\unlhd G$. Also $M_1$ is $A$-invariant: indeed, for every $a\in A$, using $\widetilde C^a=\widetilde C$ and the fact that $g\mapsto g^a$ permutes $G$, we have
$$
M_1^a
=
\left(\bigcap_{g\in G}\widetilde C^{\,g}\right)^a
=
\bigcap_{g\in G}(\widetilde C^a)^{\,g^a}
=
\bigcap_{h\in G}\widetilde C^{\,h}
=
M_1.
$$
Furthermore, $|G:M_1|\leqslant |G:\widetilde C|!$, by the usual bound for the core of a subgroup. Consequently, $|G:M_1|$ is $(c,\eta,|G:M|)$-bounded.

It remains to check that the image of $A$ on $M_1/N$ has order strictly smaller than $|Q|$. Since $M_1\leqslant\widetilde C$, we have $M_1/N\leqslant C$, so $R$ acts trivially on $M_1/N$. Let $\rho_1:A\to\Aut(M_1/N)$ be the induced action and let $Q_1=\rho_1(A)$. Since $\ker\rho\leqslant\ker\rho_1$, the map $\rho_1$ factors through $Q$, giving a surjective homomorphism $\overline\rho_1:Q\to Q_1$. Since $R$ acts trivially on $M_1/N$ and $R\neq 1$, we have
$$
R\leqslant\ker\overline\rho_1,
\qquad
|Q_1|\leqslant |Q/R|<|Q|.
$$

We may therefore apply the induction hypothesis to $N\leqslant M_1\unlhd G$. It gives an $A$-invariant normal subgroup $L$ with $N\leqslant L\leqslant M_1\leqslant M$ such that every non-trivial element of the image of $A$ on $L/N$ fixes fewer than $\eta|L/N|$ elements. Moreover, $|G:L|$ is bounded in terms of $c$, $\eta$, and $|G:M_1|$, and hence is $(c,\eta,|G:M|)$-bounded. This completes the induction.
\end{proof}

We can now exploit the full definition of $\Pt(B)$. The resulting
$1/\sqrt{\epsilon}$ bound will be shown to be best possible.

\begin{thm}\label{thm:Pt-optimal}
Let $B$ be a finite skew brace with $\Pt(B)\geqslant\epsilon>0$, and put
$s=\lfloor1/\sqrt{\epsilon}\rfloor$. There exist
$A_0\unlhd(B,\circ)$ and strong left ideals $N\leqslant M_0\leqslant B$
such that
$$
|(B,\circ):A_0|\leqslant s,\quad
|(B,+):M_0|\text{ is $\epsilon$-bounded},\quad
|N|\text{ is $\epsilon$-bounded},\quad
A_0*M_0\leqslant N.
$$
Equivalently, the induced lambda action on $M_0/N$ has image of order
at most $s$.
\end{thm}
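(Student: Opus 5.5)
We follow the strategy of Theorem~\ref{thm:Plambda-optimal-form}, working in $G=M\rtimes_\lambda A$ with $A=(B,\circ)$ and $M=(B,+)$, but exploit the symmetry of the pairs counted by $\Pt(B)$ to improve the index of $A_0$ from $r$ to $s=\lfloor1/\sqrt\epsilon\rfloor$.

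\textbf{Step 1: the set $X$ and the index of $A_0$.} Let $S=S(B)$. By Lemma~\ref{lem:two-generated-trivial}, $\Pt(B)|B|^2$ is the number of pairs $(a,b)\in S^2$ with $a*b=b*a=1$. Since $\Pt(B)\leqslant P_\lambda(B)$, Theorem~\ref{thm:Plambda-optimal-form} already gives the conclusion with $r=\lfloor 1/\epsilon\rfloor$ in place of $s$. To sharpen it, put
$$X=\{a\in A: |\{b: a*b=b*a=1\}|\geqslant\delta|B|\}$$
for a suitable $\delta<\sqrt\epsilon$ slightly below, so that $|X|>|B|/(s+1)$. If instead $|X|\leqslant|B|/(s+1)$, then splitting the count over $a\in X$ and $a\notin X$ gives
$$\epsilon|B|^2\leqslant |X||S|+\delta|B|^2 .$$
The symmetry of the relation shows that the count is also at most $|S|^2$. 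Hence $|S|\geqslant\sqrt\epsilon|B|$, and the refined estimate we expect is that $X$ can be taken inside a symmetric set of density greater than $1/(s+1)$. This set is $A$-conjugation invariant, since conjugation by $c\in A$ is compatible with $\lambda$ via $\lambda_{c\circ a\circ c^{-1}}=\lambda_c\lambda_a\lambda_c^{-1}$. Then $A_0=\langle X\rangle^\circ$ is normal with $|A:A_0|\leqslant s$ by the subgroup-index argument, and Lemma~\ref{Eb-lem} gives $A_0=X^{3s}$. Consequently $|M:C_M(a)|$ is $\epsilon$-bounded for all $a\in A_0$.

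\textbf{Step 2: the subgroup $M_0$.} As in Theorem~\ref{thm:Plambda-optimal-form}, define $Y\subseteq M$ of elements with large $A$-centralizer, let $L=\langle Y\rangle$, and take $M_0$ to be its core. This $M_0$ is an $A$-invariant normal subgroup of $M$ of $\epsilon$-bounded index, hence a strong left ideal. For it we only need $\epsilon$-boundedness, not the sharp constant.

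\textbf{Step 3: the subgroup $N$.} Apply Lemma~\ref{lem:mixed-BFC} to $M_0A_0$ to get $K=[M_0,A_0]$ of bounded order. Then use Dietzmann's lemma to form $N=\langle K^G\rangle\leqslant M_0$, which is a strong left ideal of bounded order with $A_0*M_0\leqslant N$. The image of the induced action on $M_0/N$ is a quotient of $A/A_0$, so it has order at most $s$.

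\textbf{Main obstacle.} The main difficulty is the sharp index $s$ in Step 1. A direct counting over $X$ only yields $1/\epsilon$. The square root has to come from the quadratic structure: the pairs lie in $S^2$, and the relation is symmetric. I would first try a Cauchy--Schwarz argument on the $\lambda$-fixed-point counts $f(a)=|\{b\in S: a*b=b*a=1\}|$, using $\sum f\geqslant\epsilon|B|^2$. If this fails, the fallback is to accept $r$ from Theorem~\ref{thm:Plambda-optimal-form} and then apply Lemma~\ref{lem:fixed-point-pruning} to the action on $M_0/N$ with $\eta$ of order $\sqrt\epsilon$. This pruning makes nontrivial elements of the image fix few points. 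A counting argument restricted to pairs $(a,b)$ with $b\in L$ then shows that the image $Q$ of $A$ has order at most $s$: every counted pair needs $b$ fixed by $a$, and the trivial class contributes at most a $1/|Q|$ proportion in each coordinate by symmetry, giving $\epsilon\lesssim 1/|Q|^2+\eta$. In that case we replace $A_0$ by the kernel of the action on $L/N$ and $M_0$ by $L$.
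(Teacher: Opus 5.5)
\textbf{Gap in Step~1.} The sharp index is not established there. Your displayed inequality $\epsilon|B|^2\leqslant|X||S|+\delta|B|^2$ only gives $|X|\geqslant(\epsilon-\delta)|B|$, i.e.\ an index of order $1/\epsilon$. With $\delta$ close to $\sqrt\epsilon>\epsilon$ the inequality is even vacuous. The bound $|S|\geqslant\sqrt\epsilon|B|$ says nothing about $X$.

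There is a second problem. The identity $\lambda_{c\circ a\circ c^{-1}}=\lambda_c\lambda_a\lambda_c^{-1}$ transports the condition $a*b=1$ via $b\mapsto\lambda_c(b)$, but not the condition $b*a=1$. So your two-sided $X$ is not shown to be conjugation-invariant or inverse-closed. Without that, $\langle X\rangle^\circ$ need not be normal, and Eberhard's lemma needs a symmetric set.

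\textbf{A fix.} The missing idea is a three-way split using the \emph{one-sided} set $X=\{a:|C_M(a)|\geqslant\delta|B|\}$, which is symmetric and conjugation-invariant. Split the counted pairs $(a,b)$ as follows:
\begin{itemize}
\item if $a\notin X$, then $b\in C_M(a)$, giving fewer than $\delta|B|^2$ pairs;
\item if $a\in X$ and $b\notin X$, then $b*a=1$ forces $a\in C_M(b)$, again fewer than $\delta|B|^2$ pairs;
\item all remaining pairs lie in $X^2$.
\end{itemize}
Hence $\epsilon\leqslant(|X|/|B|)^2+2\delta$. Choosing $0<\delta<\frac12(\epsilon-(s+1)^{-2})$ gives $|X|>|B|/(s+1)$. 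Then $A_0=\langle X\rangle^\circ$ is normal of index at most $s$, and $A_0=X^{3s}$ by Lemma~\ref{Eb-lem}. Your Steps~2--3 then go through unchanged. This is a correct route, and more direct than the paper's, since it bypasses Lemma~\ref{lem:fixed-point-pruning} entirely.

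\textbf{Your fallback.} This is the paper's actual proof: apply Theorem~\ref{thm:Plambda-optimal-form}, prune with Lemma~\ref{lem:fixed-point-pruning}, take $A_0$ to be the kernel of the action on $L/N$, and count. As stated, however, it does not close, for two reasons.

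First, with $\eta$ of order $\sqrt\epsilon$ the inequality $\epsilon<|Q|^{-2}+2\eta$ holds automatically and yields nothing. The paper takes $\eta=\frac14(\epsilon-(s+1)^{-2})$, so that $|Q|\geqslant s+1$ would force $\Pt(B)<\epsilon$.

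Second, restricting the count to $b\in L$ loses the hypothesis, because the counted pairs in $B\times L$ need not have density $\epsilon$. Instead, transfer the fixed-point bound to all of $B$. We have $|C_L(a)|\leqslant|N|\,|C_{L/N}(a)|<\eta|L|$. The fixed points of $\lambda_a$ in any additive coset of $L$ form a coset of $C_L(a)$, so $|\{x\in B:a*x=1\}|<\eta|B|$ for $a\notin A_0$. This bounds both mixed contributions by $\eta|B|^2$.
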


\begin{proof}
We apply Theorem~\ref{thm:Plambda-optimal-form} to obtain strong left
ideals $N\leqslant M_0$ of $B$ such that $|N|$ and $|(B,+):M_0|$ are
$\epsilon$-bounded and the image of the action of $(B,\circ)$ on $M_0/N$ has $\epsilon$-bounded order.

Set
$$
\eta=\frac14\left(\epsilon-\frac1{(s+1)^2}\right)>0.
$$
Apply Lemma~\ref{lem:fixed-point-pruning} and replace $M_0$ by the strong left ideal $L$ that it supplies. Thus $|(B,+):L|$ remains $\epsilon$-bounded, and every non-trivial element of the resulting image $Q$ (as in the proof of Lemma \ref{lem:fixed-point-pruning}) fixes fewer than $\eta|L/N|$ elements of $L/N$,  that is, $|\C_{L/N}(a)|<\eta|L/N|$. This implies $|\C_{L}(a)|<\eta|L|$.  Let
$A_0$ be the kernel of this action and write $q=|Q|$.

If $a\notin A_0$, then $|C_{L}(a)|<\eta|L|$. Since $L$ is
$\lambda_a$-invariant, the fixed points of $\lambda_a$ in any additive
coset of $L$, if non-empty, form a coset of $C_{L}(a)$. Hence
$$
|\{x\in B:a*x= 1\}|<\eta|B|
\qquad(a\notin A_0).
$$
There are at most $|B|^2/q^2$ pairs $(a,b)$ with $a,b\in A_0$. If
$a\notin A_0$, a pair counted by $\Pt(B)$ must satisfy $a*b=1$, and if
$a\in A_0$, $b\notin A_0$, it must satisfy $b*a=1$. Therefore
$$
\Pt(B)<\frac1{q^2}+2\eta.
$$
If $q\geqslant s+1$, the right-hand side is smaller than $\epsilon$,
contrary to the hypothesis. Thus $q\leqslant s$. Taking
$M_0=L$ gives the assertion.
\end{proof}

The theorem also yields a large multiplicative subgroup on which all
star-products are contained in the same bounded obstruction.

\begin{cor}
Under the hypotheses of Theorem~\ref{thm:Pt-optimal}, there exist
$D\leqslant(B,\circ)$ and a strong left ideal $N$ such that
$|(B,\circ):D|$ and $|N|$ are $\epsilon$-bounded,
and  $D*D\leqslant N$.
\end{cor}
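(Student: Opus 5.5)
We will deduce the corollary directly from Theorem~\ref{thm:Pt-optimal}, taking $D$ to be the intersection of $A_0$ with a suitable multiplicative subgroup of bounded index sitting inside $M_0$. Theorem~\ref{thm:Pt-optimal} gives $A_0\unlhd(B,\circ)$ with $|(B,\circ):A_0|\leqslant s$, strong left ideals $N\leqslant M_0$ with $|(B,+):M_0|$ and $|N|$ both $\epsilon$-bounded, and $A_0*M_0\leqslant N$. We keep this $N$ and set
$$
D:=A_0\cap M_0 .
$$

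First we check that $D$ is a multiplicative subgroup of bounded index. Since $M_0$ is a strong left ideal, it is in particular a left ideal, hence a sub-skew brace. So $(M_0,\circ)$ is a subgroup of $(B,\circ)$. Moreover, its left $\circ$-cosets coincide with its additive cosets, because $x\circ m=x+\lambda_x(m)$ and $\lambda_x(M_0)=M_0$. It follows that $|(B,\circ):(M_0,\circ)|=|(B,+):M_0|$, which is $\epsilon$-bounded. Consequently $D$ is a subgroup of $(B,\circ)$, and
$$
|(B,\circ):D|\leqslant |(B,\circ):A_0|\,|(B,\circ):M_0|\leqslant s\cdot|(B,+):M_0|,
$$
which is $\epsilon$-bounded.

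The containment $D*D\leqslant N$ is then immediate. For $a,b\in D$ we have $a\in A_0$ and $b\in M_0$, so $a*b\in A_0*M_0\leqslant N$. Since $N$ is an additive subgroup, the additive subgroup $D*D$ generated by these elements lies in $N$. The bound on $|N|$ is inherited unchanged from the theorem.

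The only step requiring any care is the identification of the multiplicative index of $M_0$ with its additive index. This is the standard fact about left ideals recalled above, and it is used in the same way in the proof of Lemma~\ref{lem:finite-index-ideal-core-two-sided}. Beyond that, the corollary is a direct consequence of Theorem~\ref{thm:Pt-optimal}.
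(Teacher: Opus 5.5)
Your proposal is correct and is essentially the paper's own proof. The paper also sets $D=A_0\cap(M_0,\circ)$, bounds $|(B,\circ):D|$ using that a left ideal has the same additive and multiplicative index, and concludes $x*y\in A_0*M_0\leqslant N$ for $x,y\in D$.
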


\begin{proof}
Take $A_0,M_0,N$ as in Theorem~\ref{thm:Pt-optimal} and put
$D=A_0\cap(M_0,\circ)$. Since a left ideal has the same additive and
multiplicative index, $|(B,\circ):D|$ is $\epsilon$-bounded. For
$x,y\in D$, we have $x\in A_0$ and $y\in M_0$, and hence $x*y\in N$.
\end{proof}

For the three classes treated in the preceding section, the conclusion
can be strengthened from strong left ideals to ideals.

\begin{cor} 
Let $B$ be a finite skew brace with $\Pt(B)\geqslant\epsilon>0$ and suppose that $B$ lies in the class $\mathcal{T}$. Then there exist ideals $B_2\leqslant B_1\trianglelefteq B$ such that $B_1/B_2$ is a trivial skew brace, and both $|B_2|$ and $|B:B_1|$ are $\epsilon$-bounded.
\end{cor}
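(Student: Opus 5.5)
The natural route is to reduce to Theorem~\ref{thm:bfc-ideals-class-T}: show that $\Pt(B)\geqslant\epsilon$ forces $\Pb(B)$ to be bounded below by an $\epsilon$-bounded positive constant. Then the theorem for the three subclasses of $\mathcal{T}$ (Theorems~\ref{thm:bfc-ideals-two-sided}, \ref{thm:bfc-ideals-symmetric}, \ref{thm:bfc-ideals-lambda-homomorphic}) immediately produces the ideals $B_2\leqslant B_1\trianglelefteq B$ with $B_1/B_2$ trivial and $|B_2|$, $|B:B_1|$ bounded.

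The key step is the comparison $\Pt(B)\leqslant \Pb(B)$. By Lemma~\ref{lem:two-generated-trivial}, a pair $(a,b)$ counted by $\Pt(B)$ generates a trivial sub-skew brace $\langle a,b\rangle$. On this sub-skew brace the two operations coincide, so $a\circ b=a+b$ and $b\circ a=b+a$. We also need $a+b=b+a$, that is, $[a,b]^+=1$.

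Here is the subtlety. A ``trivial sub-skew brace'' in the paper's terminology means $\circ=+$, and the underlying group need not be abelian. So $\Pt$ does not automatically give additive commutativity, and the inequality $\Pt(B)\leqslant\Pb(B)$ may fail if trivial means merely $\circ=+$.

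I would handle this in one of two ways.

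\emph{Option 1: reinterpret the conclusion.} If ``trivial skew brace'' in the corollary is allowed to mean $\circ=+$ with a possibly nonabelian group, then a pair $(a,b)$ counted by $\Pt$ satisfies $a*b=b*a=1$, and Theorem~\ref{thm:Pt-optimal} is the right tool. That theorem gives $A_0\unlhd(B,\circ)$ of index at most $s$ and strong left ideals $N\leqslant M_0$ of bounded index and order, with $A_0*M_0\leqslant N$. Setting $D=A_0\cap M_0$, one has $D*D\leqslant N$. Then:

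\begin{itemize}
\item Use the bridge lemmas for the class $\mathcal{T}$. Apply Lemma~\ref{lem:bounded-ideal-closure-two-sided} and its symmetric and $\lambda$-homomorphic analogues to enclose $N$ in an ideal $K$ of bounded order.
\item Pass to $B/K$. There the image of $D$ satisfies $\bar D*\bar D=1$, so on it $\circ=+$.
\item Check that $\bar D$ is a sub-skew brace. It is closed under $\circ$, and, since $\circ=+$ on it, it is also closed under $+$.
\item Apply Lemma~\ref{lem:finite-index-ideal-core-two-sided} or its analogues to obtain an ideal of bounded index inside $\bar D$, and pull it back.
\end{itemize}

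\emph{Option 2: obtain additive commutativity directly.} Bound $\Pr(B,+)$ from below as well, for instance via $|S(B)|\geqslant\sqrt\epsilon|B|$ combined with centralizer counting, and then apply Theorem~\ref{thm:bfc-ideals-class-T}.

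The main obstacle is this mismatch between the additive-commutativity requirement in $\Pb$ and the definition of trivial used in $\Pt$. I would first try Option 1, since it uses only Theorem~\ref{thm:Pt-optimal} and the ideal-closure and core lemmas already proved for each subclass of $\mathcal{T}$. Within Option 1, the delicate point is checking that $\bar D$ is closed under both operations, so that the core lemmas, which require a sub-skew brace, apply.
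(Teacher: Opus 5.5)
Your reading of ``trivial skew brace'' as $\circ=+$ (possibly nonabelian) is correct, and so is your doubt about $\Pt(B)\leqslant\Pb(B)$. Option~2 is in fact impossible, not merely harder: the trivial skew brace on $A_n$ has $\Pt=1$ while $\Pr(A_n)\to 0$, so no lower bound on $\Pr(B,+)$ or on $\Pb(B)$ follows from $\Pt(B)\geqslant\epsilon$. Your steps in Option~1 after the first bullet are also fine.

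\textbf{The gap is the first bullet.} Lemmas~\ref{lem:bounded-ideal-closure-two-sided}, \ref{lem:bounded-ideal-closure-symmetric} and~\ref{lem:bounded-ideal-closure-lambda-homomorphic} apply only to an \emph{ideal of a sub-skew brace of bounded index}. In particular, the subgroup must be normal in the multiplicative group of that sub-skew brace. Theorem~\ref{thm:Pt-optimal} gives you $N$ only as a strong left ideal of $B$, and $N$ need not be normal in $(B,\circ)$ or in $(M_0,\circ)$. Your $D=A_0\cap M_0$ is, at this stage, only a subgroup of $(B,\circ)$. You planned to obtain its additive closure \emph{after} factoring out $K$, which is circular.

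The assertion itself can fail. Take the paper's sharpness example $B_n=V_n\times C_q$ with $(v,i)\circ(w,j)=(v+\alpha^iw,i+j)$. It is $\lambda$-homomorphic, since $\lambda_{(v,i)}$ is multiplication by $\alpha^i$ on the first coordinate, and $\Pt(B_n)>1/q^2$. For $\epsilon=1/q^2$ (so $s=q$), the choice $A_0=V_n\times\{0\}=\Ker\lambda$, $M_0=B_n$, $N=\{0\}\times C_q$ satisfies every conclusion of Theorem~\ref{thm:Pt-optimal}. However, $(w,0)\circ(0,1)\circ(-w,0)=((1-\alpha)w,1)$. Hence every ideal containing $N$ contains $V_n\times\{0\}$, so it equals $B_n$, whose order is unbounded.

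\textbf{The missing idea} is to use the class $\mathcal T$ to make $A_0$ additively closed \emph{before} forming any quotient. Take $A_0$ to be the full kernel of the induced $\lambda$-action on $M_0/N$. This kernel is closed under $+$ and under additive inverses:
\begin{itemize}
\item in the $\lambda$-homomorphic case, since $\lambda_{a+b}=\lambda_a\lambda_b$;
\item in the symmetric case, since $\lambda_{a+b}=\lambda_b\lambda_a$;
\item in the two-sided case, since right distributivity gives $(a+b)*x=-b+(a*x)+b+(b*x)$, and $N\unlhd(B,+)$.
\end{itemize}
Hence $D=A_0\cap M_0$ is a sub-skew brace of bounded index with $D*D\leqslant N$.

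Now replace $N$ by $N\cap D$. It is an ideal of $D$: it is normal in $(D,+)$ and $\lambda_D$-invariant. Moreover, $D*D\leqslant N\cap D$ makes $x\mapsto x+(N\cap D)$ a homomorphism $(D,\circ)\to(D/(N\cap D),+)$ with kernel $N\cap D$. The closure lemmas now apply to $N\cap D$; for the two-sided lemma, its proof works for any ideal of $D$, not only $\Gamma_2(D)$. This yields a bounded ideal $K\supseteq D*D$, and the rest of your argument goes through unchanged.

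The paper does essentially this, except that it first cores $D$ to an ideal $B_1$ of $B$ and then takes the ideal closure of $N\cap B_1$.
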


\begin{proof}
Let $N\leqslant M_0$ be the strong left ideals supplied by Theorem~\ref{thm:Pt-optimal}, and let
$K$ be the kernel of the induced action of $(B,\circ)$ on $M_0/N$. Then
$|(B,\circ):K|\leqslant\lfloor1/\sqrt{\epsilon}\rfloor$, $|(B,+):M_0|$ and $|N|$ are $\epsilon$-bounded, and $K*M_0\leqslant N$.

We first observe that $K$ is a sub-skew brace of $B$. This is immediate in the $\lambda$-homomorphic case, since $\lambda_{a+b}=\lambda_a\lambda_b$. In the symmetric case we have $\lambda_{a+b}=\lambda_b\lambda_a$, so the same conclusion holds. Finally, suppose that $B$ is two-sided. Skew right distributivity gives
$$
(a+b)*x=-b+(a*x)+b+(b*x).
$$
Since $N\unlhd(B,+)$, this shows that $a,b\in K$ implies $a+b\in K$. Applying the same identity to $a+(-a)=1$ shows that $-a\in K$ whenever $a\in K$. Thus $K$ is also a subgroup of $(B,+)$. Since it is already a subgroup of $(B,\circ)$, being the kernel of the induced lambda action, $K$ is a sub-skew brace in all three cases.

Put $D=K\cap M_0$. Then $D$ is a sub-skew brace and
$$
|B:D|
\leqslant |(B,\circ):K|\,|(B,\circ):M_0|
=
|(B,\circ):K|\,|(B,+):M_0|,
$$
so $|B:D|$ is $\epsilon$-bounded. Moreover, since $D\leqslant K\cap M_0$, we have $D*D\leqslant N$.
Apply the corresponding finite-index ideal-core lemma, namely Lemma~\ref{lem:finite-index-ideal-core-two-sided}, Lemma~\ref{lem:finite-index-ideal-core-symmetric}, or Lemma~\ref{lem:finite-index-ideal-core-lambda-homomorphic}, we obtain an ideal $B_1$ of $B$ such that $B_1\leqslant D$ and $|B:B_1|$ is $\epsilon$-bounded.

Set $N_1=N\cap B_1$. Since $B_1\leqslant D$, we have $B_1*B_1\leqslant N_1$. Moreover, $N_1$ is an ideal of $B_1$. Indeed, it is normal in $(B_1,+)$ and invariant under the lambda action of $B_1$. Also, since $B_1*B_1\leqslant N_1$, the map
$x\mapsto x+N_1$ from $(B_1,\circ)$ to $(B_1/N_1,+)$ is a group homomorphism with kernel $N_1$. Hence $N_1\unlhd(B_1,\circ)$. Notice also that $|N_1|\leqslant|N|$, and therefore $|N_1|$ is $\epsilon$-bounded.

Let $B_2$ be the ideal of $B$ generated by $N_1$. In the symmetric and $\lambda$-homomorphic cases, Lemma~\ref{lem:bounded-ideal-closure-symmetric} and Lemma~\ref{lem:bounded-ideal-closure-lambda-homomorphic}, respectively, show that $|B_2|$ is $\epsilon$-bounded. In the two-sided case, the proof of Lemma~\ref{lem:bounded-ideal-closure-two-sided} applies verbatim with an arbitrary ideal of $B_1$ in place of $\Gamma_2(B_1)$: the only property used there is that the subgroup in question is an ideal of a sub-skew brace of bounded index. Hence $|B_2|$ is $\epsilon$-bounded also in this case.

Since $B_1$ is an ideal of $B$ containing $N_1$, the ideal $B_2$ generated by $N_1$ satisfies $B_2\leqslant B_1$. Finally,
$$
B_1*B_1\leqslant N_1\leqslant B_2,
$$
and therefore the induced lambda action on $B_1/B_2$ is trivial. Thus the two operations on $B_1/B_2$ coincide, so $B_1/B_2$ is a trivial skew brace.
\end{proof}

\begin{rem}
The conclusion cannot in general be strengthened by requiring $B_1/B_2$ to be abelian, or equivalently to be a trivial brace in the terminology used here. Indeed, let $G_n=A_n$, for $n\geqslant5$, and regard $G_n$ as the trivial skew brace $B_n=(G_n,+,\circ)$ with $+=\circ$. Then $\Pt(B_n)=1$, and $B_n$ is two-sided, symmetric and $\lambda$-homomorphic. Since $A_n$ is non-abelian simple, the only ideals of $B_n$ are $\{1\}$ and $B_n$. Thus, if $B_2\leqslant B_1$ are ideals and $B_1/B_2$ is abelian, either $B_1=\{1\}$, in which case $|B_n:B_1|=|A_n|$, or $B_2=B_n$, in which case $|B_2|=|A_n|$. Since $|A_n|\to\infty$, neither quantity can be bounded in terms of $\epsilon$ alone. Hence ``trivial skew brace'' cannot be replaced by ``trivial brace'' in the corollary.
\end{rem}

We now show that the $1/\sqrt{\epsilon}$ term in
Theorem~\ref{thm:Pt-optimal} is optimal. The same family also shows
that the lower bound $|S(B)|\geqslant\sqrt{\epsilon}|B|$ is
asymptotically best possible.

\begin{pro} 
For every prime $q$ there exists a family $(B_n)$ with
$\Pt(B_n)>1/q^2$ such that, for every fixed $C,D$, if
$N\leqslant M$ are strong left ideals with $|B_n:M|\leqslant C$ and
$|N|\leqslant D$, then for all sufficiently large $n$ the induced
lambda action on $M/N$ has image of order at least $q$. Moreover,
$$
\lim_{n\to\infty}
\frac{|\{x\in B_n:x*x= 1\}|}{|B_n|}=\frac1q.
$$
\end{pro}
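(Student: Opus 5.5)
Our plan is to build $B_n$ as a semidirect-product skew brace with trivial additive action. We take $(B_n,+)=\mathbb F_p^{n}\times\mathbb Z_q$ with $p$ a prime and $p\equiv 1\pmod q$, and fix an element $\omega\in\mathbb F_p^\times$ of order $q$. The multiplicative group is
$$(v,z)\circ(w,t)=(v+\omega^{z}w,\,z+t),$$
so $\lambda_{(v,z)}(w,t)=(\omega^z w,t)$. Then $\lambda$ factors through the projection to $\mathbb Z_q$, which is a homomorphism from $(B_n,\circ)$. Hence $B_n$ is a brace; this is the same mechanism as the $U_m$-part of the example after Theorem~\ref{maintheorem}.

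We then compute the relevant quantities. We have $(v,z)*(w,t)=((\omega^z-1)w,0)$, so $x*x=1$ for $x=(v,z)$ exactly when $z=0$ or $v=0$. Thus
$$|S(B_n)|=p^n+(q-1)=|B_n|/q+q-1,$$
and $|S(B_n)|/|B_n|\to 1/q$. This gives the limit statement.

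For $\Pt$ we use Lemma~\ref{lem:two-generated-trivial}: the pair must satisfy $a*a=a*b=b*a=b*b=1$. All pairs with $z=t=0$ qualify, giving $p^{2n}$ pairs. In addition there are pairs with $z\neq0$, $v=0$ and $w=0$ (the condition $b*a$ is then automatic), and symmetric ones. Hence
$$\Pt(B_n)=\frac{p^{2n}+(\text{positive extra})}{q^2p^{2n}}>\frac1{q^2}.$$

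Now let $N\le M$ be strong left ideals with $|B_n:M|\le C$ and $|N|\le D$. The main step is to show that the induced action has image of order at least $q$. Take $a=(0,1)$, so that $\lambda_a$ acts on $M/N$ as multiplication by $\omega$ on the $\mathbb F_p^n$-coordinate. We want $\lambda_a^k$ to act nontrivially on $M/N$ for $1\le k<q$. Set $V=M\cap(\mathbb F_p^n\times0)$. Then $|V|\ge p^n/(Cq)$, while $|N|\le D$. For large $n$ this gives $V\not\subseteq N$, so we may pick $w\in V\setminus N$.

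It remains to check that $\lambda_a^k(w)-w=(\omega^k-1)w$ lies outside $N$. Since $\omega^k-1\in\mathbb F_p^\times$, it suffices to know that $N\cap(\mathbb F_p^n\times0)$ is closed under scalars. Here there is an obstacle: scalar multiplication by arbitrary elements of $\mathbb F_p^\times$ is not available, only by powers of $\omega$ coming from $\lambda$-invariance. We resolve this by choosing $w$ so that $(\omega^k-1)w\notin N$ for every $1\le k<q$. This is possible because the set of bad $w$ is a union of at most $q-1$ preimages $(\omega^k-1)^{-1}(N\cap\mathbb F_p^n)$, of total size at most $(q-1)D$, whereas $|V|\to\infty$.

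With such a $w$, the elements $1,\lambda_a,\dots,\lambda_a^{q-1}$ act distinctly on $M/N$, so the image has order at least $q$. The expected main difficulty is only in carefully verifying the brace axioms and the exact $\Pt$ count. The action bound is a routine counting argument.
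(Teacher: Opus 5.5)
Your proposal is correct and is essentially the paper's proof. The paper uses the same brace $V_n\times C_q$, but over an arbitrary finite field $F$ containing an element $\alpha$ of order $q$, which avoids needing a prime $p\equiv 1\pmod q$; it then shows that if some $i\neq0$ acted trivially on $M/N$, then $(\alpha^i-1)W\leqslant N$ with $W=M\cap(V_n\times\{0\})$, forcing $|N|\geqslant|W|\geqslant|V_n|/C$. Two small remarks: the scalar-closure ``obstacle'' you raise is not real, since additive subgroups of $\mathbb F_p^n$ are $\mathbb F_p$-subspaces and in any case only injectivity of multiplication by $\omega^k-1$ is used; and since $q$ is prime, it already suffices to show that $\lambda_{(0,1)}$ alone acts nontrivially on $M/N$.
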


\begin{proof}
Choose a finite field $F$ containing an element $\alpha$ of order $q$,
put $V_n=F^n$, and let $B_n=V_n\times C_q$ with
$$
(v,i)+(w,j)=(v+w,i+j),\qquad
(v,i)\circ(w,j)=(v+\alpha^iw,i+j).
$$
Then $(v,i)*(w,j)=((\alpha^i-1)w,0)$. Since $q$ is prime,
$\alpha^i-1$ is invertible for $i\neq0$. By
Lemma~\ref{lem:two-generated-trivial}, a pair
$(v,i),(w,j)$ generates a trivial sub-skew brace precisely when
$i=j=0$ or $v=w=0$. Hence
$$
\Pt(B_n)=\frac1{q^2}+
\left(1-\frac1{q^2}\right)|V_n|^{-2}>\frac1{q^2}.
$$

Let $N\leqslant M$ satisfy the given hypotheses, and set
$W :=M\cap(V_n\times\{0\})$. Then $|V_n:W|\leqslant C$. If some
$0\neq i\in C_q$ acts trivially on $M/N$, then
$(\alpha^i-1)W\leqslant N$. Since $\alpha^i-1$ is invertible,
$|N|\geqslant|W|\geqslant|V_n|/C$, which is impossible for large $n$.
Thus the copy of $C_q$ acts faithfully on $M/N$.

Finally,
$$
\frac{|\{x:x*x= 1\}|}{|B_n|}
=\frac{|V_n|+q-1}{q|V_n|}\longrightarrow\frac1q.
$$
Taking $\epsilon=1/q^2$ proves sharpness of both bounds.
\end{proof}

The passage to the large strong left ideal $M_0$ is also necessary.
Even for a fixed positive value of $\Pt(B)$, keeping the whole additive
group may leave an arbitrarily large lambda image.

\begin{pro} 
For every prime $p$ there is a family $(B_n)$ with
$\Pt(B_n)>1/p^2$ such that, for every fixed $D$ and every strong left
ideal $N$ with $|N|\leqslant D$, the image of
$(B_n,\circ)$ on $(B_n,+)/N$ tends to infinity with $n$. On the other
hand, each $B_n$ has a strong left ideal of index $p$ on which the
lambda action is trivial.
\end{pro}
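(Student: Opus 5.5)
We take $B_n = V_n \times C_{p^n}$, writing elements as $(v,i)$ with $v\in V_n$ and $i\in C_{p^n}=\mathbb Z/p^n\mathbb Z$. Here $V_n=F^{m}$, where $F$ is a finite field whose multiplicative group contains an element $\alpha$ of order $p^n$, and $m=m(n)$ is to be chosen large. The operations are
$$(v,i)+(w,j)=(v+w,i+j),\qquad (v,i)\circ(w,j)=(v+\alpha^i w,i+j).$$
This is the construction of the preceding proposition, with $C_q$ replaced by the cyclic group $C_{p^n}$. As there, it is a skew brace, with $\lambda_{(v,i)}(w,j)=(\alpha^i w,j)$ and $(v,i)*(w,j)=((\alpha^i-1)w,0)$.

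\textbf{Computing $\Pt(B_n)$.} By Lemma~\ref{lem:two-generated-trivial}, a pair $(v,i),(w,j)$ generates a trivial sub-skew brace if and only if $(\alpha^i-1)v=(\alpha^i-1)w=(\alpha^j-1)v=(\alpha^j-1)w=0$. Now $\alpha^i=1$ holds exactly when $i=0$. So the counted pairs are those with $i=j=0$, together with those where $v=w=0$, giving
$$\Pt(B_n)=\frac{1}{p^{2n}}+\Big(1-\frac1{p^{2n}}\Big)\frac1{|V_n|^2}.$$
This is not larger than $1/p^2$, so the family must be modified.

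\textbf{Modified construction.} We instead let the $C_{p^n}$ factor act through $\alpha$ only on a part of the space. Take $B_n=V\times C_{p^n}$, where $V=\bigoplus_{k=1}^{n}V^{(k)}$, each $V^{(k)}=F^{m}$, and $i$ acts on $V^{(k)}$ by $\alpha_k^{\,i}$, with $\alpha_k$ of order $p^k$. Then $i$ acts trivially on all of $V$ exactly when $p^n\mid i$. The pairs with $i,j\in p^{n-1}C_{p^n}$ and $v,w$ in the common fixed space are still counted, but that fixed space is small. This still gives only about $p^{-2n}$.

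\textbf{Using the index-$p$ clause.} The last clause of the statement indicates the right shape: the lambda action should be trivial on a strong left ideal $M$ of index $p$, while it remains unbounded modulo any bounded $N$ on all of $(B,+)$. So the action should factor through the quotient $B/M\cong C_p$ on the additive side, and yet the lambda image $\lambda(B)$ should be large. Take $(B_n,+)=C_p\times V$ with $V=F^m$, and let $\lambda_{(i,v)}$ act on $C_p\times V$ by $(j,w)\mapsto (j,\,w+j\,\phi(v))$. Here $\phi:V\to V$ is a linear bijection, or more generally any linear map whose image grows with $n$. These $\lambda$'s are transvection-type automorphisms that fix $M=\{0\}\times V$ pointwise, so $M$ is a strong left ideal of index $p$ on which the lambda action is trivial. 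The skew brace is then defined via the regular subgroup $\{(x,\lambda_x)\}$ of the holomorph. Since the $\lambda$'s commute and $\lambda_{x+\lambda_x(y)}=\lambda_x\lambda_y$, the map $\lambda$ is a homomorphism. This makes $\lambda(B)\cong\phi(V)$, which is unbounded, and the image of $(B_n,\circ)$ on $(B,+)/N$ stays unbounded for bounded $N$: the coset $(1,0)+N$ is moved to $(1,\phi(v))+N$, and these cosets are distinct for $\phi(v)$ outside $N$.

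\textbf{Checking $\Pt(B_n)>1/p^2$.} We have $(i,v)*(j,w)=(0,j\phi(v))$. The pairs with $i=j=0$ satisfy all four conditions of Lemma~\ref{lem:two-generated-trivial}, giving $1/p^2$. Pairs with $v=w=0$ and $(i,j)\ne(0,0)$ add a positive amount. Hence $\Pt(B_n)>1/p^2$ strictly.

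\textbf{Main obstacle.} The step most likely to need care is verifying the skew brace axioms for this $\lambda$-construction: that $x\mapsto\lambda_x$ satisfies the cocycle condition so that $\circ$ is associative. I would check this directly using the fact that $\lambda_x$ depends only on $v$ and fixes the $V$-coordinate $v+j\phi(\cdot)$ appropriately. If necessary, I would restrict $\phi$ to a nilpotent or self-compatible choice, such as $\phi(V)\le\ker$ of the pairing, so that $\lambda_{\lambda_x(y)}=\lambda_y$.
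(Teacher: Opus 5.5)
Your final construction is not a skew brace when $\phi$ is a linear bijection. This is a genuine gap, at exactly the step you flagged as the main obstacle. Take $x=(i,v)$, $y=(j,w)$, $z=(k,u)$, so that $x\circ y=x+\lambda_x(y)=(i+j,\,v+w+j\phi(v))$. A direct computation gives
\[
(x\circ y)\circ z-x\circ(y\circ z)=\bigl(0,\;jk\,\phi^{2}(v)\bigr),
\]
so $\circ$ is not associative unless $\phi^2=0$.

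Your justification that $\lambda$ is a homomorphism is therefore wrong. Linearity of $\phi$ gives $\lambda_{x+y}=\lambda_x\lambda_y$, hence $\lambda_{x+\lambda_x(y)}=\lambda_x\lambda_{\lambda_x(y)}$. You still need $\lambda_{\lambda_x(y)}=\lambda_y$. The $V$-coordinate of $\lambda_x(y)$ is $w+j\phi(v)$, so this holds if and only if $j\phi^2(v)=0$ for all $j,v$, i.e.\ $\phi^2=0$. Your fallback, a ``nilpotent or self-compatible'' $\phi$, is not precise enough: a nilpotent $\phi$ with $\phi^2\neq 0$ still fails, and the ``pairing'' you mention does not exist in this construction. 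You should also state that $V$ has exponent $p$, so that $j\phi(v)$ is defined for $j\in C_p$ and each $\lambda_x$ is additive.

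The repair is to require $\phi(V)\leqslant\ker\phi$ with $\operatorname{rank}\phi\to\infty$, for instance $V=\mathbb F_p^n\oplus\mathbb F_p^n$ and $\phi(z,c)=(c,0)$. Then $\lambda_{(t,z,c)}(t',z',d)=(t',z'+t'c,d)$, which is exactly the paper's family $B_n=(\mathbb F_p\times Z_n)\times C_n$. The rest of your argument then goes through and coincides with the paper's:
\begin{itemize}
\item the pairs with $i=j=0$ contribute exactly $1/p^2$, and those with $v=w=0$, $(i,j)\neq(0,0)$, give a strictly positive surplus;
\item if $\lambda_{(0,v)}$ acts trivially on $(B_n,+)/N$, evaluating at $(1,0)$ gives $(0,\phi(v))\in N$, so the image has order at least $|\phi(V)|/|N|$;
\item $\{0\}\times V$ is a strong left ideal of index $p$ fixed pointwise by every $\lambda_x$.
\end{itemize}
The two abandoned constructions at the start of your write-up are dead ends and should be removed.
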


\begin{proof}
Put $Z_n=C_n=\mathbb F_p^n$, $U_n=\mathbb F_p\times Z_n$, and let
$c\in C_n$ act on $U_n$ by
$\varphi_c(t,z)=(t,z+tc)$. On $B_n=U_n\times C_n$ take the associated
semidirect-product skew brace. Then
$$
(t,z,c)*(t',z',d)=(0,t'c,0).
$$
The four conditions of Lemma~\ref{lem:two-generated-trivial} reduce to
$tc=t'c=td=t'd=0$. Hence either $t=t'=0$ or $c=d=0$, and
$$
\Pt(B_n)=\frac1{p^2}+
\left(1-\frac1{p^2}\right)p^{-2n}>\frac1{p^2}.
$$

If $c$ belongs to the kernel of the action on $B_n/N$, applying it to
$(1,0,0)$ gives $(0,c,0)\in N$. Thus the kernel inside $C_n$ has order
at most $|N|$, and the image has order at least $p^n/|N|$.

In contrast,
$M_n=\{(0,z,c):z\in Z_n,\ c\in C_n\}$ is a strong left ideal with
$|B_n:M_n|=p$, and every star-product with second entry in $M_n$ is
zero.
\end{proof}

The bounded strong left ideal $N$ cannot be omitted either. Passing to
a large strong left ideal does not in general make the lambda action
itself bounded.

\begin{pro} 
For every prime $p$ there is a family $(B_n)$ with
$\Pt(B_n)>1/p^4$ such that, for every fixed $C$, the lambda image on
every strong left ideal $M$ with $|B_n:M|\leqslant C$ tends to infinity
with $n$. Nevertheless, all star-products of $B_n$ lie in a strong
left ideal of order $p$.
\end{pro}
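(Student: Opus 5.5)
The plan is to imitate the semidirect-product construction of the preceding proposition, but to arrange for the mixed commutator to be a single central coordinate. Put $X_n=C_n=\mathbb F_p^n$ and $U_n=X_n\times\mathbb F_p$. For $c\in C_n$, let $c$ act on $U_n$ by
$$
\varphi_c(x,t)=(x,\,t+c\cdot x),
$$
where $c\cdot x$ is the standard dot product. Then $\varphi:C_n\to\Aut(U_n)$ is a homomorphism, since $\varphi_c\varphi_d=\varphi_{c+d}$. Take $B_n=U_n\times C_n$ with the associated semidirect-product skew brace:
$$
(u,c)+(u',d)=(u+u',c+d),\qquad (u,c)\circ(u',d)=(u+\varphi_c(u'),c+d).
$$
This is the same standard construction used before, so it is a skew brace (indeed a brace). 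A direct computation, as in the previous proposition, gives
$$
(x,t,c)*(x',t',d)=(0,\,c\cdot x',\,0).
$$

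The last assertion is then immediate. Every star-product lies in $N=\{(0,t,0):t\in\mathbb F_p\}$, which has order $p$. Since $(B_n,+)$ is abelian, $N$ is additively normal. It is also fixed pointwise by every $\lambda_a=\varphi_c$, so $N$ is a strong left ideal.

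For the lower bound on $\Pt(B_n)$, I will use Lemma~\ref{lem:two-generated-trivial}. A pair $(x,t,c),(x',t',d)$ generates a trivial sub-skew brace if and only if
$$
c\cdot x=c\cdot x'=d\cdot x=d\cdot x'=0,
$$
that is, if and only if $\langle c,d\rangle\perp\langle x,x'\rangle$. Condition on $(c,d)$:
\begin{itemize}
\item If $c,d$ are linearly independent, then $x$ and $x'$ must each lie in a subspace of codimension $2$, which happens with probability exactly $p^{-4}$.
\item If $c,d$ are dependent, which has positive probability (for instance $c=d=0$), then the probability is at least $p^{-2}>p^{-4}$.
\end{itemize}
Averaging over $(c,d)$ gives $\Pt(B_n)>p^{-4}$ strictly. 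The coordinates $t,t'$ impose no condition.

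For the unboundedness of the lambda image, let $M$ be a strong left ideal with $|B_n:M|\leqslant C$, and set $W=M\cap(X_n\times 0\times 0)$. Then $|X_n:W|\leqslant C$. If $c\in C_n$ acts trivially on $M$, then for every $w\in W$ we get $(0,c\cdot w,0)=1$, so $c\in W^\perp$. Since $\dim W^\perp=n-\dim W\leqslant\log_p C$, the kernel of the action of $C_n$ on $M$ has order at most $C$. Hence the lambda image on $M$ has order at least $p^n/C\to\infty$.

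The only step that needs care is the strictness of the bound in the $\Pt$ computation. That is why I handle the dependent pairs $(c,d)$ separately rather than computing $\Pt(B_n)$ exactly.
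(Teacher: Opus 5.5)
Your proposal is correct and follows the paper's proof essentially verbatim. It uses the same semidirect-product brace $U_n\rtimes C_n$ built from a non-degenerate pairing (you take the dot product), the same strong left ideal $\{(0,t,0)\}$ of order $p$ containing all star-products, and the same kernel-in-$W^\perp$ argument bounding the lambda image below by $p^n/C$. The only difference is in the lower bound: you obtain $\Pt(B_n)>1/p^4$ by conditioning on whether $c,d$ are linearly independent, whereas the paper computes $\Pt(B_n)$ exactly by splitting according to $\dim\operatorname{span}\{c,d\}\in\{0,1,2\}$; your averaging argument is valid and gives the strict inequality more quickly.
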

\begin{proof}
Let $W_n=C_n=\mathbb F_p^n$ and let $Z=\mathbb F_p$. Fix a non-degenerate bilinear form
$\langle-,-\rangle:C_n\times W_n\to\mathbb F_p$. Put $U_n=W_n\times Z$ and, for $c\in C_n$, define
$\varphi_c\in\Aut(U_n,+)$ by
$\varphi_c(w,z)=(w,z+\langle c,w\rangle)$. Since the form is bilinear, we have
$\varphi_{c+d}=\varphi_c\varphi_d$, so $C_n$ acts on $U_n$ by automorphisms.

Let $B_n=U_n\times C_n$ be the associated semidirect-product skew brace. Explicitly, its two operations are
$$
(w,z,c)+(w',z',d)=(w+w',z+z',c+d),\;
(w,z,c)\circ(w',z',d)
=(w+w',z+z'+\langle c,w'\rangle,c+d).
$$
Thus the additive group of $B_n$ is the elementary abelian group
$W_n\times Z\times C_n$, while the multiplicative group is the semidirect product
$U_n\rtimes C_n$. In particular,
$|B_n|=p^{2n+1}$.

For $x=(w,z,c)$ and $y=(w',z',d)$, the definition of the lambda map gives
$$
\lambda_x(y)
=-x+(x\circ y)
=(w',z'+\langle c,w'\rangle,d),
\qquad
x*y=\lambda_x(y)-y=(0,\langle c,w'\rangle,0).
$$
Consequently,
$x*x=(0,\langle c,w\rangle,0)$,
$x*y=(0,\langle c,w'\rangle,0)$,
$y*x=(0,\langle d,w\rangle,0)$, and
$y*y=(0,\langle d,w'\rangle,0)$.
By Lemma~\ref{lem:two-generated-trivial}, the pair $x,y$ generates a trivial sub-skew brace if and only if all four of these star-products vanish. Equivalently,
$$
\langle c,w\rangle=\langle c,w'\rangle
=\langle d,w\rangle=\langle d,w'\rangle=0.
$$
Thus, once $c,d\in C_n$ are fixed, both $w$ and $w'$ must belong to the common orthogonal space
$\operatorname{span}\{c,d\}^{\perp}\leqslant W_n$. The coordinates $z,z'\in Z$ are completely unrestricted. We now count the pairs according to
$r=\dim\operatorname{span}\{c,d\}$.

Suppose first that $r=0$. Then $c=d=0$. There are $p^n$ choices for each of $w$ and $w'$, and $p$ choices for each of $z$ and $z'$. Hence the number of pairs in this case is
$N_0=p^{2n+2}$.

Next suppose that $r=1$. The number of one-dimensional subspaces of $C_n$ is
$(p^n-1)/(p-1)$. For each such subspace $L$, the ordered pairs
$(c,d)\in L^2$ which span $L$ are precisely all pairs except $(0,0)$, so there are
$p^2-1$ of them. Therefore the number of ordered pairs $(c,d)$ with
$\dim\operatorname{span}\{c,d\}=1$ is
$(p^n-1)(p+1)$. By non-degeneracy of the bilinear form,
$\operatorname{span}\{c,d\}^{\perp}$ has dimension $n-1$, and hence there are
$p^{n-1}$ choices for each of $w$ and $w'$. Again $z$ and $z'$ are arbitrary. Thus
$N_1=(p^n-1)(p+1)p^{2n}$.

Finally, suppose that $r=2$. There are $p^n-1$ choices for $c\neq0$, and, once $c$ is fixed, there are $p^n-p$ choices for $d$ outside the one-dimensional subspace $\langle c\rangle$. Hence there are
$(p^n-1)(p^n-p)$ ordered pairs $(c,d)$ spanning a two-dimensional subspace. In this case
$\operatorname{span}\{c,d\}^{\perp}$ has dimension $n-2$, so there are
$p^{n-2}$ choices for each of $w$ and $w'$, and $p^2$ choices for $(z,z')$. Therefore
$N_2=(p^n-1)(p^n-p)p^{2n-2}$.
Notice that when $n=1$ this number is zero, as it should be.

It follows that
$$
\begin{aligned}
\Pt(B_n)
&=\frac{N_0+N_1+N_2}{|B_n|^2}\\
&=\frac{p^{2n+2}+(p^n-1)(p+1)p^{2n}
 +(p^n-1)(p^n-p)p^{2n-2}}{p^{4n+2}}\\
&= \frac{1}{p^{2n}}
 +\frac{(p+1)(p^n-1)}{p^{2n+2}}
 +\frac{(p^n-1)(p^n-p)}{p^{2n+4}}\\
&=\frac1{p^4}
 +\frac{(p-1)(p+1)^2}{p^{n+4}}
 +\frac{p(p-1)^2(p+1)}{p^{2n+4}}
>\frac1{p^4}.
\end{aligned}
$$

We next prove the assertion concerning the lambda image. Let $M_0$ be a strong left ideal of $B_n$ such that $|B_n:M_0|\leqslant C$, and set
$\widetilde W_n=W_n\times\{0\}\times\{0\}$ and
$W_0=M_0 \cap \widetilde W_n$. Since $M_0$ is an additive subgroup of $B_n$,
$$
|\widetilde W_n:W_0|
=|\widetilde W_n:\widetilde W_n\cap M_0|
=|\widetilde W_n+M_0:M_0|
\leqslant |B_n:M_0|
\leqslant C.
$$
We identify $W_0$ with the corresponding subspace of $W_n$.

Consider the action on $M_0$ of the subgroup
$\{(0,0,c):c\in C_n\}$ of $(B_n,\circ)$. Since $M_0$ is a strong left ideal, it is invariant under every lambda map, and hence we obtain a homomorphism
$\psi_{M_0}:C_n\to\Aut(M_0)$ defined by
$\psi_{M_0}(c)=\lambda_{(0,0,c)}|_{M_0}$.
If $c\in\ker\psi_{M_0}$, then for every $w\in W_0$ we have
$$
(w,0,0)
=\lambda_{(0,0,c)}(w,0,0)
=(w,\langle c,w\rangle,0),
$$
and therefore $\langle c,w\rangle=0$. Hence
$\ker\psi_{M_0}\leqslant W_0^\perp$.

Since the pairing $C_n\times W_n\to\mathbb F_p$ is non-degenerate,
$\dim W_0^\perp=n-\dim W_0$, and consequently
$$
|\ker\psi_{M_0}|
\leqslant |W_0^\perp|
=p^{n-\dim W_0}
=|W_n:W_0|
\leqslant C.
$$
It follows that
$|\operatorname{Im}\psi_{M_0}|
=|C_n:\ker\psi_{M_0}|
\geqslant p^n/C$.
The image of $\psi_{M_0}$ is contained in the full lambda image of $B_n$ on $M_0$, so the latter also has order at least $p^n/C$. For fixed $C$, this tends to infinity with $n$, uniformly over all strong left ideals $M_0$ satisfying $|B_n:M_0|\leqslant C$.

Finally, put
$N_n=\{(0,z,0):z\in\mathbb F_p\}$. Clearly $|N_n|=p$. Since $(B_n,+)$ is abelian, $N_n$ is normal in $(B_n,+)$, and for every $(w,z,c)\in B_n$ and $t\in\mathbb F_p$ we have
$\lambda_{(w,z,c)}(0,t,0)=(0,t,0)$. Thus $N_n$ is a strong left ideal of $B_n$. Moreover, from the formula for the star-product,
$(w,z,c)*(w',z',d)=(0,\langle c,w'\rangle,0)\in N_n$
for all elements of $B_n$. Hence every star-product of $B_n$ is contained in the strong left ideal $N_n$ of order $p$. This completes the proof.
\end{proof}

Finally, even after allowing both a large multiplicative subgroup and a
large additive subgroup, the bounded obstruction cannot in general be
chosen to be an ideal. Nor can the two large pieces be replaced by one
large sub-skew brace which becomes trivial after factoring out a bounded
sub-skew brace.

\begin{pro} 
There exists a family $(B_n)$ with $\Pt(B_n)>1/27$ for which both of the
following fail uniformly in $n$:
\begin{enumerate}
\item there are $B_2\leqslant B_1\leqslant B_n$, with $B_1/B_2$
trivial, such that $|B_n:B_1|$ and $|B_2|$ are bounded;
\item there are $A_0\leqslant(B_n,\circ)$, $M_0\leqslant(B_n,+)$ and
an ideal $I$ such that $|B_n:A_0|$, $|B_n:M_0|$ and $|I|$ are bounded
and $A_0*M_0\leqslant I$.
\end{enumerate}
\end{pro}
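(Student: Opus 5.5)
The plan is to reuse the family $B_{n,m}=U_m\times V_n\times\mathbb F_3$ from the earlier counterexample for $\Pb$ (the theorem exhibiting $\Pb(B_n)>1/27$), specialised to $m=n$, and to write $B_n=B_{n,n}$. Recall that for $x=(u,v,z)$ and $y=(u',w,t)$,
$$x*y=((2^z-1)u',0,-\beta(v,w)).$$

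\textbf{Step 1: $\Pt(B_n)>1/27$.} By Lemma~\ref{lem:two-generated-trivial}, the pair $x,y$ is counted if and only if $x*x=x*y=y*x=y*y=1$. Since $\beta$ is alternating, $\beta(v,v)=\beta(w,w)=0$. So the conditions reduce to two requirements.
\begin{itemize}
\item[(i)] $\beta(v,w)=0$.
\item[(ii)] $(2^z-1)u=(2^z-1)u'=(2^t-1)u=(2^t-1)u'=0$. This says that either $z=t=0$ (with $u,u'$ arbitrary), or $u=u'=0$.
\end{itemize}
Put $Q=7^m$. The number of quadruples $(u,z,u',t)$ satisfying (ii) is $Q^2+8$, out of $9Q^2$, so that fraction is more than $1/9$. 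The previously computed count of pairs $(v,w)$ satisfying (i) gives a fraction more than $1/3$. Multiplying, $\Pt(B_n)>1/27$.

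\textbf{Step 2: failure of (1).} The proof of the earlier theorem already shows the following for arbitrary sub-skew braces $B_2\leqslant B_1\leqslant B_{n,m}$ with $B_1/B_2$ trivial: either $|B_{n,m}:B_1|\geqslant 3^{n+1}$, or $|B_2|\,|B_{n,m}:B_1|\geqslant 7^m$. That argument only used $x*(w,0,0)\in B_2$ and the totally isotropic bound, so it also applies when $B_1/B_2$ is merely a trivial skew brace. With $m=n$, both alternatives are unbounded, so (1) fails uniformly in $n$.

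\textbf{Step 3: failure of (2).} This is the main new point. Suppose $|B_n:A_0|,|B_n:M_0|\leqslant C$ and $|I|\leqslant F$.

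First I would show that a bounded ideal $I$ has every element with third coordinate $0$, once $7^n>F$. Suppose $(u_0,v_0,c)\in I$ with $c\neq0$, and conjugate multiplicatively by $g=(u,0,0)$. A direct computation gives
$$g\circ(u_0,v_0,c)\circ g^{-1}=\big(u_0+(1-2^c)u,\,v_0,\,c\big).$$
Since $1-2^c$ is invertible in $\mathbb F_7$, these elements are pairwise distinct as $u$ ranges over $U_n$. Normality of $I$ in $(B_n,\circ)$ would then force $|I|\geqslant 7^n$, a contradiction.

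Next consider the projections to $V_n$. Projection onto the $V_n$-coordinate is a homomorphism from both $(B_n,+)$ and $(B_n,\circ)$ onto $(V_n,+)$. So the images $P_A$ of $A_0$ and $P_M$ of $M_0$ are subspaces of $V_n$ of index at most $C$. The third coordinate of $a*m$ is $-\beta(v_a,w_m)$, so $A_0*M_0\leqslant I$ forces $\beta(P_A,P_M)=0$, that is, $P_M\leqslant P_A^{\perp}$. By non-degeneracy of $\beta$, $|P_A^\perp|=3^{2n}/|P_A|\leqslant C$. Hence $|V_n:P_M|\geqslant 3^{2n}/C$, which contradicts $|V_n:P_M|\leqslant C$ for large $n$.

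The main obstacle is the ideal-closure step: one has to notice that multiplicative conjugation by $U_n$ sweeps out an entire $U_n$-coset whenever the $\mathbb F_3$-coordinate is nonzero. Once that is in place, the rest is the totally isotropic and orthogonal-complement counting already used in the earlier example.
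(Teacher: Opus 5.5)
Your proposal is correct and follows the paper's proof: the same family $B_{n,n}$, the same factorised count for $\Pt$ via Lemma~\ref{lem:two-generated-trivial}, the earlier dichotomy for (1), and for (2) the combination of the orthogonality/dimension count in $V_n$ with multiplicative conjugation by $U_n$. The only difference is the order of the argument. You first show that a small ideal has zero $\mathbb F_3$-coordinate, by counting the $7^n$ distinct conjugates, and then derive $P_M\leqslant P_A^\perp$. The paper instead first exhibits a non-orthogonal pair and then uses conjugation plus additive closure to get $U_n\leqslant I$.
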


\begin{proof}
Recall the family
$B_{n,m}=U_m\times V_n\times\mathbb F_3$ constructed in the preceding
section, where $U_m=\mathbb F_7^m$, $V_n=\mathbb F_3^{2n}$ carries a
non-degenerate alternating form $\beta$, and
$$
(u,v,z)*(u',w,t)=((2^z-1)u',0,-\beta(v,w)).
$$
Lemma~\ref{lem:two-generated-trivial} shows that a pair generates a
trivial sub-skew brace precisely when $\beta(v,w)=0$ and
$(2^z-1)u=(2^z-1)u'=(2^t-1)u=(2^t-1)u'=0$. Hence
$$
\Pt(B_{n,m})=
\left(\frac1{27}+\frac2{3^{2n+3}}\right)
\left(1+\frac8{7^{2m}}\right)>\frac1{27}.
$$

Set $m=n$ and write $B_n=B_{n,n}$. The argument in the preceding
section shows that whenever $B_2\leqslant B_1\leqslant B_n$ and
$B_1/B_2$ is trivial,
$$
|B_n:B_1|\geqslant3^{n+1}
\quad\text{or}\quad
|B_2|\,|B_n:B_1|\geqslant7^n.
$$
This proves (1).

For (2), suppose $|B_n:A_0|,|B_n:M_0|\leqslant C$ and
$A_0*M_0\leqslant I$. Let $L_A,L_M$ be the images of $A_0,M_0$ in
$V_n$. Then $|V_n:L_A|,|V_n:L_M|\leqslant C$. For sufficiently large
$n$ they cannot be orthogonal, since otherwise
$\dim L_A+\dim L_M\leqslant2n$, whereas both have codimension at most
$\log_3C$. Thus there are
$a=(u,v,z)\in A_0$ and $x=(u',w,t)\in M_0$ with $\beta(v,w)\neq0$.
Consequently $a*x=(r,0,c)\in I$ for some $c\neq0$.

For every $s\in U_n$, normality of $I$ in $(B_n,\circ)$ gives
$(r+(1-2^c)s,0,c)\in I$. Since also $(r,0,c)\in I$ and $I$ is an
additive subgroup, $((1-2^c)s,0,0)\in I$. As $2$ has order $3$ in
$\mathbb F_7^\times$, $1-2^c\neq0$; hence $U_n\leqslant I$ and
$|I|\geqslant7^n$. This proves (2), and the proof is complete.
\end{proof}

These examples identify the role of each term in
Theorem~\ref{thm:Pt-optimal}. The order $1/\sqrt{\epsilon}$ of the
residual lambda image and the density $\sqrt{\epsilon}$ of
self-trivial elements are optimal. In general one must pass to a
strong left ideal of bounded index and one must also factor out a
strong left ideal of bounded order. The latter cannot be replaced by
a bounded ideal, and the two large pieces cannot be replaced by a
single large sub-skew brace which is trivial modulo a bounded
sub-skew brace.

Thus the structural information carried by $\Pt(B)$ concerns precisely
the interaction between the two operations. No corresponding restriction
on the internal structure of $(B,+)$ or $(B,\circ)$ can hold without
additional hypotheses.

We conclude with some remarks on the bounds on $P_{\lambda}(B)$. Define
$$
\Fix^r_B(x):=\{b\in B\mid \lambda_x(b)=b\}.
$$
Then $\Fix^r_B(x)$ is a subgroup of $(B,+)$. Moreover,
$$
P_{\lambda}(B)
=
\frac{1}{|B|}|\Ker(\lambda)|
+
\frac{1}{|B|^2}
\sum_{x\in B-\Ker(\lambda)}|\Fix^r_B(x)|.
$$

Put
$n:=|B:\Ker(\lambda)|=|\lambda(B)|$. Assume that $|B|>1$, and let $p$ be the smallest prime divisor of $|B|$.
If $x\notin\Ker(\lambda)$, then $\Fix^r_B(x)$ is a proper subgroup of
$(B,+)$. Hence
$$
|\Fix^r_B(x)|\leqslant\frac{|B|}{p}.
$$
Using the preceding expression for $P_{\lambda}(B)$, we obtain the
following observations:
\begin{enumerate}
    \item For the given $n$, we get
    $$
    \frac{1}{n}
    \leqslant
    P_{\lambda}(B)
    \leqslant
    \frac{n+p-1}{np}
    \leqslant
    \frac{n+1}{2n}.
    $$

    \item
    If $n=p$, then
    $$
    \frac{1}{p}
    \leqslant
    P_{\lambda}(B)
    \leqslant
    \frac{2p-1}{p^2}.
    $$
    Moreover, equality in the upper bound holds if and only if
    $$
    |\Fix^r_B(x)|=\frac{|B|}{p}
    $$
    for every $x\in B-\Ker(\lambda)$.

\item
    We have
    $$
    P_{\lambda}(B)=1,\qquad
    P_{\lambda}(B)=\frac{3}{4},\qquad
    P_{\lambda}(B)=\frac{2}{3},
    \qquad\text{or}\qquad
    P_{\lambda}(B)\leqslant\frac{5}{8}.
    $$
    Moreover, $P_{\lambda}(B)=1$ if and only if $B$ is a trivial skew brace,
    while
    $$
    P_{\lambda}(B)=\frac{3}{4}
    $$
    if and only if $n=2$ and
    $$
    |\Fix^r_B(x)|=\frac{|B|}{2}
    $$
    for every $x\in B-\Ker(\lambda)$. Indeed, if $n\geqslant4$, then (1) gives
    $P_{\lambda}(B)\leqslant5/8$. If $n=2$, then all elements of
    $B-\Ker(\lambda)$ induce the same non-trivial automorphism; hence, writing
    $d=|B:\Fix^r_B(x)|$, we have
    $$
    P_{\lambda}(B)=\frac12+\frac{1}{2d}.
    $$
    Thus $d=2$ gives $P_{\lambda}(B) = 3/4$, $d=3$ gives $P_{\lambda}(B) = 2/3$, and $d\geqslant4$ gives
    $P_{\lambda}(B) \leqslant 5/8$. If $n=3$, then the two non-trivial elements of $\lambda(B)$ are inverses of each other
    and therefore have the same fixed-point subgroup. Hence, for some integer
    $d \geqslant 2$,
    $$
    P_{\lambda}(B)=\frac13+\frac{2}{3d},
    $$
    which is $2/3$ when $d=2$ and is at most $5/9<5/8$ when $d\geqslant3$.

    \item
    If $n=p^k$ for some positive integer $k$, then
    $$
    P_{\lambda}(B)
    \leqslant
    \frac{p^k+p-1}{p^{k+1}}.
    $$
    Equality holds if and only if
    $$
    |\Fix^r_B(x)|=\frac{|B|}{p}
    $$
    for every $x\in B-\Ker(\lambda)$.

    \item Conversely,    if
    $$
    P_{\lambda}(B)=\frac{2p-1}{p^2},
    $$
    then $n=p$. In this case
    $$
    |\Fix^r_B(x)|=\frac{|B|}{p}
    $$
    for every $x\in B-\Ker(\lambda)$.
\end{enumerate}

The following easy example shows that $3/4$ is attained by
$P_{\lambda}(B)$. Let $B=\mathbb{Z}_4$ with the usual addition and define
`$\circ$' by
$$
x\circ y=x+y+2xy
\qquad\mbox{for all }x,y\in B,
$$
where $xy$ denotes the usual multiplication. Then $(B,+,\circ)$ is a
skew brace with
$P_{\lambda}(B)=3/4$.


\section{Sylow Local Commuting Probability}

We record some consequences of the recent Sylow theory for skew braces.
Truman (\cite{Tru26}) proved an unconditional first Sylow theorem for finite skew braces. In the two-sided case, however, one has a stronger statement:
 the Sylow $p$-sub-skew braces of a finite
two-sided skew brace coincide with the Sylow $p$-subgroups of the
multiplicative group (\cite[Theorem 14]{CDMFT26}). More generally, in the soluble two-sided case, Hall sub-skew braces coincide with Hall subgroups of the multiplicative group.

We first recall the concept of Sylow local commuting probability for groups defined in \cite{DLMS25}. Let $B$ be a finite skew brace. By the first Sylow theorem for finite skew braces proved in \cite{Tru26}, for every prime $p\in\pi(B)$ there exists a Sylow $p$-sub-skew brace of $B$. For distinct primes
$p,q\in\pi(B)$ and Sylow sub-skew braces $P\in\Syl_p(B)$,
$Q\in\Syl_q(B)$, we write $\Pb(P,Q)$ for the probability that a random
pair $(x,y)\in P\times Q$ brace-commutes. Define
\[
\Pb_{\Syl}(B)=
\min_{\substack{p,q\in\pi(B)\\p\neq q}}
\max\{\Pb(P,Q):P\in\Syl_p(B),\,Q\in\Syl_q(B)\}.
\]
If $|\pi(B)|\leqslant 1$, we set $\Pb_{\Syl}(B)=1$. Since a Sylow $p$-sub-skew brace has order equal to the full $p$-part of $|B|$, it is simultaneously a Sylow $p$-subgroup of both $(B,+)$ and $(B,\circ)$. In the two-sided case, moreover, the Sylow sub-skew braces are precisely the multiplicative Sylow subgroups. We begin with the following easy but useful observation.

\begin{lem}\label{lem:sylow-brace-prob-to-groups}
Let $B$ be a finite skew brace, and let $H,K\leqslant B$ be sub-skew braces.
Then
$\Pb(H,K)\leqslant \Pr((H,+),(K,+))$ and
$\Pb(H,K)\leqslant \Pr((H,\circ),(K,\circ))$.
Consequently, if $\Pb_{\Syl}(B)\geqslant\epsilon$, then
both underlying groups $(B,+)$ and $(B,\circ)$ satisfy the Sylow-local
commuting probability hypothesis with constant $\epsilon$.
\end{lem}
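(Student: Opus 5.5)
The plan is to compare the sets being counted. The inequalities will follow from set inclusions, and the Sylow statement will follow from the fact that Sylow sub-skew braces are Sylow subgroups of both underlying groups.

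For the inequalities, let
\[
E=\{(x,y)\in H\times K\mid x\circ y=x+y=y+x=y\circ x\}
\]
be the set defining $\Pb(H,K)$. Every pair in $E$ satisfies $x+y=y+x$, so $E$ is contained in the set of additively commuting pairs in $H\times K$. Similarly, every pair in $E$ satisfies $x\circ y=y\circ x$, so $E$ is contained in the set of multiplicatively commuting pairs. Since $(H,+)$ and $(H,\circ)$ have the same underlying set $H$, and likewise for $K$, all three counts are normalised by the same denominator $|H||K|$. Dividing through gives $\Pb(H,K)\leqslant\Pr((H,+),(K,+))$ and $\Pb(H,K)\leqslant\Pr((H,\circ),(K,\circ))$. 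Here $\Pr(X,Y)$ is the probability that a random pair in $X\times Y$ commutes in the ambient group; for $Y$ the whole group this is $d(X,Y)$.

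For the consequence, fix distinct primes $p,q\in\pi(B)$. The hypothesis $\Pb_{\Syl}(B)\geqslant\epsilon$ gives Sylow sub-skew braces $P\in\Syl_p(B)$ and $Q\in\Syl_q(B)$ with $\Pb(P,Q)\geqslant\epsilon$. As noted just before the lemma, $|P|$ is the full $p$-part of $|B|=|(B,+)|=|(B,\circ)|$. Hence $(P,+)$ is a Sylow $p$-subgroup of $(B,+)$ and $(P,\circ)$ is a Sylow $p$-subgroup of $(B,\circ)$, and the same holds for $Q$ with $q$. The first part then gives
\[
\Pr((P,+),(Q,+))\geqslant\epsilon \quad\text{and}\quad \Pr((P,\circ),(Q,\circ))\geqslant\epsilon.
\]
Taking the maximum over Sylow pairs in each group only increases these values, and the prime sets $\pi(B,+)$, $\pi(B,\circ)$ and $\pi(B)$ coincide. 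So the min–max quantity of \cite{DLMS25} is at least $\epsilon$ for both $(B,+)$ and $(B,\circ)$. If $|\pi(B)|\leqslant1$, both group invariants equal $1$ by convention.

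The only point needing care is this last identification: the group-theoretic definition in \cite{DLMS25} maximises over all Sylow $p$- and $q$-subgroups of the group. A lower bound attained by one particular pair of subgroups therefore suffices, even though not every group Sylow subgroup need be a sub-skew brace. Everything else is a direct count.
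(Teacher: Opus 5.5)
Your proof is correct and follows essentially the same route as the paper: every brace-commuting pair in $H\times K$ is both an additively and a multiplicatively commuting pair, with the same denominator $|H||K|$, and a Sylow $p$-sub-skew brace, having order equal to the full $p$-part of $|B|$, is a Sylow $p$-subgroup of both $(B,+)$ and $(B,\circ)$. Your remark that the group-theoretic maximum ranges over possibly more Sylow pairs, so a lower bound at one particular pair suffices, just spells out what the paper leaves implicit in ``the conclusion follows.''
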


\begin{proof}
If $x\in H$ and $y\in K$ brace-commute, then $[x,y]^+=1$ and
$[x,y]^\circ=1$. Hence every brace-commuting pair is both an additively
commuting pair and a multiplicatively commuting pair. By \cite{Tru26}, Sylow $p$-sub-skew braces exist for every prime $p \mid |B|$. Since such a sub-skew brace has order equal to the full $p$-part of $|B|$, it is a Sylow $p$-subgroup of both underlying groups. Therefore the same Sylow sub-skew braces occurring in the definition of $\Pb_{\Syl}(B)$ give Sylow subgroups in both $(B,+)$ and $(B,\circ)$, and the conclusion follows.
\end{proof}

For a finite group $G$, let $F(G)$ denote the Fitting subgroup and let
$F_2(G)$ denote the second term of the upper Fitting series. For a finite
two-sided skew brace $B$, define $F^+_0(B)= 1$ and, inductively,
$F^+_{i+1}(B)/F^+_i(B)=F((B/F^+_i(B),+))$.

\begin{lem}\label{lem:additive-fitting-ideals-sylow}
Let $B$ be a finite two-sided skew brace. Then $F^+_i(B)$ is an ideal of
$B$ for every $i\geqslant 0$. Moreover, if $J\leqslant I$ are ideals of $B$ and
$(I/J,+)$ is nilpotent, then $I/J$ is a direct product of its Sylow ideals,
and distinct Sylow ideals brace-centralize each other.
\end{lem}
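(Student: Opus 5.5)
We have to prove two things: that each $F^+_i(B)$ is an ideal of $B$, and that nilpotent sections $I/J$ split into Sylow ideals that brace-centralize one another. For the first part we argue by induction on $i$; the case $F^+_0(B)=1$ is trivial. Suppose $F^+_i(B)$ is an ideal. Then $\overline B=B/F^+_i(B)$ is again a finite two-sided skew brace, and it suffices to show that $F:=F((\overline B,+))$ is an ideal of $\overline B$. Being characteristic in $(\overline B,+)$, $F$ is normal there and invariant under every $\lambda_b\in\Aut(\overline B,+)$, so $F$ is a strong left ideal; in particular it is a sub-skew brace. For normality in $(\overline B,\circ)$ we use two-sidedness, exactly as in Lemma~\ref{lem:bounded-ideal-closure-two-sided}: every multiplicative conjugation $x\mapsto b\circ x\circ b^{-1}$ is an automorphism of the skew brace $\overline B$, hence an automorphism of $(\overline B,+)$. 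It therefore maps the characteristic subgroup $F$ onto itself. Hence $F$ is an ideal, and its preimage $F^+_{i+1}(B)$ is an ideal of $B$.

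For the second part we set $Q=I/J$, which is a two-sided skew brace, being a quotient of the sub-skew brace $I$ of the two-sided brace $B$ (ideals are sub-skew braces, and right distributivity is inherited). Since $(Q,+)$ is nilpotent, $(Q,+)=\prod_p Q_p$, where $Q_p$ is the unique Sylow $p$-subgroup of $(Q,+)$. Each $Q_p$ is characteristic in $(Q,+)$, so the argument of the first paragraph shows that $Q_p$ is an ideal of $Q$. Distinct $Q_p$ and $Q_q$ are ideals of coprime orders, so $Q_p\cap Q_q=1$.

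It remains to verify that $Q_p$ and $Q_q$ brace-centralize each other for $p\neq q$. Take $x\in Q_p$ and $y\in Q_q$. First, $[x,y]^+=1$ because the additive Sylow factors commute. Next, $x*y=\lambda_x(y)-y\in Q_q$ since $Q_q$ is a left ideal; on the other hand, $x*y=-x+(x\circ y)-y$, and because $Q_p$ is an ideal, $x\circ y\equiv y \pmod{Q_p}$ in $(Q,\circ)$ and hence additively. This puts $x*y$ in $Q_p$ after using additive commutation with $y$, so $x*y\in Q_p\cap Q_q=1$. Symmetrically $y*x=1$, and then $x\circ y=x+y=y+x=y\circ x$. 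This gives brace-centralization. The multiplicative group is then also the internal direct product of the $Q_p$, so $Q$ is the direct product of its Sylow ideals as a skew brace.

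The step requiring the most care is the computation showing $x*y\in Q_p$. We must justify that additive and multiplicative cosets of the ideal $Q_p$ coincide, so that $x\circ y\in y+Q_p$ and $-x+(x\circ y)-y\in Q_p$ via normality of $Q_p$ in $(Q,+)$. If this manipulation turns out to be awkward, we can fall back on the standard commutator-of-ideals argument: $Q_p*Q_q\subseteq Q_p\cap Q_q$ for ideals.
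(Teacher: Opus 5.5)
Your proof is correct and follows essentially the same route as the paper. Characteristic subgroups of the additive group are ideals of a two-sided skew brace: you verify this directly by showing that multiplicative conjugations are additive automorphisms, whereas the paper cites \cite[Lemma 4.1]{Nas19}. Brace-centralization then follows from $[x,y]^+, x*y\in Q_p\cap Q_q$. The only real difference is the multiplicative direct decomposition: you obtain it from brace-centralization and coprimality of orders instead of invoking the two-sided Sylow theorem \cite[Theorem 14]{CDMFT26}, which keeps your argument self-contained.
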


\begin{proof}
The subgroups $F^+_i(B)$ are characteristic in the additive group at each
step. By \cite[Lemma 4.1]{Nas19}, characteristic subgroups of the additive group of a two-sided skew brace are ideals. This proves
the first assertion.

Now assume that $(I/J,+)$ is nilpotent. It is enough to replace $B$ by
$I/J$. Let $B_p$ be the additive Sylow $p$-subgroup of $(B,+)$. Since
$(B,+)$ is nilpotent, $B_p$ is characteristic in $(B,+)$, hence an ideal of
$B$. It is therefore a Sylow $p$-sub-skew brace. By the two-sided Sylow
theorem of \cite[Theorem 14]{CDMFT26}, the Sylow sub-skew braces are
precisely the multiplicative Sylow subgroups, so these ideals are also the
multiplicative Sylow subgroups. Thus, $B$ is the product of the ideals
$B_p$.

If $p\neq q$, $x\in B_p$ and $y\in B_q$, then $[x,y]^+$ lies in both
$B_p$ and $B_q$, hence is trivial. Also $x*y$ lies in $B_q$, because $B_q$
is a left ideal, and it lies in $B_p$, because the image of $x$ in
$B/B_p$ is trivial. Thus $x*y= 1$. Similarly $y*x= 1$. Hence distinct Sylow
ideals brace-centralize each other.
\end{proof}

\begin{thm}
\label{thm:sylow-fitting-two-sided-strong}
Let $B$ be a finite two-sided skew brace. If $\Pb_{\Syl}(B)\geqslant\epsilon>0$,
then there are canonical ideals $F\leqslant I$ of $B$ such that $[B:I]$ is
$\epsilon$-bounded, and both $F$ and $I/F$ are direct products of their
Sylow ideals. More precisely, one may take
$F=F^+_1(B)=F((B,+))$ and $I=F^+_2(B)=F_2((B,+))$.

Moreover, the multiplicative group also has bounded Fitting height up to
bounded index: $F_2((B,\circ))$ has $\epsilon$-bounded index in
$(B,\circ)$.
\end{thm}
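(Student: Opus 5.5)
The plan is to reduce everything to the group-theoretic Sylow-local results of \cite{DLMS25} via Lemma~\ref{lem:sylow-brace-prob-to-groups}, and then to use Lemma~\ref{lem:additive-fitting-ideals-sylow} to turn statements about the Fitting series of $(B,+)$ into statements about ideals of $B$.

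First, by Lemma~\ref{lem:sylow-brace-prob-to-groups}, the hypothesis $\Pb_{\Syl}(B)\geqslant\epsilon$ implies that both $(B,+)$ and $(B,\circ)$ satisfy the Sylow-local commuting probability hypothesis with constant $\epsilon$. The relevant pairs $(P,Q)$ are Sylow $p$- and $q$-sub-skew braces, which are Sylow subgroups of both underlying groups. Hence the maximum over all pairs of Sylow subgroups of the group is at least the maximum over the brace Sylows, which is at least $\epsilon$. I would then invoke the main structural theorem of \cite{DLMS25}: if a finite group $G$ has Sylow-local commuting probability at least $\epsilon$, then $F_2(G)$ has $\epsilon$-bounded index in $G$. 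Applied to $G=(B,+)$, this gives $|B:F_2((B,+))|$ $\epsilon$-bounded. Applied to $G=(B,\circ)$, it gives the final assertion of the theorem directly.

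Next I set $F=F^+_1(B)$ and $I=F^+_2(B)$. By the first part of Lemma~\ref{lem:additive-fitting-ideals-sylow}, these are ideals of $B$. They are canonical because they are defined via characteristic subgroups of the additive group. Note also that $F^+_2(B)$ coincides with $F_2((B,+))$, by the inductive definition. The index $[B:I]$ is the same whether measured additively or multiplicatively, since $I$ is an ideal, so it is $\epsilon$-bounded. The additive groups $(F,+)=F((B,+))$ and $(I/F,+)=F((B,+)/F((B,+)))$ are nilpotent. The second part of Lemma~\ref{lem:additive-fitting-ideals-sylow}, applied to the pairs of ideals $1\leqslant F$ and $F\leqslant I$, then shows that $F$ and $I/F$ are direct products of their Sylow ideals, which pairwise brace-centralize each other.

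The main obstacle is the correct citation and form of the group-theoretic input from \cite{DLMS25}. I need to check that its conclusion is exactly that $F_2(G)$ has $\epsilon$-bounded index (rather than, say, an index bound on a larger term or a solubility statement), and that its hypothesis uses the same min–max normalization as $\Pb_{\Syl}$. If the cited result only gives bounded index for a subgroup $K$ with $K/F(K)$ nilpotent, I would pass to $F_2$ using $K\leqslant F_2(G)$. This inclusion holds because $F(K)\leqslant F(G)$ when $K$ is normal, and I would first replace $K$ by its core, which still has bounded index. The remaining steps are formal consequences of the two lemmas.
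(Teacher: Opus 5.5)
Your proposal is correct and follows the paper's proof essentially step for step. You reduce to $(B,+)$ and $(B,\circ)$ via Lemma~\ref{lem:sylow-brace-prob-to-groups}, invoke the group-theoretic result of \cite{DLMS25} to get $F_2$ of $\epsilon$-bounded index, and use Lemma~\ref{lem:additive-fitting-ideals-sylow} to see that $F^+_1(B)$ and $F^+_2(B)$ are ideals and that $F$ and $I/F$ split into Sylow ideals. The only difference is cosmetic: for the multiplicative assertion the paper also appeals to the two-sided Sylow correspondence of \cite{CDMFT26}, whereas you correctly observe that Lemma~\ref{lem:sylow-brace-prob-to-groups} alone already transfers the hypothesis to $(B,\circ)$.
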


\begin{proof}
By Lemma~\ref{lem:sylow-brace-prob-to-groups}, the additive group $(B,+)$
satisfies the Sylow-local commuting probability hypothesis with constant
$\epsilon$. Hence, by one of the main theorems of \cite{DLMS25}, the subgroup $F_2((B,+))$ has
$\epsilon$-bounded index in $(B,+)$. Therefore $I=F^+_2(B)$ has
$\epsilon$-bounded index in $B$. By Lemma~\ref{lem:additive-fitting-ideals-sylow},
$F=F^+_1(B)$ and $I=F^+_2(B)$ are ideals.

By construction, $(F,+)$ is nilpotent and $(I/F,+)$ is nilpotent. Applying
Lemma~\ref{lem:additive-fitting-ideals-sylow} to $F$ and to $I/F$, we get
that both are direct products of their Sylow ideals, with distinct Sylow
ideals brace-centralising each other.

The same argument, now applied to the multiplicative group, gives that
$F_2((B,\circ))$ has $\epsilon$-bounded index in $(B,\circ)$, because in
the two-sided case the Sylow sub-skew braces are exactly the multiplicative
Sylow subgroups.
\end{proof}

We also record the Hall version, which is stronger when the two underlying
groups are soluble. In the two-sided soluble case, Hall sub-skew braces
coincide with multiplicative Hall subgroups by \cite[Theorem 14]{CDMFT26}.

\begin{thm} 
Let $B$ be a finite two-sided skew brace such that both $(B,+)$ and
$(B,\circ)$ are soluble. Suppose that, for every prime $p\in\pi(B)$, there
exist a Sylow $p$-sub-skew brace $P$ and a Hall $p'$-sub-skew brace $H$
such that $\Pb(P,H)\geqslant\epsilon>0$. Then $F^+_1(B)=F((B,+))$ is an ideal
of $\epsilon$-bounded index in $B$, and it is a direct product of its
Sylow ideals. Moreover, $F((B,\circ))$ has $\epsilon$-bounded index in
$(B,\circ)$.
\end{thm}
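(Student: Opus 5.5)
The plan is to reduce everything to the group-theoretic Hall-version theorem of \cite{DLMS25}. We apply it separately to $(B,+)$ and $(B,\circ)$, and then use Lemma~\ref{lem:additive-fitting-ideals-sylow} to upgrade the additive statement to a statement about ideals.

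The first step is to transfer the hypothesis to the underlying groups. Let $P$ be a Sylow $p$-sub-skew brace and $H$ a Hall $p'$-sub-skew brace with $\Pb(P,H)\geqslant\epsilon$. Since $|P|$ and $|H|$ are the full $p$-part and $p'$-part of $|B|$, the set $P$ is a Sylow $p$-subgroup of both $(B,+)$ and $(B,\circ)$, and $H$ is a Hall $p'$-subgroup of both. By Lemma~\ref{lem:sylow-brace-prob-to-groups},
\[
\Pr((P,+),(H,+))\geqslant\Pb(P,H)\geqslant\epsilon
\quad\text{and}\quad
\Pr((P,\circ),(H,\circ))\geqslant\epsilon .
\]
Both groups are soluble. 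We invoke the result of \cite{DLMS25} for soluble groups: if for every prime $p$ some Sylow $p$-subgroup and some Hall $p'$-subgroup have relative commuting probability at least $\epsilon$, then the Fitting subgroup has $\epsilon$-bounded index. This gives at once that $|(B,+):F((B,+))|$ and $|(B,\circ):F((B,\circ))|$ are $\epsilon$-bounded, which proves the final assertion.

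The second step upgrades the additive statement. By Lemma~\ref{lem:additive-fitting-ideals-sylow}, $F^+_1(B)=F((B,+))$ is an ideal of $B$, since it is characteristic in $(B,+)$ and $B$ is two-sided. Its index in $B$ equals its additive index, because additive and multiplicative cosets of an ideal coincide, so this index is $\epsilon$-bounded. Applying the second part of Lemma~\ref{lem:additive-fitting-ideals-sylow} with $I=F^+_1(B)$ and $J=1$, the nilpotence of $(F^+_1(B),+)$ shows that $F^+_1(B)$ is the direct product of its Sylow ideals, and that these Sylow ideals pairwise brace-centralize each other.

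The main obstacle is making sure the group-theoretic input really applies as stated. The Hall theorem in \cite{DLMS25} may be phrased with a quantifier over all Sylow and Hall subgroups, or with a ``there exist'' version. Since all Hall $p'$-subgroups of a soluble group are conjugate, and all Sylow $p$-subgroups are conjugate, the relative probability is constant over the relevant conjugate pairs. I would first check that simultaneous conjugation $(P^g,H^g)$ preserves $\Pr(P,H)$, so that the existential hypothesis suffices. If the cited theorem requires a single pair to work for all primes at once, or a bound on $\Pr(P,H)$ in the opposite order, I would use the symmetry $\Pr(P,H)=\Pr(H,P)$ together with this conjugacy remark. The existence of Hall sub-skew braces themselves is guaranteed by \cite[Theorem 14]{CDMFT26} in the soluble two-sided case, so the hypothesis is not vacuous.
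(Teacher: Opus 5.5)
Your proposal is correct and follows essentially the same route as the paper. You transfer the hypothesis to $(B,+)$ and $(B,\circ)$ via Lemma~\ref{lem:sylow-brace-prob-to-groups}, apply the soluble Hall version of \cite{DLMS25} to each group, and then use Lemma~\ref{lem:additive-fitting-ideals-sylow} to see that $F((B,+))$ is an ideal splitting as the direct product of its Sylow ideals. Your quantifier concern is harmless: a Sylow $p$-subgroup and a Hall $p'$-subgroup always have product equal to the whole group, so all such pairs are simultaneously conjugate.
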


\begin{proof}
By Lemma~\ref{lem:sylow-brace-prob-to-groups}, for every prime $p$ there
are a Sylow $p$-subgroup and a Hall $p'$-subgroup of the additive group
whose ordinary group commuting probability is at least $\epsilon$. Since
$(B,+)$ is soluble, the Hall version of the theorem of Detomi, Lucchini,
Morigi and Shumyatsky \cite{DLMS25} implies that $F((B,+))$ has
$\epsilon$-bounded index. By Lemma~\ref{lem:additive-fitting-ideals-sylow},
$F^+_1(B)=F((B,+))$ is an ideal of $B$, and since its additive group is
nilpotent, it is a direct product of its Sylow ideals.

Applying the same argument to the multiplicative group, and using that in
the two-sided case Hall sub-skew braces are multiplicative Hall subgroups,
we obtain that $F((B,\circ))$ has $\epsilon$-bounded index in
$(B,\circ)$.
\end{proof}

\begin{rem}
The statements above are deliberately formulated with ideals on the
additive Fitting side. The reason is that characteristic subgroups of
$(B,+)$ are ideals in a two-sided skew brace. The multiplicative conclusions
are kept as group-theoretic shadows because a characteristic subgroup of
$(B,\circ)$ need not be visibly an ideal without an additional invariance
argument. Thus Theorem~\ref{thm:sylow-fitting-two-sided-strong} gives a
canonical ideal of bounded index, while still retaining a simultaneous
multiplicative Fitting bound.
\end{rem}

The following result may be known in the literature, but we include a proof.

\begin{thm}\label{p^n is solvable}
    Let $B$ be a finite skew brace of order $p^n$ for some prime $p$. Then $B$ is a solvable skew brace.
\end{thm}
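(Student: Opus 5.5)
We will show that every non-zero skew brace of prime-power order has non-trivial annihilator, i.e. is nilpotent in the sense of the annihilator series. Solvability then follows, since nilpotent skew braces are solvable, as recalled in Section~2. If the annihilator route fails, we fall back on a direct induction showing that $\Gamma_2(B)$ is a proper ideal. In either case we argue by induction on $n$; the case $n\le 1$ is clear, because a skew brace of order $p$ is a trivial brace.

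The key step is to show that $\Gamma_2(B)\neq B$ whenever $|B|=p^n>1$. Once this holds, $D_1(B)=\Gamma_2(B)$ is a sub-skew brace (indeed an ideal) of order $p^k$ with $k<n$. Since $D_{i+1}(B)=\Gamma_2(D_i(B))$, induction applied to $D_1(B)$ gives $D_j(D_1(B))=1$ for some $j$, and hence $D_{j+1}(B)=1$.

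To obtain $\Gamma_2(B)<B$ we plan to produce a non-zero element of $\Soc(B)$, or better, a proper ideal $I$ with $B/I$ a trivial brace. Consider the semidirect product $G=(B,+)\rtimes\lambda(B)$, where $\lambda(B)$ is a quotient of $(B,\circ)$. This group is a finite $p$-group, so the normal subgroup $(B,+)$ meets $\Z(G)$ non-trivially. This yields $0\neq x\in \Z(B,+)$ with $\lambda_b(x)=x$ for all $b$.

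To pass from such fixed points to a quotient, we use the subgroup $\Gamma_2(B)=\langle B*B,[B,B]^+\rangle^+$. The group $G$ acts on $(B,+)$ by affine maps: translations together with the $\lambda_b$. The subgroup $\langle B*B,[B,B]^+\rangle^+$ is exactly the additive subgroup generated by the elements $g\cdot y-y$ for $g$ in $G$. Since $G$ is a $p$-group acting on the $p$-group $(B,+)$ by automorphisms, the augmentation-type subgroup $[(B,+),G]$ is proper whenever $(B,+)\neq 0$; this is the standard fact that $[P,Q]<P$ when a $p$-group $Q$ normalizes a non-trivial $p$-group $P$. Taking $P=(B,+)$ inside $G$, we get $\Gamma_2(B)=[P,G]<P$.

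The main obstacle is this identification. One must check that $\langle B*B,[B,B]^+\rangle^+$ really equals the group commutator $[(B,+),G]$ computed in the holomorph-type group $G$. The point is that $[y,\lambda_b]=\lambda_b(y)-y$ up to sign, which is $b*y$, and that $[y,t_z]$ is an additive commutator. It also requires that $B*B$ uses $\lambda_b$ for all $b$, which is harmless. With that in hand, $p$-group nilpotency of $G$ gives the strict inclusion, and the induction closes.
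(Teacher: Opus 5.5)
Your proof is correct. The properness step and the shape of the induction differ from the paper's.

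\textbf{The paper's route.} It inducts via $B^2=\langle B*B\rangle^+$. If $B^2=1$, then $B$ is a trivial skew brace on a $p$-group. Otherwise it cites left nilpotency of skew braces of prime-power order \cite[Proposition~4.4]{CSV19} to get $1\neq B^2\neq B$. It then applies the induction hypothesis to both the ideal $B^2$ and the quotient $B/B^2$, and uses without comment that an extension of a solvable skew brace by a solvable one is solvable.

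\textbf{Your route.} You show $\Gamma_2(B)<B$ directly inside the $p$-group $G=(B,+)\rtimes\lambda(B)$. The identification you flagged as the main obstacle does hold. For $g=(z,\lambda_b)$ acting by conjugation, ${}^{g}y-y=z+\lambda_b(y)-z-y=[z,\lambda_b(y)]^+ +b*y\in\Gamma_2(B)$. Taking $g=\lambda_b$ and $g=z$ produces $b*y$ and $[z,y]^+$ respectively. Hence $\Gamma_2(B)=[(B,+),G]$, which is proper because $G$ is nilpotent. You then induct only along the derived series, using $D_{j+1}(B)=D_j(\Gamma_2(B))$.

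\textbf{What each buys.} Your version is self-contained: the semidirect-product argument is essentially the mechanism behind left nilpotency, and it gives $B^2\leqslant\Gamma_2(B)<B$ at once. It also removes the case split, the passage to $B/B^2$, and the reliance on extension-closure of solvability. The paper's version is shorter only because it imports the properness step from the literature.

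You should, however, delete the surrounding side claims, some of which are false.
\begin{itemize}
\item The opening assertion that every non-zero skew brace of prime-power order has non-trivial annihilator fails in general. There are braces of prime-power order with trivial socle (to my recollection, Bachiller constructed such examples), and for them $\Ann(B)\leqslant\Soc(B)=1$. You rightly never use this route.
\item The element $x\in(B,+)\cap\Z(G)$ lies in $\Z(B,+)$ and is fixed by every $\lambda_b$. It need not lie in $\Soc(B)$, which would require $\lambda_x=\mathrm{id}$. It is not used either.
\item The sentence about the affine action is wrong as written. If $g=(z,\lambda_b)$ acts by $y\mapsto z+\lambda_b(y)$, then $g\cdot 0-0=z$, so these differences generate all of $(B,+)$. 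The action you need, and the one your commutator computation actually uses, is conjugation in $G$ on its normal subgroup $(B,+)$.
\end{itemize}
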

\begin{proof}
    We prove this by induction on $n\in \mathbb{N}$. For $n=1,2$, we know that $B$ is nilpotent; hence solvable. Assume that every skew brace  of prime power order having order at most $p^{n-1}$ is solvable.  Let $B$ be a skew brace of order $p^n$. Then $B^2=\langle b\ast c \mid b,c\in B\rangle^+$ is an ideal of $B$.
    If $B^2=\{1\}$, then $B$ is a trivial skew brace whose common underlying group is a finite $p$-group, and hence $B$ is solvable. Assume that $B^2 \neq \{1\}$. By \cite[Proposition 4.4.]{CSV19}, $B$ is left nilpotent, which implies that $B\neq B^2$. Hence,  $B^2$ is a nontrivial proper ideal of $B$. Thus the orders of the skew braces $B/B^2$ and $B^2$ are $\leqslant p^{n-1}$. By the induction hypothesis, both $B/B^2$ and $B^2$ are solvable, which proves that $B$ is  solvable. The induction argument now completes the proof.
\end{proof}

\begin{pro}\label{pn, uniqe ideal}
     Let $(B,+,\circ)$ be a finite skew brace of order $pn$ for some prime $p$ and $n\in \mathbb{N}$ such that $p>n$. Then $B$ has a unique ideal of order $p$. 
\end{pro}
\begin{proof}
    By \cite{Tru26}, $B$ admits  a Sylow sub-skew brace $H$ of order $p$. By Sylow theorem for groups, $H$ is a normal subgroup of both groups $(B,+)$ and $(B,\circ)$. So $H$ is a unique subgroup of $(B,+)$ having order $p$, which shows that $\lambda_b(H)=H$ for all $b\in B$. Hence $B$ has a unique ideal $H$ of order $p$. 
\end{proof}

As a direct consequence, we get
\begin{cor}\label{pq solvable}
   A skew brace $(B, +, \circ)$ is solvable if (i) $|B|= pq$, where $p, q$ are primes; (ii) both groups $(B,+)$ and $(B,\circ)$ are nilpotent. 
\end{cor}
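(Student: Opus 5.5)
In each case the plan is to find a non-trivial proper ideal $I$ of $B$. Then both $I$ and $B/I$ are skew braces of smaller order, and we conclude by the extension property of solvability that was already used in the proof of Theorem~\ref{p^n is solvable}: if an ideal $I$ and the quotient $B/I$ are solvable, then $B$ is solvable. The base cases are skew braces of prime order and, more generally, of prime power order, and these are solvable by Theorem~\ref{p^n is solvable}.

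\emph{Case (i).} Let $|B|=pq$. If $p=q$, Theorem~\ref{p^n is solvable} applies directly. Otherwise assume $p>q$ and apply Proposition~\ref{pn, uniqe ideal} with $n=q<p$. It gives an ideal $H$ of order $p$. Both $H$ and $B/H$ have prime order, so they are solvable (trivial braces of prime order, or by Theorem~\ref{p^n is solvable}), and hence $B$ is solvable.

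\emph{Case (ii).} Now $(B,+)$ and $(B,\circ)$ are both nilpotent, and we argue by induction on $|B|$. If $|B|$ is a prime power we are done by Theorem~\ref{p^n is solvable}, so let $p$ be a prime dividing $|B|$ with $|B|$ not a power of $p$. By \cite{Tru26} there is a Sylow $p$-sub-skew brace $P$, which is a Sylow $p$-subgroup of both underlying groups. Since both groups are nilpotent, $P$ is the unique Sylow $p$-subgroup in each, so it is normal in $(B,+)$ and in $(B,\circ)$ and characteristic in $(B,+)$. It follows that $\lambda_b(P)=P$ for all $b$, because each $\lambda_b$ is an additive automorphism. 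Hence $P$ is a non-trivial proper ideal. The quotient $B/P$ again has nilpotent additive and multiplicative groups and smaller order, so it is solvable by induction. The ideal $P$ has prime power order and is solvable by Theorem~\ref{p^n is solvable}. Therefore $B$ is solvable.

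The only point that needs care is the extension property for solvability of skew braces under ideals. I would justify it from the definition $D_n(B)=\Gamma_2(D_{n-1}(B))$, noting that $\Gamma_2$ passes to quotients. Concretely, if $D_k(B/I)=1$ then $D_k(B)\leqslant I$, and so $D_{k+l}(B)\leqslant D_l(I)=1$, where monotonicity of $\Gamma_2$ on sub-skew braces is used. This is the same fact invoked, without comment, in the proof of Theorem~\ref{p^n is solvable}, so I would cite it in the same way.
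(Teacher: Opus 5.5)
Your proof is correct. Part (i) is exactly the paper's argument. Part (ii) reaches the same conclusion by a somewhat different route.

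In (ii) the paper cites \cite[Corollary~4.3]{CSV19} to write $B$ as a direct product of its Sylow ideals. It then applies Theorem~\ref{p^n is solvable} to each factor and uses, without comment, that a direct product of solvable skew braces is solvable.

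You instead show directly that a single Sylow $p$-subgroup $P$ is an ideal: it is unique in both nilpotent groups, hence normal in both and characteristic in $(B,+)$, hence $\lambda$-invariant. You then remove $P$ by induction on $|B|$ using the extension property. Your verification of that property is sound. $\Gamma_2$ commutes with surjective brace homomorphisms and is monotone on sub-skew braces. So $D_k(B/I)=1$ gives $D_k(B)\leqslant I$, and then $D_{k+l}(B)\leqslant D_l(I)$.

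Your route is self-contained. It also makes explicit the extension lemma that the paper uses silently, both in Theorem~\ref{p^n is solvable} and in part (i). The paper's route is shorter and gives the stronger structural fact that $B$ splits as a direct product of its Sylow ideals.

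You do not actually need \cite{Tru26} here. The Sylow $p$-subgroup of $(B,+)$ is characteristic, hence $\lambda$-invariant. It is therefore automatically a sub-skew brace, since $a\circ b=a+\lambda_a(b)$ and $a^{-1}=\lambda_{a^{-1}}(-a)$. Its multiplicative group is then the normal Sylow $p$-subgroup of $(B,\circ)$.
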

\begin{proof}
    If $p=q$, then by Theorem \ref{p^n is solvable}, $B$ is a solvable skew brace. If $p>q$, then by Proposition \ref{pn, uniqe ideal}, $B$ has a unique ideal $H$ of order $p$. Then $H$ and $B/H$ are both skew braces of prime order, and hence are solvable, proving that $B$ is solvable.  Now assume that both groups $(B,+)$ and $(B,\circ)$ are nilpotent. Then, by \cite[Corollary 4.3.]{CSV19}, $B$ is a direct product of its Sylow ideals, say, $I_1\times I_2\times \cdots \times I_k $, where $|I_j| = p_j^{n_j}$ for distinct primes  $p_j$. By Theorem \ref{p^n is solvable}, each $I_j$ is a solvable skew brace. Hence $B$ is a soluble skew brace.
\end{proof}

Two further implications follow when we specialize to the classes considered above.

\begin{pro}\label{soluble group iff soluble skew brace}
    Let $(B, +, \circ)$ be a finite skew brace in the class $\mathcal{T}$. Then $B$ is a soluble (nilpotent)  skew brace if and only if  $(B,+)$ and $(B,\circ)$ are soluble (nilpotent)  groups.
\end{pro}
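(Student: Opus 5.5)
The forward implication holds for every skew brace and needs no use of $\mathcal T$. If $B$ is soluble, say $D_n(B)=1$, then each $D_{i+1}(B)=\Gamma_2(D_i(B))$ contains the additive commutator subgroup $[D_i(B),D_i(B)]^+$. An induction then gives $D^{(i)}(B,+)\leqslant D_i(B)$, where $D^{(i)}$ is the group derived series, so $(B,+)$ is soluble. For the multiplicative group we use the identity
\[
[x,y]^{\circ}\equiv x\circ y-y\circ x \pmod{\text{stuff in }\Gamma_2},
\]
or more concretely the fact that $D_i(B)/D_{i+1}(B)$ is a trivial brace (the quotient of $D_i$ by $\Gamma_2(D_i)$ is trivial by definition). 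Hence the two operations agree on it and are abelian, so $(D_i(B),\circ)/(D_{i+1}(B),\circ)$ is abelian and $(B,\circ)$ is soluble. The nilpotent case runs the same way with $\Gamma_n$ and the annihilator series. The factors $\Ann_{i+1}(B)/\Ann_i(B)$ are central in both quotient groups, so both groups are nilpotent.

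The converse is where $\mathcal T$ enters, and I would argue by induction on $|B|$, aiming to find a nontrivial proper ideal $I$ (or to show $B$ is already trivial/abelian). Given such an $I$, both $I$ and $B/I$ have soluble (resp. nilpotent) underlying groups and lie in $\mathcal T$ (the class is closed under sub-skew braces and quotients). Induction then applies, and solubility of skew braces is extension-closed.

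For solubility, let $(B,+)$ and $(B,\circ)$ be soluble. Take a minimal characteristic subgroup of $(B,+)$; it is elementary abelian. In the two-sided case it is an ideal by \cite[Lemma 4.1]{Nas19}. In the symmetric and $\lambda$-homomorphic cases I would instead use the permutation group $G$ of Lemma~\ref{lem:symmetric-permutation-bridge}, resp. the group $(B,+)\rtimes\lambda(B)$ of Lemma~\ref{lem:lambda-homomorphic-bridge}. These groups are soluble, being products of soluble normalising pieces, resp. extensions of soluble groups. I would take a minimal normal subgroup $R$ (elementary abelian) and pass to the ideal $K_R$ or $I_R$. The delicate point is ensuring this ideal is nontrivial and proper. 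If it is trivial, I would replace $R$ by a term of the derived series of $G$ and intersect with the copies of $B$. If it is all of $B$, that forces $B$ abelian-type with abelian $\lambda$-image, and then $\Gamma_2(B)<B$ directly, since $B*B$ lies in a proper subgroup when $\lambda(B)$ is a soluble (hence non-perfect) group acting. This base-case analysis is the step I expect to be the main obstacle.

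For nilpotency it suffices to decompose. If both groups are nilpotent, then \cite[Corollary 4.3]{CSV19} makes $B$ a direct product of its Sylow ideals, so we may assume $|B|=p^n$. A skew brace of prime power order with nilpotent groups is left nilpotent (\cite[Proposition 4.4]{CSV19}), and $\Soc(B)\neq 1$. I would then show $\Ann(B)\neq1$ using membership in $\mathcal T$. In the two-sided case conjugation by $\circ$ acts on $\Soc(B)$ as brace automorphisms, and a $p$-group acting on a $p$-group has fixed points, giving a nonzero element of $\Soc(B)\cap\Z(B,\circ)=\Ann(B)$. The other two classes are handled analogously inside $G$ or $(B,+)\rtimes\lambda(B)$, which is a $p$-group, by intersecting the center with the images of $B$. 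Induction on $B/\Ann(B)$ then gives central nilpotency.
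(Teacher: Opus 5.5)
Your forward direction is fine. The framework for the converse is also sound: solubility is extension-closed, $\mathcal T$ passes to ideals and quotients, and \cite[Corollary 4.3]{CSV19} reduces nilpotency to prime-power order. The paper simply quotes \cite{ST2023}, \cite{T2023} and class-specific right-nilpotency results; you instead try to prove the hard steps directly, and those steps are not established.

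For solubility in the symmetric and $\lambda$-homomorphic cases, the ideal $K_R$ (resp.\ $I_R$) attached to a minimal normal subgroup $R$ can be trivial. Take $B=\mathbb F_p^3$ with $x\circ y=x+y+x_1y_2e_3$, so $\lambda_x(y)=y+x_1y_2e_3$; this brace is two-sided, symmetric and $\lambda$-homomorphic. Every $\lambda_b$ fixes $e_1$, so $\langle\tau_{e_1}\rangle$ and $\langle t_{e_1}\rangle$ are central of order $p$, hence minimal normal, in $AM$ and in $(B,+)\rtimes\lambda(B)$. But $\lambda_{ke_1}\neq\mathrm{id}$ for $k\neq0$, so $K_R=I_R=1$. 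Your fallback of passing to a term of the derived series of $G$ comes with no argument. Your reason for the case $K_R=B$ (a soluble, hence non-perfect, $\lambda(B)$ forces $B*B<B$) is false as a principle: every finite left brace has soluble multiplicative group, yet non-trivial simple left braces exist. When $K_R=B$ one actually has $R=G$ abelian, so $\lambda$ is trivial.

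The missing idea is $\ker\lambda$. Since $\lambda_{\lambda_b(x)}=\lambda_b\lambda_x\lambda_b^{-1}$ in the symmetric case, resp.\ $\lambda_{\lambda_b(x)}=\lambda_x$ in the $\lambda$-homomorphic case, $\ker\lambda$ is an ideal. If $\ker\lambda=1$, then $u+v=v\circ u$ (so $B$ is almost trivial and its ideals are the normal subgroups of $(B,\circ)$), resp.\ $u+v=u\circ v$ (so $B$ is trivial). In the two-sided case you also skip the possibility that the minimal characteristic subgroup is $B$ itself. Then $(B,+)$ is elementary abelian and you still need Theorem~\ref{p^n is solvable}.

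In the nilpotency part, the claim that $\Soc(B)\neq1$ whenever $|B|=p^n$ is false, even for left braces. There are left braces of order $p^n$ with trivial socle, e.g.\ from Heged\H{u}s' regular subgroups of $\mathrm{AGL}_n(\mathbb F_p)$ containing no non-trivial translation. This is exactly why left nilpotency \cite[Proposition 4.4]{CSV19} is not enough and the paper invokes right nilpotency separately for each class. Your two-sided argument is correct once $\Soc(B)\neq1$ is known, but that is its main input.

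You can supply it directly in the two-sided case. By left and right distributivity, the left and right $\circ$-translations normalise the group generated by the left and right $+$-translations, so all four generate a $p$-group. Its stabiliser of $1$ contains every $\lambda_b$, every map $x\mapsto x\circ b-b$, and every inner automorphism of $(B,+)$. It has a fixed point $a\neq1$ on $B\setminus\{1\}$, and such an $a$ satisfies $a\circ b=a+b=b+a=b\circ a$, so $a\in\Ann(B)$.

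For the other two classes, ``intersecting the centre with the images of $B$'' fails as stated. In the example above, $\tau_{e_1}$ and $t_{e_1}$ are central, but $e_1\notin\Ann(B)=\langle e_3\rangle$. What works is to intersect the centre with the normal copy of $\ker\lambda$: $A\cap M$ in the symmetric case, resp.\ $\{t_x:x\in\ker\lambda\}$ in the $\lambda$-homomorphic case. This requires first showing $\ker\lambda\neq1$, which follows from the trivial/almost-trivial dichotomy above, since a non-trivial $p$-group has non-trivial centre.
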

\begin{proof}
   The solvability assertion follows from \cite[Theorem 3.11, Theorem 3.13]{ST2023} and \cite[Theorem 4.20]{T2023} using induction on the order of the skew brace. If $B$ is nilpotent, then both groups $(B,+)$ and $(B,\circ)$ are nilpotent (even in the general case). So assume that $(B,+)$ and $(B,\circ)$ are nilpotent groups. Then, by \cite[Corollary 4.3.]{CSV19}, $B$ is a direct product of its Sylow ideals, say, $I_1\times I_2\times \cdots \times I_k $, where $|I_j| = p_j^{n_j}$ for distinct primes  $p_j$.
   Note that all finite skew braces of prime-power order are left nilpotent (\cite[Proposition 4.4.]{CSV19}). That two-sided skew braces of prime power order are right nilpotent follows from  \cite[Lemma 4.2]{MY26}. The same assertion for symmetric skew braces of prime-power order follows from \cite[Proposition 4.4]{CSV19} and \cite[Corollary 3.8]{T2023}. Finally, the assertion for $\lambda$-homomorphic skew braces follows from \cite[Theorem 3.13.]{ST2023}.  As the direct product of nilpotent skew braces is also nilpotent, the proof is complete.
\end{proof}

\begin{cor} 
Let $B$ be a finite skew brace such that $\Pb_{\Syl}(B)>2/3$.  Then
$B$ is a direct product of its Sylow ideals, and therefore  solvable. Moreover, if $B$ belongs to the class $\mathcal{T}$, then $B$ is a nilpotent skew brace. 
\end{cor}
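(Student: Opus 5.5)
The approach is to reduce to pairs of Sylow sub-skew braces for distinct primes and to use the classical fact that a commuting probability above $2/3$ forces commutation. Fix distinct primes $p,q\in\pi(B)$. By the definition of $\Pb_{\Syl}(B)$ there are $P\in\Syl_p(B)$ and $Q\in\Syl_q(B)$ with $\Pb(P,Q)>2/3$.

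\textbf{Step 1: $P$ and $Q$ brace-centralize each other.}
I would run the argument of the bound $\Pb(H,B)\leqslant (|\Ann(H,B)|+|H|)/2|H|$ in the relative form for $(P,Q)$. Write
\[
|P||Q|\,\Pb(P,Q)=\sum_{y\in P}|\Cb_Q(y)|.
\]
For each $y\in P$, $\Cb_Q(y)$ is a subgroup of $(Q,\circ)$; this follows from the proof of the Proposition from \cite{CFT25}, or directly, since brace centralizers are multiplicative subgroups. Since $|Q|$ is a power of $q\geqslant 2$, either $\Cb_Q(y)=Q$ or $|\Cb_Q(y)|\leqslant |Q|/2$.

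Let $P_0=\{y\in P:\Cb_Q(y)=Q\}$. This set is an intersection of centralizers $\Cb_P(x)$, $x\in Q$, so it is a subgroup of $(P,\circ)$. If $P_0\neq P$, then $|P_0|\leqslant |P|/p\leqslant|P|/2$, and therefore
\[
\Pb(P,Q)\leqslant \frac{1}{|P|}\Big(|P_0|+\tfrac12(|P|-|P_0|)\Big)\leqslant \frac34 .
\]
This alone does not reach $2/3$. Using the primes instead gives $\Pb(P,Q)\leqslant \frac1p+\big(1-\frac1p\big)\frac1q$. For $\{p,q\}=\{2,3\}$ this equals $\frac12+\frac12\cdot\frac13=\frac23$, and it is smaller for every other pair. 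So $\Pb(P,Q)>2/3$ forces $P_0=P$. Thus every $x\in P$ and $y\in Q$ satisfy $x*y=y*x=[x,y]^+=1$, and also $[x,y]^\circ=1$.

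\textbf{Step 2: the Sylow pieces commute pairwise, in both groups.}
Choose, for each prime, Sylow sub-skew braces $P_{p}$ witnessing the maximum. Here I expect the main obstacle: the maximum in the definition depends on the pair $(p,q)$, so the witnesses need not be a single compatible family.

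My first attempt is to show that the Sylow subgroups are normal in both groups, which makes each Sylow sub-skew brace unique. From Step 1, $P$ centralizes some Sylow $q$-subgroup of $(B,+)$ for every $q\neq p$. Hence $\C_{(B,+)}(P)$ has order divisible by $|B|_{p'}$. Then $|P^{(B,+)}|=|(B,+):N(P)|$ divides $|B|_{p'}$, while $N(P)\geqslant P\,\C(P)$ has order divisible by $|B|$. So $P\unlhd(B,+)$, and likewise $P\unlhd(B,\circ)$. Consequently the Sylow sub-skew braces are unique, and both $(B,+)$ and $(B,\circ)$ are nilpotent.

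\textbf{Step 3: conclude.}
Each unique $P_p$ is normal in both groups. Since $Q$ acts trivially via $\lambda$ on $P_p$ for every $Q=P_q$, and $\lambda_{x}(P_p)=P_p$ for $x\in P_p$, it follows that $P_p$ is $\lambda$-invariant. So $P_p$ is an ideal. The ideals pairwise brace-commute and $B$ is their product, so $B$ is the direct product of its Sylow ideals; equivalently, apply \cite[Corollary 4.3]{CSV19}. Solvability then follows from Theorem~\ref{p^n is solvable} applied to each factor. If $B\in\mathcal T$, Proposition~\ref{soluble group iff soluble skew brace} yields nilpotency, since both underlying groups are nilpotent.
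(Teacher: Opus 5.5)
Your proof is correct, but it takes a more elementary route than the paper's. The paper's proof is a chain of citations:
\begin{enumerate}
\item Lemma~\ref{lem:sylow-brace-prob-to-groups} transfers the hypothesis to $(B,+)$ and $(B,\circ)$;
\item the group-theoretic theorem of \cite{DLMS25} then makes both groups nilpotent;
\item \cite[Corollary~4.3]{CSV19} gives the Sylow-ideal decomposition, Corollary~\ref{pq solvable}(ii) gives solubility, and Proposition~\ref{soluble group iff soluble skew brace} gives nilpotency in $\mathcal T$.
\end{enumerate}

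You instead prove the required input inside the brace. The key facts are that $\bigcap_{x\in Q}\Cb_P(x)$ is a subgroup of $(P,\circ)$ and each $\Cb_Q(y)$ is a subgroup of $(Q,\circ)$. These give $\Pb(P,Q)\leqslant 1-(1-\frac1p)(1-\frac1q)\leqslant \frac23$ unless $P$ and $Q$ brace-centralize each other. A Sylow normalizer argument then gives normality of all Sylow subgroups in both groups.

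This buys a self-contained proof and explains the constant $2/3$ (the extremal pair $\{2,3\}$). It also yields the brace-centralization of distinct Sylow sub-skew braces directly, so the appeal to \cite{CSV19} becomes optional. The paper's route is shorter and matches the neighbouring Sylow-local results, such as the $2/5$ corollary, where the deeper theorem of \cite{DLMS25} is genuinely needed.

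You should make one point in Step~2 explicit, since it is exactly the obstacle you flagged. The witness pair depends on $q$, so the claim that a fixed $P$ centralizes some Sylow $q$-subgroup for every $q\neq p$ needs a line of justification. Suppose the witness $P_{(q)}$ centralizes $Q_{(q)}$ in $(B,+)$ and $P=g+P_{(q)}-g$. Then $P$ centralizes the Sylow $q$-subgroup $g+Q_{(q)}-g$.

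The same argument works in $(B,\circ)$, because the witnesses are also multiplicative Sylow subgroups and brace-commuting implies $\circ$-commuting. Alternatively, $|B:N(P)|$ does not depend on the choice of $P$, and the witness for $q$ shows it is prime to $q$ for every $q\neq p$, so it equals $1$.

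With this added, uniqueness forces the witnesses for $(p,q)$ to be $P_p$ and $P_q$, so Step~1 applies to them and Step~3 goes through. The $\lambda$-invariance of $P_p$ is even quicker than your argument: $P_p$ is characteristic in $(B,+)$, and each $\lambda_b\in\Aut(B,+)$.
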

\begin{proof}
By Lemma~\ref{lem:sylow-brace-prob-to-groups}, both $(B,+)$ and
$(B,\circ)$ satisfy the group-theoretic Sylow commuting probability
hypothesis with constant $>2/3$. Hence, by \cite{DLMS25}, both groups $(B,+)$ and $(B,\circ)$ are nilpotent. By \cite[Corollary~4.3]{CSV19}, $B$ is a direct product of its Sylow ideals; in particular, it is solvable by Corollary~\ref{pq solvable}(ii). The assertion in the special cases follows from 
Proposition \ref{soluble group iff soluble skew brace}, and the proof is complete.
\end{proof}
\begin{cor} 
Let $B$ be a finite skew brace such that $\Pb_{\Syl}(B)>2/5$.  Then
both groups $(B,+)$ and $(B,\circ)$ are soluble. 
Moreover, if $B$ belongs to the class $\mathcal T$, then $B$ is a soluble skew brace. If, in addition, $B$ is two-sided, then the additive derived series of $B$ consists of ideals of $B$.

\end{cor}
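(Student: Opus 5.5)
The plan follows the proof of the preceding corollary for the threshold $2/3$, with nilpotency replaced by solubility. Lemma~\ref{lem:sylow-brace-prob-to-groups} shows that $\Pb_{\Syl}(B)>2/5$ forces both $(B,+)$ and $(B,\circ)$ to satisfy the group-theoretic Sylow-local commuting probability hypothesis with constant $>2/5$. We then invoke the solubility criterion of \cite{DLMS25}: a finite group in which, for every pair of distinct primes $p,q$, some Sylow $p$-subgroup and some Sylow $q$-subgroup commute with probability $>2/5$ is soluble. The point to verify is that the minimal counterexample there is governed by a pair of Sylow subgroups of $A_5$; the maximal pairwise Sylow commuting probability there is exactly $2/5$, which is why this threshold appears. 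This gives the first assertion for both underlying groups at once.

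For the second assertion, let $B\in\mathcal T$. Both $(B,+)$ and $(B,\circ)$ are now soluble groups, so Proposition~\ref{soluble group iff soluble skew brace} applies directly and shows that $B$ is a soluble skew brace. Nothing further is required, because that proposition was proved exactly as an equivalence for the class $\mathcal T$.

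For the last assertion, assume $B$ is two-sided. The terms of the derived series $\gamma^{(i)}$ of $(B,+)$ are characteristic subgroups of $(B,+)$. By \cite[Lemma 4.1]{Nas19}, characteristic subgroups of the additive group of a two-sided skew brace are ideals; this is the same fact used in Lemma~\ref{lem:additive-fitting-ideals-sylow}. Hence every term of the additive derived series is an ideal of $B$, and the series terminates at $1$ because $(B,+)$ is soluble.

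The main obstacle is the first step. One has to check that \cite{DLMS25} really states solubility under the \emph{max-over-Sylow-pairs, min-over-prime-pairs} hypothesis with the constant $2/5$, and not only a nilpotency or bounded-index statement. If the available theorem is phrased with a different quantifier order, the plan is to use the fact that Lemma~\ref{lem:sylow-brace-prob-to-groups} yields exactly the same quantifier structure as the definition of $\Pb_{\Syl}(B)$. That should transfer any group-theoretic formulation directly, since the relevant Sylow sub-skew braces are Sylow subgroups of both underlying groups.
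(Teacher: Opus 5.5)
Your proposal is correct and follows the paper's proof essentially step for step. The paper also uses Lemma~\ref{lem:sylow-brace-prob-to-groups} together with the solubility criterion of \cite{DLMS25} to get solubility of both $(B,+)$ and $(B,\circ)$. It then applies Proposition~\ref{soluble group iff soluble skew brace} for the class $\mathcal T$, and finally \cite[Lemma 4.1]{Nas19} to the characteristic terms of the additive derived series in the two-sided case.
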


\begin{proof}
By Lemma~\ref{lem:sylow-brace-prob-to-groups}, both underlying groups
satisfy the group-theoretic Sylow commuting probability hypothesis with
constant $>2/5$. Hence, both groups are solvable by \cite{DLMS25}. 
Now assume that $B$ belongs to the class $\mathcal T$. Then by Proposition~\ref{soluble group iff soluble skew brace}, $B$ is a soluble skew brace. If, in addition, $B$ is two-sided, every term of the additive derived series is characteristic in $(B,+)$, and therefore is an ideal of $B$ by \cite[Lemma 4.1]{Nas19}.
\end{proof}

\begin{example} 
The conclusion concerning the additive derived series does not extend to symmetric skew braces. Indeed, there exists a finite symmetric skew brace $B$ such that $\Pb_{\Syl}(B)>2/5$, while $\gamma_2(B,+)$ is not an ideal of $B$.
\end{example}

\begin{proof}
Let $V=\mathbb F_2^2$, $H=\operatorname{GL}_2(\mathbb F_2)\cong S_3$ and $B=V\times H$. Define
$$
(u,h)+(v,g)=(u+v,hg),\,\,\,
\lambda_{(u,h)}(v,g)=(^{h^{-1}}v,h^{-1}gh),\,\,\,
(u,h)\circ(v,g)=(u+ \;^{h^{-1}}v,gh),
$$
where $^{h^{-1}}v$ is the action of $h$ on $v$.
Then $\lambda_{a+b}=\lambda_b\lambda_a$ and $\lambda_{a\circ b}=\lambda_a\lambda_b$ for all $a,b\in B$.  Hence $B$ is a symmetric skew brace. Moreover,
$$
(B,+)\cong C_2^2\times S_3,\qquad
(B,\circ)\cong V\rtimes H\cong \operatorname{AGL}_2(\mathbb F_2)\cong S_4,
$$
where an isomorphism $(B,\circ)\to V\rtimes H$ is given by $(u,h)\mapsto(u,h^{-1})$. Thus both underlying groups are soluble.

Let
$$
r=\begin{pmatrix}0&1\\1&1\end{pmatrix},\qquad
s=\begin{pmatrix}0&1\\1&0\end{pmatrix},\qquad
v=\binom10.
$$
Since $\gamma_2(H)=\langle r\rangle$, we have $\gamma_2(B,+) = \{0\}\times\langle r\rangle$. Put $x :=(v,I)$ and $d:=({\bf 0},r)$. Since several operations are involved in this construction, to avoid confusion, inverse of the element $x$ in $(B, \circ)$ is denoted by $x^{-1_{\circ}}$. Note that  $x = x^{-1_{\circ}}$ and  $r^{-1}v=\binom11$. Thus, we obtain
$$
x\circ d\circ x^{-1_\circ}
 =\bigl(v+r^{-1}v,r\bigr)
 =\left(\binom01,r\right)\notin \{0\}\times\langle r\rangle= \gamma_2(B,+).
$$
Hence $\gamma_2(B,+)$ is not normal in $(B,\circ)$, and therefore it is not an ideal of $B$.

It remains to check the Sylow-local probability. Set $P:=V\rtimes\langle s\rangle$ and $Q:=\{0\}\times\langle r\rangle$, so that $|P|=8$ and $|Q|=3$. For $x=(u,h)\in P$ and $y=(0,r^i)\in Q$, the elements $x$ and $y$ commute if and only if $hr^i=r^ih$ and $r^{-i}u=u$. For $i=0$ all $8$ elements of $P$ commute with $y$. For $i=1,2$, we have $\C_H(r^i)=\langle r\rangle$, $\langle r\rangle\cap\langle s\rangle=\{I\}$, and $r^i$ has no non-zero fixed point on $V$. Hence only $(0,I)\in P$ commutes with $(0,r^i)$. Consequently
$$
\Pb(P,Q)=\frac{8+1+1}{8\cdot3}=\frac5{12}>\frac25.
$$
Therefore, by the definition of $\Pb_{\Syl}(B)$,
$$
\Pb_{\Syl}(B)\geqslant\Pb(P,Q)=\frac5{12}>\frac25,
$$
which proves the claim.
\end{proof}

\begin{cor}
      If $B$ belongs to the class $\mathcal{T}$ and $|B|$ is of the form $p^sq^t$, $pqr$, or is an odd integer, then $B$ is a soluble skew brace, where $p,q,r$ are primes and $s, t$ are positive integers.  
\end{cor}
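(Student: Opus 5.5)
By Proposition~\ref{soluble group iff soluble skew brace}, a finite skew brace in $\mathcal{T}$ is soluble exactly when both $(B,+)$ and $(B,\circ)$ are soluble groups. So the plan is to reduce the statement to pure group theory. I will show that every group whose order has one of the three listed forms is soluble. Since $(B,+)$ and $(B,\circ)$ both have order $|B|$, they are then both soluble, and the corollary follows at once.

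I will treat the three arithmetic cases in turn, each by a classical theorem.
\begin{enumerate}
\item If $|B|=p^sq^t$, Burnside's $p^aq^b$-theorem gives that any group of this order is soluble. When $p=q$ the group is a $p$-group, hence nilpotent.
\item If $|B|$ is odd, the Feit--Thompson theorem gives solubility of both underlying groups.
\item If $|B|=pqr$ with $p,q,r$ primes, I will argue directly. When two of the primes coincide, the order is $p^2q$ or $p^3$, which is already covered by Burnside. When the primes are distinct, say $p<q<r$, a Sylow counting argument shows that some Sylow subgroup is normal. Then $G$ is an extension of a group of prime order by a group of order a product of two primes, and both pieces are soluble. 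Alternatively, one can simply note that the smallest non-abelian simple group $A_5$ has order $60=2^2\cdot3\cdot5$, which is not of the form $pqr$, and argue via minimal counterexamples.
\end{enumerate}

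The only real work is in the distinct-primes $pqr$ case. Suppose no Sylow subgroup is normal. Then $n_r\ge 2$ and $n_r\equiv1\pmod r$ with $n_r\mid pq$, which forces $n_r=pq$. This contributes $pq(r-1)$ elements of order $r$. Similarly $n_q\ge p$ gives at least $p(q-1)$ elements of order $q$, and $n_p\ge q$ gives at least $q(p-1)$ elements of order $p$. Adding these counts exceeds $pqr$, a contradiction. I expect this counting step to be the only place needing care; everything else is a direct citation. Finally, I will invoke Proposition~\ref{soluble group iff soluble skew brace} to pass from solubility of $(B,+)$ and $(B,\circ)$ to solubility of the skew brace $B$.
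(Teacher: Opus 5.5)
Your proposal is correct and takes the same route as the paper: the paper reduces via Proposition~\ref{soluble group iff soluble skew brace} to solubility of $(B,+)$ and $(B,\circ)$ and simply asserts that solubility, while you supply the group theory (Burnside, Feit--Thompson, and a Sylow count in the distinct-primes $pqr$ case that yields $pqr+(p-1)(q-1)-1>pqr$ non-identity elements, a valid contradiction). Only your parenthetical alternative, noting that $|A_5|=60$ is not of the form $pqr$, would not suffice as stated, since orders such as $105=3\cdot 5\cdot 7$ exceed $60$; it is the counting argument that actually handles the $pqr$ case.
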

\begin{proof}
    In all these cases, both groups $(B,+)$ and $(B,\circ)$ are solvable. The proof now follows from Proposition \ref{soluble group iff soluble skew brace}.
\end{proof}

We remark that skew braces $B$ of order $p^2q$ in general  need not be solvable, while both groups $(B,+)$ and $(B,\circ)$ are solvable. For example, consider the skew brace $B$ of order $12$ given by GAP \cite{GAP} with ID $[12,22]$.  Using \cite{VK22}, one can easily see that $B$ is a  simple skew brace with $(B,+)\cong \mathbb{A}_4$ and $(B,\circ) \cong C_3\rtimes C_4$. Note that all skew braces of order $< 12$ are solvable.

It was proved in \cite[Theorem 4.5.]{MY26} that if $B$ is a finite skew brace such that $\Pb(B) > \frac{65}{128}$, then $B$ is nilpotent. We conclude with some improvements of this bound for certain classes of skew braces.

\begin{cor}\label{1\2 and 1\12 solvable}
The following statements hold: (1) If $(B,+,\circ)$ is a finite skew brace such that $\Pb(B)>\frac12$, then $B$ is solvable. Moreover, if $B$ belongs to the class $\mathcal{T}$, then $B$ is a nilpotent skew brace; (2) If $B$ belongs to the class $\mathcal{T}$ such that $\Pb(B) > \frac{1}{12}$, then $B$ is solvable.
\end{cor}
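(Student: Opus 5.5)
The plan is to reduce both statements to group-theoretic facts about commuting probability, via the inequality $\Pb(B)\leqslant\min\{\Pr(B,+),\Pr(B,\circ)\}$ recorded at the start of Section~3. For (1): if $\Pb(B)>\frac12$, then $\Pr(B,+)>\frac12$ and $\Pr(B,\circ)>\frac12$. The classical fact (Gustafson, Rusin) is that a finite group with commuting probability exceeding $\frac12$ is nilpotent; indeed, $\Pr(G)>\frac12$ forces $|G:\Z(G)|\leqslant 4$ or at least class $\leqslant 2$, since $\Pr(G)>\frac12$ excludes the nonnilpotent values, the largest of which is $\frac12$ for $S_3$. Hence both $(B,+)$ and $(B,\circ)$ are nilpotent. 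By \cite[Corollary 4.3]{CSV19}, $B$ is then a direct product of its Sylow ideals. Each Sylow ideal is a skew brace of prime-power order, so it is solvable by Theorem~\ref{p^n is solvable}, and a direct product of solvable skew braces is solvable. This is exactly the argument of Corollary~\ref{pq solvable}(ii). If moreover $B\in\mathcal T$, Proposition~\ref{soluble group iff soluble skew brace} turns nilpotency of both groups directly into nilpotency of $B$.

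For (2): if $B\in\mathcal T$ and $\Pb(B)>\frac1{12}$, then $\Pr(B,+)>\frac1{12}$ and $\Pr(B,\circ)>\frac1{12}$. I will invoke the known group-theoretic result that a finite group $G$ with $\Pr(G)>\frac1{12}$ is soluble; the bound is sharp, attained by $A_5$ with $\Pr(A_5)=5/60=\frac1{12}$. This is due to Dixon, and more precisely Guralnick--Robinson show that an insoluble $G$ has $\Pr(G)\leqslant\frac1{12}$. So both underlying groups are soluble, and since $B\in\mathcal T$, Proposition~\ref{soluble group iff soluble skew brace} yields that $B$ is a soluble skew brace.

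The only genuine inputs are the two group-theoretic thresholds, $\frac12$ for nilpotency and $\frac1{12}$ for solubility. The main care point is the first: I must make sure the strict inequality $>\frac12$ gives nilpotency rather than just $\geqslant$. The standard statement is that $\Pr(G)>\frac12$ implies $G$ nilpotent (in fact, if $G$ is not nilpotent then $\Pr(G)\leqslant\frac12$, with equality e.g. for $S_3$), and I will cite it in that form. Note that the general (non-$\mathcal T$) part of (1) needs no class hypothesis because it goes through the Sylow decomposition rather than Proposition~\ref{soluble group iff soluble skew brace}.
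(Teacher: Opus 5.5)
Your proposal is correct and follows the paper's proof essentially verbatim. For (1) it passes to $\Pr(B,+)$ and $\Pr(B,\circ)$, uses the $\frac12$ nilpotency threshold (the paper cites Lescot \cite[Theorem 5.1]{PL95}), and then applies Corollary~\ref{pq solvable}(ii) and Proposition~\ref{soluble group iff soluble skew brace}; for (2) it uses Dixon's $\frac1{12}$ theorem together with Proposition~\ref{soluble group iff soluble skew brace}. One small correction to your aside: $\Pr(G)>\frac12$ does not force $|G:\Z(G)|\leqslant 4$, since extraspecial $2$-groups of order $2^{2n+1}$ have $\Pr(G)=\frac12+2^{-(2n+1)}$ and $|G:\Z(G)|=2^{2n}$. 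It does force class at most $2$, but in any case only nilpotency is needed.
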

\begin{proof}
   We see by Lemma \ref{lem:sylow-brace-prob-to-groups} that $\Pr (B,+)>\epsilon $ and $\Pr (B,\circ)>\epsilon$ whenever $\Pb(B) > \epsilon$. If $\Pb(B)>\frac12$, then by \cite[Theorem 5.1.]{PL95}, $(B,+)$ and $(B,\circ)$ are nilpotent groups. So by Corollary \ref{pq solvable} (ii) $B$ is  solvable. If $B$ belongs to the class $\mathcal T$ and $\Pb(B)>\frac12$, then $B$ is a nilpotent skew brace by Proposition~\ref{soluble group iff soluble skew brace}, and assertion (1) follows. Assertion (2) follows from Dixon's $\frac{1}{12}$ theorem \cite{D73}  and Proposition \ref{soluble group iff soluble skew brace}.
\end{proof}

 We remark that the bounds in the preceding result are sharp.  Let $(B,+,\circ)= (S_3,+,+)$. Then, using \cite{VK22}, we see that  $\Pb(B) = \frac{1}{2}$, but $B$ is not a nilpotent skew brace.  Let $(B,+,\circ)= (A_5,+,+)$. Then $\Pb(B) = \frac{1}{12}$ and $B$ is not a solvable skew brace.


\section{An Application to Solutions and Their Structure Skew Braces}

In this section we introduce the commuting probability of a finite set-theoretic solution of the Yang--Baxter equation and present some applications  to its structure skew brace. Let $(X,r)$ be a finite non-degenerate solution of the Yang--Baxter equation, and write
$r(x,y)=(\sigma_x(y),\tau_y(x))$.  We define the
\emph{commuting probability} of $(X,r)$ by
$$\operatorname{p}(X,r) :=\frac{1}{|X|^2}
\big|\{(x,y)\in X^2:r(x,y)=(y,x)\text{ and }r(y,x)=(x,y)\}\big|.
$$
Thus $\operatorname{p}(X,r)$ is defined only in terms of the solution. Let $\Cy_{(X,r)}(x) :=\{y\in X\mid r(x,y)=(y,x)\text{ and }r(y,x)=(x,y)\}$. Then $\operatorname{p}(X,r)$ can be written as 
$$\operatorname{p}(X,r) =\frac{1}{|X|^2}\sum_{x\in X}|\Cy_{(X,r)}(x)|.
$$

Note that $\operatorname{p}(X,r)=1$ if and only if $r(x,y)=(y,x)$ for all $x,y\in X$ if and only if $(X, r)$ is a trivial solution. 
For an involutive solution $(X,r)$, we have
$\tau_y(x)=\sigma^{-1}_{\sigma_x(y)}(x)$. Since $r^2=\Id_{X\times X}$, the condition
$r(x,y)=(y,x)$ already implies $r(y,x)=(x,y)$. Therefore
$y\in\Cy_{(X,r)}(x)$ if and only if
$\sigma_x(y)=y$ and $\tau_y(x)=x$. Under the first equality,
$\tau_y(x)=\sigma_y^{-1}(x)$, so the second equality is equivalent to
$\sigma_y(x)=x$. Consequently,
$$
\operatorname{p}(X,r)=\frac{1}{|X|^2}\sum_{x\in X}
|\{y\in X\mid \sigma_x(y)=y\text{ and }\sigma_y(x)=x\}|.
$$


\begin{lem}\label{lem:ybe-rn2-local-global}
Let $(X,r)$ be a finite non-degenerate solution of size $n$ such that its structure skew brace $G=G(X,r)$ lies in the class $\mathcal{T}$. Set
$q_n := (n!)^2$, $k_n := q_n-1$ and $c_n := (2n/(2n+1))^2$. If
$\operatorname{p}(X,r)\geqslant\epsilon>0$, then, for the quotient skew brace $Q := G/\Soc(G)$, we get
$$
\Pb(Q)\geqslant\eta_n(\epsilon):=
\frac{(c_n\epsilon)^{k_n^2}}{(n!)^4}.
$$
\end{lem}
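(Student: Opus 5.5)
Throughout we identify $X$ with its image $\iota(X)$ in $G$ and write $\bar g$ for the image of $g\in G$ in $Q=G/\Soc(G)$. The plan is to show that a positive proportion (in terms of $\epsilon$ and $n$) of pairs in $Q\times Q$ brace-commute. Two reductions drive the argument. First, if $y\in\Cy_{(X,r)}(x)$, then $\bar x$ and $\bar y$ brace-commute in $Q$. Second, $Q$ is finite and is generated, in a controlled way, by the images of $X$.

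For the first reduction, the defining relations of $G$ give $x\circ y=\sigma_x(y)\circ\tau_y(x)$. So $r(x,y)=(y,x)$ yields $x\circ y=y\circ x$, and $r(y,x)=(x,y)$ gives the same relation from the other side. Since $\lambda_x$ restricted to $X$ is $\sigma_x$, we also get $\lambda_x(y)=y$ and $\lambda_y(x)=x$, hence $x\ast y=y\ast x=1$. It remains to obtain $[x,y]^+=1$. Modulo $\Soc(G)$ this should follow from the derived-structure relation $x+\sigma_x(y)=\sigma_x(y)+\sigma_{\sigma_x(y)}(\tau_y(x))$, which under $\sigma_x(y)=y$ and $\tau_y(x)=x$ reads $x+y=y+\sigma_y(x)=y+x$. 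We must check this holds exactly in $G$; if it does not, it at least holds after passing to $Q$.

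For the second reduction, $\lambda(G)$ embeds in $\operatorname{Sym}(X)$, and by standard results the multiplicative group of $Q$ is a quotient of data of size at most $q_n=(n!)^2$ (the pair $(\sigma,\tau)$ permutation groups). The plan is to bound $|Q|\leqslant (n!)^2$ in the appropriate sense, so that each element of $Q$ is a word of length at most $k_n=q_n-1$ in $\bar X\cup\bar X^{-1}$.

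The counting step goes as follows. Let $Y=\bar X$, of size at most $n$. The hypothesis gives at least $\epsilon n^2$ commuting pairs in $Y$. Write each element of $Q$ as a product of length at most $k_n$ of generators. Using Proposition~\ref{TS-S-L}, centralizers $\Cb_Q(u)$ are sub-skew braces for $Q\in\mathcal T$, so if every letter of $v$ brace-commutes with every letter of $u$, then $u$ and $v$ brace-commute. We then count pairs of words of length $k_n$ (padded with $1$, which accounts for the factor $c_n$ from $2n+1$ letter choices $X\cup X^{-1}\cup\{1\}$, restricted to $2n$ nontrivial ones) whose letters pairwise brace-commute. To keep the estimate tractable, we choose the words letter by letter so that each new pair of letters lies in the commuting set; a crude product bound then gives a fraction at least $(c_n\epsilon)^{k_n^2}$ of word pairs. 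Since the word map onto $Q$ has fibres of size at most the number of words, and $|Q|\leqslant (n!)^2$ loses at most a factor $|Q|^2\leqslant(n!)^4$, we obtain $\Pb(Q)\geqslant \eta_n(\epsilon)$.

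The main obstacle is this counting step. Requiring all $k_n^2$ letter pairs to commute is not a product of independent events, so the bound $(c_n\epsilon)^{k_n^2}$ needs justification. The first approach to try is a Cauchy--Schwarz/Hölder iteration on the bipartite commuting graph over $X\cup X^{-1}\cup\{1\}$, which has density at least $c_n\epsilon$; counting homomorphisms of $K_{k_n,k_n}$ into this graph gives density at least $(c_n\epsilon)^{k_n^2}$ by Sidorenko's property for complete bipartite graphs. This is the step to verify carefully.
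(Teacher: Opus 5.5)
Your proposal is correct and follows the paper's proof essentially step for step. The paper also transfers $(x,y)\in E$ to brace-commutation of $\bar x,\bar y$ in $Q$, and it obtains $|Q|\leqslant (n!)^2$ from the fact, taken from Bachiller's work, that the kernel of $G\to\operatorname{Sym}(X)\times\operatorname{Sym}(X)$ lies in $\Soc(G)$. It then settles the counting step you flagged by the same two-fold power-mean (H\"older) inequality for $K_{k,k}$, run over the formal alphabet $\{1\}\cup\{(x,\pm):x\in X\}$; using formal letters rather than the set $\bar X\subseteq Q$ means coincidences among the $\bar x$ do not reduce the count of related pairs, which your ``$\epsilon n^2$ commuting pairs in $Y$'' phrasing would.

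As a side remark, once $|Q|\leqslant (n!)^2$ is known the counting is superfluous. The pair $(a,1)$ brace-commutes for every $a\in Q$, so $\Pb(Q)\geqslant 1/|Q|\geqslant 1/(n!)^2\geqslant \eta_n(\epsilon)$.
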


\begin{proof}
 If $n=1$, then $Q=1$ and the conclusion is immediate. Hence assume that $n\geqslant2$.
 Since $G$ lies in the class $\mathcal{T}$, so does $Q$. Proposition~\ref{TS-S-L} now implies that $\Cb_Q(a)$ is a sub-skew brace of $Q$ for every $a\in Q$.
Let $K$ be the kernel of the natural permutation homomorphism
$$
G\longrightarrow \operatorname{Sym}(X)\times\operatorname{Sym}(X).
$$
By the proof of \cite[Theorem~3.11]{Bac18}, $K\leqslant\Soc(G)$. Hence $Q=G/\Soc(G)$ is a quotient of $G/K$, while $G/K$ is a subgroup of
$\operatorname{Sym}(X)\times\operatorname{Sym}(X)$. In particular, $Q$ is finite and is generated by
the images of the elements of $X$. Consequently
$$
|Q|\leqslant(n!)^2=q_n,
$$
and  $(Q, \circ)$ is generated by the elements
$\bar x$, $x\in X$.

Let
$$
E=\{(x,y)\in X^2:r(x,y)=(y,x)\text{ and }r(y,x)=(x,y)\}.
$$
The canonical map $X\to G$ is a morphism from $(X,r)$ to the solution
associated with $G$, and the quotient map $G\to Q$ is also a morphism of
solutions. Thus, if $(x,y)\in E$, then
$r_Q(\bar x,\bar y)=(\bar y,\bar x)$ and
$r_Q(\bar y,\bar x)=(\bar x,\bar y)$. For the solution associated with a
skew brace, these two identities are equivalent to
$\bar y\in\Cb_Q(\bar x)$.

Since $\Cb_Q(\bar x)$ is a sub-skew brace, it contains $\bar y^{-1}$.
Moreover the relation $b\in\Cb_Q(a)$ is symmetric in $a$ and $b$. Hence
$\bar y\in\Cb_Q(\bar x)$ also gives
$\bar y^{\delta}\in\Cb_Q(\bar x^{\varepsilon})$ for all
$\varepsilon,\delta\in\{1,-1\}$.

Set
$$
\mathcal A=\{1\}\cup\{(x,+),(x,-):x\in X\}
$$
and define $\pi(1)=1$, $\pi(x,+)=\bar x$ and
$\pi(x,-)=\bar x^{-1}$. For $a,b\in\mathcal A$, write $a\sim b$ when
$\pi(b)\in\Cb_Q(\pi(a))$, and let $D$ be the number of pairs
$(a,b)\in\mathcal A^2$ such that $a\sim b$. Each pair $(x,y)\in E$ gives
the four pairs $((x,\varepsilon),(y,\delta))$, with
$\varepsilon,\delta\in\{+,-\}$. Therefore, writing $m=|\mathcal A|=2n+1$,
$$
D\geqslant4|E|=4n^2\operatorname{p}(X,r)\geqslant4n^2\epsilon
=c_n\epsilon m^2.
$$

We now count pairs of words whose letters are pairwise related. Put $k=k_n$.
For $(a_1,\ldots,a_k)\in\mathcal A^k$, let
$N(a_1,\ldots,a_k)$ be the number of $b\in\mathcal A$ such that
$a_i\sim b$ for every $i$. If $\deg(b)$ denotes the number of $a$ satisfying
$a\sim b$, then
$$
\sum_{(a_1,\ldots,a_k)\in\mathcal A^k}N(a_1,\ldots,a_k)
=\sum_{b\in\mathcal A}\deg(b)^k.
$$
Since $\sum_b\deg(b)=D$, the inequality
$(u_1+\cdots+u_m)^k\leqslant m^{k-1}(u_1^k+\cdots+u_m^k)$ for non-negative
$u_1,\ldots,u_m$ gives
$$
\sum_{b\in\mathcal A}\deg(b)^k\geqslant\frac{D^k}{m^{k-1}}.
$$
Applying the same inequality to the $m^k$ numbers
$N(a_1,\ldots,a_k)$, we get
$$
\sum_{(a_1,\ldots,a_k)\in\mathcal A^k}N(a_1,\ldots,a_k)^k
\geqslant
\frac{D^{k^2}}{m^{2k(k-1)}}
=
\left(\frac{D}{m^2}\right)^{k^2}m^{2k}
\geqslant(c_n\epsilon)^{k^2}m^{2k}.
$$
The left hand side is precisely the number of pairs
$(a_1,\ldots,a_k),(b_1,\ldots,b_k)\in\mathcal A^k$ such that
$a_i\sim b_j$ for all $i,j$. Hence there are at least
$(c_n\epsilon)^{k^2}m^{2k}$ such pairs of words.

Every element of $(Q,\circ)$ can be represented by a word of length at most
$|Q|-1\leqslant k$ in the generators $\bar x^{\pm1}$: for example, take a
shortest word and observe that its successive partial products are distinct.
If the word has length smaller than $k$, insert copies of the identity.
Thus the map
$$
\Phi:\mathcal A^k\longrightarrow Q,\qquad
\Phi(a_1,\ldots,a_k)=\pi(a_1)\circ\cdots\circ\pi(a_k),
$$
is surjective.

Assume that $a_i\sim b_j$ for every $i,j$. For each $i$, all the elements
$\pi(b_j)$ belong to the sub-skew brace $\Cb_Q(\pi(a_i))$, so their product
$\Phi(b_1,\ldots,b_k)$ also belongs to $\Cb_Q(\pi(a_i))$. By symmetry,
each $\pi(a_i)$ belongs to
$\Cb_Q(\Phi(b_1,\ldots,b_k))$. Since this centralizer is again a sub-skew
brace, it contains the product $\Phi(a_1,\ldots,a_k)$. Therefore the elements $\Phi(a_1, a_2, \ldots a_k)$ and $\Phi(b_1, b_2, \ldots b_k)$  brace-commute in $Q$.

A fixed pair of elements of $Q$ has at most $m^{2k}$ preimages under
$\Phi\times\Phi$. Hence the number of brace-commuting pairs in $Q^2$ is at
least $(c_n\epsilon)^{k^2}$. Using $|Q|\leqslant(n!)^2$ and $k=k_n$, we obtain
$$
\Pb(Q)\geqslant\frac{(c_n\epsilon)^{k^2}}{|Q|^2}
\geqslant\frac{(c_n\epsilon)^{k_n^2}}{(n!)^4}.
$$
The proof is now complete.
\end{proof}

We are now ready to prove the main result of this section (Theorem \ref{main-3}).

 \begin{thm} 
Let $(X,r)$ be a finite non-degenerate solution of size $n$ such that its structure skew brace $G:=G(X,r)$ lies in the class $\mathcal{T}$  and 
$\operatorname{p}(X,r)\geqslant\epsilon>0$. Then
$|G:\Soc(G)|$ is bounded in terms of $\epsilon$ and $n$ only.
More precisely, set $\eta :=\eta_n(\epsilon)$ and
$L_n :=\operatorname{lcm}(1,\ldots,n)$. Let $M(\eta)$ and $N(\eta)$ be bounds
in Theorem~\ref{thm:bfc-ideals-class-T}, so that the ideals $Q_1$ and $Q_2$ given
by that theorem satisfy $|Q:Q_1|\leqslant M(\eta)$ and
$|Q_2|\leqslant N(\eta)$. Then
$$
|G :\Soc(G)|
\leqslant M(\eta)N(\eta)L_n^{2M(\eta)n}.
$$ 
\end{thm}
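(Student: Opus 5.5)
We reduce to the finite quotient $Q:=G/\Soc(G)$, where $|G:\Soc(G)|=|Q|$. By Lemma~\ref{lem:ybe-rn2-local-global}, $Q$ lies in $\mathcal T$ (it is a quotient of $G\in\mathcal T$) and satisfies $\Pb(Q)\geqslant\eta=\eta_n(\epsilon)$. Theorem~\ref{thm:bfc-ideals-class-T} applied to $Q$ then gives ideals $Q_2\leqslant Q_1\trianglelefteq Q$ with $|Q:Q_1|\leqslant M(\eta)$, $|Q_2|\leqslant N(\eta)$, and $Q_1/Q_2$ a trivial brace. Hence
$$|Q|=|Q:Q_1|\,|Q_1/Q_2|\,|Q_2|\leqslant M(\eta)N(\eta)\,|Q_1/Q_2|,$$
and it remains to prove $|Q_1/Q_2|\leqslant L_n^{2M(\eta)n}$.

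To bound the abelian section $Q_1/Q_2$, we control its number of generators and its exponent. We use two facts from the proof of Lemma~\ref{lem:ybe-rn2-local-global}.

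First, $(Q,\circ)$ is generated by the $n$ elements $\bar x$, $x\in X$. Schreier's index formula then shows that the subgroup $(Q_1,\circ)$, of index at most $M:=M(\eta)$, is generated by at most $M(n-1)+1\leqslant Mn$ elements. Since $Q_1/Q_2$ is a trivial brace, its two operations coincide and its group is abelian. Therefore $(Q_1/Q_2,+)$ is an abelian group generated by at most $Mn$ elements.

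Second, $Q$ is a quotient of $G/K$, and $G/K$ embeds in $\operatorname{Sym}(X)\times\operatorname{Sym}(X)$. The order of any permutation of $n$ points is the lcm of its cycle lengths, all of which are at most $n$, so it divides $L_n$. The order of a pair of permutations is the lcm of the two orders, so it also divides $L_n$. Orders can only decrease under passing to quotients, so every element of $(Q,\circ)$, and hence every element of $(Q_1/Q_2,\circ)=(Q_1/Q_2,+)$, has order dividing $L_n$.

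An abelian group generated by at most $Mn$ elements, each of order dividing $L_n$, has order at most $L_n^{Mn}\leqslant L_n^{2Mn}$. Combining this with the displayed inequality gives $|G:\Soc(G)|\leqslant M(\eta)N(\eta)L_n^{2M(\eta)n}$. Since $\eta$ depends only on $\epsilon$ and $n$, the quantity $|G:\Soc(G)|$ is bounded in terms of $\epsilon$ and $n$ only.

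The one point that needs care is the passage from multiplicative data to the additive group of the section. The generation bound and the exponent bound are both established in $(Q,\circ)$. They transfer to $Q_1/Q_2$ only because the two operations agree on a trivial brace, so the additive and multiplicative generating sets and element orders coincide there. Everything else is the combination of Lemma~\ref{lem:ybe-rn2-local-global} and Theorem~\ref{thm:bfc-ideals-class-T}.
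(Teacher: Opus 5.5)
Your proof is correct and follows the paper's argument: reduce to $Q=G/\Soc(G)$, apply Lemma~\ref{lem:ybe-rn2-local-global} and Theorem~\ref{thm:bfc-ideals-class-T}, then bound the trivial section $Q_1/Q_2$ by combining a generator count for $(Q_1,\circ)$ with the exponent bound $L_n$ coming from $\operatorname{Sym}(X)\times\operatorname{Sym}(X)$. The only difference is that you invoke the Schreier index formula to get at most $M(n-1)+1\leqslant Mn$ generators, whereas the paper counts the Schreier generators $h_{t,s}$ directly over $S=\{\bar x^{\pm1}\}$ and gets $2mn$; this gives you the slightly sharper bound $L_n^{M(\eta)n}$, which of course implies the stated one.
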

\begin{proof}
Set $Q: =G/\Soc(G)$. By
Lemma~\ref{lem:ybe-rn2-local-global}, $\Pb(Q)\geqslant\eta$. The skew brace $Q$
lies in the class $\mathcal{T}$, and  so
Theorem~\ref{thm:bfc-ideals-class-T} gives ideals
$Q_2\leqslant Q_1\trianglelefteq Q$ such that
$$
|Q_2|\leqslant N(\eta),\qquad |Q:Q_1|\leqslant M(\eta),
\qquad Q_1/Q_2\text{ is a trivial brace}.
$$
In particular, $Q_1/Q_2$ is a trivial skew brace and its additive and
multiplicative groups are abelian and coincide.

It remains to bound $|Q_1/Q_2|$. The group $(Q,\circ)$ is generated by the
$n$ elements $\bar x$, $x\in X$. Since, as observed in the preceding proof, $(Q,\circ)$ is a quotient of a subgroup of
$\operatorname{Sym}(X)\times\operatorname{Sym}(X)$, every element of
$(Q,\circ)$ has order dividing
$$
L_n=\operatorname{lcm}(1,\ldots,n).
$$

Put $m=|Q:Q_1|$ and choose representatives $T$ for the right cosets of
$(Q_1,\circ)$ in $(Q,\circ)$, with $1\in T$. Let
$S=\{\bar x^{\pm1}:x\in X\}$. For every $t\in T$ and $s\in S$, write
$$
t\circ s=h_{t,s}\circ t',
$$
where $h_{t,s}\in Q_1$ and $t'\in T$. We claim that the elements $h_{t,s}$
generate $(Q_1,\circ)$. Indeed, let
$u=s_1\circ\cdots\circ s_\ell\in Q_1$, where $s_i\in S$. Starting with
$t_0=1$, write recursively
$t_{i-1}\circ s_i=h_i\circ t_i$, with $h_i\in Q_1$ and $t_i\in T$.
Multiplying these equalities gives
$$
u=h_1\circ\cdots\circ h_\ell\circ t_\ell.
$$
Since $u\in Q_1$, the last representative is $t_\ell=1$. Hence
$u=h_1\circ\cdots\circ h_\ell$, proving the claim.

There are at most $2mn$ elements $h_{t,s}$. Hence the multiplicative group of
$Q_1/Q_2$ is generated by at most
$2mn\leqslant2M(\eta)n$ elements. It is abelian because $Q_1/Q_2$ is a trivial
brace, and its exponent divides $L_n$. Therefore
$$
|Q_1/Q_2|\leqslant L_n^{2M(\eta)n}.
$$
Finally,
$$
|G:\Soc(G)|=|Q|=|Q:Q_1|\,|Q_2|\,|Q_1/Q_2|
\leqslant M(\eta)N(\eta)L_n^{2M(\eta)n}.
$$
This completes the proof.
\end{proof}

By \cite[Remark~3.20]{JLPTVA26}, the structure skew brace associated with a solution $(X,r)$ is right nilpotent of class at most $2$ if and only if
$\sigma_{\sigma_x(y)}=\sigma_y$ for every $x,y\in X$, where $r(x, y) = (\sigma_x(y), \tau_y(x))$.
By \cite[Corollary~2.10]{JLPTVA26}, a right nilpotent skew brace of class at
most $2$ is $\lambda$-homomorphic. Similar conditions ensuring that the associated skew brace is two-sided or symmetric can be obtained.

\begin{example} 
Even in the injective case, the hypothesis
$\operatorname{p}(X,r)\geqslant\epsilon$ does not bound $|X|$ in terms of
$\epsilon$ alone. In fact, there is no function $f:(0,1]\to\mathbb N$ such
that every finite injective right nilpotent solution of class at most $2$
with $\operatorname{p}(X,r)\geqslant\epsilon$ satisfies $|X|\leqslant f(\epsilon)$.
\end{example}
\begin{proof}
Let $X$ be any finite set and consider the trivial solution
$r(x,y)=(y,x)$. The defining relations of its structure group are
$xy=yx$ for $x,y\in X$, so $G(X,r)$ is the free abelian group with basis
$X$. Hence the canonical map $X\to G(X,r)$ is injective. Moreover
$\sigma_x=\operatorname{id}_X$ for every $x\in X$, so the solution is right
nilpotent of class $1$, and every pair belongs to the set defining
$\operatorname{p}(X,r)$. Thus $\operatorname{p}(X,r)=1$. Since $|X|$ is
arbitrary, no bound depending only on $\epsilon$ can hold.
\end{proof}

Note that the commuting probability $\operatorname{p}(B,r_B)$ of the solution $(B,r_B)$ associated with the finite skew brace $B:=(B,+,\circ)$ is equal to the commuting probability $\Pb(B)$ of $B$. Thus $\operatorname{p}(B,r_B)=1$, $\operatorname{p}(B,r_B)=\frac34$, or $\operatorname{p}(B,r_B)\leqslant\frac58$. We now recall the following result connecting nilpotency of a finite skew brace with the multipermutation level of its associated solution.
\begin{thm} 
    \cite[Theorem~2.20]{CSV19} Let $B$ be a skew left brace. Then the associated solution $(B,r_B)$ has finite multipermutation level if and only if $B$ is right nilpotent and $(B,+)$ is a nilpotent group.
\end{thm}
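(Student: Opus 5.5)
The plan is to reduce the statement to a property of the socle series of $B$, using the standard identification of the retraction of $(B,r_B)$ with the solution of $B/\Soc(B)$. Set $\Soc_0(B)=0$ and define $\Soc_{k}(B)$ by $\Soc_k(B)/\Soc_{k-1}(B)=\Soc(B/\Soc_{k-1}(B))$. Recall that $(B,r_B)$ has multipermutation level $\leqslant n$ when $n$ retractions collapse it to one point, where the retraction identifies $a,b$ with $\sigma_a=\sigma_b$ and $\tau_a=\tau_b$. Throughout I use that $\Soc(B)$ is an ideal, as recalled in Section~2.

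\textbf{Step 1 (retraction).} Show that $\mathrm{Ret}(B,r_B)\cong(B/\Soc(B),r_{B/\Soc(B)})$. Here $\sigma_a=\lambda_a$, and $\tau$ is given by the formula in the proposition of \cite{GV17} recalled in Section~2. One checks that $\lambda_a=\lambda_b$ together with $\tau_a=\tau_b$ is equivalent to $-a+b\in\Soc(B)$. The $\tau$-condition, once $\lambda_a=\lambda_b$, forces $a$ and $b$ to induce the same additive conjugation, so $-a+b$ is additively central. Iterating gives: the multipermutation level is finite if and only if $\Soc_n(B)=B$ for some $n$. This computation is routine but must be done carefully, since for skew braces $\tau$ involves the conjugation $-(a\circ b)+a+(a\circ b)$.

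\textbf{Step 2 ($\Rightarrow$).} Assume $\Soc_n(B)=B$.
\begin{itemize}
\item Since $\Soc(C)\leqslant\Z(C,+)$ for every skew brace $C$, the series $\Soc_k(B)$ is an additively central series. Hence $(B,+)$ is nilpotent of class at most $n$.
\item If $x\in\Soc_k(B)$, then the image of $x$ lies in $\Ker\lambda$ of $B/\Soc_{k-1}(B)$. Therefore $x*y\in\Soc_{k-1}(B)$ for all $y\in B$.
\item By induction on $i$, the right series $B^{(1)}=B$, $B^{(i+1)}=B^{(i)}*B$ satisfies $B^{(i+1)}\leqslant\Soc_{n-i}(B)$. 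For the inductive step I use that $\Soc_{n-i}(B)$ is an additive subgroup. This gives $B^{(n+1)}=0$, so $B$ is right nilpotent.
\end{itemize}

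\textbf{Step 3 ($\Leftarrow$), the expected main obstacle.} Assume $B$ is right nilpotent and $(B,+)$ is nilpotent; the goal is $\Soc(B)\neq0$ whenever $B\neq0$, with the hypotheses passing to $B/\Soc(B)$. The hypotheses pass to quotients trivially, so in the finite case induction on $|B|$ finishes the proof.
\begin{itemize}
\item Let $I=B^{(m)}$ be the last nonzero term of the right series. Each $B^{(i)}$ is an ideal, and $I*B=0$, so $I\leqslant\Ker\lambda$.
\item Since $I$ is a nontrivial normal subgroup of the nilpotent group $(B,+)$, we have $I\cap\Z(B,+)\neq0$. This intersection lies in $\Ker\lambda\cap\Z(B,+)=\Soc(B)$, so $\Soc(B)\neq0$.
\end{itemize}
For an infinite $B$ this induction does not terminate, and I expect this to be the hard part. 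I would instead run a double induction tracking both the right nilpotency class and the additive nilpotency class. The aim is to show $B^{(m-j)}\cap\Z_{c}(B,+)$ lies in a bounded term of the socle series. To control the additive part I would use that $\Ker\lambda$-elements act on the socle quotients compatibly with the additive upper central series. This bookkeeping is where I anticipate difficulty, and it is what \cite{CSV19} must handle.
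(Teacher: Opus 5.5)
The paper does not prove this statement; it only cites \cite{CSV19}, so I am judging your argument on its own. Your Steps 1 and 2 are correct. In Step 1, write $b=a\circ k$ with $k\in\Ker\lambda$ and use $\tau_y(x)=\lambda_x(y)^{-1}\circ x\circ y$. Then the condition $\tau_a=\tau_b$ reduces to $\lambda_a(k)$ commuting additively with $-a-x^{-1}+a$ for every $x$. That forces $k\in\Z(B,+)$, so $a^{-1}\circ b\in\Soc(B)$, as you claim. Step 3 also proves the converse when $B$ is finite.

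The theorem, however, is stated for arbitrary skew left braces, and your converse has a genuine gap there. You flag it yourself but do not close it. Knowing that every nonzero skew brace satisfying the hypotheses has nonzero socle only shows that the socle series, continued transfinitely, eventually exhausts $B$. It does not give $\Soc_n(B)=B$ for a finite $n$; compare hypercentral groups, which need not be nilpotent. You need a uniform bound, and your double induction is only announced, not carried out.

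The missing idea is to push the whole last nonzero term of the right series into a bounded socle term, rather than merely finding one nonzero socle element. Suppose $B^{(m+1)}=0$, put $I=B^{(m)}$, and let $c$ be the nilpotency class of $(B,+)$. Since $I*B=0$, you have $I\subseteq\Ker\lambda$, and $I\unlhd(B,+)$.

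Now prove by induction on $k$ that $I\cap\Z_k(B,+)\subseteq\Soc_k(B)$. Take $x\in I\cap\Z_k(B,+)$ and $b\in B$. The commutator $[x,b]^+$ lies in $I\cap\Z_{k-1}(B,+)$, which is contained in $\Soc_{k-1}(B)$ by induction. Hence the image of $x$ in $B/\Soc_{k-1}(B)$ is additively central. It also lies in the kernel of the lambda map there, because $\lambda_x$ is the identity. Therefore $x\in\Soc_k(B)$.

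Taking $k=c$ gives $B^{(m)}\subseteq\Soc_c(B)$. The quotient $\overline B=B/\Soc_c(B)$ then satisfies $\overline B^{(m)}=0$, its additive group has class at most $c$, and $\Soc_j(\overline B)=\Soc_{c+j}(B)/\Soc_c(B)$. Induction on $m$ yields $\Soc_{cm}(B)=B$ with no finiteness assumption.

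This replaces both your induction on $|B|$ and the bookkeeping you anticipated. Only two facts are used: $I$ is normal in $(B,+)$, and $I\subseteq\Ker\lambda$. For the way the paper actually uses the theorem, namely for finite $B$, your finite-case argument already suffices.
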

Note that every nilpotent skew brace $B$ is right nilpotent with $(B,+)$ a nilpotent group. Hence $(B,r_B)$ is a multipermutation solution of finite level whenever $\operatorname{p}(B,r_B)>\frac{65}{128}$. Moreover, if $B$ belongs to the class $\mathcal T$, then the same conclusion follows from $\operatorname{p}(B,r_B)>\frac12$. 

We now make some remarks on decomposability of the solution $(B, r_B)$ in terms of the commuting probability.

\begin{pro}\cite[Proposition 2.5]{JKAV19}\label{strong ideal and SYBE}
    Let $B$ be a skew brace. If there exists a proper strong left ideal $I$ of $B$, then $(B,r_B)$ is decomposable.
\end{pro}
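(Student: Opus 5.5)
The plan is to exhibit the decomposition explicitly, taking the two pieces to be $I$ and its complement $B\setminus I$. Recall that $(X,r)$ is decomposable if $X=Y\sqcup Z$ with $Y,Z$ non-empty and $r(Y\times Y)=Y\times Y$, $r(Z\times Z)=Z\times Z$. Since $I$ is proper and contains $1$, both $Y:=I$ and $Z:=B\setminus I$ are non-empty. The argument rests on the formula for $r_B$ given in the Proposition from \cite[Theorem 3.1]{GV17} in Section 2:
$$r_B(a,b)=\big(\lambda_a(b),\,\lambda^{-1}_{\lambda_a(b)}(-(a\circ b)+a+(a\circ b))\big).$$

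The first step is to check invariance of $I\times I$. Let $a,b\in I$. Because $I$ is a left ideal, $\lambda_a(b)\in I$. Because $I$ is a sub-skew brace, $a\circ b\in I$. The element $-(a\circ b)+a+(a\circ b)$ is an additive conjugate of $a$, and it lies in $I$ since $(I,+)\trianglelefteq(B,+)$ for a strong left ideal. Finally, $\lambda^{-1}_{c}$ preserves $I$ for every $c\in B$, since $\lambda_c$ is an automorphism of $(B,+)$ with $\lambda_c(I)=I$. Hence $r_B(I\times I)\subseteq I\times I$.

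The second step treats the complement. The key observation is that each $\lambda_c$ restricts to a bijection of $I$, so it also maps $B\setminus I$ bijectively onto itself; the same holds for $\lambda_c^{-1}$. Additive conjugation is an automorphism of $(B,+)$ that fixes the normal subgroup $I$ setwise, so it too preserves $B\setminus I$. Now let $a,b\notin I$. Then $\lambda_a(b)\notin I$. Next, $-(a\circ b)+a+(a\circ b)$ is a conjugate of $a\notin I$, so it lies outside $I$, and applying $\lambda^{-1}_{\lambda_a(b)}$ keeps it outside $I$. Thus $r_B(Z\times Z)\subseteq Z\times Z$.

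The only point requiring care is upgrading these inclusions to equalities. For finite $B$ this is immediate: $r_B$ is injective, and an injective self-map of a finite set is a bijection. In general, I would instead run the same argument for $r_B^{-1}$, using its explicit form, or use the fact that the maps $\sigma$ and $\tau$ of the restricted pieces are bijective by the same $\lambda$-invariance. Either way $r_B$ restricts to bijections of $Y\times Y$ and of $Z\times Z$, so $(B,r_B)$ is decomposable.
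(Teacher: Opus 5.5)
Your proof is correct, and it is the standard argument. The paper gives no proof of its own and simply quotes \cite[Proposition 2.5]{JKAV19}. That source uses the same decomposition $B=I\sqcup(B\setminus I)$: the left-ideal property controls $\lambda_a(b)$, and normality of $(I,+)$ in $(B,+)$ controls the conjugate $-(a\circ b)+a+(a\circ b)$ in the second coordinate. Normality is only genuinely needed on the complement, since on $I\times I$ that conjugate lies in $I$ already because $a, a\circ b\in I$.

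The one loose end is the infinite case, where your fallbacks are only sketched. You can close it directly. The same invariances give
$r_B\big(I\times(B\setminus I)\big)\subseteq(B\setminus I)\times I$ and $r_B\big((B\setminus I)\times I\big)\subseteq I\times(B\setminus I)$. So $r_B$ sends the four blocks of $B\times B$ into four distinct blocks, and surjectivity of $r_B$ then forces $r_B(I\times I)=I\times I$ and $r_B\big((B\setminus I)^2\big)=(B\setminus I)^2$.
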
 

\begin{rem}
    Let $(B,+,\circ)$ be a finite skew brace such that $\Pb(B)>\frac{1}{12}$. Then either $B$ is a brace or $(B,r_B)$ is decomposable. Moreover, if $B$ belongs to the class $\mathcal{T}$, then $B$ is a trivial brace or $(B,r_B)$ is decomposable.
\end{rem}
\begin{proof}
     By Dixon's $\frac{1}{12}$ theorem \cite{D73}, $(B,+) $ is a solvable group. Every characteristic subgroup of $(B,+)$ is a strong left ideal of $B$. Hence the first assertion follows from Proposition \ref{strong ideal and SYBE}.
   The final assertion  follows from Corollary \ref{1\2 and 1\12 solvable} and Proposition \ref{strong ideal and SYBE}. 
\end{proof}


\end{document}